\newtheorem{Theo}{Theorem}[section]
\newtheorem{Prop}[Theo]{Proposition}
\newtheorem{Coro}[Theo]{Corollary}
\newtheorem{Lemm}[Theo]{Lemma}
\newtheorem{Rema}[Theo]{Remark}
\newcommand{\Hcal}{\mathcal{H}}
\newcommand{\T}{\mathbb{T}}
\newcommand{\Bcal}{\mathcal{B}}
\newcommand{\D}{\mathbb{D}}
\newcommand{\C}{\mathbb{C}}
\newcommand{\Z}{\mathbb{Z}}
\DeclareMathOperator{\spa}{span}
\def\N{\mathbb{ N}}
\def\R{\mathbb{ R}}
\def\HH{\mathcal{ H}}
\DeclareMathOperator{\rad}{Rad}
\DeclareMathOperator{\Rad}{RAD}
\def\Q{\mathbb{ Q}}
\begin{document}

\title{Vector-valued general Dirichlet series }

\author{
D.~Carando, A.~Defant, F.~Marceca, I.~Schoolmann}

%
%


\begin{abstract}
Opened up by  early contributions due to, among others, H. Bohr, Hardy-Riesz, Bohnenblust-Hille, Neder and Landau
the last 20 years show a substantial revival of  systematic research on ordinary Dirichlet series $\sum a_n n^{-s}$, and more recently even on general Dirichlet series
$\sum a_n e^{-\lambda_n s}$.  This involves the intertwining of  classical work with modern functional analysis, harmonic analysis, infinite dimensional holomorphy and probability theory as well as analytic number theory.
Motivated through this line of research the main goal of this article is to start a systematic study of a variety of fundamental aspects of vector-valued general Dirichlet series
$\sum a_n e^{-\lambda_{n} s}$, so Dirichlet series,   where the coefficients are not necessarily in $\C$ but in some arbitrary Banach space $X$.
\end{abstract}

\maketitle


\tableofcontents

\noindent
\renewcommand{\thefootnote}{\fnsymbol{footnote}}
\footnotetext{2010 \emph{Mathematics Subject Classification}: Primary 30B50, 43A70, 43A17, 46E40. } \footnotetext{\emph{Key words and phrases}: Vector-valued general Dirichlet series, maximal inequalities, Hardy spaces.
} \footnotetext{}

\section{Introduction}

Given a frequency $\lambda = (\lambda_n)$, i.e. a strictly increasing sequence of non-negative real numbers,
a $\lambda$-Dirichlet series  is a (formal) series of the form $D=\sum a_{n}e^{-\lambda_{n}s}$, where $s$ is a complex variable and the sequence  $(a_{n})$
(of so-called Dirichlet coefficients) belongs to $\C$.

In contrast to the theory of general Dirichlet series $D=\sum a_{n}e^{-\lambda_{n}s}$, the theory of  ordinary Dirichlet series $\sum a_{n} n^{-s}$ saw a remarkable renaissance
within the last two decades
  which in particular led to the solution of some long-standing problems.
One of many fruitful lines of research in this respect is given by the analysis of functional analytic aspects of vector-valued ordinary Dirichlet series, so  series $\sum a_{n} n^{-s}$
with coefficients $a_n$ in a given normed  space $X$. The list
\cite{CarandoDefantSevilla},
\cite{CarandoDefantSevilla1},
\cite{CarandoMaceraScottiTradacete},
\cite{CarandoMarcecaSevilla},
\cite{CastilloMedinaGarciaMaestre},
\cite{DefantGarciaMaestrePerez},
\cite{DefantPopaSchwarting}, \cite{DefantSchwartingSevilla},
\cite{DefantSevilla} of recent articles indeed documents this activity; let us also mention that  some of the results proved in these articles are   collected in the recent monograph \cite[Chapter 26]{Defant}.

Motivated through this line of research the main goal of this article is to start a systematic study of a variety of aspects of vector-valued general Dirichlet series $D=\sum a_{n}e^{-\lambda_{n}s}$ with coefficients $a_n$ in a given Banach space $X$.
Two different challenges combine in this setting: the behaviour of vector-valued general Dirichlet series depends not only on the structure of the frequency $\lambda$ but also on the geometric structure of the normed space $X$.

Regarding the frequency, many important tools developed within the study of ordinary Dirichlet series (un)fortunately  fail  for general Dirichlet series. In other words,   making the jump from the frequency
$(\log n)$ to arbitrary frequencies $\lambda$ reveals challenging consequences. Much of the ordinary theory relies on 'Bohr's theorem', a fact which in particular implies that for each ordinary Dirichlet series the abscissas of uniform convergence and boundedness coincide. However for general Dirichlet series, the  validity of Bohr's theorem  depends very much on the 'structure' of the frequency $\lambda$.

Further due to the fundamental theorem of arithmetic, each natural number $n$ has its prime number decomposition $n=\mathfrak{p}^{\alpha}$, where $\alpha\in\N_0^{(\N)}$ and $\mathfrak{p} = (2,3, \ldots)$ stands for the sequence of primes, and so  the frequency $(\log n)$ can be written as a linear combination of $(\log p_{j})$ with natural coefficients. This allows to translate Dirichlet series $\sum a_{n} n^{-s}$ into power series of infinitely many variables $\sum_\alpha a_{p^\alpha}z_1^{\alpha_1}z_2^{\alpha_2}\ldots$ in a natural way providing an intimate link between the theory of ordinary Dirichlet series  and the theory of holomorphic functions and polynomials on polydiscs.  Known as Bohr's transform, this procedure enables powerful tools to enter the game. However, it is no longer entirely available for arbitrary frequencies.

Regarding vector-valued phenomena, whereas some scalar-valued results can be translated directly to the vector-valued setting, others depend on the geometric structure of the normed space. Furthermore, some geometric restrictions vary depending on the frequency $\lambda$.

To illustrate all this, we in the rest of this introduction describe  four aspects of ordinary as well as general Dirichlet series which are going to guide us in establishing a systematic theory of general Dirichlet series with coefficients in normed spaces. The Sections \ref{sec3vectorvalued}, \ref{sec4vectorvalued} and \ref{maximalinequalitiessecvectorvalued} are devoted to each of these aspects -- but  in Section~\ref{sec2vectorvalued} we first start with some more preliminaries followed by the definition of  several important classes of general Dirichlet series.

\bigskip

\noindent{\bf Aspect I.}
Let $\mathcal{D}_{\infty}(\lambda,X)$ be the space of all $\lambda$-Dirichlet series $D:=\sum a_{n} e^{-\lambda_n s}$ with coefficients in $X$ which converge and define a bounded, and then necessarily holomorphic, function on the open right half plane $[Re>0]$
(endowed with the supremum norm on $[Re>0]$).

   The countable product  $\mathbb{T}^\infty$ of the torus $\T$  forms a natural compact abelian group,
 where  the Haar measure is given by the  normalized Lebesgue measure.
Recall the definition of the Hardy space $H_\infty(\mathbb{T}^\infty,X)$, i.e. the closed subspace of all $f \in L_\infty(\mathbb{T}^\infty,X)$
such that $\widehat{f}(\alpha) =0$ if $\alpha\in \Z^{(\N)}\setminus \N_0^{(\N)}$.
Using Bohr's transform, these Banach spaces provide us with a  notion of 'Hardy space' for ordinary Dirichlet series which we denote by $\mathcal{H}_\infty ((\log n),X)$ (see Section~\ref{HplambdaX} for the precise definition); whenever $X = \C$ we write $\HH_{\infty}((\log n))$.

One of the celebrated results in the  scalar ordinary theory is a result of Hedenmalm, Lindqvist and Seip from  \cite{HLS}  which shows
\begin{equation} \label{HLSvectorvalued}
\mathcal{D}_{\infty}((\log n)) = \HH_{\infty}((\log n)).
\end{equation}
  This equality can be proven via the intermediate space $\HH^+_{\infty}((\log n))$. To understand this, define $\HH^+_{\infty}((\log n),X)$ as the Banach space of all $X$-valued ordinary Dirichlet series $D=\sum a_{n} n^{-s}$ such that all its  translates
  $D_\sigma:=\sum a_{n} n^{-\sigma} n^{-s}, \sigma >0,$ form a  uniformly bounded set of $\HH^+_{\infty}((\log n),X)$, together with the norm
\[\|D\|_\infty^+:=\sup_{\sigma>0}\|D_\sigma\|_\infty.\]
As a particular case of ideas from \cite{AntonioDefant} (elaborated in \cite[Theorems 24.8 and 24.17]{Defant}) we have
\begin{align}\label{HLS2vectorvalued}
\HH_{\infty}((\log n))=\HH^+_{\infty}((\log n))=\mathcal{D}_{\infty}((\log n)).
\end{align}
Both identities are proven separately in order to show \eqref{HLSvectorvalued}, while the original proof from \cite{HLS} takes a detour through infinite dimensional holomorphy using a result of Cole and Gamelin from \cite{ColeGamelin} (see also \eqref{hol_pvectorvalued}).

As it turns out, studying vector-valued general Dirichlet series, provides the right setting to shed some light on why the proof is done in two steps.
Regarding vector-valued ordinary Dirichlet series, in \cite{AntonioDefant} (see also \cite[Theorem 24.8]{Defant}) it is actually shown that
\begin{align}\label{arnpvectorvalued}
\HH_{\infty}((\log n),X)=\HH^+_{\infty}((\log n),X),
\end{align}
if and only if the Banach space $X$ has the so-called analytic Radon-Nikodym property ARNP (see \cite[Chapter 23]{Defant} for a definition). On the other hand, by  \cite{AntonioDefant} (see also \cite[Theorem 24.17]{Defant})
\[\HH^+_{\infty}((\log n),X)=\mathcal{D}_{\infty}((\log n),X),\]
regardless of the geometry of $X$.

As shown in Section \ref{HplambdaX}, the definition of $\HH_{\infty}((\log n),X)$ and $\HH^+_{\infty}((\log n),X)$ can be extended to general frequencies to define $\HH_{\infty}(\lambda,X)$ and $\HH^+_{\infty}(\lambda,X)$. In fact, a standard weak compactness argument shows that for scalar generalized Dirichlet series we always have that
\begin{align}\label{motiv1vectorvalued}
\HH_{\infty}(\lambda)=\HH^+_{\infty}(\lambda)
\end{align}
(see also Proposition~\ref{Hpplusscalarvectorvalued}), whereas together with \cite[Theorem 5.1]{DefantSchoolmann4} the equality
\begin{align}\label{motiv2vectorvalued}
\HH^+_{\infty}(\lambda)=\mathcal{D}_{\infty}(\lambda)
\end{align}
holds if and only if $\lambda$ satisfies the Bohr's Theorem. Two questions arise.
\begin{enumerate}[I.]
	\item \label{q1vectorvalued} For which frequencies $\lambda$ and Banach spaces $X$ does $\HH_{\infty}(\lambda,X)=\HH^+_{\infty}(\lambda,X)$ hold?
	\item \label{q2vectorvalued} For which frequencies $\lambda$ and Banach spaces $X$ does $\HH^+_{\infty}(\lambda,X)=\mathcal{D}_{\infty}(\lambda,X)$ hold?
\end{enumerate}
The answer to these questions is at the core of Sections \ref{sec3vectorvalued} and \ref{sec4vectorvalued}, respectively.

\bigskip

\noindent{\bf Aspect II.}
 As mentioned before, ordinary Dirichlet series  can be identified with holomorphic functions on polydiscs. More precisely, denote by $H_\infty (B_{c_0}, X)$ the Banach space of all holomorphic (Fr\'echet differentiable) functions
 $g:B_{c_0}\rightarrow X$ endowed with the sup norm.
  There  is a unique isometric linear bijection
 \begin{equation} \label{hol_pvectorvalued}
 \mathcal{D}_{\infty}((\log n),X)= H_\infty (B_{c_0},X)\,,
 \end{equation}
 which preserves Dirichlet and monomial coefficients (see \cite{ColeGamelin} for the scalar case, \cite{AntonioDefant} for the vector-valued case, and also \cite[Theorem 24.17]{Defant}).

 Regarding $1 \leq p < \infty$, define the Banach space $H_{p}(\ell_{2} \cap B_{c_{0}},X)$ of all holomorphic functions $g: \ell_{2} \cap B_{c_{0}}\rightarrow X$ for which
 \begin{equation} \label{jungelsvectorvalued}
 \Vert g \Vert_{H_{p}(\ell_{2} \cap B_{c_{0}})} =
 \sup_{n \in \mathbb{N}} \sup_{0<r<1} \left( \int_{\mathbb{T}^{n}} \big\| g(rw_{1}, \ldots , r w_{n},0,0, \ldots) \big\|_X^{p} d(w_{1}, \ldots , w_{n})
 \right)^{\frac{1}{p}} < \infty  \,.
 \end{equation}
 In \cite{Bayart} Bayart developed an $H_p$-theory of ordinary Dirichlet series for $1\leq p\leq \infty$. This was later extended to $\lambda$-Dirichlet series in \cite{DefantSchoolmann2} through Fourier analysis on groups.
 Providing a vector-valued definition is then straightforward and gives rise to the spaces $\HH_p(\lambda,X)$ and $\HH^+_p(\lambda,X)$ which are properly defined in Section \ref{sec2vectorvalued}.

 It is proven in \cite{BaDe} that for  $1\leq  p< \infty$ there is a unique isometric equality
 \begin{equation} \label{Hpholvectorvalued}
 \mathcal{H}_{p}((\log n),X)=  H_{p}(\ell_{2} \cap B_{c_{0}},X)  \,,
 \end{equation}
 identifying Fourier and monomial coefficients if and only if $X$
 has ARNP (see also \cite[Chapter 13]{Defant}).
However, for general frequencies these characterizations are no longer available. The spaces $\HH^+_{p}(\lambda,X)$ partly compensate for this loss.

 The main goal of Section \ref{sec3vectorvalued} is to study for which frequencies $\lambda$ and Banach spaces $X$  we have
 \begin{align}\label{coincidenceIIIvectorvalued}
 \HH_{p}(\lambda,X)=\HH^+_{p}(\lambda,X),
 \end{align}
 which generalizes Question \ref{q1vectorvalued} to any $1\leq p\leq \infty$. As in the case $p=\infty$ in \eqref{arnpvectorvalued}, we for vector-valued ordinary Dirichlet series have
 \begin{align}\label{arnp2vectorvalued}
 \HH_{p}((\log n),X)=\HH^+_{p}((\log n),X),
 \end{align}
 if and only if the Banach space $X$ has ARNP (see \cite{AntonioDefant} and \cite[Theorem 24.8]{Defant}). However, we find that in the general framework the validity of \eqref{coincidenceIIIvectorvalued} depends not only on the geometry of the Banach space but also on the frequency $\lambda$. More precisely, as shown in Theorem \ref{mainresultARNPvectorvalued} and Remark \ref{cnullvectorvalued} for any frequency $\lambda$ we have
 \[X \text{ has ARNP} \Rightarrow \Hcal_p(\lambda,X)=\Hcal^+_p(\lambda,X) \text{ for all } 1\le p \le \infty \Rightarrow c_0\not\subset X.\]
 Furthermore, both extremes may characterize the coincidence of the spaces for certain frequencies.

\bigskip

\noindent {\bf Aspect III.}
Suppose that $D=\sum a_{n}e^{-\lambda_{n}s}$ (scalar coefficients) converges somewhere and that its limit function extends  to a bounded and holomorphic function $f$ on $[Re>0]$. Then a prominent problem from the beginning of the 20th century was to determine the class of $\lambda$'s for which under this assumption all $\lambda$-Dirichlet series converge uniformly on $[Re>\varepsilon]$ for every $\varepsilon>0$.

 We say that $\lambda$ satisfies 'Bohr's theorem'  if the answer to the preceding problem is affirmative. In Section \ref{Bohr's theoremvectorvalued} we are going to repeat concrete conditions on $\lambda$ under  which
Bohr's theorem holds, but also examples which show that not every frequency has this property.

 Anyway,  this notion  seems to be at the very heart of every serious study of the space  $\mathcal{D}_{\infty}(\lambda)$ -- an opinion which  is underlined
 by the equivalence of the following three statements (a result from  \cite[Theorem 5.1]{DefantSchoolmann4}):
 \begin{itemize}
\item $\lambda$ satisfies Bohr's theorem
\item $\mathcal{D}_{\infty}(\lambda)$ is complete
\item $\mathcal{D}_{\infty}(\lambda)=\mathcal{H}_{\infty}(\lambda)$
\end{itemize}
The second statement allows to apply (principles of) functional analysis to $\mathcal{D}_{\infty}(\lambda)$, whereas the third statement links its study intimately with
Fourier analysis. The proof seems highly non-trivial since it needs, among other tools,  a variant of the  Carleson theorem (on pointwise convergence of Fourier series in $H_2(\T)$) for  the Hilbert space
$\mathcal{H}_2(\lambda)$ of $\lambda$-Dirichlet series from \cite[Theorem 5.1]{DefantSchoolmann4}.

In Section \ref{sec4vectorvalued} the main aim is to find reasonable extensions of the above equivalences within the vector-valued setting. As it turns out, this phenomenon depends only on the frequency once we isolated
\eqref{motiv2vectorvalued} from \eqref{motiv1vectorvalued}. In Theorem \ref{maininftyvectorvalued} it is shown among other equivalences that
\[\mathcal{D}_{\infty}(\lambda,X)=\HH^+_{\infty}(\lambda,X),\]
if and only if $\lambda$ satisfies Bohr's theorem which answers Question \ref{q2vectorvalued}.

We in particular relate Dirichlet series in $\mathcal{D}_\infty(\lambda, X)$ (and their relatives) with the theory  of almost periodic functions on half-planes.

\bigskip \noindent {\bf Aspect IV.}
Many solutions of problems which appeared in the  modern theory of ordinary/general  Dirichlet series live from appropriate inequalities
adapted to these problems -- in particular, maximal inequalities (like in the preceding topic). One of the most prominent examples is the quantitative version of Bohr's theorem which states that for every Dirichlet series $D \in \mathcal{D}_\infty((\log n))$
and every $N\ge 2$ we have
\begin{equation}\label{quantyvectorvalued}
\big\| \sum_{n=1}^N a_n n^{-s} \big\|_\infty \leq C \log N \|D\|_\infty\,,
\end{equation}
where $C > 0$ is a universal constant. Recently, various improvements of different technical difficulties of this inequality have been proved. In \cite{Schoolmann}
a version of \eqref{quantyvectorvalued} is proved which applies to general Dirichlet series under no condition on the frequency. For certain conditions on $\lambda$ isolated by Bohr and Landau this leads to
Bohr type estimates like  \eqref{quantyvectorvalued}. More generally, Carleson-Hunt  like maximal inequalities  for (Riesz) summation of general Dirichlet series within the scale of $\mathcal{H}_p(\lambda)$-spaces have been studied
in \cite{DefantSchoolmann4} and \cite{DefantSchoolmann3}. Here it is essential to distinguish carefully between the cases $p=1$ and $1< p \leq \infty$.

In Section \ref{maximalinequalitiessecvectorvalued} we deal with our third goal, which consists in extending  a couple of fundamental maximal inequalities for  general scalar Dirichlet series to our vector-valued setting.  In some situation this just means to apply an appropriate Hahn-Banach argument, but there  are other situations where this in fact is a delicate problem since
the underlying geometry of $X$ becomes essential.

\section{Classes of general Dirichlet series}\label{sec2vectorvalued}
Recall from above the notion of  a general Dirichlet series $\sum a_n e^{-\lambda_n s}$ with coefficients in a Banach space $X$. Finite sums of the form $D=\sum_{n=1}^{N} a_{n}e^{-\lambda_{n}s}$ are called  Dirichlet polynomials, and by  $\mathcal{D}(\lambda,X)$ we denote the space of all $X$-valued $\lambda$-Dirichlet series; we write $\mathcal{D}(\lambda)$ whenever  $X =\mathbb{C}$.   It is well-known that
	if $D=\sum a_{n}e^{-\lambda_{n}s}$ converges in $s_0 \in \C$, then it also converges for all $s\in \C$
	with $Re~ s > Re~ s_0$, and its limit function $f(s) = \sum_{n=1}^{\infty} a_{n}e^{-\lambda_{n}s}$
	defines a holomorphic function on $[Re > \sigma_c(D)]$ with values in $X$, where
	  \[
  \sigma_{c}(D)=\inf\left \{ \sigma \in \R \mid D \text{ converges on } [Re>\sigma] \right\}
	 \]
	determines the so-called abscissa of convergence. The abscissas of absolute and uniform convergence
$\sigma_{a}(D)$ and $\sigma_{u}(D)$ are defined accordingly. These abscissas define the largest half-plane where the Dirichlet series converges in each sense, and the limit function $f: [Re > \sigma_c(D)] \to X$ constitutes a holomorphic function. However, for our purposes we need  summation methods of Dirichlet series $\sum a_n e^{-\lambda_n s}$ which are more general than only taking limits of  partial sums $\sum_{n=1}^N a_n e^{-\lambda_n s}$.

\subsection{Riesz means}
Fixing a frequency  $\lambda$, some $k\ge 0$, and  $D=\sum a_{n}e^{-\lambda_{n}s}\in \mathcal{D}(\lambda,X)$, the (first) $(\lambda,k)$-Riesz mean of $D$ of length $x >0$ is given by the Dirichlet polynomial
$$R_{x}^{\lambda,k}(D):=\sum_{\lambda_{n}<x} a_{n} \Big(1-\frac
{\lambda_{n}}{x}\Big)^{k}e^{-\lambda_{n}s}.$$
 We define four abscissas of $D$:
 \begin{align*}
&
\sigma^{\lambda, k}_{c}(D)=\inf\left \{ \sigma \in \R \mid (R_{x}^{\lambda,k}(D))_{x \ge 0} \text{ converges on } [Re>\sigma] \right\},
\\[1ex]&
\sigma^{\lambda, k}_{a}(D)=\inf\left \{ \sigma \in \R \mid (R_{x}^{\lambda,k}(D))_{x \ge 0} \text{ converges absolutely  on } [Re>\sigma] \right\},
\\[1ex]&
\sigma^{\lambda, k}_{u}(D)=\inf\left \{ \sigma \in \R \mid (R_{x}^{\lambda,k}(D))_{x \ge 0} \text{ converges uniformly  on } [Re>\sigma] \right\}.
\end{align*}
By definition $\sigma^{\lambda, k}_{c}(D)\le \sigma^{\lambda, k}_{u}(D)\le \sigma^{\lambda, k}_{a}(D)$, and in  general all these abscissas differ.
For historical reasons we call the following formulas 'Bohr-Cahen formulas for Riesz summation' (see \cite{HardyRiesz} and also \cite{DefantSchoolmann3}).
\begin{Prop}
\label{BohrCahenIIvectorvalued}
Let $D \in \mathcal{D}(\lambda,X)$. Then
\begin{align*}
&
\sigma^{\lambda, k}_{c}(D)\le\limsup_{x\to \infty} \frac{\log\Big(\| \sum_{\lambda_{n}<x} a_{n}\big(1-\frac
{\lambda_{n}}{x}\big)^{k}\|_{X}\Big)}{x},
\\&
\sigma^{\lambda, k}_{a}(D)\le\limsup_{x\to \infty} \frac{\log\Big( \sum_{\lambda_{n}<x} \|a_{n}\|_{X}\big(1-\frac
{\lambda_{n}}{x}\big)^{k}\Big)}{x},
\\&
\sigma^{\lambda, k}_{u}(D)\le\limsup_{x\to \infty} \frac{\log\left(\sup_{t\in \R} \| R_{x}^{\lambda,k}(D)(it)\|_{X}\right)}{x},
\end{align*}
where in each case equality holds if the left hand side is non-negative.
\end{Prop}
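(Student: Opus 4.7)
The plan is to adapt the classical scalar Bohr-Cahen arguments (as carried out for Riesz summation in \cite{HardyRiesz} and \cite{DefantSchoolmann3}) to the Banach-space-valued setting. All three inequalities rest on an Abel summation / integration-by-parts manipulation and a straightforward exponential estimate; since these manipulations only involve taking norms, the adaptation to $X$-valued coefficients is essentially formal. The point of the proof is therefore to record the correct Abel-type identity for Riesz means of order $k$ and to extract from it the usual growth estimates.

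For the bound on $\sigma^{\lambda,k}_c(D)$, denote the right-hand side by $\alpha$ and fix $\alpha' > \alpha$, so that $\|S_k(x)\|_X \le e^{\alpha' x}$ for all large $x$, where $S_k(x) := R_x^{\lambda,k}(D)(0)$. I would apply Abel summation in the index $n$ to obtain
\[
R_x^{\lambda,k}(D)(s) \;=\; e^{-xs}S_k(x) \,+\, s\int_0^x e^{-us}\,\tilde A_x(u)\,du, \qquad \tilde A_x(u) := \sum_{\lambda_n<u} a_n\bigl(1-\tfrac{\lambda_n}{x}\bigr)^k .
\]
For $s$ with $\re s > \alpha'$ the boundary term tends to $0$, and once one establishes a uniform-in-$x$ exponential bound $\|\tilde A_x(u)\|_X \le C e^{\alpha'' u}$ (for any $\alpha'' > \alpha'$), the integral converges to an $x$-independent limit, giving convergence of the Riesz means on $[\re > \alpha']$. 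Letting $\alpha' \downarrow \alpha$ yields $\sigma^{\lambda,k}_c(D) \le \alpha$. The inequality for $\sigma^{\lambda,k}_a(D)$ reduces immediately to the scalar Bohr-Cahen estimate after replacing $a_n$ by $\|a_n\|_X$, and the inequality for $\sigma^{\lambda,k}_u(D)$ follows from the very same Abel identity by taking $\sup_{t\in\R}$ on both sides.

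For the equality clause, assume $\beta := \sigma^{\lambda,k}_c(D) \ge 0$ and fix $\sigma > \beta$; then $\{R_x^{\lambda,k}(D)(\sigma)\}_{x>0}$ is convergent hence bounded in $X$. Reversing the Abel identity above with $a_n$ replaced by $a_n e^{-\lambda_n\sigma}$ (using $\sigma \ge 0$ to absorb the resulting boundary term) yields an estimate of the form $\|S_k(x)\|_X \le C_\sigma(1+x)e^{\sigma x}$, so $\limsup_{x\to\infty} x^{-1}\log\|S_k(x)\|_X \le \sigma$; letting $\sigma \downarrow \beta$ furnishes the matching upper bound. The non-negativity hypothesis is used precisely here, since for negative $\sigma$ the reverse-Abel estimate would control the wrong exponential. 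The absolute and uniform cases are analogous, using $\|a_n\|_X$ and $\sup_t\|\cdot(it)\|_X$, respectively. The main technical obstacle is the uniform-in-$x$ control of the partial Riesz sums $\tilde A_x(u)$: because the weight $(1-\lambda_n/x)^k$ genuinely couples $n$ and $x$, one cannot simply identify $\tilde A_x(u)$ with $S_k(u)$. This is resolved either via the smoothing identity $S_k(x)=(k/x^k)\int_0^x(x-u)^{k-1}A(u)\,du$ relating Riesz means of order $k$ to the plain partial sums $A(u)=\sum_{\lambda_n<u}a_n$, or by running Abel summation inside the Laplace representation $e^{-\lambda s}=(s^{k+1}/k!)\int_\lambda^\infty (v-\lambda)^k e^{-vs}\,dv$; either device decouples $x$ from $u$ and reduces the argument to the standard scalar pattern.
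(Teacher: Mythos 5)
The paper itself offers no proof of this proposition: it is presented as the vector-valued form of the classical Bohr--Cahen formulas for Riesz summation, with a pointer to \cite{HardyRiesz} and \cite{DefantSchoolmann3}, the tacit justification being that every step of the scalar proof uses only linearity and the triangle inequality, so that replacing $|\cdot|$ by $\|\cdot\|_X$ is purely formal. Measured against that, your skeleton is the right one: the Abel identity $R_x^{\lambda,k}(D)(s)=e^{-xs}S_k(x)+s\int_0^x e^{-us}\tilde{A}_x(u)\,du$ is correct, the absolute case does reduce to the scalar statement applied to $\sum\|a_n\|_X e^{-\lambda_n s}$, the uniform case follows by taking suprema over $t\in\R$, and your account of where the hypothesis $\sigma^{\lambda,k}_{c}(D)\ge 0$ enters in the equality clause is accurate.

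The gap sits exactly at the point you flag and then claim to resolve. The hypothesis controls only $\tilde{A}_x(x)=S_k(x)$, and neither of your two devices, as stated, supplies the uniform-in-$x$ bound on $\tilde{A}_x(u)$ for $u<x$. The smoothing identity $S_k(x)=(k/x^k)\int_0^x(x-u)^{k-1}A(u)\,du$ runs in the wrong direction: it expresses the controlled quantity $S_k$ through the uncontrolled unweighted sums $A(u)$, and it cannot be inverted without loss (summability of higher Riesz order is strictly weaker than of lower order). The Laplace representation $e^{-\lambda s}=\frac{s^{k+1}}{\Gamma(k+1)}\int_\lambda^\infty(v-\lambda)^k e^{-vs}\,dv$ does decouple the variables, but it produces the \emph{integral} Riesz means $\frac{s^{k+1}}{\Gamma(k+1)}\int_0^\infty A^k(v)e^{-vs}\,dv$ with $A^k(v)=\sum_{\lambda_n\le v}a_n(v-\lambda_n)^k$; to conclude convergence of the discrete means $R_x^{\lambda,k}(D)(s)=x^{-k}\sum_{\lambda_n<x}a_n(x-\lambda_n)^ke^{-\lambda_n s}$ one still needs the identity expressing this twisted Riesz sum as an integral against $A^k$ together with the consistency theorems, i.e.\ the lemmas of Chapters II and IV of \cite{HardyRiesz}. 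That is the genuinely nontrivial kernel of the scalar proof, and your sketch defers it rather than supplying it; everything else in the proposal, including the vector-valued adaptation, is fine.
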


It is evident that  the Hahn-Banach theorem plays a fundamental role when extending results  on scalar-valued $\lambda$-Dirichlet series to valued-valued $\lambda$-Dirichlet series. For the following proposition we define for $D=\sum a_{n} e^{-\lambda_{n}s} \in \mathcal{D}(\lambda,X)$ and $x^{\ast} \in X^{\ast}$ the scalar $\lambda$-Dirichlet series
$$x^{\ast}\circ D:=\sum x^{\ast}(a_{n}) e^{-\lambda_{n}s}\in \mathcal{D}(\lambda).$$
\begin{Prop} \label{weakabscissasvectorvalued} Let $D \in \mathcal{D}(\lambda,X)$, $k \ge 0$ and let $\iota=c,u$. Then
 $$\sigma^{\lambda, k}_{\iota}(D)=\sup_{x^{\ast} \in X^{\ast}} \sigma^{\lambda, k}_{\iota}(x^{\ast}\circ D).$$
\end{Prop}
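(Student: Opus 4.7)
The plan is to prove the two inequalities separately: the bound $\sup_{x^\ast \in X^\ast} \sigma^{\lambda, k}_{\iota}(x^\ast\circ D) \le \sigma^{\lambda, k}_{\iota}(D)$ follows immediately from the definitions, while the reverse inequality relies on the Bohr--Cahen formulas of Proposition~\ref{BohrCahenIIvectorvalued} combined with the uniform boundedness principle. For the easy direction, if $(R_x^{\lambda,k}(D))_x$ converges (resp.\ converges uniformly) on $[\re > \sigma]$, then for every $x^\ast \in X^\ast$ the scalar net $R_x^{\lambda,k}(x^\ast \circ D) = x^\ast(R_x^{\lambda,k}(D))$ has the same property, giving $\sigma^{\lambda, k}_{\iota}(x^\ast\circ D) \le \sigma^{\lambda, k}_{\iota}(D)$.

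For the reverse inequality, set $\sigma_0 := \sup_{x^\ast \in X^\ast} \sigma^{\lambda, k}_{\iota}(x^\ast \circ D)$, which I may assume finite. The first move is to translate $D$ by a suitable $\tau > 0$, i.e.\ replace $D$ by $\widetilde D := \sum a_n e^{\lambda_n \tau} e^{-\lambda_n s}$; this shifts each of $\sigma^{\lambda,k}_\iota(x^\ast \circ D)$ and $\sigma^{\lambda,k}_\iota(D)$ by $+\tau$, and so reduces matters to the case $\sigma_0 > 0$, in which the scalar Bohr--Cahen formulas hold as equalities. Fix $\varepsilon > 0$. For each $x^\ast$ with $\sigma^{\lambda,k}_\iota(x^\ast \circ D) \ge 0$, the equality case of Proposition~\ref{BohrCahenIIvectorvalued} yields scalar exponential bounds: $|x^\ast(S_x(D))| \le e^{(\sigma_0+\varepsilon)x}$ for large $x$ when $\iota = c$, with $S_x(D) := \sum_{\lambda_n < x} a_n (1 - \lambda_n/x)^k$, and $\sup_{t \in \R} |x^\ast(R_x^{\lambda,k}(D)(it))| \le e^{(\sigma_0+\varepsilon)x}$ for large $x$ when $\iota = u$. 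For the remaining $x^\ast$, the scalar series $x^\ast \circ D$ converges (resp.\ converges uniformly) on $[\re > 0]$, so the same quantities are actually \emph{bounded} in $x$. Since each $R_x^{\lambda,k}(D)$ is a finite Dirichlet polynomial, these estimates extend to all $x > 0$ up to an $x^\ast$-dependent constant.

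The key step is now a uniform boundedness argument applied to the family $\mathcal{F}_c := \{e^{-(\sigma_0+\varepsilon)x} S_x(D) : x > 0\} \subset X$ in the case $\iota = c$, and to $\mathcal{F}_u := \{e^{-(\sigma_0+\varepsilon)x} R_x^{\lambda,k}(D)(it) : x > 0,\, t \in \R\} \subset X$ in the case $\iota = u$. Pointwise boundedness of every evaluation $\langle x^\ast, \cdot \rangle$ on $\mathcal{F}_\iota$ upgrades via Banach--Steinhaus to uniform norm boundedness, producing $\|S_x(D)\|_X \le M e^{(\sigma_0+\varepsilon)x}$ and $\sup_{t \in \R} \|R_x^{\lambda,k}(D)(it)\|_X \le M e^{(\sigma_0+\varepsilon)x}$ respectively. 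Feeding these into the \emph{vector-valued} Bohr--Cahen inequalities of Proposition~\ref{BohrCahenIIvectorvalued} yields $\sigma^{\lambda, k}_{\iota}(D) \le \sigma_0 + \varepsilon$; letting $\varepsilon \to 0$ and undoing the translation gives $\sigma^{\lambda, k}_{\iota}(D) \le \sigma_0$.

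The main obstacle I anticipate is the mismatch between the scalar Bohr--Cahen formulas, which are equalities only when the relevant abscissa is non-negative, and our need for uniform exponential control over every $x^\ast$. The preliminary translation and the separate treatment of functionals $x^\ast$ yielding a negative abscissa -- where the scalar quantities are in fact bounded -- resolve this tension, after which Banach--Steinhaus cleanly converts the scalar bounds into the norm bounds required to apply the vector-valued Bohr--Cahen formula.
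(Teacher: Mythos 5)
Your proof is correct and follows essentially the same route as the paper: pointwise (weak) boundedness of the relevant Riesz-mean nets, upgraded to norm boundedness by the uniform boundedness principle (the paper cites Mackey's theorem), and then the vector-valued Bohr--Cahen inequalities of Proposition~\ref{BohrCahenIIvectorvalued}. The paper's version is a little leaner: instead of working at $s=0$ with weights $e^{-(\sigma_0+\varepsilon)x}$, it fixes $\sigma>\sigma_0$ and evaluates the Riesz means at $\sigma$ (equivalently, applies the Bohr--Cahen inequality to the translate $D_\sigma$), so each scalar net converges and is therefore automatically bounded -- which makes your preliminary translation, the appeal to the equality case of the scalar Bohr--Cahen formulas, and the case split on the sign of $\sigma^{\lambda,k}_{\iota}(x^{\ast}\circ D)$ unnecessary.
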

Before we prove this, let us mention that Proposition \ref{weakabscissasvectorvalued} does not hold for the abscissa of absolutely convergence. For instance choosing $X=c_{0}$ and $a_{n}=e_{n}$ (the $n$th unit vector), the ordinary Dirichlet series $D=\sum e_{n} n^{-s}$ satisfies  $\sigma_{a}(D)=1$ and $\sigma_{a}(x^{\ast}(D))=0$ for all $x^{\ast} \in X^{\ast}$ (this example is taken from \cite[Example 3.1]{Bonet}).
\begin{proof}[Proof of Proposition \ref{weakabscissasvectorvalued}]
To show that the left hand side is $\ge$ than the right hand side, is  obvious. For the other inequality assume that  $\sup_{x^{\ast} \in X^{\ast}} \sigma^{\lambda, k}_{\iota}(x^{\ast}\circ D)< \infty$ (otherwise the claim is trivial), and let $\sigma >\sup_{x^{\ast} \in X^{\ast}} \sigma^{\lambda, k}_{\iota}(x^{\ast}\circ D)$. If $\iota=c$, then  the net $\left(R_{x}^{\lambda,k}(D)(\sigma) \right)_{x\ge 0}$ is weakly bounded in $X$, and so by Mackey's theorem norm bounded. Now Proposition \ref{BohrCahenIIvectorvalued} (first formula) implies that $D$ converges on $[Re>\sigma]$ and so $\sigma_{c}(D)\le \sigma$. If $\iota=u$, then  the set
$$A(\sigma):=\left \{  \sum_{\lambda_{n}<x} a_{n} e^{-\lambda_{n} \sigma}\Big(1-\frac
{\lambda_{n}}{x}\Big)^{k} e^{-\lambda_{n} it} \mid t \in \R,~ x \in \N \right \} $$ is weakly bounded in $X$. Again by Mackey's theorem, $A(\sigma)$ is norm bounded in $X$, and the Bohr-Cahen formula for $\sigma_{u}$ (Proposition \ref{BohrCahenIIvectorvalued}, third formula) implies that $\sigma^{\lambda, k}_{u}(D)\le \sigma$.
\end{proof}

 \subsection{The spaces $\mathcal{D}_{\infty}(\lambda,X)$}
 Recall from the introduction the definition of the spaces $\mathcal{D}_{\infty}(\lambda,X)$ and  $\mathcal{D}_{\infty}(\lambda,\C)=\mathcal{D}_{\infty}(\lambda)$.
 We endow $\mathcal{D}_{\infty}(\lambda,X)$ with the supremum norm on $[Re>0]$.  In order to see that this is a norm, we need to show that every   $D\in \mathcal{D}_{\infty}(\lambda,X)$ with a limit function vanishing on  $[Re>0]$, in fact is zero, i.e. all its coefficients are zero. But a standard  application of the Hahn-Banach theorem to \cite[Corollary 3.9]{Schoolmann} shows that for every
 $D = \sum a_n e^{-   \lambda_ns}\in \mathcal{D}_{\infty}(\lambda,X)$  with limit function $f: [Re >0] \to \C$ and for all $n$,
 \[
 a_n = \lim_{T \to \infty} \frac{1}{2T}\int_{-T}^{T} f( \sigma + it) e^{(\sigma + it) \lambda_n} dt\,.
 \]
  In particular, we have
  $$\sup_{n \in \N} \|a_{n}\|_{X} \le \|D\|_{\infty}.$$
But in general $\left(\mathcal{D}_{\infty}(\lambda,X), \|\cdot\|_{\infty} \right)$ is not complete (see \cite[Theorem 5.2]{Schoolmann} for examples). Completeness
of $\mathcal{D}_{\infty}(\lambda,X)$ will be carefully studied in Section~\ref{Equivalencevectorvalued}.

\begin{Theo} \label{uniformRieszlimitvectorvalued} Let $\lambda$ be an arbitrary frequency,  and $D=\sum a_{n}e^{-\lambda_{n}s}\in \mathcal{D}_{\infty}(\lambda,X)$ with  limit function  $f$. Then for every $k>0$ and $\varepsilon>0$ we have
$$f=\lim_{x\to \infty} \sum_{\lambda_{n}<x} a_{n}\big(1-\frac{\lambda_{n}}{x}\big)^{k} e^{-\lambda_{n}s}$$
uniformly on $[Re>\varepsilon]$.
\end{Theo}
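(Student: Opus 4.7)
The plan is to reduce the vector-valued statement to the scalar case via composition with functionals and then invoke Proposition~\ref{weakabscissasvectorvalued}. So first, I would fix $D=\sum a_n e^{-\lambda_n s} \in \mathcal{D}_\infty(\lambda,X)$ with limit function $f$ on $[\re>0]$, and for each $x^{\ast} \in X^{\ast}$ consider the scalar Dirichlet series $x^{\ast}\circ D = \sum x^{\ast}(a_n)e^{-\lambda_n s}$, whose limit function is $x^{\ast}\circ f$ and satisfies $\|x^{\ast}\circ D\|_\infty \le \|x^\ast\|\,\|D\|_\infty$. Hence $x^{\ast}\circ D\in \mathcal{D}_\infty(\lambda)$.

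Next, I would appeal to the scalar version of the statement (a Hardy--Riesz type theorem stating that $\sigma_u^{\lambda,k}(E)\le 0$ for every $E\in\mathcal{D}_\infty(\lambda)$ whenever $k>0$, see e.g.~\cite{DefantSchoolmann3,Schoolmann}). This yields $\sigma_u^{\lambda,k}(x^\ast\circ D)\le 0$ for every $x^\ast\in X^\ast$. Applying Proposition~\ref{weakabscissasvectorvalued} with $\iota=u$ we then obtain
\[
\sigma_u^{\lambda,k}(D) \;=\; \sup_{x^\ast\in X^\ast}\sigma_u^{\lambda,k}(x^\ast\circ D) \;\le\; 0,
\]
which by definition of $\sigma_u^{\lambda,k}$ means that the Riesz means $R_x^{\lambda,k}(D)$ converge uniformly on $[\re>\varepsilon]$ for every $\varepsilon>0$ to some holomorphic $X$-valued function $g$.

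Finally, I would verify that $g=f$. For each $x^\ast\in X^\ast$ the uniform convergence of $R_x^{\lambda,k}(D)$ to $g$ gives uniform convergence of $x^\ast\circ R_x^{\lambda,k}(D)= R_x^{\lambda,k}(x^\ast\circ D)$ to $x^\ast\circ g$ on $[\re>\varepsilon]$; but the scalar result also tells us that this limit equals $x^\ast\circ f$ there. Thus $x^\ast(g(s))=x^\ast(f(s))$ for every $x^\ast\in X^\ast$ and every $s$ in the half-plane, and Hahn--Banach yields $g=f$.

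The only genuinely non-trivial ingredient is the scalar theorem $\sigma_u^{\lambda,k}(E)\le \sigma_b(E)$, so the main obstacle, if one wants a self-contained argument, would be producing that estimate; however since it is already available in the cited scalar literature, the vector-valued step becomes essentially a bookkeeping argument combining Hahn--Banach with Proposition~\ref{weakabscissasvectorvalued}.
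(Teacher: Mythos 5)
Your proposal is correct and follows essentially the same route as the paper: the paper's proof also reduces to the scalar fact $\sigma_u^{\lambda,k}(x^\ast\circ D)\le 0$ (cited from Schoolmann) and then applies Proposition~\ref{weakabscissasvectorvalued}. Your additional step identifying the Riesz limit with $f$ via Hahn--Banach is a worthwhile detail that the paper leaves implicit.
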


\begin{proof}
By \cite[Proposition 3.4]{Schoolmann} we know that $\sigma^{k,\lambda}_u (x^\ast \circ D) \leq 0$ for every $x^\ast \in X^\ast$, so that by Proposition~\ref{weakabscissasvectorvalued}
we have $\sigma^{k,\lambda}_u (D) \leq 0$.
\end{proof}

\begin{Theo} \label{HahnBanachinftyvectorvalued} Let $D=\sum a_{n} e^{-\lambda_{n}s} \in \mathcal{D}(\lambda, X)$. Then the following are equivalent:
\begin{enumerate}
\item[(i)] $D \in
\mathcal{D}_{\infty}(\lambda,X)$.
\item[(ii)] $x^{\ast}\circ D \in \mathcal{D}_{\infty}(\lambda)$ for all $x^{\ast}\in X^{\ast}$ and $\sup_{ \|x^{\ast}\|=1} \|x^{*}\circ D\|_{\infty}  < \infty$
\end{enumerate}
Moreover, in this case $\|D\|_{\infty}=\sup_{ \|x^{\ast}\|=1} \|x^{\ast}\circ D\|_{\infty}$.
\end{Theo}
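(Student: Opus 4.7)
The strategy is a two-way Hahn--Banach argument that uses Proposition~\ref{weakabscissasvectorvalued} (applied with $k=0$, in which case the Riesz means are just partial sums and $\sigma_c^{\lambda,0}=\sigma_c$) to pass from weak convergence of $D$ to actual convergence, combined with the duality formula $\|x\|=\sup_{\|x^{\ast}\|=1}|x^{\ast}(x)|$ to handle boundedness.

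For the implication (i) $\Rightarrow$ (ii), I would let $f\colon [\re>0]\to X$ be the bounded holomorphic limit function of $D$, fix $x^{\ast}\in X^{\ast}$, and note that continuity of $x^{\ast}$ yields convergence of $x^{\ast}\circ D=\sum x^{\ast}(a_{n})e^{-\lambda_{n}s}$ on $[\re>0]$ to the bounded holomorphic function $x^{\ast}\circ f$. Hence $x^{\ast}\circ D\in \mathcal{D}_{\infty}(\lambda)$ with $\|x^{\ast}\circ D\|_{\infty}\le \|x^{\ast}\|\,\|D\|_{\infty}$, and taking the supremum over the unit sphere of $X^{\ast}$ gives $\sup_{\|x^{\ast}\|=1}\|x^{\ast}\circ D\|_{\infty}\le \|D\|_{\infty}<\infty$.

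For the harder direction (ii) $\Rightarrow$ (i), set $M:=\sup_{\|x^{\ast}\|=1}\|x^{\ast}\circ D\|_{\infty}<\infty$. Since $x^{\ast}\circ D\in \mathcal{D}_{\infty}(\lambda)$ for every $x^{\ast}$, each $x^{\ast}\circ D$ converges on $[\re>0]$, so $\sigma_{c}(x^{\ast}\circ D)\le 0$ for every $x^{\ast}$. Invoking Proposition~\ref{weakabscissasvectorvalued} with $k=0$ and $\iota=c$ then yields $\sigma_{c}(D)=\sup_{x^{\ast}\in X^{\ast}}\sigma_{c}(x^{\ast}\circ D)\le 0$, so $D$ converges on $[\re>0]$ to some $f\colon [\re>0]\to X$. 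For any $s\in[\re>0]$ and any $x^{\ast}$ of norm one, continuity gives $x^{\ast}(f(s))=(x^{\ast}\circ D)(s)$, whence $|x^{\ast}(f(s))|\le \|x^{\ast}\circ D\|_{\infty}\le M$; taking the supremum over $\|x^{\ast}\|=1$ via Hahn--Banach produces $\|f(s)\|_{X}\le M$. Thus $f$ is bounded (and automatically holomorphic, being a pointwise limit that is locally uniform by the same Proposition~\ref{weakabscissasvectorvalued} argument applied on half-planes $[\re>\varepsilon]$), so $D\in \mathcal{D}_{\infty}(\lambda,X)$ with $\|D\|_{\infty}\le M$. Combining this with the reverse inequality from the first direction establishes the claimed equality of norms.

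I do not expect a real obstacle here: the only nontrivial ingredient is Proposition~\ref{weakabscissasvectorvalued}, which was precisely set up to perform this weak-to-strong transfer and which already encodes the Mackey boundedness step where $c_{0}\not\subset X$ would be needed if we were working with $\sigma_{a}$ instead of $\sigma_{c}$ or $\sigma_{u}$. Everything else is bookkeeping with Hahn--Banach.
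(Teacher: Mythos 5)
Your proof is correct and follows essentially the same route as the paper: Proposition~\ref{weakabscissasvectorvalued} with $k=0$ transfers convergence from the functionals $x^{\ast}\circ D$ to $D$ itself, and Hahn--Banach yields the boundedness of the limit function and the norm identity. The only difference is that you work with $\sigma_{c}$ where the paper's proof invokes $\sigma_{u}^{0,\lambda}(x^{\ast}\circ D)\le 0$; your choice is in fact the safer one, since membership of $x^{\ast}\circ D$ in $\mathcal{D}_{\infty}(\lambda)$ gives $\sigma_{c}(x^{\ast}\circ D)\le 0$ directly, whereas deducing $\sigma_{u}\le 0$ from it amounts to Bohr's theorem, which is not available for arbitrary $\lambda$.
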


\begin{proof}
Obviously, (i) implies (ii). Conversely,   for every $x^\ast \in X^\ast$ we by the first assumption have $\sigma^{0,\lambda}_u (x^\ast \circ D) \leq 0$, and  hence
$\sigma^{0,\lambda}_u (D) \leq 0$ by Proposition~\ref{weakabscissasvectorvalued}. This implies that $D$ converges on $[Re >0]$. By the  second assumption the limit function of $D$ is bounded
 on $[Re >0]$.
\end{proof}

\subsection{The spaces $\mathcal{H}_{\infty}^{\lambda}([Re>0],X)$}
In the following we define almost periodicity  for  functions on the real line or half planes with values in a Banach spaces -- all our definitions are straightforward extensions of the well-known definitions
for complex-valued functions (see e.g. \cite{Besicovitch}).

 Given a Banach space $X$, a  continuous function $g:\R \to X$ is said to be uniformly almost periodic (compare  \cite[pp.1-2]{Besicovitch} for $X = \C$)
 if for  every $\varepsilon>0$ there is a number $l>0$ such that for all intervals $I\subset \R$ with $|I|=l$ there is  $\tau \in I$  such that
$$\sup_{ x \in \R} \|g(x +\tau)-g(x)\|_X<\varepsilon.$$
Let now $F: [Re >0] \to X$ be a bounded and holomorphic function such that for all $\sigma>0$ the restriction $t \mapsto F(\sigma+it)$ to the vertical line $[Re=\sigma]$ is uniformly almost periodic. Fixing $x \in \R$ and  $\sigma>0$, the limit
$$a_{x}(F):=\lim_{T\to \infty} \frac{1}{2T} \int_{-T}^{T} F(\sigma+it) e^{(\sigma+it)x} dt$$
exists and  is independent of the choice of $\sigma$; we call it the  $x$th Bohr coefficient of $F$.
All this follows from the scalar case discussed in  \cite[p. 147]{Besicovitch}) and a straightforward application
of the Hahn-Banach theorem. For all  $\sigma >0 $ and $x \in \R$ we have that
\begin{equation*} \label{slimvectorvalued}
|a_{x}(F)|\le e^{\sigma x} \|F\|_{\infty}\,,
\end{equation*}
and hence  $|a_{x}(F)|\le \|F\|_{\infty}$ for $x \in \R$  and $a_{x}(F) = 0$ for  $x<0$.
   Moreover, at most countable many Bohr coefficients are non zero, and  $F$ vanishes, whenever its Bohr coefficients vanish (compare with  \cite[p. 148 and p. 18]{Besicovitch}).

 Note that the typical examples of such functions are finite polynomials $F(z):=\sum_{n=1}^{N} a_{n} e^{-\lambda_{n}z}$ with coefficients $0 \neq a_n \in X$ and frequencies $\lambda_n \ge 0$, and then the $a_{k}$ are precisely the (non-zero) Bohr coefficients of $F$.

 The following definition will be important. Given a frequency $\lambda$ and a Banach space $X$,
 $$\mathcal{H}_{\infty}^{\lambda}([Re>0],X)$$
 consists  of all bounded and holomorphic functions $F\colon [Re>0] \to X$, which are  almost periodic on all  abscissas $[Re=\sigma], \sigma>0,$ and for which the Bohr coefficients $a_{x}(F)$ vanish whenever $x \notin \{\lambda_{n} \mid n\in \mathbb{N}\}$.

Together with $\|F\|_{\infty}:=\sup_{z\in [Re>0]} \|F(z)\|_{X}$ the linear space $\mathcal{H}_{\infty}^{\lambda}([Re>0],X)$ becomes a Banach space, and we call this class of spaces 'Besicovitch spaces'.

 \subsection{The spaces $\mathcal{H}_p(\lambda,X)$}
\label{HplambdaX}
 From \cite{DefantSchoolmann2} we recall the definition and some basic facts of so-called Dirichlet groups.
 Let $G$ be a compact abelian group and $\beta\colon (\R,+) \to G$ a homomorphism of groups.  Then the pair $(G,\beta)$ is called Dirichlet group, if $\beta$ is continuous and has dense range. In this case  the dual map $\widehat{\beta}\colon \widehat{G} \hookrightarrow \R$ is injective, where we identify $\mathbb{R}=\widehat{(\mathbb{R},+)}$ (note that we do not assume $\beta$ to be injective).

   \smallskip
  Consequently, the  characters $e^{-ix\pmb{\cdot}} \colon \R \to \T$, $x\in \widehat{\beta}(\widehat{G})$, are precisely those which define  a unique $h_{x} \in \widehat{G}$  such that $h_{x} \circ \beta=e^{-ix\pmb{\cdot}}$. In particular, we have that
\begin{equation*}
\widehat{G}=\{h_{x} \mid x \in \widehat{\beta}(\widehat{G}) \}.
\end{equation*}
From \cite[Section 3.1]{DefantSchoolmann2} we know that every $L_{1}(\R)$-function may be interpreted as a bounded regular Borel measure on $G$. In particular, for every $u>0$ the Poisson kernel
$$P_{u}(t):=\frac{1}{\pi}\frac{u}{u^{2}+t^{2}}\,,\,\,\, t \in \R,$$
defines a measure $p_{u}$ on $G$, which we call the Poisson measure on $G$. We have $\|p_{u}\|\le\|P_{u}\|_{L_{1}(\R)}=1$ and
\begin{equation*}\label{Fourier1Helsonvectorvalued}
\text{$\widehat{p_{u}}(h_{x})=\widehat{P_{u}}(x)=e^{-u|x|}$ for all $u >0$ and
$x\in \widehat{\beta}(\widehat{G})$.}
\end{equation*}
Now, given a frequency $\lambda$, we call a Dirichlet group $(G,\beta)$ a $\lambda$-Dirichlet group whenever $\lambda \subset \widehat{\beta}(\widehat{G})$, or equivalently whenever for every
$e^{-i\lambda_{n} \pmb{\cdot}} \in \widehat{(\mathbb{R},+)}$ there is (a unique) $h_{\lambda_{n}}\in  \widehat{G}$ with $h_{\lambda_{n}}\circ \beta=e^{-i\lambda_{n} \pmb{\cdot}}$.

 Note that for every $\lambda$ there exists a $\lambda$-Dirichlet groups $(G,\beta)$ (which is not unique).
To see a very first example, take the Bohr compactification $\overline{\R}$ together with the mapping
$$\beta_{\overline{\R}} \colon \R \to \overline{\R}, ~~ t \mapsto \left[ x \mapsto e^{-itx} \right].$$
Then $\beta_{\overline{\R}}$ is continuous and has dense range (see e.g. \cite[Theorem 1.5.4, p. 24]{QQ} or \cite[Example 3.6]{DefantSchoolmann2}), and so the pair $(\overline{\R},\beta_{\overline{\R}})$ forms a $\lambda$-Dirichlet group for all $\lambda$'s. We refer to \cite{DefantSchoolmann2}  for more 'universal' examples of Dirichlet groups. Looking at the  frequency  $\lambda=(n)=(0,1,2,\ldots)$, the group $G=\T$ together with \[\beta_\T: \R \to \T, \,\,\beta_{\T}(t)=e^{-it},\]
forms  a $\lambda$-Dirichlet group, and  the so-called
Kronecker flow
\begin{equation*}
\label{oscarHelsonvectorvalued}
\beta_{\T^{\infty}}\colon \R \to \T^{\infty}, ~~ t \mapsto \mathfrak{p}^{-it}=(2^{-it},3^{-it}, 5^{-it}, \ldots),
\end{equation*}
turns the infinite dimensional torus $\T^{\infty}$ into a  $\lambda$-Dirichlet group
for $\lambda = (\log n)$.
We note that, identifying  $\widehat{\T} = \Z$ and $\widehat{\T^\infty} = \Z^{(\N)}$ (all finite sequences of integers), in the first case $h_n(z) = z^n$ for $z \in \T, n \in \Z$,
and in the second case $h_{\sum \alpha_j \log p_j}(z) = z^\alpha$ for  $z \in \T^\infty, \alpha \in \Z^{(\N)}$.

We finish with another   crucial tool  given by the following fact from \cite[Lemma 3.10]{DefantSchoolmann3}: For any  $\lambda$-Dirichlet group $(G,\beta)$ and $k>0$ there is a constant $C=C(k)>0$ such that for all $x>0 $ there is a measure $\mu_x \in M(G)$ which satisfies  $\|\mu_x\|\le C$ and for all $n$
\begin{equation} \label{measureRieszmeanvectorvalued}
\widehat{\mu_x}(h_{\lambda_{n}})=\begin{cases} \big(1-\frac{\lambda_{n}}{x}\big)^{k},&  ~~\lambda_{n}< x,\\
0,&  ~~\lambda_{n}\ge x. \end{cases}
\end{equation}

 In the following we are going to extend several results from \cite{AntonioDefant} from the ordinary to the general case. But many arguments in the ordinary case rely on the good properties of  the Poisson kernel
 \begin{equation} \label{humerusvectorvalued}
 p_N: \mathbb{D}^N \times \mathbb{T}^N \to \C\,, \,\, p_N(z,w) = \sum_{\alpha \in \Z^N} w^{-\alpha} |z|^{|\alpha|} \Big(\frac{z}{|z|}\Big)^\alpha\,.
 \end{equation}
 In  our much more general setting of general Dirichlet series and Dirichlet groups, this fundamental tool is not available -- but in many cases the measures from \eqref{measureRieszmeanvectorvalued} will be an appropriate substitute.

 Let us turn to Hardy spaces of $\lambda$-Dirichlet series.
Fix some $\lambda$-Dirichlet group $(G,\beta)$, a Banach space $X$, and $1\le p \le \infty$. By
 $$H_{p}^{\lambda}(G,X)$$
 we denote the  Hardy space of all functions
$f\in L_{p}(G,X)$ (the Banach space  of all  $X$-valued $p$-Bochner integrable functions on $G$) having a Fourier transform
 \[
 \widehat{f}(\gamma) = \int_G f(\omega) \overline{\gamma}(\omega) dm(\omega), \, \gamma \in \widehat{G},
 \]
 supported on $\{h_{\lambda_n} \colon n \in \mathbb{N}\} \subset \widehat{G}$. Being a closed subspace of $L_p(G,X)$, this clearly defines a Banach space.

With the  spaces $H_{p}^{\lambda}(G,X)$ at hand we in a natural way  define $\mathcal{H}_p$'s of  $\lambda$-Dirichlet series. Let
$$\Hcal_{p}(\lambda,X)$$
be the class of all $\lambda$-Dirichlet series $D=\sum a_n e^{-\lambda_n s}$ for which there is some
$f \in H_p^\lambda(G,X)$ such that   $a_n = \widehat{f}(h_{\lambda_{n}})$ for all $n$. In this case the function $f$ is unique, and together with
the norm
$$\|D\|_{p}:=\|f\|_{p}$$
the linear space  $\Hcal_{p}(\lambda,X)$ obviously  forms a Banach space. So (by definition) the so-called Bohr map
\begin{equation} \label{BohrmapHelsonvectorvalued}
\Bcal\colon H_{p}^{\lambda}(G,X)\to \mathcal{H}_{p}(\lambda,X),~~ f \mapsto \sum \widehat{f}(h_{\lambda_{n}}) e^{-\lambda_{n}s},
\end{equation}
defines an onto isometry. A fundamental fact (extend the proof of  \cite[Theorem 3.24.]{DefantSchoolmann2}
word by word to the vector-valued case) is that the definition of $\mathcal{H}_{p}(\lambda,X)$ is independent of the chosen $\lambda$-Dirichlet group $(G,\beta)$.

Our two basic examples of frequencies, $\lambda = (n)$ and $\lambda = (\log n)$, lead to well-known examples:
\begin{equation} \label{hardyTHelsonvectorvalued}
   H_{p}(\T,X)=H_{p}^{(n)}(\T,X) \,\,\, \,\text{and} \,\,\,\,   H_p(\T^\infty,X) = H_p^{(\log n)}(\T^\infty,X) \,.
\end{equation}
In particular,
$f \in H_p^{(n)}(\T,X)$ if and only if $f \in L_p(\T,X)$ and $\widehat{f}(n) = 0$ for any $n \in \Z$ with  $n < 0$, and
$f \in H_p^{(\log n)}(\T^\infty,X)$ if and only if $f \in L_p(\T^\infty,X)$ and $\widehat{f}(\alpha) = 0$ for any finite sequence
$\alpha = (\alpha_k)$ of integers with  $\alpha_k < 0$ for some $k$
(where as usual  $\widehat{f}(\alpha) := \widehat{f}(h_{\log \mathfrak{p}^\alpha}))$.
Consequently, if we turn to Dirichlet series, then the Banach spaces
$$\mathcal{H}_p(X)= \mathcal{H}_p((\log n),X)$$
are precisely  Bayart's Hardy spaces of ordinary $X$-valued Dirichlet series from \cite{Bayart}
(see also \cite{Defant} and \cite{QQ}).

We use ideas from the proof of \cite[Theorem 3.26]{DefantSchoolmann2} to give the following   internal description of $\mathcal{H}_{p}(\lambda,X)$, $1\le p <\infty$: The limit
\begin{equation}\label{internalvectorvalued}
\|D\|_{p}:=\lim_{T\to \infty} \Big(\frac{1}{2T} \int_{-T}^{T} \big\| \sum_{n=1}^{N}a_{n}e^{-\lambda_{n}it}\big\|^{p} dt \Big)^{\frac{1}{p}}
\end{equation}
exists, since we integrate an almost periodic function on $\R$, and it defines a norm on the space $Pol(\lambda,X)$ of all $X$-valued $\lambda$-Dirichlet polynomials. Then $\mathcal{H}_{p}(\lambda, X)$ is the completion of $\big(Pol(\lambda,X),\|\cdot\|_{p}\big)$; here the density of $Pol(\lambda,X)$ in $\mathcal{H}_{p}(\lambda, X)$ follows by an analysis of the arguments in the scalar case, that are given in
\cite[Proposition 3.14]{DefantSchoolmann2}. We for the sake of completeness sketch the proof: Fix some $(G,\beta)$ be a Dirichlet group and  $A\subset \widehat{G}$. Denote by $C_{A}(G,X)$ the Banach space of all continuous functions $f\colon G \to X$ with Fourier transform supported on $A$ and by $Pol_{A}(G,X)$ the set of all $X$-valued polynomials of the form $\sum_{\gamma \in A} x_{\gamma} \gamma$. Once we prove that $Pol_{A}(G,X)$ is dense in  $C_{A}(G,X)$ with respect to the sup norm, the full claim follows the same lines as in \cite[Proposition 3.14]{DefantSchoolmann2} with $A=\{ h_{\lambda_{n}} \mid n\in \N\}$. This fact we prove  by contradiction, so assume that there exists some $g\in C_{A}(G,X) \setminus \overline{Pol_{A}(G,X)}$. Then by the theorem of Hahn-Banach  there is some  $\varphi\in C(G,X)^{\ast}$ with $\varphi(g)\ne 0$, that vanishes on $Pol_{A}(G,X)$. Moreover, a direct calculation shows that the continuous function
$\varphi*g(x):=\varphi(g(x+\cdot))\colon G\to X$ satisfies $\widehat{\varphi*g}(\gamma)=\varphi(\widehat{g}(\gamma)\gamma)$ for every $\gamma \in \widehat{G}$. Then we easily deduce that $\widehat{\varphi*g}=0$, and  consequently $\varphi*g=0$ (here again the Hahn-Banach theorem is needed). In particular we have $0=\varphi*g(0)=\varphi(g)\ne 0$, a contradiction.

\subsection{The spaces $\mathcal{H}^{+}_p(\lambda,X)$}
We already remarked in the introduction that when we do the jump from ordinary Dirichlet series to general ones, then we
pay  the price to lose holomorphic functions in infinitely dimensional polydiscs. Here we define a class of spaces that can compensate for this loss.

Recall that there is an isometric coefficient preserving equality
\[
 H_p(\mathbb{T}, X) =H_p(\mathbb{D},X)
\]
if and only if $X$ has ARNP (see e.g. \cite[Theorem 23.6]{Defant}). The preceding result then says that $X$ has ARNP
if and only if:  $f \in H_p(\mathbb{T},X)$ if and only if $f  \ast p(r, \cdot ) \in H_p(\mathbb{T},X)$ for all $0 < r < 1$, and in this case
\[
\|f\|_p = \sup_{0<r <1} \|f  \ast p(r, \cdot ) \|_p\,.
\]
(recall from \eqref{humerusvectorvalued} the definition of the Poison kernel $p=p_1$).
If we translate this in terms of scalar $(n)$-Dirichlet series, then it reads:   $D \in \mathcal{H}_p((n))$  if and only if all translates
$D_\sigma = \sum a_n e^{-n \sigma } e^{-n s}\in \mathcal{H}_p((n)),\, 0 < \sigma < \infty$, and in this case $$\|D\|_p = \sup_{0<\sigma<\infty}\|D_\sigma\|_p\,.$$
Let now $\lambda$ be an arbitrary  frequency, $X$ a Banach space, and $1\le p \le  \infty$.
Inspired by the 'ordinary definition' from \cite{AntonioDefant} (see also \cite[Chapter 24.3]{Defant}), we  define   the Banach space
$$\Hcal_{p}^{+}(\lambda,X)$$
as the space of all (formal) $X$-valued $\lambda$-Dirichlet series $D$
such that the translate $D_{\sigma}=\sum a_{n}e^{-\sigma \lambda_{n}}e^{-\lambda_{n}s} \in \Hcal_{p}(\lambda,X)$  for all $\sigma>0$ and $\|D\|^{+}_{p}:=\sup_{\sigma>0} \|D_{\sigma}\|_{p}<\infty$. It is worth mentioning that $\|D_{\sigma}\|_{p}\nearrow\|D\|^{+}_{p}$ as $\sigma\to0$, whenever $1\le p<\infty$. The proof of this is analogous to \cite[Proposition 2.3]{AntonioDefant} and is sketched in Lemma \ref{blavectorvalued}.
We will see that this is the space which  must take over the role holomorphic functions in infinite dimensions play in the ordinary world.

\begin{Prop}\label{Hpplusscalarvectorvalued} For every $1\le p \le \infty$ and frequency $\lambda$  the isometric equality
$\mathcal{H}_{p}(\lambda)=\mathcal{H}_{p}^{+}(\lambda)$ holds.
\end{Prop}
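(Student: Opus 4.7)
The plan is to prove both inclusions $\mathcal{H}_p(\lambda) \subseteq \mathcal{H}_p^+(\lambda)$ and $\mathcal{H}_p^+(\lambda) \subseteq \mathcal{H}_p(\lambda)$ isometrically, working throughout with a fixed $\lambda$-Dirichlet group $(G,\beta)$ and the Bohr map $\mathcal{B}\colon H_p^\lambda(G) \to \mathcal{H}_p(\lambda)$ from \eqref{BohrmapHelsonvectorvalued}. For the easy inclusion, given $D \in \mathcal{H}_p(\lambda)$ with corresponding $f \in H_p^\lambda(G)$, I would convolve with the Poisson measure $p_\sigma$: since $\widehat{p_\sigma}(h_{\lambda_n}) = e^{-\sigma \lambda_n}$ and $\|p_\sigma\|_{M(G)} \leq 1$, the function $f \ast p_\sigma \in H_p^\lambda(G)$ has Fourier coefficients $a_n e^{-\sigma \lambda_n}$ (hence corresponds to $D_\sigma$) and satisfies $\|f \ast p_\sigma\|_p \leq \|f\|_p$. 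Therefore $D \in \mathcal{H}_p^+(\lambda)$ with $\|D\|_p^+ \leq \|D\|_p$.

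For the reverse inclusion, let $D \in \mathcal{H}_p^+(\lambda)$ and let $f_\sigma \in H_p^\lambda(G)$ correspond to $D_\sigma$, so $\|f_\sigma\|_p \leq \|D\|_p^+$ for every $\sigma>0$. The plan is to extract a (weak or weak-$*$) limit of this bounded net as $\sigma \to 0^+$. For $1<p<\infty$ I would invoke reflexivity of $L_p(G)$ to pass to a weakly convergent subsequence $f_{\sigma_k}\rightharpoonup f$ with $\sigma_k\to 0^+$; for $p=\infty$ I would use the weak-$*$ compactness of the unit ball of $L_\infty(G) = L_1(G)^*$. Testing against characters $\gamma \in \widehat{G}$ gives $\widehat{f}(h_{\lambda_n}) = \lim_k a_n e^{-\sigma_k \lambda_n} = a_n$ and $\widehat{f}(\gamma) = 0$ for $\gamma \notin \{h_{\lambda_n}\}$, so $f \in H_p^\lambda(G)$ and $D = \mathcal{B}(f) \in \mathcal{H}_p(\lambda)$. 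Lower semicontinuity of the norm under (weak or weak-$*$) convergence yields $\|D\|_p = \|f\|_p \leq \liminf_k \|f_{\sigma_k}\|_p \leq \|D\|_p^+$, which, combined with the first inclusion, gives the isometric equality.

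The main obstacle is the case $p=1$, since $L_1(G)$ is not a dual space and the family $(f_\sigma)$ need not be weakly relatively compact in $L_1(G)$. To handle it I would embed $L_1(G) \hookrightarrow M(G) = C(G)^*$, apply Banach-Alaoglu to extract a weak-$*$ limit measure $\mu \in M(G)$ of a subnet $(f_{\sigma_\alpha})$ with $\sigma_\alpha \to 0^+$, and verify via testing against characters that $\widehat{\mu}$ is supported on $\{h_{\lambda_n}\}_n$ with values $a_n$. The remaining step is to establish the absolute continuity of $\mu$; this would follow from an F.~and M.~Riesz type theorem for Dirichlet groups asserting that any measure on $G$ whose Fourier transform is supported in the set of nonnegative characters $\widehat{\beta}^{-1}([0,\infty)) \subseteq \widehat{G}$ is absolutely continuous with respect to the Haar measure on $G$. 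Since $\{h_{\lambda_n}\}_n$ lies inside this set, writing $d\mu = f\,dm$ produces $f \in H_1^\lambda(G)$ with $\|f\|_1 = \|\mu\|_{M(G)} \leq \|D\|_1^+$, completing the argument.
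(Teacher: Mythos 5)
Your proposal is correct, and for the easy inclusion and for $1<p\le\infty$ it is essentially the paper's argument: the forward inclusion is exactly the Poisson--measure convolution $f\mapsto f\ast p_\sigma$ (this is how the paper gets it, via \cite[Theorem 4.7]{DefantSchoolmann2}, or alternatively via Remark~\ref{sylviavectorvalued} and Corollary~\ref{inclusionplusvectorvalued}), and the reverse inclusion for $1<p\le\infty$ is the same weak/weak-$*$ compactness argument in $L_q(G)^\ast$ that the paper invokes. The genuine divergence is at $p=1$. You reduce it to a weak-$*$ limit in $M(G)=C(G)^\ast$ followed by an F.~and M.~Riesz theorem for Dirichlet groups; be aware that the statement you invoke (every measure whose Fourier transform is supported in \emph{all} of $\widehat{\beta}^{-1}([0,\infty))$ is absolutely continuous) is the generalized F.~and M.~Riesz theorem for groups with archimedean ordered duals (Helson--Lowdenslager, de~Leeuw--Glicksberg), which is true but is precisely as deep as the identity being proved --- indeed the weak-$*$ compactness step alone already gives $M_\lambda(G)=\mathcal{H}_1^+(\lambda)$, so your absolute-continuity input \emph{is} the scalar brothers Riesz theorem $M_\lambda(G)=\mathcal{H}_1(\lambda)$ that the paper records later (proved in \cite[Theorem 4.25]{DefantSchoolmann2} via a theorem of Doss). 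So your argument closes only if that result is taken as a citation rather than something still to be established; only the $\lambda$-restricted version (support in $\{h_{\lambda_n}\}$, not the full nonnegative half-line) is actually needed. The paper sidesteps this entirely for $p=1$ by either citing \cite[Theorem 4.7]{DefantSchoolmann2} directly or, more in the spirit of this article, by applying Theorem~\ref{mainresultARNPvectorvalued} to $X=\C$: since $\C$ has ARNP, the boundary-limit argument of Lemma~\ref{blavectorvalued} together with \eqref{ARNPlimitvectorvalued} yields $\mathcal{H}_1^+(\lambda)=\mathcal{H}_1(\lambda)$ without any measure-theoretic F.~and M.~Riesz input; that route is self-contained within the paper and is worth knowing as the cheaper alternative.
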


\begin{proof}
Although parts of this result are proved in \cite{DefantSchoolmann2}, we prefer to sketch the argument. For $1 \leq p < \infty$ the  embedding $\mathcal{H}_{p}(\lambda) \subset \mathcal{H}_{p}^{+}(\lambda)$ is isometric -- this is proved in \cite[Theorem 4.7]{DefantSchoolmann2}.
The proof extends to $p=\infty$ since for each $D \in \mathcal{H}_{\infty}(\lambda)$, we have that
$
\|D\|_\infty^+ = \lim_{p\to \infty} \|D\|_p^+
$
(alternatively, see the proof of the more general result Corollary~\ref{inclusionplusvectorvalued}).
Conversely, for $1 < p \leq \infty$ the proof of $\mathcal{H}_{p}^{+}(\lambda) \subset \mathcal{H}_{p}(\lambda)$
follows from a standard weak-compactness  in $L_{q}(G)^\ast$, where $(G,\beta)$ is an appropriate
$\lambda$-Dirichlet group and $\frac{1}{p}+\frac{1}{p} =1$ (compare the proof for the ordinary case given in
\cite[Proof of Theorem 11.21]{Defant}). The case $p=1$ is proved in
\cite[Theorem 4.7]{DefantSchoolmann2}; the proof uses \cite[Lemma 4.9]{DefantSchoolmann2} which has a further assumption on $\lambda$, but an easy analysis shows that this assumption is in fact not needed.
Alternatively, see the proof of the more general result Theorem~\ref{mainresultARNPvectorvalued}.
\end{proof}

We finish with  a characterization of Dirichlet series
$\mathcal{H}_p^+(\lambda, X)$ in terms of Riesz means -- an important tool in many of  the forthcoming proofs.

\begin{Prop} \label{tool1vectorvalued} Let  $k>0$,  $1\le p \le \infty$, and $D=\sum a_{n}e^{-\lambda_{n}s} \in \mathcal{D}(\lambda, X)$. Then $D \in \mathcal{H}_p^+(\lambda, X)$
if and only if
\begin{equation*}
M_{k,p}(D) := \sup_{x>0} \big\| \sum_{\lambda_{n}<x} a_{n}\big(1-\frac{\lambda_{n}}{x}\big)^{k} e^{-\lambda_{n}s}\big\|_{p} < \infty\,,
\end{equation*}
and in this case for all $\sigma >0$
$$D_\sigma = \lim_{x\to \infty} \sum_{\lambda_{n}<x} a_n \big(1-\frac{\lambda_{n}}{x}\big)^{k} e^{-\sigma\lambda_{n}} e^{-\lambda_{n} s}
$$
with convergence in $\mathcal{H}_{p}(\lambda,X)$.
Moreover, for every $k>0$ there is a constant $C_{k}>0$ such that for every $1\le p \le \infty$ and $D \in \mathcal{H}_p^+(\lambda, X)$
\begin{equation*} \label{formulaRieszvectorvalud}
\|D\|_{p}^{+}\le  M_{k,p}(D) \le C_{k} \|D\|_{p}^{+};
\end{equation*}
here for $0<k\le 1$  the choice $C_{k}=\frac{C}{k}$ with some absolute constant $C$ is possible.
\end{Prop}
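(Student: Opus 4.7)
My plan rests on two main tools: the measures $\mu_x \in M(G)$ from \eqref{measureRieszmeanvectorvalued}, satisfying $\|\mu_x\| \le C(k)$ (with $C(k) \le C/k$ for $0 < k \le 1$, by \cite[Lemma 3.10]{DefantSchoolmann3}); and the elementary Laplace-type identity
\begin{equation*}
e^{-\sigma t} = \frac{\sigma^{k+1}}{\Gamma(k+1)} \int_0^\infty x^k \Big(1-\frac{t}{x}\Big)_+^k e^{-\sigma x} \, dx \qquad (t \ge 0,\ \sigma > 0),
\end{equation*}
which follows from the substitution $u = x - t$ in the standard $\Gamma$-integral. Throughout I shall use the Bohr isometry $\mathcal{B}\colon H_p^\lambda(G, X) \to \mathcal{H}_p(\lambda, X)$ from \eqref{BohrmapHelsonvectorvalued}.

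For the upper bound $M_{k,p}(D) \le C_k \|D\|_p^+$, I assume $D \in \mathcal{H}_p^+(\lambda, X)$, fix $\sigma > 0$, and set $f_\sigma := \mathcal{B}^{-1}(D_\sigma) \in H_p^\lambda(G, X)$. A direct Fourier-side check shows that $\mu_x * f_\sigma$ is the Bohr preimage of $R_x^{\lambda, k}(D)_\sigma = R_x^{\lambda, k}(D_\sigma)$, so
\begin{equation*}
\|R_x^{\lambda, k}(D)_\sigma\|_p = \|\mu_x * f_\sigma\|_p \le C(k)\,\|f_\sigma\|_p \le C(k)\,\|D\|_p^+ .
\end{equation*}
Since $R_x^{\lambda, k}(D)$ is a finite Dirichlet polynomial whose $\mathcal{H}_p$-norm depends continuously on its coefficients, letting $\sigma \to 0^+$ yields $\|R_x^{\lambda, k}(D)\|_p \le C(k) \|D\|_p^+$.

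For the lower bound $\|D\|_p^+ \le M_{k,p}(D)$ together with the Riesz-mean representation of $D_\sigma$, assume $M_{k,p}(D) < \infty$ and set $g_x := \mathcal{B}^{-1}(R_x^{\lambda, k}(D))$, so $\|g_x\|_p \le M_{k,p}(D)$ uniformly in $x$. The key construction is
\begin{equation*}
f_\sigma := \frac{\sigma^{k+1}}{\Gamma(k+1)} \int_0^\infty x^k e^{-\sigma x}\, g_x \, dx ,
\end{equation*}
interpreted as a Bochner integral in $L_p(G, X)$ for $1 \le p < \infty$ (measurability of $x \mapsto g_x$ from its continuity, integrability from the uniform norm bound and the $L_1$-weight $x^k e^{-\sigma x}$) and as a pointwise integral in $\omega \in G$ for $p = \infty$. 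In either case $\|f_\sigma\|_p \le M_{k,p}(D)$. Fubini combined with the Laplace identity above yields $\widehat{f_\sigma}(h_{\lambda_n}) = a_n e^{-\sigma \lambda_n}$ and $\widehat{f_\sigma}(\gamma) = 0$ for any other $\gamma \in \widehat{G}$, so $f_\sigma \in H_p^\lambda(G, X)$ represents $D_\sigma$. Hence $\|D_\sigma\|_p \le M_{k,p}(D)$, and taking the supremum gives $D \in \mathcal{H}_p^+(\lambda, X)$ with $\|D\|_p^+ \le M_{k,p}(D)$.

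Finally, to show $R_x^{\lambda, k}(D_\sigma) \to D_\sigma$ in $\mathcal{H}_p(\lambda, X)$, note that its preimage under $\mathcal{B}$ is $\mu_x * f_\sigma$, so the claim reduces to $\mu_x * f_\sigma \to f_\sigma$ in $L_p(G, X)$. Applying Fubini to the representation of $f_\sigma$ yields
\begin{equation*}
\mu_x * f_\sigma - f_\sigma = \frac{\sigma^{k+1}}{\Gamma(k+1)} \int_0^\infty y^k e^{-\sigma y}\, (\mu_x * g_y - g_y) \, dy .
\end{equation*}
For each fixed $y$, the polynomial $g_y$ has finite spectrum $\{h_{\lambda_n}:\lambda_n < y\}$ and $\widehat{\mu_x}(h_{\lambda_n}) \to 1$ as $x \to \infty$, so $\|\mu_x * g_y - g_y\|_p \to 0$. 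Together with the integrable uniform dominator $(C(k)+1)\,M_{k,p}(D)\cdot y^k e^{-\sigma y}$, Minkowski's integral inequality and dominated convergence deliver $\|\mu_x * f_\sigma - f_\sigma\|_p \to 0$ for every $1 \le p \le \infty$. The main obstacle, in my view, is locating the right integral representation of $f_\sigma$ in terms of the $g_x$ via the Laplace identity; once that is in hand, the remaining work — justifying the Fubini interchanges and carefully handling the $p = \infty$ case pointwise, since Bochner integration into $L_\infty$ fails for non-separable targets — is standard bookkeeping.
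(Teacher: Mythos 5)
Your argument is correct, and for the harder (``if'') direction it takes a genuinely different route from the paper. The ``only if'' direction is identical: convolve $f_\sigma=\Bcal^{-1}(D_\sigma)$ with the measures $\mu_x$ from \eqref{measureRieszmeanvectorvalued} and let $\sigma\to 0$ through the finitely many coefficients of the polynomial. For the converse, the paper does not construct $f_\sigma$ explicitly: it regards the Riesz means of $D$ as an $\mathcal{H}_p(\lambda,X)$-valued Dirichlet series $E=\sum (a_ne^{-\lambda_ns})e^{-\lambda_n z}$, applies the vector-valued Bohr--Cahen formula (Proposition \ref{BohrCahenIIvectorvalued}) to get $\sigma_c^{\lambda,k}(E)\le 0$ --- which simultaneously yields the convergence $R_x^{\lambda,k}(D_\sigma)\to D_\sigma$ in $\mathcal{H}_p(\lambda,X)$ --- and then obtains $\|D_\sigma\|_p\le M_{k,p}(D)$ by observing that passing from coefficients $a_n(1-\lambda_n/x)^k$ to $a_n(1-\lambda_n/x)^ke^{-\sigma\lambda_n}$ is convolution with the Poisson measure $p_\sigma$, of total variation at most $1$. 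You instead use the exact identity $e^{-\sigma t}=\frac{\sigma^{k+1}}{\Gamma(k+1)}\int_0^\infty x^k(1-t/x)_+^k e^{-\sigma x}\,dx$ to realize $f_\sigma$ as a probability-weighted Bochner average of the Riesz polynomials $g_x$, which makes the bound $\|D_\sigma\|_p\le M_{k,p}(D)$ an immediate convexity statement and reduces the convergence claim to an approximate-identity argument with the $\mu_x$ plus dominated convergence. The paper's route is shorter given that the Bohr--Cahen machinery and the Poisson measure are already in place; yours is more constructive (it exhibits $f_\sigma$ explicitly), avoids introducing the auxiliary $\mathcal{H}_p(\lambda,X)$-valued series $E$, and the extra care you flag for $p=\infty$ (pointwise rather than Bochner integration into $L_\infty$) is exactly the right bookkeeping. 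Both proofs deliver the same constants.
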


\begin{proof}
Assume first that $M_{k,p}(D) < \infty$, and define  $E=\sum (a_{n} e^{-\lambda_{n}s})e^{-\lambda_{n}z} \in \mathcal{D}(\lambda, \mathcal{H}_{p}(\lambda,X))$. Then the Bohr-Cahen formula for $\sigma_{c}^{\lambda, k}$ from Proposition \ref{BohrCahenIIvectorvalued} implies  that $\sigma_{c}^{\lambda, k}(E)\le 0$, that is for every $\sigma>0$ the limit
\begin{equation*} \label{jorinvectorvalued}
\lim_{x\to \infty}\sum_{\lambda_{n}<x} (a_{n}e^{-\lambda_{n}s})\big(1-\frac{\lambda_{n}}{x}\big)^{k} e^{-\sigma \lambda_{n}}
\end{equation*}
exists in $\mathcal{H}_{p}(\lambda,X)$, and the limit is given by $D_{\sigma}$. In particular, by convolution with the Poisson measure we obtain for every $\sigma>0$
\begin{align*}
\|D_{\sigma}\|_{p}&=\lim_{x\to \infty} \|\sum_{\lambda_{n}<x} (a_{n}e^{-\lambda_{n}s})\big(1-\frac{\lambda_{n}}{x}\big)^{k} e^{-\sigma \lambda_{n}}e^{-\lambda_{n}s}\|_{p} \\ & =\lim_{x\to \infty} \|\big(\sum_{\lambda_{n}<x} (a_{n}e^{-\lambda_{n}s})\big(1-\frac{\lambda_{n}}{x}\big)^{k} h_{\lambda_{n}}\big)*p_{\sigma}\|_{p} \\ &\le\lim_{x\to \infty} \|\sum_{\lambda_{n}<x} (a_{n}e^{-\lambda_{n}s})\big(1-\frac{\lambda_{n}}{x}\big)^{k}e^{-\lambda_{n}s}\|_{p} \le M_{k,p}(D),
\end{align*}
and so $\|D\|_{p}^{+}\le M_{k,p}(D).$
Suppose conversely that  $D\in  \mathcal{H}_{p}^{+}(\lambda,X)$. Then for every $\sigma>0$ we have $D_{\sigma} \in \mathcal{H}_{p}(\lambda,X)$ with $\|D_{\sigma}\|_p \leq \|D\|^+_p$. Hence, if
$f_\sigma \in H_p^\lambda(G, X)$ is associated with $D_\sigma$ (where $(G, \beta)$ is some appropriate $\lambda$-Dirichlet group), then  by
 \eqref{measureRieszmeanvectorvalued} for every $x$
\begin{align*}
\big\| \sum_{\lambda_{n}<x} a_{n}e^{-\sigma\lambda_{n}}
&
\big(1-\frac{\lambda_{n}}{x}\big)^{k} e^{-\lambda_{n}s}\big\|_{p}
=
\big\| \sum_{\lambda_{n}<x} a_{n}e^{-\sigma\lambda_{n}}
\big(1-\frac{\lambda_{n}}{x}\big)^{k} h_{\lambda_n}\big\|_{p}
\\&
=\|f_{\sigma}*\mu_{x}\|_{p}
\le \|f_{\sigma}\|_{p}\|\mu_{x}\| =  \|D_{\sigma}\|_{p}\|\mu_{x}\|\le  C_{k}\|D\|_{p}^{+},
\end{align*}
where $C_{k}$ only depends on $k$ and for $0<k\le 1$ the choice $C_{k}=\frac{C}{k}$ with an absolute constant $C$ is possible (see \cite[Lemma 3.10, Proposition 3.2 and Theorem 2.1]{DefantSchoolmann3}). If now in the preceding estimate $\sigma \to 0$, then we
get as desired that $ M_{k,p}(D)\leq C_{k} \|D\|_{p}^{+}$. This completes the proof.
\end{proof}

Let us revisit the case $\lambda=(n)$ with $(n)$-Dirichlet group $(\T,\beta_{\T})$. Then, given $f\in H_{\infty}(\T)$ and $\sigma>0$, we have that
\begin{equation*}
f*p_{\sigma}(z)=\sum_{n=0}^{\infty} \widehat{f}(n) e^{-\sigma n} z^{n}
\end{equation*}
uniformly on $\T$, and in particular we know that $f*p_{\sigma}$ is continuous on $\T$. The following corollary  extends this observation to the case of  general $\lambda$'s.

\begin{Coro}
\label{heutevectorvalued}
Let $(G, \beta)$ be a $\lambda$-Dirichlet group and  $f\in H_\infty^\lambda(G, X)$.
Then for every $k, \sigma>0$ we have uniformly on $G$
$$f*p_{\sigma}= \lim_{x\to \infty} \sum_{\lambda_{n}<x} \widehat{f}(h_{\lambda_{n}}) \big(1-\frac{\lambda_{n}}{x}\big)^{k} e^{-\sigma\lambda_{n}} h_{\lambda_{n}}.$$
In particular, the function $f*p_{\sigma}$ is continuous for every $\sigma>0$.
\end{Coro}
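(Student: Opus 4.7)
The plan is to translate the corollary into a statement about $\lambda$-Dirichlet series via the Bohr isometry $\mathcal{B}\colon H_\infty^\lambda(G,X)\to\mathcal{H}_\infty(\lambda,X)$ and then invoke Proposition~\ref{tool1vectorvalued}.

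First, I would set $D:=\mathcal{B}(f)=\sum \widehat{f}(h_{\lambda_n})e^{-\lambda_n s}$, so that $D\in \mathcal{H}_\infty(\lambda,X)$ with $\|D\|_\infty=\|f\|_\infty$. The next step is to check that $D\in\mathcal{H}_\infty^+(\lambda,X)$. For any $\sigma>0$, the Fourier transform of $f*p_\sigma$ at $h_{\lambda_n}$ equals $\widehat{f}(h_{\lambda_n})\widehat{p_\sigma}(h_{\lambda_n})=\widehat{f}(h_{\lambda_n})e^{-\sigma\lambda_n}$ and vanishes on the remaining characters, so $f*p_\sigma\in H_\infty^\lambda(G,X)$ is the function associated with $D_\sigma$. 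Since $\|p_\sigma\|\le 1$, one has $\|D_\sigma\|_\infty=\|f*p_\sigma\|_\infty\le\|f\|_\infty$, hence $\|D\|_\infty^+\le\|f\|_\infty<\infty$.

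Having $D\in \mathcal{H}_\infty^+(\lambda,X)$, I would apply Proposition~\ref{tool1vectorvalued} with $p=\infty$. It gives, for every $k,\sigma>0$, that the Riesz-mean polynomials
$$g_x:=\sum_{\lambda_n<x}\widehat{f}(h_{\lambda_n})\Big(1-\frac{\lambda_n}{x}\Big)^k e^{-\sigma\lambda_n}h_{\lambda_n}$$
satisfy $\mathcal{B}(g_x)\to D_\sigma$ in $\mathcal{H}_\infty(\lambda,X)$ as $x\to\infty$. Transporting this back through the isometry $\mathcal{B}$, we obtain $g_x\to f*p_\sigma$ in $L_\infty(G,X)$.

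The only mildly delicate point is that $L_\infty$-convergence a priori means essential-sup convergence, not literal uniform convergence on $G$. However, each $g_x$ is a finite linear combination of continuous characters $h_{\lambda_n}\colon G\to\T$, hence $g_x\in C(G,X)$; and on $C(G,X)$ the essential sup coincides with the genuine sup. Therefore $(g_x)$ is Cauchy in $(C(G,X),\|\cdot\|_\infty)$ and converges uniformly on $G$ to a continuous function $\widetilde{g}$, which must agree $m$-almost everywhere with $f*p_\sigma$. Taking $\widetilde{g}$ as the continuous representative yields both the uniform convergence asserted in the corollary and the continuity of $f*p_\sigma$ for every $\sigma>0$.
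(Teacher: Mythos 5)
Your argument is correct and is essentially the route the paper intends: the corollary is stated as an immediate consequence of Proposition~\ref{tool1vectorvalued} (via the Bohr isometry and the isometric inclusion $\mathcal{H}_\infty(\lambda,X)\subset\mathcal{H}_\infty^+(\lambda,X)$), which is exactly what you carry out. Your extra care in passing from essential-sup convergence in $L_\infty(G,X)$ to genuine uniform convergence of the continuous Riesz polynomials, and hence to a continuous representative of $f*p_\sigma$, is precisely the point the corollary is recording.
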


\section{General Dirichlet series vs.  operators}\label{sec3vectorvalued}

As mentioned in the introduction, the main goal of this section is to study the coincidence of the spaces $\HH_p(\lambda,X)$ and $\HH^+_p(\lambda,X)$. As an application we obtain a generalization of brothers Riesz theorem and conditions for $\HH_p(\lambda,X^*)$ to be a dual space. A useful tool with interest in its own will be to identify $\HH^+_p(\lambda,X)$ with a suitable space of cone summing operators.

\subsection{Cone summing operators}
Let $1 \leq p,q \leq \infty$ and  with $\frac{1}{p}+\frac{1}{q}=1$.
To simplify, we  from now on fix  a $\lambda$-Dirichlet group $(G, \beta)$ with Haar measure $m$. Then  for  $1 \le q < \infty$ we write $E_{q}(G)= L^{q}(G)$ and  $E_q(G) =C(G)$ for $q= \infty$.
Moreover, let $X$ be some Banach space.

Recall that a (bounded linear) operator $T\colon E_q(G) \to X$ is  cone summing (in short $T\in \Pi_{cone}(E_q(G),X)$)
if there is a constant $C>0$ such that for every choice of finitely many we have $f_{1}, \ldots , f_{N} \in E_q(G)$
$$\sum_{n=1}^{N} \|T(f_{k})\|_{X} \le C \Big\| \sum_{n=1}^{N} |f_{k}| \Big\|_{E_q(G)},$$
and in this case $\pi_{cone}(T):=\inf C$ is the so-called cone summing norm. To see more examples recall the  following classical results
\begin{equation} \label{cone1vectorvalued}
\Pi_{cone}(L_1(G),X)=\mathcal{L}(L_1(G),X),
\end{equation}
and
\begin{equation} \label{cone2vectorvalued}
\Pi_{cone}(C(G),X)=\Pi_{1}(C(G),X),
\end{equation}
where $\Pi_1$ indicates all  summing operators from $C(G)$ into $X$.
For  all needed information on cone summing operators in the context of Dirichlet series,  see \cite[24.4]{Defant} and \cite{AntonioDefant}.

\begin{Rema}\label{sylviavectorvalued}
From \cite[Proposition 24.16]{Defant} we deduce that the canonical inclusion
$$ L_{p}(G,X) \hookrightarrow \Pi_{cone}(E_{q}(G),X), ~~ f \mapsto \Big[ g \mapsto \int f g~ dm \Big]$$
defines an into isometry.
\end{Rema}
 Given $T \in \Pi_{cone}(E_{q}(G),X)$, we call $$\widehat{T} \colon \widehat{G} \to X, ~~\gamma\mapsto T(\overline{\gamma}),$$ the Fourier transform of $T$.
With this definition we see by density of the polynomials in $E_q(G)$  that $T$ is uniquely determined by its Fourier coefficients, that is $T=0$, whenever $\widehat{T}=0$.

\begin{Rema}
\label{convolutionvectorvalued}
Let $T \in\Pi_{cone}(E_{q}(G),X)$ and $\mu \in  M(G)$. Then  $$(T*\mu)(g):= T(g \ast \mu)\colon E_{q}(G) \to X $$ belongs to $\Pi_{cone}(E_{q}(G),X)$ with $\pi_{cone}(T*\mu)\le \pi_{cone}(T)\|\mu\|$ and $\widehat{T*\mu}= \widehat{T} \widehat{\mu}$.
\end{Rema}
\begin{proof}
Using the  definition of cone summing operators the proof of the first two statements are straightforward.  For the formula on Fourier coefficients check
\begin{align*}
\widehat{T*\mu}(\gamma)
&
=(T*\mu)(\overline{\gamma})
=T\Big(\int_{G} \overline{\gamma(\cdot+x)} d\mu(x)\Big)
\\&
=T\Big(\int_{G} \overline{\gamma}(\cdot)  \overline{\gamma}(x)  d\mu(x)\Big)
=T\Big( \overline{\gamma}(\cdot) \int_{G} \overline{\gamma}(x)  d\mu(x)\Big) =\widehat{\mu}(\gamma) \widehat{T}(\gamma) \,,
\end{align*}
as desired.
\end{proof}

For  $1\leq p\le \infty$  we denote by $$\Pi^{\lambda}_{cone}(E_{q}(G), X)$$ the Banach space of all $T  \in \Pi_{cone}(E_{q}(G),X)$ such that
 $\widehat{T}(\gamma)\ne 0$ implies  $\gamma =h_{\lambda_{n}}$ for some $n$.
Now operators $T\in \Pi^{\lambda}_{cone}(E_{q}(G), X)$ formally define $\lambda$-Dirichlet series via the mapping
\begin{equation*}
T \mapsto   \sum \widehat{T}(h_{\lambda_{n}}) e^{-\lambda_{n}s}\,,
\end{equation*}
and then our main result in this section shows that this  identifies
 $\Pi^{\lambda}_{cone}(E_{q}(G), X)$ with $\mathcal{H}_{p}^{+}(\lambda,X)$. In the ordinary case $\lambda = (\log n)$ this was proved in \cite[Theorem~4.2]{AntonioDefant} (see also \cite[Theorem~24.13]{Defant}).
\begin{Theo} \label{conesummingcoincidencevectorvalued} Let $\lambda$ be arbitrary, $(G, \beta)$ a $\lambda$-Dirichlet group and $X$ a Banach space. Then for every $1\le p,q \le \infty$ with $\frac{1}{p}+\frac{1}{q}=1$ the map
$$ \mathcal{T} \colon  \Pi^{\lambda}_{cone}(E_{q}(G),X) \to \Hcal^{+}_{p}(\lambda,X), ~~ T \mapsto \sum \widehat{T}(h_{\lambda_{n}}) e^{-\lambda_{n}s}$$ is an onto isometry.
\end{Theo}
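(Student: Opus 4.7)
The strategy is to verify that $\mathcal{T}$ is injective, isometric, and surjective, with the isometry in both directions coming from Fourier-coefficient matching (using Remark~\ref{sylviavectorvalued}) rather than from direct norm estimates on specific functions. Injectivity is immediate: if $\mathcal{T}(T)=0$, then $\widehat{T}(h_{\lambda_n}) = 0$ for every $n$, and since $T \in \Pi^{\lambda}_{cone}(E_q(G),X)$ has Fourier transform supported on $\{h_{\lambda_n}\}$, the Fourier-uniqueness principle recorded just before Remark~\ref{convolutionvectorvalued} forces $T=0$.

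For $\|\mathcal{T}(T)\|_p^+ \le \pi_{cone}(T)$, I would first place $D:=\mathcal{T}(T)$ inside $\mathcal{H}_p^+(\lambda,X)$ via the Riesz-mean criterion of Proposition~\ref{tool1vectorvalued}: with any fixed $k>0$, each $T*\mu_x$ (Remark~\ref{convolutionvectorvalued} with the measures from \eqref{measureRieszmeanvectorvalued}) is cone summing with finitely supported Fourier transform, hence by Fourier-uniqueness combined with Remark~\ref{sylviavectorvalued} coincides with the integration operator against the trigonometric polynomial whose Bohr transform is $R_x^{\lambda,k}(D)$; this bounds $M_{k,p}(D) \le C_k\,\pi_{cone}(T)$ and the proposition yields $D \in \mathcal{H}_p^+(\lambda,X)$. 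To get the sharp constant, fix $\sigma>0$ and let $f_\sigma \in H_p^\lambda(G,X)$ represent $D_\sigma \in \mathcal{H}_p(\lambda,X)$. The operator $T_{f_\sigma}$ attached to $f_\sigma$ by Remark~\ref{sylviavectorvalued} and the convolution $T*p_\sigma$ from Remark~\ref{convolutionvectorvalued} have identical Fourier transforms --- namely $e^{-\sigma \lambda_n}\widehat{T}(h_{\lambda_n})$ on $\{h_{\lambda_n}\}$ and zero elsewhere --- so Fourier-uniqueness gives $T_{f_\sigma} = T*p_\sigma$. Therefore
\[\|D_\sigma\|_p = \|f_\sigma\|_p = \pi_{cone}(T*p_\sigma) \le \pi_{cone}(T)\,\|p_\sigma\| \le \pi_{cone}(T),\]
and taking the supremum over $\sigma>0$ yields the claim.

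For the reverse inequality and surjectivity, given $D \in \mathcal{H}_p^+(\lambda,X)$ and $\sigma>0$, the function $f_\sigma \in H_p^\lambda(G,X)$ associated with $D_\sigma$ induces $T_\sigma \in \Pi^\lambda_{cone}(E_q(G),X)$ with $\pi_{cone}(T_\sigma) = \|f_\sigma\|_p \le \|D\|_p^+$. I would then extract a limit operator $T$ from $(T_\sigma)_{\sigma\to 0^+}$ in the following way: for each fixed $g \in E_q(G)$ the net $\{T_\sigma(g)\}$ is bounded in $X$ and hence relatively weak-$*$ compact in $X^{**}$, while on the characters one has $T_\sigma(\overline{h_{\lambda_n}}) = e^{-\sigma \lambda_n} a_n \to a_n$ in $X$. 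A Banach--Alaoglu/ultrafilter limit produces an operator $T: E_q(G) \to X^{**}$ with $\pi_{cone}(T) \le \|D\|_p^+$ and $\widehat{T}(h_{\lambda_n}) = a_n$. Because any trigonometric polynomial $g$ satisfies $T(g) = \sum_n \widehat{g}(\overline{h_{\lambda_n}}) a_n \in X$ (finite sum with coefficients $a_n \in X$), density of trigonometric polynomials in $E_q(G)$ and continuity of $T$ force $T(g)\in X$ for every $g$, yielding $T \in \Pi^\lambda_{cone}(E_q(G),X)$ with $\mathcal{T}(T)=D$.

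The main obstacle I anticipate is precisely this construction of the inverse operator. Two subtleties must be handled: the natural weak-$*$ compactness of bounded cone summing operators into $X$ lives a priori only in the bidual $X^{**}$, and one must re-descend to $X$ using that $\widehat{T}$ is supported on $\{h_{\lambda_n}\}$ with $a_n\in X$. The case $p=1$ (corresponding to $q=\infty$, so $E_q(G)=C(G)$ and $\Pi_{cone}=\Pi_1$ via \eqref{cone2vectorvalued}) is likely to require additional care, as is already apparent from the sketch of Proposition~\ref{Hpplusscalarvectorvalued}.
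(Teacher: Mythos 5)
Your proof is correct. The forward direction is essentially the paper's argument, and you in fact make explicit a step the paper glosses over: the Riesz--mean estimate via $T*\mu_x$ only yields $\|D\|_p^+\le C_1\,\pi_{cone}(T)$, and the sharp constant is recovered exactly as you do it, by identifying the operator induced by $f_\sigma$ with $T*p_\sigma$ through Fourier uniqueness and using $\|p_\sigma\|\le 1$. Where you genuinely diverge is in the surjectivity/reverse inequality. The paper also starts from the operators $T_\sigma$ attached to $f_\sigma$ by Remark~\ref{sylviavectorvalued}, but it passes to the scalar majorants $\|f_\sigma(\cdot)\|_X\in L_p(G)\subset E_q(G)^{\ast}$, extracts a weak-star limit $g$ of these positive functionals along $\sigma_j\to 0$, establishes the pointwise domination $\|T(P)\|_X\le g(|P|)$ on polynomials, and invokes the Pietsch-type domination theorem for cone summing operators (see \cite[Proposition 24.12]{Defant}) to get $\pi_{cone}(T)\le \|g\|\le\|D\|_p^+$ --- so the operator never leaves $X$. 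You instead take a weak-star ultrafilter limit of the operators $T_\sigma$ themselves in $X^{\ast\ast}$, obtain the cone summing inequality for the limit from weak-star lower semicontinuity of the bidual norm, and then descend to $X$ using that $\widehat{T}$ is supported on $\{h_{\lambda_n}\}$ with values $a_n\in X$ together with density of the trigonometric polynomials. Both routes rest on the same compactness as $\sigma\to 0$; yours trades the domination machinery for a bidual detour plus an easy descent, while the paper's stays inside $X$ at the price of citing the Pietsch-type theorem. Your closing worry about $p=1$ is unfounded: neither argument needs a case distinction there, since Remark~\ref{sylviavectorvalued} and the density of polynomials in $C(G)$ cover $q=\infty$ exactly as they cover $q<\infty$.
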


Our proof is inspired by the proof of the ordinary case -- nevertheless it is substantially different since, among others, no Poisson kernel is available in the case of general Dirichlet series.
\begin{proof}
Let $T \in \Pi^{\lambda}_{cone}(E_{q}(G),X)$, and $\mu_{x}$ the measure from (\ref{measureRieszmeanvectorvalued}) with $k=1$ and $x >0$. We define $D=\sum a_{n}e^{-\lambda_{n}s}$, where $a_{n}:=\widehat{T}(h_{\lambda_{n}})$ for all $n$. Then by Remark~\ref{sylviavectorvalued} and Remark~\ref{convolutionvectorvalued} we have
$$\Big\| \sum_{\lambda_{n}<x} a_{n}\big(1-\frac{\lambda_{n}}{x}\big)e^{-\lambda_{n}s} \Big\|_{p}=\pi_{cone}(T*\mu_{x})\le C_{1}\pi_{cone}(T)\,,$$
and by Proposition~\ref{tool1vectorvalued} we conclude that $D=\sum a_{n} e^{-\lambda_{n}s} \in \mathcal{H}_{p}^{+}(\lambda,X)$ with $\|D\|^{+}_{p}\le \pi_{cone}(T)$. Suppose conversely  that
$D= \sum a_{n} e^{-\lambda_{n}s}\in \mathcal{H}_{p}^{+}(\lambda,X)$ and let $f_{\sigma} \in H_{p}^{\lambda}(G,X)$ correspond to
$D_{\sigma} = \sum a_{n} e^{-\lambda_{n}\sigma} e^{-\lambda_{n}s}, \sigma>0$. We define an operator
$T\colon Pol(G) \to X$ by
$$\widehat{T}(h_{x})= \begin{cases} a_{n}, & \text{if } x= \lambda_{n} \text{ for some } n, \\ 0, & else.\end{cases}$$ Then we claim that there is $g \in E_{q}(G)^{\ast}$ with $\|g\|_{E_{q}(G)^{\ast}}\le\|D\|_{p}^{+}$ such that $\|T(P)\|_{X}\le g( |P| )$ for all $P \in Pol(G)$. This completes the proof, since by density of the polynomials and the Pietsch type domination theorem for cone summing operators (see e.g. \cite[Proposition 24.12]{Defant}) we obtain that $T$ is cone summing with $\pi_{cone}(T)\le \|g\|_{E_{q}(G)^{\ast}}$, and so $\mathcal{T}(T)=D$ and $\pi_{cone}(T)\le \|D\|_{p}^{+}$. Indeed, the desired $g \in E_{q}(G)^{\ast}$ exists: Since for all $x \in \R$
$$T(\overline{h_{x}})e^{-|x|\sigma}=\hat{f_{\sigma}}(x)=\int_{G} f_{\sigma}(\omega)~ \overline{h_{x}(\omega)}~ d\omega, $$
we obtain
$$T(P)= \int_{G} f_{\sigma}(\omega)~ \sum c_{x}e^{|x|\sigma} h_{x}(\omega)~ d\omega$$
for all polynomials $P=\sum c_{x} h_{x} \in Pol(G)$. Hence
$$\|T(P)\|_{X}\le  \int_{G} \|f_{\sigma}(\omega)\|_{X} \big| \sum c_{x}e^{|x|\sigma} h_{x}(\omega) \big| d \omega.$$
Since the net $(\|f_{\sigma}(\cdot)\|_{X})_{\sigma} \subset L_{p}(G) \subset E_{q}(G)^{\ast}$ is a weak-star bounded, there is a sequence $\sigma_{j}\to 0$ and $g \in E_{q}(G)^{\ast}$ such that $\|f_{\sigma_{j}}(\cdot)\|_{X} \to g$ with respect to the weak-star topology and $\|g \|\le \|D\|_{p}^{+}$. This implies that
$$\|T(P)\|_X\le \lim_{j \to \infty} \int_{G} \|f_{\sigma_{j}}(\omega)\|_{X} \big| \sum c_{x}e^{|x|\sigma_{j}} h_{x}(\omega) \big| d\omega = g\left( \big| \sum c_{x} h_{x} \big| \right),$$
and so as desired $\|T(P)\|_{X}\le g(|P| )$ for all polynomials $P\in Pol(G)$.
\end{proof}

From the into isometry in Remark~\ref{sylviavectorvalued} we immediately see that the inclusion
 $$\mathcal{H}_{p}(\lambda,X)=H_{p}^{\lambda}(G,X)\subset \Pi^{\lambda}_{cone}(E_{q}(G), X)$$
 is isometric as well. Hence Theorem~\ref{conesummingcoincidencevectorvalued} immediately implies the following consequence.
\begin{Coro} \label{inclusionplusvectorvalued} Let $\lambda$ be a frequency and $X$ a Banach space. Then for every $1\le p \le \infty$ isometrically $\mathcal{H}_{p}(\lambda,X)\subset \mathcal{H}^{+}_{p}(\lambda,X).$
\end{Coro}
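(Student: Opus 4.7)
The plan is to chain together the two isometric facts that have already been recorded, namely Remark~\ref{sylviavectorvalued} and Theorem~\ref{conesummingcoincidencevectorvalued}, and then check that these maps compose correctly at the level of Fourier (and Dirichlet) coefficients.

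First, I would start with $D = \sum a_n e^{-\lambda_n s} \in \mathcal{H}_p(\lambda, X)$ and let $f \in H_p^\lambda(G, X)$ be the associated function, so that $\widehat{f}(h_{\lambda_n}) = a_n$ and $\|D\|_p = \|f\|_p$ via the Bohr map $\mathcal{B}$ from \eqref{BohrmapHelsonvectorvalued}. By Remark~\ref{sylviavectorvalued}, the assignment $T_f\colon g \mapsto \int_G f g \, dm$ defines an operator in $\Pi_{cone}(E_q(G), X)$ with $\pi_{cone}(T_f) = \|f\|_p$. A direct calculation then gives
\[
\widehat{T_f}(\gamma) = T_f(\overline{\gamma}) = \int_G f(\omega)\, \overline{\gamma}(\omega)\, dm(\omega) = \widehat{f}(\gamma)
\]
for every $\gamma \in \widehat{G}$, so $\widehat{T_f}$ is supported on $\{h_{\lambda_n} : n \in \mathbb{N}\}$; that is, $T_f \in \Pi^{\lambda}_{cone}(E_q(G), X)$.

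Next, I would apply the isometry $\mathcal{T}$ from Theorem~\ref{conesummingcoincidencevectorvalued} to $T_f$. Since $\widehat{T_f}(h_{\lambda_n}) = a_n$, we get $\mathcal{T}(T_f) = \sum a_n e^{-\lambda_n s} = D$, and therefore $D \in \mathcal{H}_p^+(\lambda, X)$ with
\[
\|D\|_p^+ = \pi_{cone}(T_f) = \|f\|_p = \|D\|_p.
\]
Thus the identity map $\mathcal{H}_p(\lambda, X) \hookrightarrow \mathcal{H}_p^+(\lambda, X)$ is isometric, as claimed.

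There is essentially no obstacle here beyond verifying that the two isometric embeddings — first $f \mapsto T_f$, then $T \mapsto \mathcal{T}(T)$ — preserve the relevant coefficients so that their composition is the identity at the level of Dirichlet series. The only thing worth being careful about is the case $p = \infty$ (where $E_q(G) = E_1(G) = L_1(G)$, so cone summing is the same as bounded by \eqref{cone1vectorvalued}) and the case $p = 1$ (where $E_q(G) = C(G)$); in both extreme cases Remark~\ref{sylviavectorvalued} still produces an isometric embedding of $L_p(G,X)$ into $\Pi_{cone}(E_q(G), X)$, so the argument goes through uniformly for $1 \le p \le \infty$.
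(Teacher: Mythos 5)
Your proof is correct and follows exactly the paper's route: the paper likewise composes the isometric embedding $H_p^\lambda(G,X)\subset \Pi^{\lambda}_{cone}(E_q(G),X)$ from Remark~\ref{sylviavectorvalued} with the onto isometry of Theorem~\ref{conesummingcoincidencevectorvalued}, and your coefficient check $\widehat{T_f}(\gamma)=\widehat{f}(\gamma)$ is precisely the (implicit) verification that makes the composition the identity on Dirichlet series.
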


\subsection{Nth abschnitte}
Hilbert's criterion from \cite[Theorem 15.26, p. 372]{Defant} states that to a family $(c_{\alpha})_{\alpha \in \N_{0}^{(\N)}}$ in a Banach space $X$ there is a function $f\in H_{\infty}(B_{c_{0}},X)$ such that $c_{\alpha}= \frac{\partial f(0)}{\alpha!}$ for every $\alpha\in \N_{0}^{(\N)}$ if and only if
$$\sup_{N\in \N} \sup_{z\in \D^{N}} \big\| \sum_{\alpha \in \N_{0}^{N}} c_{\alpha}z^{\alpha}\big\|_{X}<\infty.$$
In view of (\ref{hol_pvectorvalued}) this results translates to ordinary Dirichlet series. In fact, an extension of this result to the framework of vector-valued $\lambda$-Dirichlet series is possible, which requires no assumption on the frequency $\lambda$.

Therefore, note that every frequency $\lambda$ admits another real sequence $B=(b_{k})$ such that for every $n$ there are finitely many unique rationals $q_{k}^{n}$ for which
$\lambda_{n}=\sum q^{n}_{k} b_{k}.$
The matrix $R=(q^{n}_{k})$ we call Bohr matrix, and we write $\lambda=(R,B)$, whenever $\lambda$ decomposes with respect to $B$ with associated Bohr matrix $R$.
Given a formal Dirichlet series $D=\sum a_{n}e^{-\lambda_{n}s}$ and a decomposition $\lambda=(R,B)$, the Nth abschnitt $D|_{N}$ of $D$ is the sum $\sum a_{n}e^{-\lambda_{n}s}$, where only those $a_{n}$ differ from $0$ for which $\lambda_{n}$ is a linear combination of the first $b_{1}, \ldots, b_{N}$.

\begin{Theo} \label{Nabschnittvectorvalued} Let $\lambda$ be any frequency with decomposition $\lambda=(R,B)$ and $X$ a Banach space. Let $1\le p \le \infty$ and
$D=\sum a_{n} e^{-\lambda_{n}s} \in \mathcal{D}(\lambda, X)$. Then the following are equivalent:
\begin{enumerate}
\item[(1)]$D \in \Hcal^{+}_{p}(\lambda,X)$,
\item[(2)]$D|_{N} \in \Hcal^{+}_{p}(\lambda,X)$ for all $N$ and $\sup_{N \in \N } \| D|_{N}\|^{+}_{p}<\infty$.
\end{enumerate}
Moreover in this case
\begin{equation} \label{formulaNabschnittvectorvalued}
\|D\|_{p}^{+}=\sup_{N \in \N } \| D|_{N}\|^{+}_{p}<\infty.
\end{equation}
\end{Theo}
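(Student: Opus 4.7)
The strategy is to transfer everything to the operator side via the isometric identification $\mathcal{T}:\Pi^{\lambda}_{cone}(E_{q}(G),X)\to \Hcal^{+}_{p}(\lambda,X)$ from Theorem~\ref{conesummingcoincidencevectorvalued}, realize the passage from $D$ to its Nth abschnitt $D|_{N}$ as convolution with Haar measure on an appropriate closed subgroup of $G$, and use Pietsch-type domination together with weak-* compactness to reconstruct the operator corresponding to $D$ from those of the $D|_{N}$'s.

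Fix a $\lambda$-Dirichlet group $(G,\beta)$ and set $V_{N}:=\spa_{\Q}(b_{1},\ldots,b_{N})$. The set $A_{N}:=\{h_{x}\colon x\in V_{N}\cap \widehat{\beta}(\widehat{G})\}$ is a subgroup of the discrete group $\widehat{G}$, and its annihilator $H_{N}:=A_{N}^{\perp}\subset G$ is a closed subgroup satisfying $\widehat{G/H_{N}}=A_{N}$ by Pontryagin duality. Hence the normalized Haar measure $m_{H_{N}}$ on $H_{N}$, viewed as a probability measure on $G$, has Fourier transform $\widehat{m_{H_{N}}}=\mathbf{1}_{A_{N}}$. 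Since $h_{\lambda_{n}}\in A_{N}\Leftrightarrow \lambda_{n}\in V_{N}$, Remark~\ref{convolutionvectorvalued} shows that whenever $T$ corresponds to $D$, the convolution $T*m_{H_{N}}$ corresponds precisely to $D|_{N}$, with $\pi_{cone}(T*m_{H_{N}})\le \pi_{cone}(T)\,\|m_{H_{N}}\|=\|D\|_{p}^{+}$. This immediately gives the implication $(1)\Rightarrow (2)$ together with the inequality $\sup_{N}\|D|_{N}\|_{p}^{+}\le \|D\|_{p}^{+}$.

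For the converse, let $M:=\sup_{N}\|D|_{N}\|_{p}^{+}<\infty$ and let $T_{N}\in \Pi^{\lambda}_{cone}(E_{q}(G),X)$ correspond to $D|_{N}$. By Pietsch-type domination (\cite[Proposition~24.12]{Defant}), choose positive functionals $g_{N}\in E_{q}(G)^{*}$ with $\|g_{N}\|\le M$ and $\|T_{N}(f)\|_{X}\le g_{N}(|f|)$ for every $f\in E_{q}(G)$. Banach-Alaoglu produces a weak-* cluster point $g$ (necessarily positive, with $\|g\|\le M$). Define $T$ on $Pol(G)$ by $\widehat{T}(h_{\lambda_{n}}):=a_{n}$ and $\widehat{T}(\gamma):=0$ for $\gamma\notin\{h_{\lambda_{n}}\}$. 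Since any polynomial $P$ involves only finitely many frequencies and each $\lambda_{n}$ lies in some $V_{N}$, for all large $N$ one has $T(P)=T_{N}(P)$, so passing to the cluster point yields $\|T(P)\|_{X}\le g(|P|)$. The density of polynomials in $E_{q}(G)$ proved in Section~\ref{HplambdaX}, together with Pietsch-type domination, extends $T$ uniquely to a cone summing operator with $\pi_{cone}(T)\le \|g\|\le M$; Theorem~\ref{conesummingcoincidencevectorvalued} then yields $D=\mathcal{T}(T)\in \Hcal^{+}_{p}(\lambda,X)$ with $\|D\|_{p}^{+}\le M$, completing both $(2)\Rightarrow (1)$ and the remaining inequality in \eqref{formulaNabschnittvectorvalued}.

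The main obstacle is this last direction: obtaining the \emph{sharp} inequality $\|D\|_{p}^{+}\le \sup_{N}\|D|_{N}\|_{p}^{+}$ (rather than one up to a Riesz-mean constant, which is all that Proposition~\ref{tool1vectorvalued} would give) demands precisely identifying the Nth abschnitt on the Fourier side of the cone summing picture and then patching the operators $T_{N}$ together via Pietsch domination, where the weak-* cluster point $g$ survives as a dominating functional for a $T$ whose Fourier support is automatically contained in $\{h_{\lambda_{n}}\}$ thanks to the stabilization $T(P)=T_{N}(P)$ on polynomials.
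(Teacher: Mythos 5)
Your proof is correct, and for the implication $(1)\Rightarrow(2)$ it is exactly the paper's argument: the paper simply cites \cite[Remark 4.21]{DefantSchoolmann2} for the identification of $D|_N$ with $T*m_{H_N}$, where $m_{H_N}$ is the Haar measure of the annihilator of the subgroup $A_N$, and you have written out precisely that computation, so the inequality $\sup_N\|D|_N\|_p^+\le\|D\|_p^+$ comes out the same way. The two proofs part ways on the converse. The paper first establishes mere membership $D\in\Hcal_p^+(\lambda,X)$ by observing that for fixed $x$ the Riesz mean $R_x^{\lambda,1}(D)$ coincides with $R_x^{\lambda,1}(D|_N)=\mathcal{B}(T|_N*\mu_x)$ for $N$ large, so that $M_{1,p}(D)<\infty$ and Proposition~\ref{tool1vectorvalued} applies; only afterwards, now knowing that an operator $T$ corresponding to $D$ exists, does it prove the sharp bound $\|D\|_p^+\le\sup_N\|D|_N\|_p^+$ by testing the cone-summing inequality of $T$ on finitely many polynomials and using that $T(g_k)=T|_M(g_k)$ for large $M$. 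You instead build $T$ from scratch on $Pol(G)$ and get membership and the sharp constant in one stroke, via Pietsch dominating functionals $g_N$ for the $T_N$ and a weak-$*$ cluster point $g$; this is a legitimate and self-contained alternative that bypasses Proposition~\ref{tool1vectorvalued} entirely. It is worth noting that the weak-$*$ compactness step is actually dispensable: the stabilization $T(P_k)=T_N(P_k)$ for large $N$ already gives $\sum_k\|T(P_k)\|_X=\sum_k\|T_N(P_k)\|_X\le M\,\|\sum_k|P_k|\|_{E_q(G)}$ directly, i.e.\ the cone-summing inequality with constant $M$ on the dense subspace of polynomials, which is all one needs to extend $T$ -- this shortcut is essentially the paper's final display. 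Your diagnosis of why Proposition~\ref{tool1vectorvalued} alone cannot give the isometric identity \eqref{formulaNabschnittvectorvalued} (it loses the constant $C_k$ of the Riesz kernels) is accurate and is exactly the reason the paper, like you, must return to the cone-summing picture for the norm equality.
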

We mention that in \cite[Theorem~5.9]{DefantSchoolmann4} this theorem for $X=\mathbb{C}$ is proven under the assumption that Bohr's theorem  holds for $\lambda$. We now present a proof that avoids this assumption.
\begin{proof}[Proof of Theorem \ref{Nabschnittvectorvalued}] First assume that (1) holds. Then, following the argument from \cite[Remark 4.21]{DefantSchoolmann2} together with Remark \ref{convolutionvectorvalued}, we obtain $D|_{N}\in \mathcal{H}_{p}^{+}(\lambda,X)$ with $\|D|_{N}\|^+_{p}\le \|D\|^+_{p}.$ Suppose that (2) holds, and let $\mu_{x}$ be the measure from \eqref{measureRieszmeanvectorvalued} with $x >0$ and $k=1$. Moreover, let $T|_{N}\in \Pi_{cone}^{\lambda}(E_{q}(G),X)$ correspond to $D|_{N}$ (Theorem \ref{conesummingcoincidencevectorvalued}). Then for fixed $x$ and $N$ large enough we have by Remark~ \ref{sylviavectorvalued} and Remark~\ref{convolutionvectorvalued}
$$\Big\|\sum_{\lambda_{n}<x} a_{n} \big(1-\frac{\lambda_{n}}{x}\big)e^{-\lambda_{n}s}\Big\|_{p}=\pi_{cone}(T|_{N}*\mu_{x})\le \sup_{M} \pi_{cone}(T|_{M}),$$
which is finite by assumption and Theorem \ref{conesummingcoincidencevectorvalued}.
Hence Proposition~\ref{tool1vectorvalued} implies $D \in \mathcal{H}_{p}^{+}(\lambda,X)$. It remains to check (\ref{formulaNabschnittvectorvalued}). So let  $T\in \Pi_{cone}^{\lambda}(E_{q}(G),X)$ correspond to $D$ (Theorem \ref{conesummingcoincidencevectorvalued}). Then for every choice of finitely many polynomials $g_{k}\in E_{q}(G)$ we have for large $M$
$$\sum_{k=1}^{n} \|T(g_{k})\|_{X}=\sum_{k=1}^{n} \|T|_{M}(g_{k})\|_{X}\le \sup_{N} \pi_{cone}(T|_{N}) \big \| \sum_{k=1}^{n} |g_{k}|\big\|_{q},$$
which by density shows that $\|D\|_{p}^{+}\le \sup_{N}\|D|_{N}\|_{p}^{+}$.
\end{proof}
\subsection{Coincidence}

Recall that by Corollary \ref{inclusionplusvectorvalued}  for every $1\le p \le \infty$ and  every frequency $\lambda$ we  have the isometric inclusion $\mathcal{H}_{p}(\lambda,X)\subset \mathcal{H}_{p}^{+}(\lambda,X)$.
However as stated in \eqref{arnp2vectorvalued}, for the vector-valued case with $\lambda=(\log n)$ equality is attained if and only if $X$ has ARNP. Likewise, the same is true for the frequency $\lambda=(n)$. As mentioned before, we have that $X$ has ARNP if and only if $H_p(\T,X)=H_p(\D,X)$ which we can rewrite as $\Hcal_p((n),X)=\Hcal^+_p((n),X)$.
 Indeed, as the following theorem proves, ARNP is sufficient for every frequency $\lambda$.

\begin{Theo} \label{mainresultARNPvectorvalued}
Let $\lambda$ be an arbitrary frequency and $X$ a Banach space with ARNP. Then for all $1\le p \le \infty$ we have  $$\mathcal{H}_{p}^{+}(\lambda,X)=\Hcal_{p}(\lambda,X).$$
\end{Theo}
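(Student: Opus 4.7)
The inclusion $\mathcal{H}_p(\lambda,X) \hookrightarrow \mathcal{H}_p^+(\lambda,X)$ is isometric by Corollary~\ref{inclusionplusvectorvalued}, so the task reduces to proving the reverse containment. Fix a $\lambda$-Dirichlet group $(G,\beta)$ and take $D = \sum a_n e^{-\lambda_n s} \in \mathcal{H}_p^+(\lambda,X)$. For each $\sigma >0$ let $f_\sigma \in H_p^\lambda(G,X)$ be the $L_p$-function associated to the translate $D_\sigma$, so that $\widehat{f_\sigma}(h_{\lambda_n}) = a_n e^{-\sigma \lambda_n}$ and $\|f_\sigma\|_p \le \|D\|_p^+$; the semigroup property of the Poisson measures $p_\sigma$ yields the compatibility $f_\sigma = f_{\sigma'} \ast p_{\sigma - \sigma'}$ whenever $\sigma > \sigma' > 0$. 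The aim is to produce a single $f \in H_p^\lambda(G,X)$ with $\widehat{f}(h_{\lambda_n}) = a_n$ and $\|f\|_p \le \|D\|_p^+$.

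My plan is to reduce to a finite-dimensional setting via Theorem~\ref{Nabschnittvectorvalued}. Picking a Bohr decomposition $\lambda=(R,B)$, that theorem gives $\|D|_N\|_p^+ \le \|D\|_p^+$, and the Fourier coefficients of $D|_N$ are supported on characters $h_{\lambda_n}$ with $\lambda_n$ in the $\Q$-span of $b_1,\ldots,b_N$. After clearing the finitely many denominators of the involved rationals $q_k^n$ and passing to a suitable finite-dimensional subgroup, $D|_N$ can be identified with a vector-valued Dirichlet series supported on $\Z^m$-characters of a finite-dimensional torus $\T^m$. On $\T^m$ the ARNP of $X$ yields the isometric identity $H_p(\T^m,X) = H_p(\D^m,X)$ (the finite-variable analogue of \eqref{arnp2vectorvalued}), which translates back to the statement that $D|_N \in \Hcal_p(\lambda,X)$ with $\|D|_N\|_p = \|D|_N\|_p^+$. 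Consequently each $D|_N$ is represented by some $f^{(N)} \in H_p^\lambda(G,X)$ with $\|f^{(N)}\|_p \le \|D\|_p^+$.

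The final step is to extract a limit $f$ of the sequence $(f^{(N)})$ representing $D$. For $1 < p \le \infty$ I would invoke weak (respectively weak-$*$) compactness in $L_p(G,X)$: the bounded sequence $(f^{(N)})$ admits a cluster point $f$, the coefficientwise stabilization $\widehat{f^{(N)}}(h_{\lambda_n})=a_n$ (for $N$ large enough to include the $n$-th coefficient) combined with continuity of Fourier coefficients yields $\widehat{f}(h_{\lambda_n})=a_n$, and lower semicontinuity of the norm delivers $\|f\|_p \le \|D\|_p^+$. For $p=1$, where $L_1(G,X)$ is not a dual space, I would instead work through the cone-summing description of Theorem~\ref{conesummingcoincidencevectorvalued}: the operators $T^{(N)} \in \Pi_{cone}^\lambda(C(G),X)$ representing the $D|_N$ are uniformly cone-summing, their Fourier coefficients stabilize, and a Pietsch-domination argument in the spirit of the proof of Theorem~\ref{conesummingcoincidencevectorvalued} realizes the limit as an honest $L_1$-function.

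The main obstacle is the endpoint case $p=\infty$: here standard weak-$*$ compactness in $L_\infty(G,X)$ is not available for an arbitrary Banach space $X$, so ARNP has to intervene not only in the finite-dimensional step but also to ensure that the weak-$*$ cluster point of $(f^{(N)})$ in a suitable ambient dual space in fact belongs to $H_\infty^\lambda(G,X)$. I expect this is where the argument (and the hypothesis on $X$) does its heaviest lifting, and one may well prefer to bypass it by first establishing the $1\le p <\infty$ cases and then deducing $p=\infty$ from $M_{k,\infty}(D) = \lim_{p\to\infty} M_{k,p}(D)$ via Proposition~\ref{tool1vectorvalued}.
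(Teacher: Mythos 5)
Your opening reductions are fine (the isometric inclusion via Corollary~\ref{inclusionplusvectorvalued}, and deducing $p=\infty$ from finite $p$, which is exactly what the paper does via Lemma~\ref{coincidenceIIvectorvalued}), but the core of your argument has two genuine gaps. First, the finite-dimensional reduction does not go through: a single abschnitt $D|_N$ involves infinitely many frequencies $\lambda_n=\sum_{k\le N} q_k^n b_k$, and the denominators of the rationals $q_k^n$ need not be bounded over $n$, so there is no ``clearing of finitely many denominators'' and the natural supporting group for $D|_N$ is in general a solenoid-type compact group rather than a finite-dimensional torus $\T^m$. The identity $H_p(\T^m,X)=H_p(\D^m,X)$ is therefore not available as stated, and even in the ordinary case the passage from ARNP to $\Hcal_p((\log n),X)=\Hcal_p^+((\log n),X)$ is not a formal consequence of the one-variable Fatou theorem.

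Second, and more seriously, the limit-extraction step fails. For $1<p<\infty$ the space $L_p(G,X)$ is neither reflexive nor a dual space for a general $X$ with ARNP (take $X=\ell_1$), so a bounded sequence $(f^{(N)})$ need not have a weak or weak-$*$ cluster point \emph{inside} $L_p(G,X)$; the only compactness available is weak-$*$ compactness in $E_q(G,X)^*=\Pi_{cone}(E_q(G),X)$, whose $\lambda$-part is by Theorem~\ref{conesummingcoincidencevectorvalued} precisely $\Hcal_p^+(\lambda,X)$ — so the argument is circular and, if it worked for arbitrary $X$, would contradict Remark~\ref{cnullvectorvalued}. The same objection applies to your $p=1$ route: Pietsch domination only produces a cone summing operator, i.e.\ an element of $M_\lambda(G,X)=\Hcal_1^+(\lambda,X)$, not an $L_1$-density. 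The missing idea — which is the paper's entire proof — is to apply ARNP not pointwise on $G$ but to the $\Hcal_p(\lambda,X)$-valued map: by Lemma~\ref{blavectorvalued} the function $z\mapsto D_z$ is bounded and holomorphic from $[Re>0]$ into $\Hcal_p(\lambda,X)$, which inherits ARNP from $X$ as a closed subspace of $L_p(G,X)$; hence by \eqref{ARNPlimitvectorvalued} the horizontal limit $\lim_{\varepsilon\to 0}D_{\varepsilon+it}$ exists in $\Hcal_p(\lambda,X)$ for almost every $t$, comparison of coefficients identifies it with $D_{it}$, and translation invariance gives $D\in\Hcal_p(\lambda,X)$. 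No abschnitt reduction is needed.
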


The proof needs some preparation, and is given at the end of this section.
Recall that a Banach space $X$ with ARNP never contains an isomorphic copy of $c_0$. We also point out that Banach lattices $X$ have this property if and only if $c_0$ is not isomorphically contained in $X$.
This is actually a necessary condition for the coincidence result.

\begin{Rema} \label{cnullvectorvalued} If $\Hcal_{p}(\lambda,X)= \Hcal^{+}_{p}(\lambda,X)$ for some $1\le p \le \infty$, then $X$ contains no isomorphic copy of  $c_{0}$.
\end{Rema}
\begin{proof} We assume by contradiction that  there is an isomorphic embedding $c_{0}\hookrightarrow X$,
	and let moreover be a $\lambda$-Dirichlet group $(G, \beta)$ such that $\mathcal{H}_p(\lambda,X)= H_p^\lambda(G, X)$.   Define $f^{N}=\sum_{n=1}^{N} e_{n}h_{\lambda_{n}}$, where the
	$e_{n}$ denote the unit vectors in $c_{0}$. Then for every $\sigma>0$ and $M >N $ we have
	$\|f^{M}_{\sigma}-f^{N}_{\sigma}\|_p=e^{-\lambda_{N+1}\sigma}$. Consequently, $(f^{N}_{\sigma})$ is Cauchy in $H_p^{\lambda}(G,X)$ with limit, say $f_{\sigma}$, and
	$$\|f_{\sigma}\|_p=\lim_{N\to \infty} \|f^{N}_{\sigma}\|_p\le 1.$$
	Hence $D=\sum e_{n}e^{-\lambda_{n}s} \in \mathcal{H}_p^{+}(\lambda,X)$, and so $D\in \mathcal{H}_{p}(\lambda,X)= H_p^\lambda(G, X)$. Consequently, $(e_{n})$  by the Riemann-Lebesgue lemma  is a zero sequence in $X$ and so in $c_{0}$, a contradiction.
\end{proof}

Our proof of Theorem \ref{mainresultARNPvectorvalued} follows the same strategy as in \cite{AntonioDefant}. As an important ingredient we use Lemma 5.4 from \cite{AntonioDefant} which states that  for every
bounded and holomorphic function $F \colon [Re >0] \to X$ the horizontal limit
\begin{equation} \label{ARNPlimitvectorvalued}
\lim_{\varepsilon\to 0} F(\varepsilon+it)
\end{equation}
exists for Lebesgue-almost all $t \in \R$, whenever $X$ has ARNP (see also \cite[Lemma~11.22]{Defant}).
The following lemma is an analogue of  \cite[Proposition 2.3]{AntonioDefant} (also proved in  \cite[Proposition 11.20]{Defant}).
\begin{Lemm} \label{blavectorvalued} For $1 \leq p <\infty$ and $D \in \Hcal_{p}^{+}(\lambda,X)$ the function
$$F \colon [Re> 0] \to \Hcal_{p}(\lambda,X), ~~~ z \mapsto D_{z}=\sum a_{n}(D)e^{-\lambda_{n}z} e^{-\lambda_{n}s}$$
is defined, continuous on $[Re \ge 0]$ and holomorphic on $[Re>0]$. Moreover,
\begin{enumerate}
\item[(1)] $\|D_{\varepsilon+it}\|_{p}=\|D_{\varepsilon}\|_{p}$ for all $t \in \R$ and $\varepsilon\ge 0$,
\item[(2)]  $\sup_{Re \ge 0} \|D_z\|_p = \|D\|^+_p$,
\item[(3)] the function $\varepsilon \to \|D_{\varepsilon}\|_{p}$ is decreasing,
\item[(4)] if $D \in \mathcal{H}_{p}(\lambda,X)$, then $\lim_{\varepsilon \to 0} D_{\varepsilon}=D$ in  $\Hcal_{p}(\lambda,X)$.
\end{enumerate}
\end{Lemm}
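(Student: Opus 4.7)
The idea is to exploit the $\lambda$-Dirichlet group $(G,\beta)$: for $D \in \mathcal{H}_p^+(\lambda,X)$ and each $\sigma>0$, let $f_\sigma \in H_p^\lambda(G,X)$ be the function corresponding to $D_\sigma$ via the Bohr map \eqref{BohrmapHelsonvectorvalued}. For $z = \sigma+it$ with $\sigma>0$, a direct check on characters using $h_{\lambda_n}\circ\beta = e^{-i\lambda_n \pmb\cdot}$ shows that the translate $\omega\mapsto f_\sigma(\omega+\beta(t))$ has Fourier coefficients $a_n e^{-\lambda_n(\sigma+it)}$, hence corresponds to $D_z$. Translation invariance of the Haar measure then gives $\|D_{\sigma+it}\|_p = \|D_\sigma\|_p$, which is (1), and the monotonicity (3) follows because for $0<\varepsilon_2<\varepsilon_1$ one has $f_{\varepsilon_1} = f_{\varepsilon_2}\ast p_{\varepsilon_1-\varepsilon_2}$ with $\|p_u\|\le 1$. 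Assertion (2) is then immediate from (1), (3) and the definition of $\|D\|_p^+$ (restricting the supremum to $\{Re > 0\}$; the extension to $Re\ge 0$ is read off from the uniformly continuous extension established below).

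For the continuity and holomorphy of $F(z) = D_z$ on $\{Re > 0\}$, the plan is to reduce to a polynomial approximation. Fix $z_0$ with $Re(z_0)>0$ and choose $0<\sigma_0<Re(z_0)$. Writing $g := D_{\sigma_0}\in\mathcal{H}_p(\lambda,X)$ and $w = z-\sigma_0$, one has $F(z) = g_w$ with $Re(w)$ close to $Re(z_0)-\sigma_0>0$, so it suffices to prove that for any $g\in\mathcal{H}_p(\lambda,X)$ the map $w\mapsto g_w$ is continuous on $\{Re\ge 0\}$ and holomorphic on $\{Re>0\}$. For a Dirichlet polynomial $P = \sum_{n=1}^N b_n e^{-\lambda_n s}$ this is trivial, since $P_w = \sum_{n=1}^N b_n e^{-\lambda_n w} e^{-\lambda_n s}$ is a finite $X$-valued sum, entire in $w$. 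Approximating $g$ by polynomials $P^k$ in $\mathcal{H}_p(\lambda,X)$ (density was recalled in Section~\ref{HplambdaX}) and using $\|g_w-P^k_w\|_p = \|(g-P^k)_w\|_p \le \|g-P^k\|_p$ uniformly in $w$ with $Re(w)\ge 0$ (by (1) and (3) already proved at the polynomial level via Poisson convolution), one gets that $P^k_w\to g_w$ uniformly on $\{Re\ge 0\}$. Continuity of $w\mapsto g_w$ on $\{Re\ge 0\}$ and holomorphy on $\{Re>0\}$ then follow from standard facts on uniform limits of (vector-valued) holomorphic functions.

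Finally, for (4), assume $D \in \mathcal{H}_p(\lambda,X)$ and let $f\in H_p^\lambda(G,X)$ correspond to $D$. As already noted, $D_\varepsilon$ corresponds to $f\ast p_\varepsilon$, so the claim reduces to $\|f\ast p_\varepsilon - f\|_{L_p(G,X)}\to 0$ as $\varepsilon\to 0^+$. Since $(p_\varepsilon)_{\varepsilon>0}$ is a bounded approximate identity on $G$ (its Fourier transform satisfies $\widehat{p_\varepsilon}(h_x)=e^{-\varepsilon|x|}\to 1$ pointwise on $\widehat G$ and $\|p_\varepsilon\|\le 1$), this follows by first verifying the convergence for trigonometric polynomials with values in $X$ (where it is immediate from the Fourier side) and then by density in $L_p(G,X)$ for $1\le p<\infty$.

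The main technical point is the holomorphy step in the middle paragraph: one has to be careful to observe that the uniform boundedness $\|m_w\|\le 1$ of the translated Poisson measures $m_w := p_{Re\,w}\ast\delta_{\beta(Im\,w)}$, with $\widehat{m_w}(h_{\lambda_n}) = e^{-\lambda_n w}$, makes the polynomial approximation uniform in $w$ on the whole closed half-plane $\{Re\ge 0\}$; once this is in hand, (1)--(4) fall out quickly.
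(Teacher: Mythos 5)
Your proposal is correct and follows essentially the same route as the paper: the paper's own proof only writes out the translation-invariance computation for (1) at the level of polynomials and then refers to the ordinary case \cite[Proposition 2.3]{AntonioDefant}, whose argument is precisely your combination of Poisson-measure convolution for (3), polynomial approximation with the uniformly bounded measures $m_w$ for continuity and holomorphy, and the approximate-identity argument for (4). Your care in claiming continuity up to $[Re=0]$ only after the reduction to $g=D_{\sigma_0}\in\mathcal{H}_p(\lambda,X)$ is in fact the honest reading of the statement, since a continuous extension of $F$ to the boundary would already force $D\in\mathcal{H}_p(\lambda,X)$.
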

For the proof of  (1) take a $\lambda$-Dirichlet group $(G,\lambda)$, and note that by the translation invariance of the Haar measure $m$ on $G$ for every polynomial $D=\sum_{n=1}^{N} a_{n} e^{-\lambda_{n}s}$ we have
$$\|D_{it}\|_{p}^{p}= \int_{G} \big\| \sum_{n=1}^{N} a_{n} e^{-it\lambda_{n}} h_{\lambda_{n}}(\omega) \big\|^{p} d\omega =\int_{G} \big\| \sum_{n=1}^{N} a_{n}  h_{\lambda_{n}}(\omega \beta(t)) \big\|^{p} d\omega=\|D\|_{p}^{p}.$$
Now all claims follow  the same lines as in the ordinary case.

The third ingredient for the proof of Theorem \ref{mainresultARNPvectorvalued} reduces the case $p=\infty$ to finite $p$'s.
\begin{Lemm}\label{coincidenceIIvectorvalued} Let $X$ be a Banach space and $\lambda$ a frequency. If $\Hcal_{p}^{+}(\lambda,X)=\Hcal_{p}(\lambda,X)$ for some $1\le p < \infty$, then $\Hcal_{\infty}^{+}(\lambda,X)=\Hcal_{\infty}(\lambda,X)$.
\end{Lemm}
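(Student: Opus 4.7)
The plan is to reduce everything to an almost-everywhere convergence argument, exploiting the hypothesis to represent $D$ as an $L_p$-function and then upgrading to $L_\infty$ via a pointwise bound. The nontrivial direction is $\mathcal{H}_\infty^+(\lambda,X)\subset\mathcal{H}_\infty(\lambda,X)$, since the reverse is the content of Corollary \ref{inclusionplusvectorvalued}. So fix $D\in\mathcal{H}_\infty^+(\lambda,X)$; the first step is to notice that because the Haar measure $m$ on any $\lambda$-Dirichlet group $(G,\beta)$ is a probability measure, one has the continuous inclusion $L_\infty(G,X)\hookrightarrow L_p(G,X)$. This transfers to $\mathcal{H}_\infty(\lambda,X)\hookrightarrow\mathcal{H}_p(\lambda,X)$ and, applied translate by translate, gives $\mathcal{H}_\infty^+(\lambda,X)\hookrightarrow\mathcal{H}_p^+(\lambda,X)$ with $\|D\|_p^+\le\|D\|_\infty^+$. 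By the standing hypothesis $\mathcal{H}_p^+(\lambda,X)=\mathcal{H}_p(\lambda,X)$ we conclude $D\in\mathcal{H}_p(\lambda,X)$.

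Fix now $f\in H_p^\lambda(G,X)$ corresponding to $D$ and, for each $\sigma>0$, $f_\sigma\in H_p^\lambda(G,X)$ corresponding to $D_\sigma$. Since $D\in\mathcal{H}_\infty^+(\lambda,X)$, each $D_\sigma$ lies in $\mathcal{H}_\infty(\lambda,X)$ with $\|D_\sigma\|_\infty\le\|D\|_\infty^+$; by uniqueness of Fourier coefficients, this forces $f_\sigma\in H_\infty^\lambda(G,X)$ and hence $\|f_\sigma(\omega)\|_X\le\|D\|_\infty^+$ for $m$-a.e.\ $\omega\in G$. On the other hand, because $D\in\mathcal{H}_p(\lambda,X)$ and $p<\infty$, Lemma \ref{blavectorvalued}(4) yields $D_\sigma\to D$ in $\mathcal{H}_p(\lambda,X)$, that is $f_\sigma\to f$ in $L_p(G,X)$, as $\sigma\to 0$.

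To conclude, extract a sequence $\sigma_j\to 0$ along which $f_{\sigma_j}\to f$ in $X$ for $m$-almost every $\omega$ (the standard subsequence principle remains valid for Bochner-valued $L_p$ convergence). Continuity of $\|\cdot\|_X$ combined with the pointwise bounds $\|f_{\sigma_j}(\omega)\|_X\le\|D\|_\infty^+$ then gives $\|f(\omega)\|_X\le\|D\|_\infty^+$ $m$-almost everywhere, so $f\in L_\infty(G,X)$ with $\|f\|_\infty\le\|D\|_\infty^+$. Since the Fourier transform of $f$ is already supported on $\{h_{\lambda_n}\colon n\in\mathbb{N}\}$ (as $f\in H_p^\lambda(G,X)$), we obtain $f\in H_\infty^\lambda(G,X)$, i.e.\ $D\in\mathcal{H}_\infty(\lambda,X)$ with $\|D\|_\infty\le\|D\|_\infty^+$. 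Combined with Corollary \ref{inclusionplusvectorvalued} this yields the isometric equality $\mathcal{H}_\infty^+(\lambda,X)=\mathcal{H}_\infty(\lambda,X)$.

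The only place requiring care is the transition from the uniform essential-supremum bound on the $f_\sigma$ to the same bound on $f$: this rests on the subsequence-a.e.\ principle for Bochner-valued $L_p$ convergence, which is the genuine substitute for what in the scalar case is often handled via weak-$\ast$ compactness of $L_\infty$. Everything else is routine book-keeping; in particular no hypothesis on the frequency $\lambda$ or on the geometry of $X$ beyond the given equality at level $p$ is used.
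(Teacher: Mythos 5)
Your proof is correct, and it takes a genuinely different route from the paper's. The paper handles the passage from the uniform $L_p$-data to an $L_\infty$-bound on $f$ by a weak-to-strong argument: it applies the scalar identity $\mathcal{H}_{\infty}^{+}(\lambda)=\mathcal{H}_{\infty}(\lambda)$ (Proposition~\ref{Hpplusscalarvectorvalued}) to each $x^{\ast}\circ D$, obtains $\|x^{\ast}\circ f\|_{\infty}\le\|D\|_{\infty}^{+}$ for all $x^{\ast}\in B_{X^{\ast}}$, and then upgrades this to $\|f\|_{\infty}\le\|D\|_{\infty}^{+}$ using that $f$ is essentially separably valued together with a countable norming family of functionals on the separable range (collecting countably many null sets). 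You instead stay entirely at the vector-valued level: the membership $D\in\mathcal{H}_{\infty}^{+}(\lambda,X)$ directly supplies functions $f_{\sigma}\in H_{\infty}^{\lambda}(G,X)$ with $\|f_{\sigma}(\omega)\|_{X}\le\|D\|_{\infty}^{+}$ a.e., and you transport this bound to $f$ via $f_{\sigma}\to f$ in $L_p(G,X)$ (Lemma~\ref{blavectorvalued}(4), legitimately available since it is stated just before this lemma) plus a.e.\ convergence along a subsequence. Your identification of $f_{\sigma}$ with the $L_\infty$-representative of $D_\sigma$ via uniqueness of Fourier coefficients in $H_p^{\lambda}(G,X)$ is sound, and the countable collection of exceptional null sets over the subsequence is handled correctly. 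What your approach buys is that it sidesteps both the scalar weak-compactness result and the separability/norming-functionals bookkeeping; what the paper's approach buys is independence from the quantitative convergence statement $D_{\sigma}\to D$ in $\mathcal{H}_p$, relying only on coefficient comparison. Both yield the same isometric conclusion $\|D\|_{\infty}=\|D\|_{\infty}^{+}$.
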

\begin{proof} By Corollary~\ref{inclusionplusvectorvalued} we only have to check one inclusion, so let $D\in \mathcal{H}_{\infty}^{+}(\lambda,X)$. Then $D \in \mathcal{H}^{+}_{p}(\lambda,X)=\mathcal{H}_{p}(\lambda,X)$, and so there is $f\in H_{p}^{\lambda}(G,X)$ such that $\widehat{f}(h_{\lambda_{n}})=a_{n}(D)$ for all $n$ (for an appropriate $\lambda$-Dirichlet group $(G,\beta)$, as e.g. the Bohr compactification). We show that in fact  $f \in H_{\infty}^{\lambda}(G,X)$. Indeed,
by Proposition~\ref{Hpplusscalarvectorvalued} we know that
$x^{\ast} \circ D \in \mathcal{H}_{\infty}(\lambda)$ with $\|x^{*} \circ D\|_{\infty}\le \|D\|^{+}_{\infty}$.
Hence, comparing Fourier- and Dirichlet coefficients  we see that $x^{*}\circ f\in H_{\infty}^{\lambda}(G)$ for every $x^{*}\in X^{*}$ with $\|x^{*}\circ f\|_{\infty}\le \|D\|^{+}_{\infty}$. Now recall that, since $f$ is measurable, $f$ is separable-valued, i.e. there is a zero set $E\subset G$ and separable subspace $X_{0}$ such that $f(G\setminus E)\subset X_{0}$. An application of the Hahn-Banach theorem shows that there is a sequence  $(x_{n}^{*}) \subset X_{0}^{*}$ such that
\begin{equation} \label{separableHahnBanachvectorvalued}
\|x_{0}\|=\sup_{n} |x^{*}_{n}(x_{0})|
\end{equation} for every $x_{0} \in X_{0}$.
Hence, to every $n$ there is a zero set $E_{N} \subset G$ such that $|x^{*}_{n}(f(\omega))|\le \|D\|_{\infty}^{+}$ for every $\omega \notin E_{N} \cup E$. Collecting countable zero sets we by (\ref{separableHahnBanachvectorvalued})  obtain that $\|f\|_{\infty}\le \|D\|_{\infty}^{+}$.
\end{proof}

\begin{proof}[Proof of Theorem \ref{mainresultARNPvectorvalued}] By Lemma \ref{coincidenceIIvectorvalued} it suffices to prove the claim for $1\le p <\infty$.
 Choose a $\lambda$-Dirichlet group $(G, \beta)$ such that $\mathcal{H}_{p}(\lambda,X)= H_p^\lambda(G,X)$. Since by assumption $X$ has ARNP, the space $L^{p}(G,X)$ has ARNP, and so also $\mathcal{H}_{p}(\lambda,X)$ as a closed isometric subspace (see e.g. \cite[Proposition 23.20]{Defant}). Now, given $D\in \mathcal{H}^{+}_{p}(\lambda,X)$,  using Lemma \ref{blavectorvalued} and (\ref{ARNPlimitvectorvalued}) there is some  $t \in \R$, such that $\lim_{\varepsilon \to 0} D_{\varepsilon+it}$ exists in $\Hcal_{p}(\lambda,X)$. Comparing  Dirichlet coefficients we see  that this limit is exactly $D_{it}$, and so by translation invariance $D \in \Hcal_{p}(\lambda,X)$.
\end{proof}

\subsection{$\mathbb{Q}$-linear independence I}
Theorem \ref{mainresultARNPvectorvalued} and Remark \ref{cnullvectorvalued} show that for a given frequency $\lambda$ and a Banach space $X$ we have
\[X \text{ has ARNP} \Rightarrow \Hcal_p(\lambda,X)=\Hcal^+_p(\lambda,X) \text{ for all } 1\le p \le \infty \Rightarrow c_0\not\subset X.\]
As mentioned before, for the case of classical Fourier series $\lambda=(n)$ and ordinary Dirichlet series $\lambda=(\log n)$, it is known that $X$ has ARNP if and only if  $\Hcal_p(\lambda,X)=\Hcal^+_p(\lambda,X)$. To complete the picture we show that for $\Q$-linearly independent frequencies $\lambda$ we have $\Hcal_p(\lambda,X)=\Hcal^+_p(\lambda,X)$ if and only if $c_0$ is not isomorphic to a subspace of $X$. To this end we identify  $\Hcal_p(\lambda,X)$ and $\Hcal^+_p(\lambda,X)$ with two sequence spaces of random convergence.
For a Banach space $X$ we define
\[
\rad(X) =\left\{(x_n)_{n\in\N}\subseteq X : \ \sum_{n=1}^\infty  x_n z_n \text{ converges almost surely in } X \right\}
\]
and
\[
\Rad(X) =\left\{(x_n)_{n\in\N}\subseteq X : \ \sup_{N\in\N} \Big\|\sum_{n=1}^N x_n z_n\Big\|_{L_2(\T^\infty,X)}<\infty\right\}.
\]
A straightforward computation shows that $\Rad(X)$ is a Banach space under the norm provided in its definition. On the other hand, in \cite[Theorem 3.1 (b)]{vakhania} it is shown that $\rad(X)$ can be identified with the closure in $L_2(\T^\infty,X)$ of
\[\spa\left\{\sum_{n=1}^N x_n z_n: \ N\in\N,\ x_n\in X\right\}.\]
This endows $\rad(X)$ with a Banach space structure.

As a consequence of \cite[Theorem 6.1]{vakhania} we get that $\rad(X)=\Rad(X)$ if and only if $c_0$ is not isomorphic to a subspace of $X$ (apply the equivalence $(a)\Leftrightarrow (c)$
from \cite[Theorem 6.1]{vakhania}
for $\xi_k:\T^\infty\rightarrow L_2(\T^\infty,X)$ given by $\xi_k(w)=w_k(z_k x_k)$).
This together with the characterization provided in the following Theorem settles the issue.

\begin{Theo} \label{radHpcoincidevectorvalued} Let $\lambda$ be $\mathbb{Q}$-linearly independent and $X$ a Banach space. Then for all $1\le p<\infty$ isomorphically
$$\rad(X)\simeq \mathcal{H}_{p}(\lambda,X)\subset \mathcal{H}_{p}^{+}(\lambda,X)\simeq\Rad(X).$$
In particular for every $1\le p \le \infty$ we have $\mathcal{H}_{p}(\lambda,X)= \mathcal{H}_{p}^{+}(\lambda,X)$ if and only if $c_{0}$ is not isomorphic to a subspace of $X$.
\end{Theo}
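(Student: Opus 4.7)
The strategy is to exploit $\Q$-linear independence to turn everything into sums of Steinhaus random series on $\T^\infty$, then read off the two isomorphisms directly. Concretely, take the $\lambda$-Dirichlet group $(G,\beta)$ with $G = \T^\infty$ and Kronecker flow $\beta(t) = (e^{-i\lambda_n t})_{n\ge 1}$; by Kronecker's approximation theorem the $\Q$-linear independence of $\lambda$ guarantees that $\beta$ has dense range, so this is indeed a $\lambda$-Dirichlet group. Identifying $\widehat{\T^\infty} = \Z^{(\N)}$, the $\Q$-linear independence forces the character $h_{\lambda_n}$ to be the $n$-th coordinate projection $w \mapsto w_n$, so an element of $H_p^\lambda(\T^\infty, X)$ is precisely an $f \in L_p(\T^\infty, X)$ whose Fourier expansion is of the form $\sum_{n=1}^\infty a_n w_n$, where $(w_n)$ is a sequence of independent Steinhaus variables on $\T^\infty$.

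For the first isomorphism, polynomials $\sum_{n=1}^N a_n w_n$ are dense in $\Hcal_p(\lambda, X)$, and Kahane's inequality for Steinhaus variables says their $L_p$ and $L_2$ norms are equivalent; hence the Bohr map identifies $\Hcal_p(\lambda, X)$ isomorphically with $\rad(X)$ as defined in the paper. For the second isomorphism, I want to show $\sup_{\sigma>0}\|D_\sigma\|_p \sim \sup_N \|\sum_{n=1}^N a_n w_n\|_{L_p}$. For the direction $\Hcal_p^+ \to \Rad$: given $D \in \Hcal_p^+(\lambda,X)$ with representative $f_\sigma \in H_p^\lambda(\T^\infty,X)$ of $D_\sigma$, apply the conditional expectation $P_N = \mathbb{E}[\,\cdot \mid w_1,\ldots,w_N]$, which is a contraction on $L_p(\T^\infty,X)$ and kills all monomials with support outside $\{1,\ldots,N\}$; thus $P_N f_\sigma = \sum_{n=1}^N a_n e^{-\lambda_n\sigma} w_n$ and
\[
\bigl\|\sum_{n=1}^N a_n e^{-\lambda_n\sigma} w_n\bigr\|_{L_p} \le \|f_\sigma\|_p = \|D_\sigma\|_p \le \|D\|_p^+.
\]
Letting $\sigma \to 0$ in this finite sum yields $\|\sum_{n=1}^N a_n w_n\|_{L_p} \le \|D\|_p^+$ uniformly in $N$. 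For the converse direction $\Rad \to \Hcal_p^+$: if $M := \sup_N \|\sum_{n=1}^N a_n w_n\|_{L_p} < \infty$, then for each fixed $\sigma > 0$ the contraction principle for Steinhaus variables, combined with the decay $e^{-\lambda_n\sigma} \to 0$ (and Abel summation applied to the bounded partial sums $S_N = \sum_{n=1}^N a_n w_n$), produces a Cauchy sequence $\sum_{n=1}^N a_n e^{-\lambda_n\sigma} w_n$ in $L_p(\T^\infty,X)$ whose limit $f_\sigma$ has the right Fourier coefficients to represent $D_\sigma$, with $\|f_\sigma\|_p \lesssim M$. Passing from the $L_p$- to the $L_2$-norm by Kahane's inequality gives the claimed isomorphism with $\Rad(X)$.

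Once the two isomorphisms are in place, the final "in particular" follows: for $1 \le p < \infty$ combine them with Vakhania's characterization (\cite[Theorem 6.1]{vakhania}) that $\rad(X) = \Rad(X)$ if and only if $c_0$ is not isomorphic to a subspace of $X$. For $p = \infty$, if $c_0 \not\hookrightarrow X$ then $\Hcal_1(\lambda,X) = \Hcal_1^+(\lambda,X)$ by the previous case, and Lemma \ref{coincidenceIIvectorvalued} upgrades this to $\Hcal_\infty(\lambda,X) = \Hcal_\infty^+(\lambda,X)$; conversely Remark \ref{cnullvectorvalued} already gives $c_0 \not\hookrightarrow X$. The main obstacle I anticipate is the direction $\Rad(X) \to \Hcal_p^+(\lambda,X)$: merely having bounded partial sums does not a priori yield $L_p$-convergence after damping by $e^{-\lambda_n\sigma}$, and the Abel/contraction argument must be executed carefully because the whole point of the $c_0$-versus-ARNP dichotomy is that such bounded-but-non-Cauchy behavior can genuinely occur when $c_0 \hookrightarrow X$.
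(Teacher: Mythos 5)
Your proof is correct, and for the crucial second isomorphism it takes a genuinely different route from the paper. The paper obtains $\Hcal_p^+(\lambda,X)\simeq \Rad(X)$ by reducing to the $N$th-abschnitt theorem: for a $\Q$-linearly independent $\lambda$ the abschnitte are exactly the partial sums, so $\|D|_N\|_p^+=\|\sum_{n\le N}a_nz_n\|_p\simeq\|\sum_{n\le N}a_nz_n\|_2$ by Kahane--Khintchine, and Theorem \ref{Nabschnittvectorvalued} (whose proof runs through the cone-summing operator identification of Theorem \ref{conesummingcoincidencevectorvalued} and the Riesz-mean measures of Proposition \ref{tool1vectorvalued}) converts $\sup_N\|D|_N\|_p^+$ into $\|D\|_p^+$. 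You instead argue directly: the conditional expectation onto the first $N$ Steinhaus coordinates gives $\|\sum_{n\le N}a_ne^{-\lambda_n\sigma}z_n\|_p\le\|D_\sigma\|_p\le\|D\|_p^+$, and conversely Abel summation with the decreasing weights $c_n=e^{-\lambda_n\sigma}$ writes $\sum_{n\le N}c_na_nz_n=\sum_{n<N}(c_n-c_{n+1})S_n+c_NS_N$, which converges in $L_p(\T^\infty,X)$ with norm at most $\sup_N\|S_N\|_p$ because $\sum_n(c_n-c_{n+1})\le 1$ and $c_N\to 0$ (this uses $\lambda_n\to\infty$, which is implicit throughout the paper, e.g.\ in the definition of Riesz means); the limit has the correct Fourier coefficients, so $\|D\|_p^+\le\sup_N\|S_N\|_p$. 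This is more elementary and self-contained --- it bypasses the cone-summing machinery entirely, and the Abel argument alone already settles the "main obstacle" you flag --- at the cost of exploiting the special structure of independent characters, whereas the paper's abschnitt theorem is valid for arbitrary frequencies. The remaining ingredients (Kahane--Khintchine and density for the first isomorphism, Vakhania's theorem for the equivalence with $c_0\not\hookrightarrow X$ when $p<\infty$, and Lemma \ref{coincidenceIIvectorvalued} together with Remark \ref{cnullvectorvalued} for $p=\infty$) coincide with the paper's.
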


We will need the Kahane-Khintchine inequality (see e.g. \cite[(14.18) and Theorem 25.9 ]{Defant}) to compare the $p$-th moments of random sums: For every $1\leq p<\infty$ there are constants $A_p,B_p>0$ such that for every Banach space $X$ and every choice of vectors $(x_n)_{n=1}^N\subseteq X$ we have
\begin{equation}\label{Kahaneinequalityvectorvalued}
A_P^{-1}\Big\|\sum_{n=1}^N x_n z_n\Big\|_{L_p(\T^\infty,X)}
\leq \Big\|\sum_{n=1}^N x_n z_n\Big\|_{L_2(\T^\infty,X)}
\leq B_p \Big\|\sum_{n=1}^N x_n z_n\Big\|_{L_p(\T^\infty,X)}.
\end{equation}

\begin{proof}[Proof of Theorem \ref{radHpcoincidevectorvalued}]
Notice that $\T^\infty$ (with the canonical morphism $\beta$) is a Dirichlet group for $\lambda$. As a direct consequence of the Kahane-Khintchine inequality (\ref{Kahaneinequalityvectorvalued}) and the density of finite sums $\sum_{n=1}^N x_n z_n$ in $\rad(X)$ and $H_p^\lambda(\T^\infty,X)$ we have that
\[\rad(X)\simeq H_p^\lambda(\T^\infty,X)=\mathcal{H}_{p}(\lambda,X).\]
For the second equality notice that using Corollary \ref{inclusionplusvectorvalued} and again the Kahane-Khintchine inequality for any sequence $(x_n)_n\subseteq X$ we have
\begin{align*}
   \|\sum_{n=1}^N x_n z_n\|_{\rad(X)} &=\sup_{N\in\N}\Big\|\sum_{n=1}^N x_n z_n\Big\|_2
   =\sup_{N\in\N}\sup_{\sigma>0}\Big\|\sum_{n=1}^N x_n e^{-\lambda_n\sigma}z_n\Big\|_2
    \\ &\simeq \sup_{N\in\N}\sup_{\sigma>0}\Big\|\sum_{n=1}^N x_n e^{-\lambda_n\sigma}z_n\Big\|_p
    =  \sup_{N\in\N} \|D|_N\|_{\mathcal{H}_{p}^+(\lambda,X)}.
\end{align*}
This together with Theorem \ref{Nabschnittvectorvalued} proves the equality.

It only remains to check the case $p=\infty$ of the last statement. This is a direct consequence from Lemma \ref{coincidenceIIvectorvalued} and Remark \ref{cnullvectorvalued}.
\end{proof}

\subsection{Brothers Riesz theorem}

Let us discuss the special case $p=1$ in Theorem~\ref{conesummingcoincidencevectorvalued}: For every $\lambda$-Dirichlet group  $(G, \beta)$  and every Banach space $X$
\[
 \Pi^{\lambda}_{cone}(C(G),X) = \Hcal^{+}_{1}(\lambda,X), ~~ T \mapsto \sum \widehat{T}(h_{\lambda_{n}}) e^{-\lambda_{n}s}
\]
 is an onto isometry. Denote by $M(G,X)$ the Banach space of all regular   $X$-valued Borel measures on $G$ of bounded variation, and
 by $M_{\lambda}(G,X)$ its subspace of $\lambda$-analytic measures $\mu$, i.e $\widehat{\mu}(\gamma)\neq 0$ only if $\gamma = h_{\lambda_n}$
 for some $n$.  A well-known result (see e.g. \cite[Chapter VI]{DiestelUhl})
 shows that isometrically
 \[
  M(G,X) = \Pi_1(C(G), X)\,, \,\, \mu \mapsto \Big[ g \mapsto \int_G g ~d\mu \Big]\,,
 \]
 where as above $\Pi_1$ denotes the summing operators. As a consequence of what we have achieved so far, we  state the following Brother Riesz type theorem for general $X$-valued
 $\lambda$-Dirichlet series.

 \begin{Theo} Let $\lambda$ be arbitrary, $(G,\beta)$ a $\lambda$-Dirichlet group, and $X$ a Banach space. Then the mapping
$$ M_{\lambda}(G,X) = \Hcal^{+}_{1}(\lambda,X), ~~ \mu \mapsto \sum \widehat{\mu}(h_{\lambda_{n}})e^{-\lambda_{n}s},$$ defines an onto isometry
preserving the Fourier- and Dirichlet coefficients.  Moreover,
\begin{enumerate}
\item[(1)]  $M_{\lambda}(G,X)=\mathcal{H}_{1}(\lambda,X)$ whenever $X$ has ARNP,
\end{enumerate}
and under the assumption that $\lambda$ is $\mathbb{Q}$-linearly independent,
\begin{enumerate}
\item[(2)] $M_{\lambda}(G,X)=\mathcal{H}_{1}(\lambda,X)$ if and only if $X$ contains no isomorphic copy of $c_0$.
\end{enumerate}
\end{Theo}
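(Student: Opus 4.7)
The plan is to assemble the theorem as a direct consequence of the machinery already developed. The key observation is that all three isometric identifications in the statement follow by chaining together results already proved, once one checks that Fourier coefficients are preserved at every step.

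First, I would establish the main isometry $M_{\lambda}(G,X) = \mathcal{H}^+_{1}(\lambda, X)$. The recalled identification $M(G,X) = \Pi_1(C(G),X)$, $\mu \mapsto [g \mapsto \int_G g\,d\mu]$, is isometric, and a one-line computation (evaluating both sides on the character $\overline{\gamma}$) shows that it sends the $X$-valued measure $\mu$ to an operator $T$ with $\widehat{T}(\gamma) = \widehat{\mu}(\gamma)$ for every $\gamma \in \widehat{G}$. Consequently it restricts to an isometric bijection from $M_{\lambda}(G,X)$ onto the subspace of those summing operators whose Fourier support lies in $\{h_{\lambda_n} : n \in \N\}$. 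By the classical identity \eqref{cone2vectorvalued} we have $\Pi_1(C(G),X) = \Pi_{cone}(C(G),X)$ isometrically, and this equality respects the Fourier support condition, so the image is exactly $\Pi^{\lambda}_{cone}(C(G),X)$. Now Theorem~\ref{conesummingcoincidencevectorvalued} applied with $p=1$ (hence $q=\infty$ and $E_q(G)=C(G)$) provides the onto isometry $\Pi^{\lambda}_{cone}(C(G),X) = \mathcal{H}^+_{1}(\lambda,X)$, $T \mapsto \sum \widehat{T}(h_{\lambda_n}) e^{-\lambda_n s}$. Composing these two isometries yields precisely the map $\mu \mapsto \sum \widehat{\mu}(h_{\lambda_n}) e^{-\lambda_n s}$, and by construction it preserves both Fourier and Dirichlet coefficients.

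With the main isometry in hand, parts (1) and (2) are immediate. For (1), Theorem~\ref{mainresultARNPvectorvalued} gives $\mathcal{H}^+_{1}(\lambda, X) = \mathcal{H}_{1}(\lambda, X)$ whenever $X$ has ARNP, regardless of $\lambda$, so $M_{\lambda}(G,X) = \mathcal{H}_{1}(\lambda, X)$. For (2), under the hypothesis that $\lambda$ is $\mathbb{Q}$-linearly independent, Theorem~\ref{radHpcoincidevectorvalued} (specialized to $p=1$) characterizes $\mathcal{H}_{1}(\lambda, X) = \mathcal{H}^+_{1}(\lambda, X)$ by the condition that $c_0$ embeds isomorphically into $X$ being false; combined with the main isometry this is precisely the required equivalence.

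There is really no serious obstacle; the theorem is a corollary harvested from the deeper results that have already been proved. The only point that needs a careful (if brief) verification is the Fourier-coefficient compatibility of the identifications $M(G,X) \leftrightarrow \Pi_1(C(G),X) \leftrightarrow \Pi_{cone}(C(G),X)$, since the whole chain only produces the stated map $\mu \mapsto \sum \widehat{\mu}(h_{\lambda_n}) e^{-\lambda_n s}$ if one tracks $\widehat{T}$ through each step, and this is what guarantees that the restriction to $\lambda$-analytic measures lands exactly in $\Pi^{\lambda}_{cone}(C(G),X)$ rather than in some larger subspace.
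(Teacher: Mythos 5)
Your proposal is correct and follows essentially the same route as the paper: the main isometry is obtained by composing $M(G,X)=\Pi_1(C(G),X)$ with \eqref{cone2vectorvalued} and Theorem~\ref{conesummingcoincidencevectorvalued} for $p=1$, and parts (1) and (2) are then read off from Theorem~\ref{mainresultARNPvectorvalued} and Theorem~\ref{radHpcoincidevectorvalued}. Your explicit tracking of the Fourier coefficients through each identification is exactly the (brief) verification the paper leaves implicit.
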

Note that \eqref{cone2vectorvalued} and Theorem \ref{conesummingcoincidencevectorvalued} immediately give the first result.
Then both  statements (1) and (2) follow from Theorem~\ref{mainresultARNPvectorvalued} and Theorem~\ref{radHpcoincidevectorvalued}.
The scalar ordinary case is due to \cite{HelsonLowdenslager} (for an alternative proof see also \cite{AlemanOlsenSaksman}).
In the ordinary vector-valued case (1) was proved in \cite{AntonioDefant} (see also \cite[Section 26.6]{Defant}).  The identity $M_{\lambda}(G)=\mathcal{H}_{1}(\lambda)$,   where $X =\C$ and $\lambda$ is arbitrary, is done in \cite[Theorem 4.25]{DefantSchoolmann2}; there the proof  uses as a crucial ingredient a result of  Doss from \cite[Theorem 4]{Doss}  for locally compact and connected groups with ordered duals. Note that the proof given here   may be seen as a proof  which is entirely performed within  Dirichlet series.

 \subsection{Preduals of $\mathcal{H}_{p}(\lambda,X^{*})$}
As e.g. proved in \cite[Proposition 24.16]{Defant} for every $1\le p \le \infty$ and every  Banach space $X$ we have that isometrically
\begin{equation} \label{marshallvectorvalued}
\Pi_{cone}(E_{q}(G),X^{*})=E_{q}(G,X)^{*}\,, \,\, \,[f \otimes x \mapsto < Tf,x>].
\end{equation}
Hence, fixing a frequency $\lambda$ and denoting by $E_{q}(G,X)^{*}_{\lambda}$ the weak*-closed subspace of all functionals $\varphi \in E_{q}(G,X)^{*}$ such that $\varphi(\overline{h_{t}}\otimes x)=0$ for all $x\in X$, whenever for  $t \notin \{\lambda_{n} \mid n \in \N\}$, we by  (\ref{marshallvectorvalued}) and Theorem \ref{conesummingcoincidencevectorvalued} have the isometric equalities
$$\mathcal{H}_{p}^{+}(\lambda,X^{*})=\Pi^{\lambda}_{cone}(E_{q}(G),X^{*})=E_{q}(G,X)^{*}_{\lambda},$$
where again the Dirichlet- and Fourier coefficients are preserved. In the ordinary case the first part of the  following theorem was proved in \cite[Theorem~7.3]{AntonioDefant}
(compare also  \cite[Theorem 24.15]{Defant}). The proof follows similar lines as in the ordinary case.

\begin{Theo}\label{predualvectorvalued} Let $\lambda$ be a frequency, $X$  a Banach space and $1\le p \le \infty$. Then $\Hcal_{p}(\lambda,X^{*})$ has a predual if and only if $$\Hcal_{p}^{+}(\lambda,X^{*})=\Hcal_{p}(\lambda,X^{*})\,.$$
In particular, each of these equivalent statements holds
\begin{enumerate}
\item[(1)] whenever $X^*$ has ARNP,
\end{enumerate}
and,  assuming that $\lambda$ is $\mathbb{Q}$-linearly independent,
\begin{enumerate}
\item[(2)] if and only if $X^{*}$ does not contain an isomorphic copy of $c_{0}$.
\end{enumerate}
\end{Theo}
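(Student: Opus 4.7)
For $(\Leftarrow)$, I would invoke the isometric identifications recorded just before the theorem,
\[
\mathcal{H}_p^+(\lambda, X^*) = \Pi^\lambda_{cone}(E_q(G), X^*) = E_q(G, X)^*_\lambda,
\]
where $E_q(G, X)^*_\lambda$ is (by definition) a weak*-closed subspace of the dual $E_q(G, X)^*$. Being weak*-closed in a dual, it is automatically a dual space, namely of the quotient of $E_q(G, X)$ by the corresponding annihilator subspace. Hence if $\mathcal{H}_p(\lambda, X^*) = \mathcal{H}_p^+(\lambda, X^*)$, the former inherits this predual.

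For $(\Rightarrow)$, suppose $Y := \mathcal{H}_p(\lambda, X^*)$ admits a predual $Z$, and set $Y^+ := \mathcal{H}_p^+(\lambda, X^*)$. By Corollary~\ref{inclusionplusvectorvalued} the inclusion $Y \subset Y^+$ is isometric, so only $Y^+ \subset Y$ needs proof. Given $D \in Y^+$, Proposition~\ref{tool1vectorvalued} together with the definition of $Y^+$ places its translates $(D_\sigma)_{\sigma > 0}$ in $Y$ with $\|D_\sigma\|_Y \leq \|D\|_{Y^+}$. Banach--Alaoglu in $Y = Z^*$ furnishes a subnet $(D_{\sigma_i})$ converging weak* in $Y$ to some $\widetilde D \in Y$. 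Independently, under the identification $Y^+ = E_q(G, X)^*_\lambda$, the entire net $(D_\sigma)$ converges to $D$ in the weak*-topology of $Y^+$: one verifies this first on elementary tensors $\overline{h_{\lambda_n}} \otimes x$, where
\[
D_\sigma(\overline{h_{\lambda_n}} \otimes x) = e^{-\lambda_n \sigma}\,\langle a_n(D), x\rangle \longrightarrow \langle a_n(D), x\rangle,
\]
and extends to all of $E_q(G, X)$ by uniform boundedness and density of such polynomials. Passing to a common subnet, the task reduces to identifying $\widetilde D$ with $D$ by matching their Dirichlet coefficients.

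The main obstacle lies in exactly this compatibility: the weak*-topology on $Y$ coming from its own predual $Z$ and the one induced on $Y$ from the predual of $Y^+$ must agree on bounded sets. To address this, let $V \subset Y^*$ be the subspace of restrictions to $Y$ of elements $f \in E_q(G, X)$. Then $V$ is norming for $Y$, since for $E \in Y$ one has $\|E\|_Y = \|E\|_{Y^+} = \sup_{\|f\|_{E_q}\le 1}|E(f)|$, and $V$ contains in particular the Dirichlet-coefficient functionals $E \mapsto \langle a_n(E), x\rangle$. A Dixmier--Ng / Krein--\v{S}mulian type argument then uses the existence of some predual $Z$ to force $B_Y$ to be $\sigma(Y,V)$-compact, which is equivalent to $Y$ being weak*-closed in $Y^+$; combined with the weak* convergence $(D_\sigma) \to D$ in $Y^+$ above, this delivers $D \in Y$ and hence $Y = Y^+$. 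I expect this compatibility step to be the genuine difficulty, with the ordinary case~\cite[Theorem~7.3]{AntonioDefant} serving as the natural template.

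Finally, parts (1) and (2) are immediate consequences of the equivalence just established: (1) follows from Theorem~\ref{mainresultARNPvectorvalued}, which under ARNP of $X^*$ delivers $\mathcal{H}_p^+(\lambda, X^*) = \mathcal{H}_p(\lambda, X^*)$ directly, while (2) follows from Theorem~\ref{radHpcoincidevectorvalued}, which for $\mathbb{Q}$-linearly independent $\lambda$ characterizes this same equality by $c_0 \not\subset X^*$.
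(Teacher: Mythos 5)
Your proof of the ``if'' direction is correct and is exactly the paper's route: $\Hcal_p^+(\lambda,X^*)=E_q(G,X)^*_\lambda$ is a weak*-closed subspace of a dual space, hence the dual of the corresponding quotient of $E_q(G,X)$, and this predual is inherited by $\Hcal_p(\lambda,X^*)$ once the two spaces coincide. Likewise, your derivation of (1) and (2) from Theorem~\ref{mainresultARNPvectorvalued} and Theorem~\ref{radHpcoincidevectorvalued} is what the paper intends.

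The ``only if'' direction, however, has a genuine gap, and it sits precisely at the step you yourself flag. Your plan is: let $V\subset Y^*$ be the (norming) subspace of functionals induced by $E_q(G,X)$, and deduce from the existence of \emph{some} predual $Z$ that $B_Y$ is $\sigma(Y,V)$-compact. That deduction is false in general. The Dixmier--Ng theorem runs in the opposite direction (compactness of the ball in a given locally convex topology \emph{produces} a predual), and a dual space can have a closed $1$-norming subspace of its dual whose induced topology does not make the ball compact: take $Y=\ell_1=c_0^*$ and $V=c\subset\ell_\infty=Y^*$; the sequence $(e_n)\subset B_{\ell_1}$ has no $\sigma(\ell_1,c)$-convergent subnet in $\ell_1$, since testing against the coordinate functionals forces any limit to be $0$ while testing against $(1,1,\dots)\in c$ gives the value $1$. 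The obstruction is exactly that $c\not\subset c_0$, i.e.\ that $V$ need not consist of weak*-continuous functionals. So the entire content of the implication ``$\Hcal_p(\lambda,X^*)$ has a predual $\Rightarrow$ $\Hcal_p^+=\Hcal_p$'' is to prove that the functionals $E\mapsto\int_G \langle E,f\rangle\,dm$ for $f\in E_q(G,X)$ (equivalently, enough of the coefficient functionals) are weak*-continuous with respect to an \emph{arbitrary} isometric predual $Z$ --- and your proposal offers no argument for this; it only names a mechanism that does not apply. Everything else in your sketch (boundedness of the translates, Banach--Alaoglu, weak* convergence $D_\sigma\to D$ in $E_q(G,X)^*_\lambda$ via elementary tensors and density, and the reduction ``$B_Y$ weak*-compact and weak*-dense in $B_{Y^+}$ implies $Y=Y^+$'') is sound, but it all funnels into this one unproved compatibility statement. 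The paper itself does not reproduce this argument either --- it defers to the ordinary case \cite[Theorem~7.3]{AntonioDefant} --- but a self-contained proof must supply it, since it is the only nontrivial part of the forward implication.
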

In contrast to the ordinary case, there are Banach spaces $X$ such that $\Hcal_{p}(\lambda,X^{*})$ has a predual but $X^*$ fails ARNP. Indeed, it suffices to find a dual space failing ARNP that contains no isomorphic copy of $c_0$, since then  for $\Q$-linearly independent frequencies \textit{(2)} holds.

We show that if $A$ is the disk algebra, then $A^*$ satisfies the desired properties.
It does not contain $c_0$, since $A^*$ has cotype 2 (see \cite[Corollary~2.11]{Bou84}).
We now check that $A^*$ fails ARNP.
As shown in \cite[page 11]{Pel}, the F. and M. Riesz theorem  implies that
\[A^*\simeq L_1(\T)/H_0^1(\T) \oplus_1 M_s(\T),\]
where $M_s(\T)$ denotes the Banach space of singular measures on $\T$. Since the quotient $L_1(\T)/H_0^1(\T)$ does not have ARNP (see \cite[Remark~4.35]{Pi16}) and this property is inherited by closed subspaces, we deduce that $A^*$ also fails ARNP. In conclusion, for $\Q$-linearly independent frequencies $\lambda$ we have that  $\Hcal_{p}(\lambda,A^{*})$ has a predual by Theorem \ref{predualvectorvalued} but $A^*$ fails ARNP.

\section{General Dirichlet series vs. holomorphic functions}\label{sec4vectorvalued}

The main goal of this section is to extend the equivalences of Bohr's theorem to the vector-valued setting (see Aspect III). Surprisingly, this depends only on the frequency $\lambda$ and not on the geometry of the Banach space $X$ once we replace $\HH_\infty(\lambda,X)$ with $\HH^+_\infty(\lambda,X)$ as mentioned in the introduction.

\subsection{Besicovitch spaces} Recall that the Banach space $\mathcal{H}_{\infty}^{\lambda}([Re>0],X)$
consists  of all bounded and holomorphic functions $F\colon [Re>0] \to X$, which are  almost periodic on all abscissas and for which the Bohr coefficients $a_{x}(F)$ vanish whenever $x \notin \{\lambda_{n} \mid n \in \N\}$.

In the scalar case $X=\mathbb{C}$ we know from \cite[Theorem 2.16]{DefantSchoolmann3} that there is an onto isometry
\begin{equation} \label{Besiscalarcasevectorvalued}
\mathcal{H}_{\infty}^{\lambda}[Re>0] = \mathcal{H}_{\infty}(\lambda)
\end{equation}
preserving the Bohr and Dirichlet coefficients. We extend this result to the $X$-valued case.

\begin{Theo} \label{Besivectorvalued}
For all frequencies $\lambda$  and Banach spaces $X$ there is  an onto isometry
\[
\mathcal{H}_\infty^\lambda([Re >0], X) =  \mathcal{H}^+_\infty(\lambda, X), ~~ F \mapsto D, \,
\]
such that $a_{\lambda_n}(F)= a_n(D)$ for all $n$. In particular, the inclusion
$$   \mathcal{D}_{\infty}(\lambda, X)  \hookrightarrow \mathcal{H}^+_\infty(\lambda, X)\,.$$
is isometric.
\end{Theo}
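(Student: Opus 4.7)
The plan is to establish the main isometry by constructing mutually inverse isometric maps between $\mathcal{H}_\infty^\lambda([Re>0], X)$ and $\mathcal{H}_\infty^+(\lambda, X)$, and then to deduce the last statement from Theorem \ref{uniformRieszlimitvectorvalued}. Throughout, it is convenient to fix the Bohr compactification $\overline{\R}$ together with its canonical morphism $\beta: \R \to \overline{\R}$, which is a $\lambda$-Dirichlet group for every $\lambda$ and which best encodes the interplay between almost periodic functions on $\R$ and continuous functions on a compact group.

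For the forward map, I would take $F \in \mathcal{H}_\infty^\lambda([Re>0], X)$ and, for each $\sigma > 0$, use uniform almost periodicity of $t \mapsto F(\sigma + it)$ together with the density of $\beta(\R)$ in $\overline{\R}$ to extend this restriction uniquely to a continuous function $G_\sigma \in C(\overline{\R}, X)$. The Fourier coefficients of $G_\sigma$ coincide with the Bohr coefficients of $F(\sigma+\cdot)$, namely $e^{-\sigma\lambda_n} a_{\lambda_n}(F)$, so $G_\sigma \in H_\infty^\lambda(\overline{\R}, X)$, and the Bohr transform \eqref{BohrmapHelsonvectorvalued} returns $D_\sigma = \sum a_{\lambda_n}(F) e^{-\sigma\lambda_n} e^{-\lambda_n s}$ with $\|D_\sigma\|_\infty = \|G_\sigma\|_\infty = \sup_{t\in\R}\|F(\sigma+it)\|_X$. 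Taking the supremum over $\sigma > 0$ then places $D := \sum a_{\lambda_n}(F) e^{-\lambda_n s}$ in $\mathcal{H}_\infty^+(\lambda, X)$ with $\|D\|_\infty^+ = \|F\|_\infty$.

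For the backward map, given $D = \sum a_n e^{-\lambda_n s} \in \mathcal{H}_\infty^+(\lambda, X)$, let $g_\sigma \in H_\infty^\lambda(\overline{\R}, X)$ correspond to $D_\sigma$. For $s \in [Re > 0]$ and any decomposition $Re(s) = \sigma + \sigma'$ with $\sigma, \sigma' > 0$ I would define
\[
F(s) := (g_\sigma \ast p_{\sigma'})(\beta(\Im s)).
\]
Corollary \ref{heutevectorvalued} yields continuity of $g_\sigma \ast p_{\sigma'}$ on $\overline{\R}$ and a uniform Riesz representation; since its Fourier coefficients $e^{-Re(s)\lambda_n} a_n$ depend only on $Re(s)$, the definition is independent of the splitting, and the bound $\|F(s)\|_X \leq \|g_\sigma\|_\infty \leq \|D\|_\infty^+$ is immediate. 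For holomorphicity, I would argue weakly: on each vertical line, the continuous u.a.p.\ function $t \mapsto x^*(F(\sigma+it))$ has Bohr coefficients $e^{-Re(s)\lambda_n}x^*(a_n)$, which also characterize the bounded holomorphic image of $x^* \circ D \in \mathcal{H}_\infty(\lambda)$ under the scalar isometry \eqref{Besiscalarcasevectorvalued}; uniqueness of a u.a.p.\ function from its Bohr spectrum forces pointwise equality, so $x^* \circ F$ agrees with a holomorphic function on $[Re>0]$. Local boundedness of $F$ then upgrades weak holomorphicity to holomorphicity. Almost periodicity on vertical lines follows by pullback, and $a_{\lambda_n}(F) = a_n$ follows from the classical equality between time-averages on $\R$ and Haar integrals on $\overline{\R}$ applied to continuous functions.

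For the inclusion $\mathcal{D}_\infty(\lambda, X) \hookrightarrow \mathcal{H}_\infty^+(\lambda, X)$, let $D \in \mathcal{D}_\infty(\lambda, X)$ with limit function $f$. By Theorem \ref{uniformRieszlimitvectorvalued}, the Riesz means $R_x^{\lambda,k}(D)$ converge to $f$ uniformly on $[Re > \varepsilon]$ for every $\varepsilon > 0$. Each Riesz mean is a Dirichlet polynomial, hence uniformly almost periodic on every vertical line, so the uniform limit $f$ is also u.a.p.\ on each $[Re = \sigma]$, $\sigma > 0$. The $\lambda_n$-Bohr coefficient of $R_x^{\lambda,k}(D)$ equals $a_n(1-\lambda_n/x)^k$ and tends to $a_n$ as $x \to \infty$, while all other Bohr coefficients vanish; thus $f \in \mathcal{H}_\infty^\lambda([Re>0], X)$ with $a_{\lambda_n}(f) = a_n(D)$ and $\|f\|_\infty = \|D\|_{\mathcal{D}_\infty}$, and the main isometry concludes. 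The principal technical obstacle is the backward construction, specifically verifying holomorphicity of $F$; the reduction to the scalar isometry \eqref{Besiscalarcasevectorvalued} via Hahn-Banach and weak holomorphicity is exactly what sidesteps the absence of a polydisc Poisson kernel in the general Dirichlet group setting.
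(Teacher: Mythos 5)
Your argument is correct, but it follows a genuinely different route from the paper in both directions. For the forward map $F\mapsto D$ the paper does not construct the functions $f_\sigma\in H_\infty^\lambda(G,X)$ directly: it applies Hahn--Banach, invokes the scalar isometry \eqref{Besiscalarcasevectorvalued} to get $x^\ast\circ D\in\mathcal H_\infty(\lambda)$ for every $x^\ast$, and then quotes Theorem \ref{weakvectorvalued} (whose proof runs through the cone-summing identification $\Pi^\lambda_{cone}(L_1(G),X^{\ast\ast})$ of Theorem \ref{conesummingcoincidencevectorvalued}) to conclude $D\in\mathcal H_\infty^+(\lambda,X)$ with equality of norms. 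You instead build $G_\sigma\in C(\overline{\R},X)$ by hand from the almost periodic restriction $F(\sigma+i\cdot)$; this is cleaner and bypasses Theorem \ref{weakvectorvalued} entirely, but it silently imports the isometric identification of $X$-valued uniformly almost periodic functions on $\R$ with $C(\overline{\R},X)$ (the vector-valued Bochner/approximation theorem) -- a classical fact, but a heavier input than the Hahn--Banach reductions the paper allows itself, and one you should at least cite. For the backward map the paper again avoids pointwise constructions: it uses the measures $\mu_x$ from \eqref{measureRieszmeanvectorvalued} together with the Bohr--Cahen formula to show $\sigma_u^{\lambda,1}(D)\le 0$, and defines $F$ as the uniform limit on $[Re>\varepsilon]$ of the Riesz means $\sum_{\lambda_n<x}a_n(1-\lambda_n/x)e^{-\lambda_n s}$, from which holomorphy, almost periodicity and the Bohr coefficients are immediate (and which yields Corollary \ref{fireinthemorningvectorvalued} as a by-product). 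Your Poisson-convolution definition $F(s)=(g_\sigma\ast p_{\sigma'})(\beta(\Im s))$ is equally valid, and your detour through weak holomorphy -- matching Bohr spectra with the scalar function from \eqref{Besiscalarcasevectorvalued} and invoking uniqueness of a u.a.p.\ function from its Bohr coefficients, then Dunford's theorem -- does close the one step (holomorphy of $F$) that is not automatic in your setup. Your treatment of the final inclusion via Theorem \ref{uniformRieszlimitvectorvalued} coincides with the paper's. In short: same skeleton (scalar isometry plus a construction of $F$ from the family $(D_\sigma)$), but you trade the paper's Riesz-mean and cone-summing machinery for the a.p.--$C(\overline{\R},X)$ correspondence and the Poisson kernel; both are sound.
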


Before going into the proof, from Corollary~\ref{heutevectorvalued} we easily deduce the following approximation theorem for almost periodic functions.

\begin{Coro} \label{fireinthemorningvectorvalued} Let $\lambda$ be a frequency,  $X$ a Banach space, and $k >0$. Then for every $F\in \mathcal{H}_\infty^\lambda([Re >0], X)$ and $\sigma>0$ the restriction
$$F_{\sigma}: \R \to X, \, F_\sigma(t) = F (\sigma +it)$$
is,  as $x\to \infty$, the uniform limit of
the polynomials
$$\sum_{\lambda_{n}<x} a_{\lambda_{n}}(F)\big(1-\frac{\lambda_{n}}{x}\big)^{k} e^{-\lambda_{n}(\sigma +it)}\,.$$
\end{Coro}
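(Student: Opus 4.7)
The plan is to combine Theorem~\ref{Besivectorvalued} with Corollary~\ref{heutevectorvalued}, using a small horizontal shift to bridge the gap between the spaces $\mathcal{H}^+_\infty(\lambda,X)$ and $\mathcal{H}_\infty(\lambda,X) = H_\infty^\lambda(G,X)$ to which the corollary directly applies. Fix a $\lambda$-Dirichlet group $(G,\beta)$. By Theorem~\ref{Besivectorvalued}, $F$ corresponds isometrically to a Dirichlet series $D=\sum a_n e^{-\lambda_n s}\in \mathcal{H}^+_\infty(\lambda,X)$ with $a_n = a_{\lambda_n}(F)$ for every $n$.

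Given $\sigma>0$, choose any $0<\delta<\sigma$. Since $D_\delta \in \mathcal{H}_\infty(\lambda,X)$, there exists $g\in H_\infty^\lambda(G,X)$ with $\widehat{g}(h_{\lambda_n}) = a_n e^{-\delta \lambda_n}$ for every $n$. Applying Corollary~\ref{heutevectorvalued} to $g$ with parameter $\sigma-\delta$ yields the uniform limit on $G$
\[
g\ast p_{\sigma-\delta} = \lim_{x\to\infty} \sum_{\lambda_n<x} a_n \Big(1-\frac{\lambda_n}{x}\Big)^k e^{-\sigma\lambda_n}\, h_{\lambda_n}.
\]
Since $\beta\colon \R\to G$ is continuous, precomposition preserves uniform convergence, giving
\[
(g\ast p_{\sigma-\delta})\circ \beta = \lim_{x\to\infty} \sum_{\lambda_n<x} a_n \Big(1-\frac{\lambda_n}{x}\Big)^k e^{-\lambda_n(\sigma+it)}
\]
uniformly in $t\in\R$.

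It remains to identify the limit function $t\mapsto (g\ast p_{\sigma-\delta})(\beta(t))$ with $F_\sigma$. Both maps are bounded and continuous $\R\to X$; the first is a uniform limit of exponential polynomials with frequencies in $\lambda$ and hence uniformly almost periodic, while the second is uniformly almost periodic by the very definition of $\mathcal{H}_\infty^\lambda([Re>0],X)$. A direct Bohr coefficient computation shows that both take the value $a_n e^{-\sigma\lambda_n}$ at each frequency $\lambda_n$ and vanish elsewhere: for $F_\sigma$ this follows from the translation rule $a_{\lambda_n}(F_\sigma) = e^{-\sigma\lambda_n} a_{\lambda_n}(F)$, and for the other function from continuity of Bohr coefficients under uniform limits applied to the approximating exponential polynomials. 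By the uniqueness property of Bohr coefficients for uniformly almost periodic $X$-valued functions (recalled in the Besicovitch subsection of the excerpt), the two functions coincide pointwise.

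The main obstacle is precisely the mismatch between $\mathcal{H}^+_\infty(\lambda,X)$ and $\mathcal{H}_\infty(\lambda,X)$: without further assumptions on $\lambda$ or $X$, one cannot expect $D$ itself to be represented by a function on $G$, so Corollary~\ref{heutevectorvalued} cannot be applied directly to $D$. The splitting $\sigma=\delta+(\sigma-\delta)$ circumvents this by absorbing part of the exponential decay into a translation that lands us inside $\mathcal{H}_\infty(\lambda,X)$, and leaving the remainder to be handled by Poisson convolution.
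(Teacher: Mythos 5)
Your argument is correct and follows exactly the route the paper intends: the paper gives no explicit proof beyond asserting that the corollary is ``easily deduced'' from Corollary~\ref{heutevectorvalued}, and your proof is a sound elaboration of precisely that deduction --- passing to $D\in\mathcal{H}^+_\infty(\lambda,X)$ via Theorem~\ref{Besivectorvalued}, shifting by $\delta<\sigma$ to land in $H_\infty^\lambda(G,X)$, applying Corollary~\ref{heutevectorvalued} with parameter $\sigma-\delta$, and identifying the limit with $F_\sigma$ through the uniqueness of Bohr coefficients of uniformly almost periodic functions. There is no circularity, since the proof of Theorem~\ref{Besivectorvalued} does not use this corollary.
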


In order to prove Theorem \ref{Besivectorvalued} we first extend Theorem~\ref{HahnBanachinftyvectorvalued} from $\mathcal{D}_{\infty}(\lambda,X)$ to $\mathcal{H}_{\infty}^{+}(\lambda,X)$.

\begin{Theo}\label{weakvectorvalued}
Let $D \in \mathcal{D}(\lambda, X)$. Then the following are equivalent:
\begin{itemize}
\item[(1)]
$D\in \mathcal{H}^+_\infty(\lambda, X)$
\item[(2)]
$x^\ast \circ D\in \mathcal{H}_\infty(\lambda)$ for all $x^\ast \in X^\ast$
\end{itemize}
Moreover, in this case
\begin{equation} \label{HahnBanachHpluesvectorvaled}
\|D\|^{+}_\infty = \sup_{x^\ast  \in B_{X^\ast}} \|x^\ast \circ D\|_{\mathcal{H}_\infty(\lambda)}.
\end{equation}
\end{Theo}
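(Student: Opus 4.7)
The plan is to prove (1) $\Rightarrow$ (2) directly and the converse by combining a closed graph argument with the Riesz means characterization from Proposition~\ref{tool1vectorvalued}. The identity~\eqref{HahnBanachHpluesvectorvaled} will then follow from a Hahn-Banach/separable-range trick in the spirit of Lemma~\ref{coincidenceIIvectorvalued}.

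For (1) $\Rightarrow$ (2), fix a $\lambda$-Dirichlet group $(G,\beta)$ and, for each $\sigma>0$, let $f_\sigma \in H^\lambda_\infty(G,X)$ represent $D_\sigma$. Then $x^\ast \circ f_\sigma \in H^\lambda_\infty(G)$ with $\|x^\ast \circ f_\sigma\|_\infty \le \|x^\ast\|\,\|D\|^+_\infty$, and its Fourier coefficients match the Dirichlet coefficients of $(x^\ast \circ D)_\sigma$. Hence $x^\ast \circ D \in \HH^+_\infty(\lambda) = \HH_\infty(\lambda)$ by Proposition~\ref{Hpplusscalarvectorvalued}, with $\|x^\ast \circ D\|_\infty \le \|x^\ast\|\,\|D\|^+_\infty$; in particular $\sup_{\|x^\ast\|\le 1}\|x^\ast \circ D\|_\infty \le \|D\|^+_\infty$.

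For the converse, I first show that the linear map $T\colon X^\ast \to \HH_\infty(\lambda)$, $x^\ast \mapsto x^\ast \circ D$, is bounded. Since $\HH_\infty(\lambda)$ is complete, by the closed graph theorem it suffices to check that $T$ is closed. If $x^\ast_n \to x^\ast$ in $X^\ast$ and $T(x^\ast_n) \to g$ in $\HH_\infty(\lambda)$, then for every $m$ the $m$-th Dirichlet coefficient defines a continuous functional on $\HH_\infty(\lambda)$ (of norm at most $1$, by the Fourier-coefficient bound in $L_\infty(G)$), so $x^\ast_n(a_m) = a_m(T(x^\ast_n)) \to a_m(g)$; since also $x^\ast_n(a_m) \to x^\ast(a_m)$ by norm convergence in $X^\ast$, this forces $g = x^\ast \circ D$. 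Set $M := \sup_{\|x^\ast\|\le 1}\|x^\ast \circ D\|_\infty < \infty$.

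Fix any $k>0$. For every $x^\ast \in B_{X^\ast}$, Proposition~\ref{tool1vectorvalued} applied to the scalar series $x^\ast \circ D \in \HH_\infty(\lambda) = \HH^+_\infty(\lambda)$ yields $\|R^{\lambda,k}_x(x^\ast \circ D)\|_\infty \le C_k \|x^\ast \circ D\|_\infty \le C_k M$ for every $x>0$. Since $R^{\lambda,k}_x(D)$ is a finite $X$-valued Dirichlet polynomial whose representing function on $G$ is continuous, a direct Hahn-Banach argument gives $\|R^{\lambda,k}_x(D)\|_{\HH_\infty(\lambda,X)} = \sup_{\|x^\ast\|\le 1}\|x^\ast \circ R^{\lambda,k}_x(D)\|_\infty \le C_k M$, and invoking Proposition~\ref{tool1vectorvalued} in the other direction yields $D \in \HH^+_\infty(\lambda,X)$. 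To obtain the norm identity, note that for each $\sigma>0$ the representative $f_\sigma \in H^\lambda_\infty(G,X)$ is essentially separable-valued; reasoning exactly as in the proof of Lemma~\ref{coincidenceIIvectorvalued} gives $\|D_\sigma\|_\infty = \|f_\sigma\|_{L_\infty(G,X)} = \sup_{\|x^\ast\|\le 1}\|x^\ast \circ f_\sigma\|_{L_\infty(G)} \le \sup_{\|x^\ast\|\le 1}\|x^\ast \circ D\|_\infty$, and taking supremum over $\sigma$, together with the reverse inequality from the first step, completes the proof. The main delicate point is the closed graph step, whose closedness hinges on coefficient stability under $\HH_\infty(\lambda)$-convergence.
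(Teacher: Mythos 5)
Your argument is correct, and while the forward implication and the closed graph step coincide with the paper's, your converse takes a genuinely different route. The paper, after the same closed graph argument, builds an operator $T\colon L_1(G)\to X^{\ast\ast}$ from the scalar representatives $f_{x^\ast}$, observes via \eqref{cone1vectorvalued} that it is cone summing, checks that its Fourier coefficients actually lie in $X$, and then invokes the isometry $\Pi^{\lambda}_{cone}(L_1(G),X)=\Hcal^+_{\infty}(\lambda,X)$ of Theorem~\ref{conesummingcoincidencevectorvalued}; this delivers membership \emph{and} the exact norm identity \eqref{HahnBanachHpluesvectorvaled} in one stroke. You instead bound the Riesz means: the scalar estimate $M_{k,\infty}(x^\ast\circ D)\le C_k\|x^\ast\circ D\|_{\infty}$ from Proposition~\ref{tool1vectorvalued}, combined with the elementary Hahn--Banach identity $\|P\|_{\infty}=\sup_{\|x^\ast\|\le1}\|x^\ast\circ P\|_{\infty}$ for continuous $X$-valued polynomials, gives $M_{k,\infty}(D)\le C_kM$, and Proposition~\ref{tool1vectorvalued} then yields $D\in\Hcal^+_{\infty}(\lambda,X)$. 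What this buys is a proof that stays entirely within the Riesz-mean machinery and avoids both the cone-summing identification and the passage through the bidual; the price is that the constant $C_k$ spoils the isometry, so you need the additional separable-range/countable-norming argument (as in Lemma~\ref{coincidenceIIvectorvalued}) to recover $\|D_\sigma\|_{\infty}\le\sup_{\|x^\ast\|\le1}\|x^\ast\circ D\|_{\infty}$ and hence the exact equality \eqref{HahnBanachHpluesvectorvaled}. That extra step is carried out correctly: $x^\ast\circ f_\sigma$ represents $(x^\ast\circ D)_\sigma$, whose $\Hcal_\infty(\lambda)$-norm is controlled by $\|x^\ast\circ D\|_\infty$ via Proposition~\ref{Hpplusscalarvectorvalued}, and the essential separability of the range of $f_\sigma$ lets you pass from the scalarized bounds to $\|f_\sigma\|_{L_\infty(G,X)}$. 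Both proofs are complete; yours is arguably more self-contained at the cost of one extra measure-theoretic step.
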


\begin{proof}
 Suppose that $D\in \mathcal{H}^+_\infty(\lambda, X)$. Then $x^{*} \circ D \in \mathcal{H}_{\infty}^{+}(\lambda)$ with $\|x^{*}\circ D\|_{\infty}^{+}\le \|D\|_{\infty}^{+}$ for every $x^{*}$ with $\|x^{*}\|=1$. Now the claim follows, since  $\mathcal{H}_{\infty}^{+}(\lambda)=\mathcal{H}_{\infty}(\lambda)$ by Proposition~\ref{Hpplusscalarvectorvalued}. Assume conversely that $D$ satisfies (2). By a closed graph argument we have that
\[
\sup_{x^\ast \in B_{X^\ast}}\|x^\ast \circ D\|_{\mathcal{H}_\infty(\lambda)} =C < \infty.
\]
Moreover fixing a $\lambda$-Dirichlet group $G$, for every $x^\ast \in B_{X^\ast}$ there is a function $f_{x^{*}} \in \mathcal{H}_\infty^{\lambda}(G)$ such that $\widehat{f_{x^\ast}}(h_{\lambda_n})= x^\ast (a_n)$ for all $n$ and $\|f_{x^\ast}\|_\infty = \|x^\ast \circ D\|_{\mathcal{H}_\infty(\lambda)}$. Now consider the linear operator
$$ T\colon L_{1}(G) \to X^{**}, ~~ g \mapsto  \Big[x^{*}\mapsto \int_{G} g(\omega) f_{x^{*}}(\omega)~ d\omega\Big].$$
Then $T$ is bounded with $\|T\|\le C$ and so $T\in \Pi_{cone}(L_{1}(G),X^{**})$ by (\ref{cone1vectorvalued}). Since for all $x^{*}\in X^{*}$
$$\widehat{T}(h_{x})(x^{*})=T(\overline{h_{x}})(x^{*})=\widehat{f_{x^{*}}}(h_{x})$$
we have $T\in \Pi^{\lambda}_{cone}(L_{1}(G),X^{**})$ with
$$\widehat{T}(h_{\lambda_{n}})(x^{\ast})=x^{\ast}(a_{n}(D))\in X \,\,\,\text{for all $n$}$$
for every $x^{\ast}\in X^{\ast}$. Hence we identify $\widehat{T}(h_{\lambda_{n}})=a_{n}\in X$ so that for every polynomial $g= \sum a_{x_{n}}h_{x_{n}}$ we consequently have $T\in \Pi^{\lambda}_{cone}(L_{1}(G),X)$, which finishes the claim by Theorem \ref{conesummingcoincidencevectorvalued}.
\end{proof}

\begin{proof}[Proof of Theorem \ref{Besivectorvalued}]
Let $F \in \mathcal{H}_\infty^\lambda([Re >0], X)$ and $D=\sum a_{n}e^{-\lambda_{n}s}$ be defined by  $a_{n}=a_{\lambda_n}(F)$. Then $x^{*} \circ F\in \mathcal{H}_\infty^\lambda[Re >0]$, and so by (\ref{Besiscalarcasevectorvalued}) and comparing coefficients we have $x^{*}\circ D\in \mathcal{H}_{\infty}(\lambda)$ for all $x^{*}$. Now Theorem \ref{weakvectorvalued} implies $D\in \mathcal{H}_{\infty}^{+}(\lambda,X).$ Conversely, take some $D = \sum a_n e^{\lambda_n s} \in \mathcal{H}^+_\infty(\lambda, X)$, i.e. for each $\sigma >0$ there is some
$f_\sigma \in H_\infty^\lambda (G,X)$ such that $\widehat{f_{\sigma}}(h_{\lambda_n}) = a_{n} e^{-\sigma \lambda_{n}}$ for all $n$ and $\|f_{\sigma}\|=\|D_{\sigma}\|_{\infty}$.
Using the measures from (\ref{measureRieszmeanvectorvalued}) with $k=1$,  we for all $x>0$ have
\begin{align*}
\sup_{t \in \R} \big| \sum_{\lambda_{n}<x} x^\ast(a_{n})
  &
  e^{-\lambda_n \sigma}
    \big(1- \frac{\lambda_n}{x}\big)
   e^{-it \lambda_n}\big|
   \\
   &
=
\|(x^{*}\circ f_\sigma) \ast \mu_x\|_{\infty} \leq \|f_\sigma\|_{\infty} \|\mu_x\| \leq C_{1} \|D\|_\infty.
\end{align*}
We conclude by Proposition~\ref{BohrCahenIIvectorvalued} that $\sigma_u^{\lambda,1}(D) \leq 0$,
so that the function
\[
F: [Re >0] \to X, \, F(s) := \lim_{x \to \infty}
\sum_{\lambda_{n}<x} a_{n}
      \big(1- \frac{\lambda_n}{x}\big)
   e^{-\lambda_n s}
\]
belongs to $\mathcal{H}_{\infty}^{\lambda}([Re>0],X)$ and has $a_n$ as its $\lambda_{n}$th Bohr coefficient. Indeed, this function  is bounded, since for every $\sigma >0$
\begin{align*}
\sup_{t \in \R}\|F(\sigma+it)\|_X
&
=
\lim_{x\to\infty} \sup_{t \in  \R} \big\|\sum_{\lambda_{n}<x} a_{n}\big(1-\frac{\lambda_{n}}{x}\big) e^{-\sigma\lambda_n} e^{-i\lambda_n t}\big\|_X
\\&
= \lim_{x \to \infty} \|f_\sigma \ast \mu_x\|_{\infty}
\leq
 \|f_\sigma\|_{\infty} C_{1} \le C_{1} \|D\|^{+}_{\infty}\,,
\end{align*}
which finishes the proof.
\end{proof}

\subsection{Bohr's theorem} \label{Bohr's theoremvectorvalued}

Suppose that $D=\sum a_{n}e^{-\lambda_{n}s}$ converges somewhere and that its limit function extends  to a bounded and holomorphic function $f$ on $[Re>0]$. Then as already mentioned in Aspect III from the introduction a prominent problem from the beginning of the 20th century was to determine the class of $\lambda$'s for which under this assumption all $\lambda$-Dirichlet series converge uniformly on $[Re>\varepsilon]$ for every $\varepsilon>0$.
 We say that $\lambda$ satisfies 'Bohr's theorem'  if the answer to the preceding problem is affirmative. In general the answer is negative. See \cite[Theorem 5.2]{Schoolmann} for examples of $\lambda$'s which fail for Bohr's theorem.

Considering $X$-valued Dirichlet series  one may ask if it does make sense to define  '$\lambda$ satisfies Bohr's theorem for the Banach space $X$'
 whenever every Dirichlet series $D=\sum a_{n}e^{-\lambda_{n}s}$  with coefficients in $X$ which
 converges somewhere and has a limit function extending  to a bounded and holomorphic function $f$ on $[Re>0]$ with values in $X$,
  converges uniformly on $[Re>\varepsilon]$ for every $\varepsilon>0$. In this context the space $\mathcal{D}_{\infty}^{ext}(\lambda,X)$ of all somewhere convergent $D\in \mathcal{D}(\lambda,X)$, that allow a holomorphic and bounded extension $f$ to $[Re>0]$ is natural. Actually, as a consequence of Proposition \ref{BohrCahenIIvectorvalued} we see that the Banach space $X$ does not affect satisfying Bohr's theorem.

\begin{Prop} \label{mainresultingredient1vectorvalued} Let $\lambda$ be a frequency and $X$  a non-trivial Banach space. Then
 $\lambda$  satisfies Bohr's theorem if and only if  $\lambda$  satisfies Bohr's theorem for $X$.
\end{Prop}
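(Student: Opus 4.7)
The plan is to split the equivalence into its two natural implications, where the non-trivial direction boils down to a Hahn--Banach/Mackey style reduction to the scalar case via Proposition~\ref{weakabscissasvectorvalued}.

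For the easy implication, assume that $\lambda$ satisfies Bohr's theorem for $X$, and fix any $0 \ne x_{0} \in X$ (which exists since $X$ is non-trivial). Given a scalar $\lambda$-Dirichlet series $D = \sum a_{n} e^{-\lambda_{n}s}$ which converges somewhere and whose limit function extends to a bounded holomorphic $f \colon [Re>0] \to \C$, I would consider the $X$-valued series $\widetilde{D} := \sum (a_{n} x_{0}) e^{-\lambda_{n}s}$; it converges wherever $D$ does, and the function $f(\cdot)\, x_{0}$ is a bounded holomorphic extension of its limit function to $[Re>0]$. By the hypothesis, $\widetilde{D}$ converges uniformly on $[Re>\varepsilon]$ for every $\varepsilon>0$, and since $\|x_{0}\| > 0$ this transfers back to $D$ verbatim.

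For the non-trivial implication, assume that $\lambda$ satisfies the scalar Bohr theorem, and let $D = \sum a_{n} e^{-\lambda_{n}s} \in \mathcal{D}^{ext}_{\infty}(\lambda,X)$ with bounded holomorphic extension $f \colon [Re>0] \to X$. For every $x^{\ast} \in X^{\ast}$, the scalar series $x^{\ast} \circ D$ converges wherever $D$ does, and the scalar function $x^{\ast} \circ f$ is a bounded holomorphic extension of its limit function to $[Re>0]$. Applying the scalar Bohr's theorem to $x^{\ast} \circ D$ yields $\sigma^{\lambda,0}_{u}(x^{\ast}\circ D) \le 0$ for every $x^{\ast} \in X^{\ast}$.

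Now I would invoke Proposition~\ref{weakabscissasvectorvalued} (with $k=0$ and $\iota=u$) to conclude
\[
\sigma^{\lambda,0}_{u}(D) \;=\; \sup_{x^{\ast} \in X^{\ast}} \sigma^{\lambda,0}_{u}(x^{\ast} \circ D) \;\le\; 0,
\]
which is precisely the statement that $D$ converges uniformly on $[Re>\varepsilon]$ for every $\varepsilon>0$. The main conceptual step is recognizing that Proposition~\ref{weakabscissasvectorvalued} (whose proof rests on the Bohr--Cahen formula Proposition~\ref{BohrCahenIIvectorvalued} together with Mackey's theorem) bypasses any delicate geometry of $X$: weak boundedness of the partial sums on each vertical line forces norm boundedness, which is all that is needed to run the scalar argument uniformly in $x^{\ast}$. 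No further obstacle seems to arise, and the absolute convergence counterexample recalled after Proposition~\ref{weakabscissasvectorvalued} (the $c_{0}$-valued series $\sum e_{n} n^{-s}$) indicates why one should not expect an analogue for $\sigma_{a}$ but why $\sigma_{u}$ behaves perfectly under duality.
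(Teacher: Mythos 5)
Your proposal is correct and follows essentially the same route as the paper: reduce to the scalar case by composing with functionals, then recover $\sigma_u(D)\le 0$ from $\sup_{x^\ast}\sigma_u(x^\ast\circ D)\le 0$ via the Bohr--Cahen formula and Mackey's theorem (the paper cites Proposition~\ref{BohrCahenIIvectorvalued} directly where you cite Proposition~\ref{weakabscissasvectorvalued}, but these amount to the same argument), and dismiss the converse as trivial, which your $x_0$-tensoring makes explicit.
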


\begin{proof}
Assume that $\lambda$  satisfies Bohr's theorem, and let $D \in \mathcal{D}_{\infty}^{ext}(\lambda,X)$.  Then by assumption $\sigma_u(x^\ast \circ D) \leq 0$
for every $x^\ast \in X^\ast$, which by Proposition~\ref{BohrCahenIIvectorvalued} implies that as desired $\sigma_u(D) \leq 0$.
The converse implication is trivial.
\end{proof}

Given a frequency $\lambda$ and a Banach space $X$,  to obtain quantitative versions of Bohr's theorem means to estimate the norm of the partial sum operator
$$S_{N}\colon \mathcal{D}_{\infty}^{ext}(\lambda,X) \to \mathcal{D}_{\infty}(\lambda,X), ~~ D=\sum a_{n}e^{-\lambda_{n}s} \mapsto \sum_{n=1}^{N} a_{n}e^{-\lambda_{n}s}.$$
For the scalar case $X=\mathbb{C}$ we deduce from \cite[Theorem 3.2]{Schoolmann} the following estimate  which does not assume any condition on $\lambda$:
For every $D=\sum a_{n}e^{-\lambda_{n}s} \in \mathcal{H}_{\infty}(\lambda)$ and $0<k\le 1$ we have
\begin{equation*}
\big\|\sum_{n=1}^{N} a_{n}(D) e^{-\lambda_{n}s} \big\|_{\infty} \le \frac{C}{k} \Big(\frac{\lambda_{N+1}}{\lambda_{N+1}-\lambda_{N}}\Big)^{k} \|D\|_{\infty}
\end{equation*}
Applying the Hahn-Banach theorem and (\ref{HahnBanachHpluesvectorvaled}) this result extends to $\mathcal{H}_{\infty}^{+}(\lambda,X)$ .
\begin{Theo}\label{quantitativevectorvalued}  Let $\lambda$ be an arbitrary frequency and $X$ a Banach space. Then for all $D\in \mathcal{H}^{+}_{\infty}(\lambda, X)$, $0<k\le 1$ and $N$ we have
\begin{equation*}
\big\|\sum_{n=1}^{N} a_{n}(D) e^{-\lambda_{n}s} \big\|_{\infty} \le \frac{C}{k} \Big(\frac{\lambda_{N+1}}{\lambda_{N+1}-\lambda_{N}}\Big)^{k} \|D\|_{\infty}^{+},
\end{equation*}
where $C>0$ is a universal constant.
\end{Theo}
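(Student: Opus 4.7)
The plan is to reduce the vector-valued estimate to the scalar case via a standard Hahn-Banach duality argument, exactly as the paragraph before the theorem suggests. All the heavy lifting has already been done: the scalar estimate from \cite[Theorem 3.2]{Schoolmann} and, on the vector-valued side, the identity \eqref{HahnBanachHpluesvectorvaled} from Theorem~\ref{weakvectorvalued} together with the scalar coincidence $\mathcal{H}^+_\infty(\lambda)=\mathcal{H}_\infty(\lambda)$ from Proposition~\ref{Hpplusscalarvectorvalued}.

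First I would observe that the left-hand side is the $\mathcal{D}_\infty(\lambda,X)$-norm of the Dirichlet polynomial $P=\sum_{n=1}^{N} a_n e^{-\lambda_n s}$, and for any such $P$ the Hahn-Banach theorem gives
\[
\|P\|_\infty \;=\; \sup_{s\in[\mathrm{Re}>0]} \|P(s)\|_X \;=\; \sup_{x^{\ast}\in B_{X^\ast}} \|x^{\ast}\circ P\|_\infty\,.
\]
For each fixed $x^{\ast}\in B_{X^\ast}$, Proposition~\ref{Hpplusscalarvectorvalued} yields $x^{\ast}\circ D\in \mathcal{H}^+_\infty(\lambda)=\mathcal{H}_\infty(\lambda)$, so the scalar quantitative Bohr estimate from \cite[Theorem 3.2]{Schoolmann} applies and gives
\[
\|x^{\ast}\circ P\|_\infty \;=\; \Big\|\sum_{n=1}^{N} x^{\ast}(a_n)\,e^{-\lambda_n s}\Big\|_\infty \;\le\; \frac{C}{k}\Big(\frac{\lambda_{N+1}}{\lambda_{N+1}-\lambda_N}\Big)^{k} \|x^{\ast}\circ D\|_{\mathcal{H}_\infty(\lambda)}.
\]

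Finally, I would invoke \eqref{HahnBanachHpluesvectorvaled} from Theorem~\ref{weakvectorvalued}, which asserts $\sup_{x^\ast\in B_{X^\ast}} \|x^\ast\circ D\|_{\mathcal{H}_\infty(\lambda)} = \|D\|^+_\infty$. Taking the supremum over $x^{\ast}\in B_{X^\ast}$ on both sides of the previous inequality completes the proof, with exactly the same constant $C$ as in the scalar version.

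There is really no serious obstacle: the only point that needs to be acknowledged is that the vector-valued norm of the polynomial $P$ can be tested on functionals of $X^\ast$, and that $x^\ast\circ D$ genuinely lies in $\mathcal{H}_\infty(\lambda)$ rather than merely in $\mathcal{H}^+_\infty(\lambda)$, so that the scalar theorem is applicable. Both points are supplied by the cited preceding results.
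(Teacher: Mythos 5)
Your proposal is correct and is exactly the argument the paper intends: the authors simply state that the scalar estimate from \cite[Theorem 3.2]{Schoolmann} ``extends to $\mathcal{H}_{\infty}^{+}(\lambda,X)$'' by the Hahn--Banach theorem and \eqref{HahnBanachHpluesvectorvaled}, and you have filled in precisely those steps, including the needed observation that $x^{\ast}\circ D\in\mathcal{H}^+_\infty(\lambda)=\mathcal{H}_\infty(\lambda)$ via Proposition~\ref{Hpplusscalarvectorvalued}.
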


Several sufficient conditions on $\lambda$ that guarantee
Bohr's theorem are known. Initially, Bohr in \cite{Bohr} introduces the following condition, which we call Bohr's condition $(BC)$:
 \begin{equation*}
 \exists ~l >0 ~ \forall ~\delta >0 ~\exists ~C>0~\forall   ~n \in \N\colon ~~\lambda_{n+1}-\lambda_{n}\ge Ce^{-(l+\delta)\lambda_{n}}.
\end{equation*}
Note that $\lambda=(\log n)$ has $(BC)$ with $l=1$.
Secondly there is a weaker condition than $(BC)$, namely Landau's condition $(LC)$ from \cite{Landau}:
$$\forall ~\delta>0 ~\exists ~C>0~ \forall ~n \in \N \colon ~~ \lambda_{n+1}-\lambda_{n}\ge C e^{-e^{\delta\lambda_{n}}}.$$
To see an example that has (LC) and fails for (BC) take $\lambda=(\sqrt{\log n})$.
Assuming (LC), the choice $k_N=e^{-\delta\lambda_{N}}$ in Theorem \ref{quantitativevectorvalued}   leads to
$$\|S_{N}\|_{\infty}\le Ce^{\delta \lambda_{N}},$$
which in fact is the vector-valued quantitative variant of Bohr theorem under (LC). Assuming  (BC) the  choice $k_{N}=\lambda_{N}^{-1}$ (here $N\ge 2$, since $\lambda_{1}=0$ is possible) yields
$$\|S_{N}\|\le C \lambda_{N};$$
in the ordinary scalar case $\lambda=(\log n)$ this was first proven in \cite{BalasubramanianCaladoQueffelec} (see also \cite[Theorem 6.2.2]{QQ} and \cite[Theorem 1.13 and (24.14)]{Defant}).

\subsection{Equivalence} \label{Equivalencevectorvalued}

Given a frequency $\lambda$ and a Banach space $X$, we say that $\lambda$ satisfies Bayart's Montel theorem for  $X$ whenever
the following statement holds:
Every  sequence $(D^{N})$ of Dirichlet series $D^N = \sum a_n^Ne^{-\lambda_{n}s} \in  \mathcal{D}_{\infty}(\lambda,X)$ admits a subsequence $(N_{k})$ and $D\in \mathcal{D}_{\infty}(\lambda,X)$ such that $(D^{N_{k}})$ converges to $D$ uniformly on $[Re>\varepsilon]$ for every $\varepsilon>0$ as $k\to \infty$ provided $(D^{N})$ satisfies the following two conditions:
\begin{itemize}
\item[(a)]
There is subsequence $(N_k)_k$ such that $\lim_{k \to \infty} a_n^{N_k}$ exists for all $n$,
\item[(b)]
and $(D^N)$ is bounded in $\mathcal{D}_\infty(\lambda)$\,.
\end{itemize}
 If $X=\C$, then  we shortly say that $\lambda$ satisfies Bayart's Montel theorem. In this case the first assumption $(a)$ on $(D^N)$ by compactness is superfluous,  since we by (b) have that
 $|a_n^N| \leq \sup_N \|D^{N}\|_\infty < \infty$ for all $n$.  In \cite[Lemma 18]{Bayart} Bayart proves that  $\lambda = (\log n)$ has this property.

 Mainly collecting results from  \cite{DefantSchoolmann4} and  \cite{DefantSchoolmann3}, we see that in the case of scalar-valued general Dirichlet series several of the aspects we so far looked at, in fact generate the same classes of  frequencies.
\begin{Theo} \label{scalarcase}
Let $\lambda$ be a frequency. Then the following are equivalent:
\begin{itemize}
\item[(1)] $\lambda$ satisfies Bohr's theorem.
\item[(2)] $\mathcal{D}_{\infty}(\lambda)$ is complete.
\item[(3)] $\mathcal{D}_{\infty}(\lambda)=\mathcal{H}_{\infty}(\lambda)$.
\item[(4)] $\mathcal{D}_{\infty}(\lambda)= \mathcal{H}_{\infty}^{\lambda}[Re>0]$.
\item[(5)] $\lambda$ satisfies Bayart's Montel theorem\,.
\end{itemize}
\end{Theo}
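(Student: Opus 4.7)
The plan is to take the scalar equivalence (1) $\Leftrightarrow$ (2) $\Leftrightarrow$ (3) from \cite[Theorem~5.1]{DefantSchoolmann4} as the backbone, and then close the circle by adding (4) and (5) via the identification of $\mathcal{D}_{\infty}(\lambda)$ with $H_{\infty}^{\lambda}(G)$ on a $\lambda$-Dirichlet group $(G,\beta)$. The equivalence (3) $\Leftrightarrow$ (4) is essentially free: Proposition~\ref{Hpplusscalarvectorvalued} in the scalar case combined with the scalar Besicovitch identity \eqref{Besiscalarcasevectorvalued} (which is a special case of Theorem~\ref{Besivectorvalued}) yields $\mathcal{H}_{\infty}^{\lambda}([Re>0]) = \mathcal{H}_{\infty}^{+}(\lambda) = \mathcal{H}_{\infty}(\lambda)$, so (3) and (4) coincide as statements.

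For (3) $\Rightarrow$ (5), fix a $\lambda$-Dirichlet group $(G,\beta)$ and use (3) to identify $\mathcal{D}_{\infty}(\lambda)$ with $H_{\infty}^{\lambda}(G)$. Given $(D^{N})$ satisfying (a) and (b), the associated $f_{N} \in H_{\infty}^{\lambda}(G)$ are uniformly bounded in $L_{\infty}(G) = L_{1}(G)^{\ast}$, so Banach--Alaoglu yields a subsequence converging weak* to some $f \in L_{\infty}(G)$. Since each Fourier-coefficient functional is weak*-continuous, $H_{\infty}^{\lambda}(G)$ is weak*-closed and therefore $f \in H_{\infty}^{\lambda}(G)$ with $\widehat{f}(h_{\lambda_{n}}) = \lim_{k} a_{n}^{N_{k}} = a_{n}$ by (a); call the corresponding Dirichlet series $D$. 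To upgrade the weak* convergence to the uniform convergence required by (5), I would convolve with the Poisson measure $p_{\sigma}$: pointwise convergence $(f_{N_{k}} \ast p_{\sigma})(\omega) \to (f \ast p_{\sigma})(\omega)$ on $G$ follows directly from the weak* convergence, while equicontinuity of the family $(f_{N} \ast p_{\sigma})$ follows from the uniform bound on $\|f_{N}\|_{\infty}$ together with continuity of translation on $L_{1}(G)$. Arzel\`a--Ascoli then gives uniform convergence on $G$, and under the Bohr correspondence this is exactly uniform convergence of the limit functions of $D^{N_{k}}$ and $D$ on the vertical line $[Re = \sigma]$. Since the limit functions are bounded and holomorphic on $[Re > 0]$, the maximum modulus principle on half-planes propagates the convergence to $[Re > \sigma]$, and arbitrariness of $\sigma > 0$ yields (5).

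For (5) $\Rightarrow$ (2), take $(D^{N})$ Cauchy in $\mathcal{D}_{\infty}(\lambda)$. The limit functions converge uniformly on $[Re>0]$ to a bounded holomorphic $F$, and the coefficient bound $\|a_{n}\|_{X} \le \|D\|_{\infty}$ from Section~\ref{sec2vectorvalued} applied to differences forces $a_{n}^{N} \to a_{n}$ for each $n$. Hence both conditions (a) and (b) are met, so (5) produces a subsequence $D^{N_{k}}$ converging uniformly on every $[Re > \varepsilon]$ to some $\tilde{D} \in \mathcal{D}_{\infty}(\lambda)$. Comparison of coefficients identifies $\tilde{D} = \sum a_{n} e^{-\lambda_{n} s}$, and its limit function agrees with $F$ on $[Re > \varepsilon]$ for every $\varepsilon > 0$, hence on all of $[Re > 0]$. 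Since a Cauchy sequence with a convergent subsequence converges, $D^{N} \to \tilde{D}$ in $\mathcal{D}_{\infty}(\lambda)$, giving completeness.

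I expect the main obstacle to be the weak*-to-uniform upgrade in (3) $\Rightarrow$ (5), since this is the only step that is not soft functional analysis and genuinely exploits harmonic analysis on $G$ together with the Poisson measure. In particular, the equicontinuity argument and the use of the maximum modulus principle must go through without imposing any structural assumption on $\lambda$, which is what the entire theorem demands; the key point is that Poisson convolution on a general Dirichlet group retains enough regularity for Arzel\`a--Ascoli, which is precisely what Corollary~\ref{heutevectorvalued} ensures.
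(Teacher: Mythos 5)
Your overall architecture coincides with the paper's: the equivalences (1)--(4) are taken from the cited literature (your derivation of (3)$\Leftrightarrow$(4) from Proposition~\ref{Hpplusscalarvectorvalued} and \eqref{Besiscalarcasevectorvalued} just unwinds the citation to \cite[Theorem~2.16]{DefantSchoolmann3}), and your proof of (5)$\Rightarrow$(2) is in substance the one given in the paper. The only place where you depart from the paper is the self-contained proof of (3)$\Rightarrow$(5) --- the paper obtains the implication into (5) by citing \cite[Theorem~5.8]{DefantSchoolmann4} --- and this is exactly where your argument has a genuine gap.

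The weak*-to-uniform upgrade does not work as described, because on a general $\lambda$-Dirichlet group the Poisson measure $p_\sigma$ is a genuine, typically singular, measure on $G$ rather than an $L_1(G)$-function: it is the push-forward of $P_\sigma(t)\,dt$ under $\beta$ and is concentrated on the $\sigma$-compact subgroup $\beta(\R)$, which is Haar-null whenever $\beta$ is not surjective (e.g.\ $G=\T^{\infty}$; only for $\lambda=(n)$ and $G=\T$ is $p_\sigma$ absolutely continuous). Consequently: (i) the functional $g\mapsto (g\ast p_\sigma)(\omega)$ is integration against the singular measure $\delta_\omega\ast\widetilde{p_\sigma}$ and is \emph{not} weak*-continuous on $L_\infty(G)=L_1(G)^{\ast}$, so pointwise convergence of $(f_{N_k}\ast p_\sigma)(\omega)$ does not follow from the weak* convergence; and (ii) your equicontinuity claim appeals to continuity of translation on $L_1(G)$, but translation is not norm-continuous on $M(G)$ for singular measures (mutually singular translates are at distance $2\|p_\sigma\|$), and proximity of $\omega,\omega'$ in $G$ does not correspond to proximity of parameters in $\R$ since $\beta$ is not a homeomorphism onto its image. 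This is precisely the unavailability of the Poisson kernel that the paper flags after \eqref{humerusvectorvalued}, and Corollary~\ref{heutevectorvalued} gives continuity of each $f\ast p_\sigma$ but no common modulus for the family. The repair is the Riesz-mean machinery: approximate $f_N\ast p_\sigma$ \emph{uniformly in $N$} by the fixed-length means $\sum_{\lambda_n<x}\widehat{f_N}(h_{\lambda_n})(1-\lambda_n/x)^{k}e^{-\sigma\lambda_n}h_{\lambda_n}$, whose coefficients converge by hypothesis (a) and which therefore converge uniformly on $G$ for each fixed $x$; the required rate of Riesz summation depending only on $\sup_N\|D^N\|_\infty$, $k$ and $\sigma$ is the actual content of \cite[Theorem~5.8]{DefantSchoolmann4}. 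A minor further point: weak* \emph{sequential} compactness of the ball of $L_\infty(G)$ requires $L_1(G)$ separable, so you must first fix a metrizable $\lambda$-Dirichlet group; in fact hypothesis (a) already supplies the candidate limit coefficients, so Banach--Alaoglu can be bypassed altogether.
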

The first four equivalences are known from \cite[Theorem~5.1]{DefantSchoolmann4} and  \cite[Theorem~2.16]{DefantSchoolmann3}, and looking at \cite[Theorem 5.8]{DefantSchoolmann4}
we see that each of them implies $(5)$. We close the cycle including the proof of the implication $(5) \Rightarrow (2)$.

\begin{proof}[Proof of $(5) \Rightarrow (2)$]
We check that (5) implies  that $\mathcal{D}_{\infty}(\lambda)\subset \mathcal{H}_{\infty}^{\lambda}[Re>0]$ is closed, and so (2) follows. Indeed, let $(D^{N})$ be a sequence in $\mathcal{D}_{\infty}(\lambda)$ that converges to a function $F\in \mathcal{H}_{\infty}^{\lambda}[Re>0]$ uniformly on $[Re>0]$. Assuming (5), there is a subsequence $(N_{k})$ and $D\in \mathcal{D}_{\infty}(\lambda)$ such that $D^{N_{k}}$ converges to $D$ on $[Re>\varepsilon]$ for every $\varepsilon>0$ as $k\to +\infty$. This implies that $F=D$ with $a_{\lambda_{n}}(F)=a_{n}(D)$ for all $n$ which finishes the proof.
\end{proof}

The  following vector-valued extension of Theorem~\ref{scalarcase} is the main contribution in this section.

\begin{Theo} \label{maininftyvectorvalued} Let $\lambda$ be a frequency and $X$ a non-trivial Banach space. Then the following are equivalent:
\begin{enumerate}
\item[(1)] $\lambda$ satisfies Bohr's theorem for $X$.
\item[(2)] $\mathcal{D}_{\infty}(\lambda,X)$ is complete.
\item[(3)] $\mathcal{D}_{\infty}(\lambda,X)=\mathcal{H}_{\infty}^{+}(\lambda,X)$.
\item[(4)] $\mathcal{D}_{\infty}(\lambda,X)=\mathcal{H}_{\infty}^{\lambda}([Re>0],X)$.
\item[(5)]
 $\lambda$ satisfies Bayart's Montel theorem for  $X$.
 \end{enumerate}
\end{Theo}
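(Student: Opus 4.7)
The equivalence (3)~$\Leftrightarrow$~(4) is immediate from Theorem~\ref{Besivectorvalued}. The remaining implications will close via the cycle (1)~$\Rightarrow$~(3)~$\Rightarrow$~(2)~$\Rightarrow$~(1), together with (5)~$\Rightarrow$~(2) and (1)~$\Rightarrow$~(5).

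For (1)~$\Rightarrow$~(3), Proposition~\ref{mainresultingredient1vectorvalued} reduces to the scalar Bohr's theorem, so Theorem~\ref{scalarcase} gives $\mathcal{H}_\infty(\lambda)=\mathcal{D}_\infty(\lambda)$; for any $D\in\mathcal{H}^+_\infty(\lambda,X)$, Theorem~\ref{weakvectorvalued} supplies $x^*\circ D\in\mathcal{H}_\infty(\lambda)=\mathcal{D}_\infty(\lambda)$ uniformly in $\|x^*\|\le 1$, and Theorem~\ref{HahnBanachinftyvectorvalued} then lifts this to $D\in\mathcal{D}_\infty(\lambda,X)$; the reverse isometric inclusion is part of Theorem~\ref{Besivectorvalued}. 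For (3)~$\Rightarrow$~(2), note that $\mathcal{H}^+_\infty(\lambda,X)=\mathcal{H}^\lambda_\infty([Re>0],X)$ is complete by Theorem~\ref{Besivectorvalued}. For (2)~$\Rightarrow$~(1), fix $x_0\in X$ with $\|x_0\|=1$; then $D\mapsto D\cdot x_0$ is an isometric embedding of $\mathcal{D}_\infty(\lambda)$ into $\mathcal{D}_\infty(\lambda,X)$ whose image is closed (any limit has both its Dirichlet coefficients and its limit function confined to the one-dimensional subspace $\mathbb{C}\,x_0$, from which the scalar series is recovered), so completeness transfers to $\mathcal{D}_\infty(\lambda)$; the scalar Theorem~\ref{scalarcase} then yields scalar Bohr, and Proposition~\ref{mainresultingredient1vectorvalued} returns (1).

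For (5)~$\Rightarrow$~(2), I mirror the scalar implication already proved after Theorem~\ref{scalarcase}: if $(D^N)$ is Cauchy in $\mathcal{D}_\infty(\lambda,X)$, it is in particular norm-bounded, and the coefficient estimate $\|a_n\|_X\le\|D\|_\infty$ forces coefficient convergence, so hypothesis (a) of (5) is satisfied; (5) then delivers a subsequence converging on every $[Re>\varepsilon]$ to some $D\in\mathcal{D}_\infty(\lambda,X)$, which by the isometric embedding of Theorem~\ref{Besivectorvalued} into the complete space $\mathcal{H}^\lambda_\infty([Re>0],X)$ must coincide with the Besicovitch limit, placing the Cauchy limit back in $\mathcal{D}_\infty(\lambda,X)$.

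For (1)~$\Rightarrow$~(5), I proceed in two stages. Given $(D^N)$ bounded and $a_n=\lim_k a_n^{N_k}$ along a subsequence, set $D=\sum a_n e^{-\lambda_n s}$. For each $x^*\in X^*$ the sequence $(x^*\circ D^{N_k})$ is bounded in $\mathcal{D}_\infty(\lambda)$ with coefficients tending to $x^*(a_n)$; applying scalar Bayart's Montel (available via (1), Proposition~\ref{mainresultingredient1vectorvalued} and Theorem~\ref{scalarcase}) and matching coefficients forces the scalar limit to be $x^*\circ D\in\mathcal{D}_\infty(\lambda)$ with $\|x^*\circ D\|_\infty\le\|x^*\|\sup_N\|D^N\|_\infty$, so Theorem~\ref{HahnBanachinftyvectorvalued} puts $D\in\mathcal{D}_\infty(\lambda,X)$. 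The second and more delicate stage is to show $D^{N_k}\to D$ uniformly on every $[Re>\varepsilon]$: letting $G^k=D^{N_k}-D$, the sequence $(G^k)$ is bounded in $\mathcal{D}_\infty(\lambda,X)=\mathcal{H}^+_\infty(\lambda,X)$ with coefficients $\to 0$; Proposition~\ref{tool1vectorvalued} yields a uniform-in-$k$ and $x$ bound $\|R_x^{\lambda,1}(G^k)\|_\infty\le C\sup_N\|D^N\|_\infty$, Theorem~\ref{uniformRieszlimitvectorvalued} gives, for each individual $G^k$, uniform convergence of its Riesz means on $[Re>\varepsilon]$, and coefficient convergence gives $R_x^{\lambda,1}(G^k)\to 0$ uniformly on half-planes for each fixed $x$ (finite sums). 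The principal obstacle is the uniform-in-$k$ Riesz-tail estimate $\|G^k-R_x^{\lambda,1}(G^k)\|_\infty\to 0$ on $[Re>\varepsilon]$ as $x\to\infty$; I plan to extract this from the Hahn--Banach reduction to $x^*\circ G^k$ combined with the approximate-identity properties of the measures $\mu_x$ from \eqref{measureRieszmeanvectorvalued} on bounded families and the scalar maximal-type estimates of \cite{DefantSchoolmann3}, using also the operator representation $\mathcal{H}^+_\infty(\lambda,X)=\Pi^\lambda_{cone}(L_1(G),X)$ from Theorem~\ref{conesummingcoincidencevectorvalued} to accommodate weak compactness of $(G^k)$. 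Once this uniform tail control is in hand, a standard triangle inequality closes the cycle.
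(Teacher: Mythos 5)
Most of your cycle is sound and runs essentially parallel to the paper: the paper factors (1)$\Leftrightarrow$(2)$\Leftrightarrow$(3) through the scalar Theorem~\ref{scalarcase} via Proposition~\ref{mainresultingredient1vectorvalued}, a statement ``$\mathcal{D}_\infty(\lambda)=\mathcal{H}_\infty(\lambda)$ iff $\mathcal{D}_\infty(\lambda,X)=\mathcal{H}^+_\infty(\lambda,X)$'' (proved exactly by your combination of Theorems~\ref{weakvectorvalued} and~\ref{HahnBanachinftyvectorvalued}), and a statement ``$\mathcal{D}_\infty(\lambda)$ complete iff $\mathcal{D}_\infty(\lambda,X)$ complete'' (whose easy direction is your closed-subspace embedding $D\mapsto D\cdot x_0$); (3)$\Leftrightarrow$(4) is Theorem~\ref{Besivectorvalued} in both treatments. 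Your (5)$\Rightarrow$(2) differs mildly from the paper, which instead restricts (5) to scalar series and invokes Theorem~\ref{scalarcase}; your direct vector-valued adaptation of the scalar closedness argument is correct, since the Cauchy sequence has a limit $F$ in the complete space $\mathcal{H}_\infty^\lambda([Re>0],X)$ which must agree with the Montel limit $D\in\mathcal{D}_\infty(\lambda,X)$.

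The genuine gap is in (1)$\Rightarrow$(5). Your first stage (identifying the candidate limit $D$ and placing it in $\mathcal{D}_\infty(\lambda,X)$ via scalarization) is fine, but the second stage is announced rather than proved: the statement ``$\sup_k\|G^k-R_x^{\lambda,1}(G^k)\|_{\infty,[Re>\varepsilon]}\to 0$ as $x\to\infty$'' is exactly the content of the implication, and listing the tools you ``plan to extract [it] from'' does not establish it. Note that uniform boundedness of the operators $E\mapsto E-R_x^{\lambda,1}(E_\varepsilon)$ together with their strong convergence to $0$ does not give convergence uniformly over bounded sets, so something quantitative and specific to Bohr's theorem is required. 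What is actually needed is the scalar uniform tail estimate over the ball $\{E\in\mathcal{D}_\infty(\lambda):\|E\|_\infty\le M\}$ -- this is the role of \cite[Lemma 5.2]{DefantSchoolmann4}, and the paper simply defers to a word-by-word extension of \cite[Theorem 5.8]{DefantSchoolmann4} here. One clean way to close your argument: derive the scalar uniform tail estimate by contradiction from the scalar Bayart--Montel theorem (if $\|E^j-R_{x_j}^{\lambda,1}(E^j)\|_{[Re>\varepsilon]}\ge\delta$ along $x_j\to\infty$ with $\|E^j\|_\infty\le 1$, pass to a Montel limit $E$ on $[Re>\varepsilon/2]$ and use $\|R_x^{\lambda,1}(F_{\varepsilon/2})\|_\infty\le C_1\|F\|_{[Re>\varepsilon/2]}$ from the measures \eqref{measureRieszmeanvectorvalued} together with Theorem~\ref{uniformRieszlimitvectorvalued} to reach a contradiction); once the scalar estimate is uniform over the ball, your Hahn--Banach reduction $\|G^k-R_x^{\lambda,1}(G^k)\|_X=\sup_{x^*\in B_{X^*}}\|x^*\circ G^k-R_x^{\lambda,1}(x^*\circ G^k)\|$ does finish the proof. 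As written, however, the step is missing.
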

Before turning to the proof of these equivalences, we add another remark.

\begin{Rema} \label{ingovectorvalued}
The equality (4) is equivalent to the fact that every  Dirichlet series $D=\sum a_{\lambda_{n}}(F)e^{-\lambda_{n}s}$ generated by a function  $F\in \mathcal{H}_{\infty}^{\lambda}([Re>0],X)$ converges on $[Re>0]$.
Analogously, (3) holds if and only if every  Dirichlet series $D=\sum a_n e^{-\lambda_{n}s} \in \mathcal{H}_{\infty}^{+}(\lambda,X)$  converges on $[Re>0]$.
\end{Rema}

\begin{proof}[Proof of Remark \ref{ingovectorvalued}]
Clearly, if (4) holds, then every  $D=\sum a_{\lambda_{n}}(F)e^{-\lambda_{n}s}$ generated by some   $F\in \mathcal{H}_{\infty}^{\lambda}([Re>0],X)$ is in $\mathcal{D}_\infty(\lambda, X)$, so converges on $[Re>0]$.
Assume conversely, that  every   $D=\sum a_{\lambda_{n}}(F)e^{-\lambda_{n}s}$ generated by some  $F\in \mathcal{H}_{\infty}^{\lambda}([Re>0],X)$ converges on $[Re>0]$. Then by \cite[Proposition 1.2]{DefantSchoolmann3} (see also \cite[Chapter V]{HardyRiesz}) the Dirichlet series  $D$ is $(\lambda,1)$-summable at every $s\in [Re>0]$, that is
$$D(s)=\lim_{x\to \infty} \sum _{\lambda_{n}<x} a_{\lambda_{n}}\big(1-\frac{\lambda_{n}}{x}\big)e^{-\lambda_{n}s}.$$
By Corollary  \ref{fireinthemorningvectorvalued} this limit coincides with $F(s)$, which implies $D\in \mathcal{D}_{\infty}(\lambda,X)$.
\end{proof}

Starting the proof of Theorem~\ref{maininftyvectorvalued}, we note first that  by Proposition \ref{mainresultingredient1vectorvalued} statement (1) holds if and only if
$\lambda$ satisfies Bohr's theorem.
We will then see that together with Theorem \ref{Besivectorvalued} and Theorem \ref{scalarcase} the proof of Theorem \ref{maininftyvectorvalued} is evident  once we prove the following two results.

\begin{Prop} \label{maininfty2vectorvalued} Let $\lambda$ be a frequency and $X$ a non-trivial Banach space. Then $\mathcal{D}_{\infty}(\lambda)=\mathcal{H}_{\infty}(\lambda)$ if and only if $\mathcal{D}_{\infty}(\lambda,X)=\mathcal{H}^{+}_{\infty}(\lambda,X)$. Moreover, if $X$ has ARNP, then each of these statements is equivalent to
$\mathcal{D}_{\infty}(\lambda,X)=\mathcal{H}_{\infty}(\lambda,X)$.
\end{Prop}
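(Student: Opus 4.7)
My plan is to use the machinery already developed in the paper to reduce this to Theorem~\ref{scalarcase} and to the Besicovitch-type Theorem~\ref{Besivectorvalued}. The key inputs are: Theorem~\ref{weakvectorvalued} (which lets us pass from vector-valued membership in $\mathcal{H}_\infty^+$ to scalar membership in $\mathcal{H}_\infty$ by applying functionals), Proposition~\ref{weakabscissasvectorvalued} (which lets us lift convergence abscissas from the scalar traces back to the vector-valued series), and the isometric inclusion $\mathcal{D}_\infty(\lambda,X) \hookrightarrow \mathcal{H}_\infty^+(\lambda,X)$ from Theorem~\ref{Besivectorvalued}.

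For the forward direction, suppose $\mathcal{D}_\infty(\lambda) = \mathcal{H}_\infty(\lambda)$ and take $D \in \mathcal{H}_\infty^+(\lambda,X)$. Theorem~\ref{weakvectorvalued} gives that $x^\ast \circ D \in \mathcal{H}_\infty(\lambda) = \mathcal{D}_\infty(\lambda)$ for every $x^\ast \in X^\ast$, so $\sigma_c(x^\ast \circ D) \le 0$ for all $x^\ast$. Proposition~\ref{weakabscissasvectorvalued} then yields $\sigma_c(D) \le 0$, i.e.\ $D$ converges on $[Re > 0]$. By Theorem~\ref{Besivectorvalued} there is an $F \in \mathcal{H}_\infty^\lambda([Re>0], X)$ with $a_{\lambda_n}(F) = a_n(D)$ and $\|F\|_\infty = \|D\|_\infty^+$; the limit function of $D$ is a bounded holomorphic $X$-valued function on $[Re>0]$ that shares all Bohr coefficients with $F$ (apply functionals and the scalar uniqueness from \cite[p.~148]{Besicovitch}), so it coincides with $F$ and is bounded. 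Hence $D \in \mathcal{D}_\infty(\lambda, X)$.

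For the converse, assume $\mathcal{D}_\infty(\lambda,X) = \mathcal{H}_\infty^+(\lambda,X)$ and fix a scalar $D = \sum a_n e^{-\lambda_n s} \in \mathcal{H}_\infty(\lambda)$. Choose any $0 \ne x_0 \in X$ and form $\tilde D := \sum (a_n x_0)\, e^{-\lambda_n s}$. If $f \in H_\infty^\lambda(G)$ represents $D$ for an appropriate $\lambda$-Dirichlet group $(G,\beta)$, then $f \otimes x_0 \in H_\infty^\lambda(G, X)$ represents $\tilde D$, so $\tilde D \in \mathcal{H}_\infty(\lambda, X)$; Corollary~\ref{inclusionplusvectorvalued} then places $\tilde D$ in $\mathcal{H}_\infty^+(\lambda, X)$. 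By hypothesis $\tilde D \in \mathcal{D}_\infty(\lambda, X)$, hence $\tilde D$ converges with bounded limit on $[Re>0]$. Picking $x^\ast \in X^\ast$ with $x^\ast(x_0) = 1$ we recover $D = x^\ast \circ \tilde D \in \mathcal{D}_\infty(\lambda)$, and by Proposition~\ref{Hpplusscalarvectorvalued} this gives $\mathcal{D}_\infty(\lambda) = \mathcal{H}_\infty(\lambda)$. The ``moreover'' assertion is then immediate from Theorem~\ref{mainresultARNPvectorvalued}, which under ARNP identifies $\mathcal{H}_\infty^+(\lambda,X) = \mathcal{H}_\infty(\lambda,X)$.

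The only subtle point is the identification step in the forward direction, where one must be sure that ``convergence on $[Re>0]$'' together with membership in $\mathcal{H}_\infty^+(\lambda,X)$ actually forces the classical limit function to be bounded; this is where Theorem~\ref{Besivectorvalued} is essential, since it supplies a bounded almost periodic candidate $F$ whose Bohr coefficients coincide with the Dirichlet coefficients and which must therefore agree with the pointwise limit of $D$ by Bohr-coefficient uniqueness. Everything else is a direct assembly of earlier results, and no new analytic input is required.
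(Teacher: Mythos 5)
Your proof is correct and follows the same core strategy as the paper, which disposes of the equivalence in essentially one line by combining Theorem~\ref{HahnBanachinftyvectorvalued} with Theorem~\ref{weakvectorvalued}: both results characterize membership in $\mathcal{D}_{\infty}(\lambda,X)$ and $\mathcal{H}^{+}_{\infty}(\lambda,X)$, respectively, through the scalar traces $x^{\ast}\circ D$, so the two vector-valued spaces coincide precisely when the two scalar spaces do (your tensoring argument $a_n\mapsto a_nx_0$ for the converse is the standard way to make the backward implication explicit, and the ARNP addendum via Theorem~\ref{mainresultARNPvectorvalued} is exactly right). The one place you diverge is the boundedness of the limit function in the forward direction: you route it through Theorem~\ref{Besivectorvalued} and Bohr-coefficient uniqueness, which does work, but only after the additional observation (supplied by Theorem~\ref{uniformRieszlimitvectorvalued} and \eqref{Besiscalarcasevectorvalued}) that the limit function of a scalar series in $\mathcal{D}_{\infty}(\lambda)$ is uniformly almost periodic on every vertical line with Bohr coefficients equal to its Dirichlet coefficients, so that the uniqueness theorem can be applied to $x^{\ast}\circ g-x^{\ast}\circ F$; as written, your sentence asserts the boundedness of the limit function before deriving it. This detour is avoidable: once $\sigma_c(D)\le 0$ is known from Proposition~\ref{weakabscissasvectorvalued}, condition (ii) of Theorem~\ref{HahnBanachinftyvectorvalued} gives boundedness for free, since $\|g(s)\|_{X}=\sup_{\|x^{\ast}\|\le 1}|(x^{\ast}\circ D)(s)|\le\sup_{\|x^{\ast}\|\le 1}\|x^{\ast}\circ D\|_{\infty}=\|D\|_{\infty}^{+}$, so no Besicovitch machinery is needed for this step.
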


\begin{proof}
This is an immediate consequence of  Theorem \ref{HahnBanachinftyvectorvalued} and Theorem \ref{weakvectorvalued}.
\end{proof}

\begin{Prop} \label{bumbumvectorvalued} Let $\lambda$ be a frequency and $X$ a non-trivial Banach space. Then
 $\mathcal{D}_{\infty}(\lambda)$ is complete if and only if $\mathcal{D}_{\infty}(\lambda,X)$ is complete.
\end{Prop}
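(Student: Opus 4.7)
The plan is to reduce both directions to facts already established in the excerpt. For the easy direction, suppose $\mathcal{D}_\infty(\lambda,X)$ is complete; fix any $x_0\in X$ with $\|x_0\|=1$ and consider the canonical linear isometric embedding
$$\iota\colon \mathcal{D}_\infty(\lambda)\to \mathcal{D}_\infty(\lambda,X),\qquad \sum a_n e^{-\lambda_n s}\mapsto \sum (a_n x_0)\,e^{-\lambda_n s}.$$
Its range is precisely the set of those $E\in\mathcal{D}_\infty(\lambda,X)$ whose Dirichlet coefficients all lie in the one-dimensional subspace $\mathbb{C} x_0$: picking any $x^\ast\in X^\ast$ with $x^\ast(x_0)=1$, the series $x^\ast\circ E\in\mathcal{D}_\infty(\lambda)$ (Theorem~\ref{HahnBanachinftyvectorvalued}) maps back to $E$ under $\iota$. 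Since each coefficient map $D\mapsto a_n(D)$ is a continuous linear map $\mathcal{D}_\infty(\lambda,X)\to X$ by the Bohr-type formula recalled in Section~\ref{sec2vectorvalued} (which yields $\|a_n(D)\|_X\le\|D\|_\infty$), this range is closed, hence complete, hence $\mathcal{D}_\infty(\lambda)$ is complete.

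For the forward direction, assume $\mathcal{D}_\infty(\lambda)$ is complete. By the scalar equivalence (2)$\Leftrightarrow$(3) of Theorem~\ref{scalarcase} we have $\mathcal{D}_\infty(\lambda)=\mathcal{H}_\infty(\lambda)$, and Proposition~\ref{maininfty2vectorvalued} then produces the set-theoretic equality $\mathcal{D}_\infty(\lambda,X)=\mathcal{H}^{+}_\infty(\lambda,X)$. To conclude completeness, I will verify that this identification is actually isometric and combine it with the completeness of $\mathcal{H}^{+}_\infty(\lambda,X)$. The two norms admit parallel Hahn--Banach descriptions,
$$\|D\|_\infty=\sup_{x^\ast\in B_{X^\ast}}\|x^\ast\circ D\|_\infty\quad\text{and}\quad \|D\|^{+}_\infty=\sup_{x^\ast\in B_{X^\ast}}\|x^\ast\circ D\|_{\mathcal{H}_\infty(\lambda)},$$
by Theorem~\ref{HahnBanachinftyvectorvalued} and Theorem~\ref{weakvectorvalued} respectively; since the two scalar norms coincide on $\mathcal{D}_\infty(\lambda)=\mathcal{H}_\infty(\lambda)$, the two suprema are equal. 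That $\mathcal{H}^{+}_\infty(\lambda,X)$ is a Banach space follows either from its definition (a $\|\cdot\|^{+}_\infty$-Cauchy sequence gives $\|\cdot\|_\infty$-Cauchy translates whose limits reassemble into a Dirichlet series in $\mathcal{H}^{+}_\infty(\lambda,X)$) or, more structurally, from its isometric identification with the Banach space of cone summing operators in Theorem~\ref{conesummingcoincidencevectorvalued}.

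The only delicate point is the norm-matching step above; the remaining ingredients are a closed-subspace exercise and a Hahn--Banach transfer. Overall, this strategy reveals that, in contrast to the coincidence results of Section~\ref{sec3vectorvalued}, completeness of $\mathcal{D}_\infty(\lambda,X)$ is insensitive to the geometry of $X$ and is governed purely by whether $\lambda$ satisfies Bohr's theorem.
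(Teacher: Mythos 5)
Your argument is correct, but the forward direction follows a genuinely different route from the paper's. The paper proves completeness of $\mathcal{D}_{\infty}(\lambda,X)$ directly: given a Cauchy sequence $(D^N)$, the coefficient bound $\|a_n\|_X\le\|D\|_\infty$ produces limit coefficients $a_n$, the scalar completeness gives $x^\ast\circ D=\lim_N x^\ast\circ D^N\in\mathcal{D}_\infty(\lambda)$ with uniformly bounded norms, Theorem~\ref{HahnBanachinftyvectorvalued} then places $D=\sum a_ne^{-\lambda_n s}$ in $\mathcal{D}_\infty(\lambda,X)$, and an $\varepsilon/3$-argument yields $D^N\to D$. You instead invoke the scalar equivalence $(2)\Leftrightarrow(3)$ of Theorem~\ref{scalarcase} (imported from the literature), pass through Proposition~\ref{maininfty2vectorvalued} to get $\mathcal{D}_\infty(\lambda,X)=\mathcal{H}^+_\infty(\lambda,X)$, and transfer completeness from $\mathcal{H}^+_\infty(\lambda,X)$ via Theorem~\ref{conesummingcoincidencevectorvalued}. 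There is no circularity, since Proposition~\ref{maininfty2vectorvalued} rests only on Theorems~\ref{HahnBanachinftyvectorvalued} and~\ref{weakvectorvalued}, but your proof leans on noticeably heavier machinery (the scalar Bohr-theorem equivalences and the cone-summing identification), whereas the paper's is elementary and self-contained; what your route buys is a structural explanation --- completeness of $\mathcal{D}_\infty(\lambda,X)$ is literally the statement that it fills up the already-complete space $\mathcal{H}^+_\infty(\lambda,X)$. Your ``delicate'' norm-matching step can be dispensed with: the ``in particular'' clause of Theorem~\ref{Besivectorvalued} already states that $\mathcal{D}_{\infty}(\lambda,X)\hookrightarrow\mathcal{H}^+_\infty(\lambda,X)$ is isometric for every $\lambda$ and $X$, so once the sets coincide the norms automatically do; if you keep your own derivation, you should note explicitly that the scalar identity $\mathcal{D}_\infty(\lambda)=\mathcal{H}_\infty(\lambda)$ is isometric (which follows from \eqref{Besiscalarcasevectorvalued} together with Proposition~\ref{Hpplusscalarvectorvalued}). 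The easy direction is the same closed-subspace observation the paper makes in one line, spelled out via a rank-one embedding.
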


\begin{proof}
 Completeness of $\mathcal{D}_{\infty}(\lambda,X)$ implies completeness of $\mathcal{D}_{\infty}(\lambda)$, since the second  space can be viewed as a closed subspace of the first one. Assume
  conversely that $\mathcal{D}_{\infty}(\lambda)$ is complete. If $(D^{N}) \subset \mathcal{D}_{\infty}(\lambda,X)$ is Cauchy, then $(a_n)$ is Cauchy in $X$
  and $(x^{\ast}\circ D^{N})$ is Cauchy in $\mathcal{D}_{\infty}(\lambda)$ for all $x^{\ast} \in X^{\ast}$.
  Define $a_{n}:=\lim_{N\to\infty} a_{n}^{N}, n \in \N$ and $D:=\sum a_{n}e^{-\lambda_{n}s} \in \mathcal{D}(\lambda, X)$.
  Then  $x^\ast\circ D = \lim_N x^\ast \circ D^N \in \mathcal{D}_{\infty}(\lambda)$ with $\|D\|_\infty = \sup_{\|x^{\ast}\|=1} \|x^{\ast}\circ D\|_{\infty} < \infty$, and by
  Theorem \ref{HahnBanachinftyvectorvalued} we see that  $D \in \mathcal{D}_{\infty}(\lambda,X)$.
  It remains to check that $\lim_{N\to \infty} D^{N}=D$ in $\mathcal{D}_\infty(\lambda, X)$.
   Indeed, for  $\varepsilon>0$ take $N_0$ such that  $\|D^{N}-D^{M}\|_{\infty}\le \varepsilon$ for all $M,N\ge N_{0}$.
    Fix now  $x^{\ast}\in X^{\ast}$ with  $\|x^{\ast}\|=1$, and  take $M\ge N_{0}$ such that $\|x^{\ast}(D-D^{M})\|_\infty \le \varepsilon$. Then together for all $N\ge N_{0}$
$$\|x^{\ast}\circ (D-D^{N})\|_\infty \le \|x^{\ast}\circ (D-D^{M})\|_\infty +\|D^{M}-D^{N}\|_\infty \le 2\varepsilon,$$
and so $\|D-D^{N}\|_{\infty}=\sup_{\|x^{\ast}\|=1} \|D-D^{N}\|_{\infty}\le 2\varepsilon$.
\end{proof}

Finally, we collect all partial results for the proof of Theorem~\ref{maininftyvectorvalued}.

\begin{proof}[Proof of Theorem \ref{maininftyvectorvalued}]
Propositions \ref{mainresultingredient1vectorvalued}, \ref{maininfty2vectorvalued} and \ref{bumbumvectorvalued} show the equivalence of (1), (2) and (3), and  Theorem \ref{Besivectorvalued} clearly proves   that  (3) and (4)
are equivalent.
Clearly, if $(5)$ holds, then $\lambda$ satisfies Bayart's Montel theorem (for $\mathbb{C}$), and then we know from  Theorem~\ref{scalarcase} that $\lambda$ satisfies Bohr's theorem (for $\mathbb{C}$), and hence
by Proposition~\ref{mainresultingredient1vectorvalued} also for $X$.
 Finally, we note that the proof of the implication $(1) \Rightarrow (5)$ follows by a word-by-word extension of the proof of \cite[Theorem 5.8]{DefantSchoolmann4}
 (this proof uses a straight forward vector-valued extension of the Bohr-Cahen formula from \cite[Proposition 2.4]{Schoolmann} as well as \cite[Lemma 5.2]{DefantSchoolmann4}).
\end{proof}

Let us again come back to a characterization through $N$th abschnitte like in Theorem~\ref{Nabschnittvectorvalued}, this time for $\mathcal{D}_{\infty}(\lambda,X)$.

\begin{Coro} Let $\lambda$ satisfy Bohr's theorem, $D=\sum a_{n} e^{-\lambda_{n}s}$ be a formal $\lambda$-Dirichlet series and $X$ be any Banach space. Then the following are equivalent:
\begin{enumerate}
\item[(1)]$D \in \mathcal{D}_{\infty}(\lambda,X)$
\item[(2)]$D|_{N} \in \mathcal{D}_{\infty}$ and $\sup_{N \in \N } \| D|_{N}\|_{\infty}<\infty$.
\end{enumerate}
Moreover in this case, $\|D\|_{\infty}=\sup_{N \in \N } \| D|_{N}\|_{\infty}<\infty$.
\end{Coro}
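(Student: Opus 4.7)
The plan is to derive this corollary by combining Theorem \ref{maininftyvectorvalued} with Theorem \ref{Nabschnittvectorvalued} applied at $p=\infty$. Since $\lambda$ satisfies Bohr's theorem, Theorem \ref{maininftyvectorvalued} yields the isometric identity
\[
\mathcal{D}_{\infty}(\lambda, X) = \mathcal{H}^{+}_{\infty}(\lambda, X),
\]
and this identity can be applied both to the full series $D$ and to every $N$th abschnitt $D|_N$, since each $D|_N$ is itself a formal $\lambda$-Dirichlet series (most of its coefficients are zero). So the first step is simply to translate both sides of the claim from the $\mathcal{D}_{\infty}$-picture into the $\mathcal{H}^{+}_{\infty}$-picture, preserving norms on the nose.

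Once we are in the Hardy-side world, the equivalence and the norm identity are exactly the content of Theorem \ref{Nabschnittvectorvalued} for $p=\infty$: $D \in \mathcal{H}^{+}_{\infty}(\lambda, X)$ iff every $D|_N \in \mathcal{H}^{+}_{\infty}(\lambda, X)$ with $\sup_N \|D|_N\|^{+}_{\infty} < \infty$, and in that case $\|D\|^{+}_{\infty} = \sup_N \|D|_N\|^{+}_{\infty}$. Transferring this back through the isometry in the previous paragraph gives
\[
\|D\|_{\infty} = \|D\|^{+}_{\infty} = \sup_{N} \|D|_{N}\|^{+}_{\infty} = \sup_{N} \|D|_{N}\|_{\infty},
\]
which is precisely the asserted norm formula and, along the way, the equivalence of (1) and (2).

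There is no real obstacle here beyond being careful that Theorem \ref{maininftyvectorvalued} is applied to the full frequency $\lambda$ (and not to a subfrequency whose Bohr-theorem status would be in question). I would therefore write the proof as a two-line deduction: cite the identity $\mathcal{D}_{\infty}(\lambda, X) = \mathcal{H}^{+}_{\infty}(\lambda, X)$ from Theorem \ref{maininftyvectorvalued}, then invoke Theorem \ref{Nabschnittvectorvalued} with $p=\infty$, and finally read off the norm equality from \eqref{formulaNabschnittvectorvalued} after translating back. No further ingredients are needed.
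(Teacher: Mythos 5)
Your proposal is correct and is exactly the paper's argument: the authors also deduce the corollary immediately from Theorem \ref{Nabschnittvectorvalued} (at $p=\infty$) combined with the identity $\mathcal{D}_{\infty}(\lambda,X)=\mathcal{H}^{+}_{\infty}(\lambda,X)$ from Theorem \ref{maininftyvectorvalued}, the isometry of that identification being guaranteed by Theorem \ref{Besivectorvalued}. No differences worth noting.
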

\begin{proof}
Follows immediately from Theorem \ref{Nabschnittvectorvalued} and  \ref{maininftyvectorvalued}.
\end{proof}

We finish with another equivalence on Bohr's theorem -- this time in terms of a concrete inequality.  Speaking in vague terms, one might expect that the question of whether or not a given frequency $\lambda$ satisfies Bohr's theorem, may be  decided within  $\lambda$-polynomials. Indeed, yet another consequence of Theorem \ref{maininftyvectorvalued}  confirms this  intuition.

\begin{Theo}Let $\lambda$ be an arbitrary frequency and $X$ a Banach space. Then Bohr's theorem holds for $\lambda$ if and only if for every $\sigma>0$ there is a constant $C=C(\sigma,\lambda)$  and $M_{0}=M_{0}(\sigma)\in \N$ such that for every $M\ge M_{0}$ and every sequence $(a_{n})\subset X$ we have
\begin{equation} \label{BTineqvectorvalued}
\sup_{N\le M} \sup_{t\in \R} \big\| \sum_{n=1}^{N}a_{n}e^{-\lambda_{n}it}\big\|_{X}\le C e^{\lambda_{M}\sigma} \sup_{t\in \R} \big\| \sum_{n=1}^{M} a_{n}e^{-\lambda_{n}it}\big\|_{X}.
\end{equation}
\end{Theo}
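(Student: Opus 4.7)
The plan is to prove the two implications separately using the equivalences in Theorem~\ref{maininftyvectorvalued}, with an elementary Bernstein-type inequality for $\lambda$-Dirichlet polynomials as the main auxiliary tool: for any polynomial $P = \sum_{n=1}^M a_n e^{-\lambda_n s}$ of length $M$ and any $\sigma > 0$,
\begin{equation*}
\sup_{t \in \R} \|P(it)\|_X \le e^{\lambda_M \sigma} \sup_{t \in \R} \|P(\sigma + it)\|_X. \tag{$\ast$}
\end{equation*}
This follows from applying the maximum modulus principle to $\phi(s) = e^{\lambda_M s} P(s)$, which is entire and uniformly bounded on $[Re \le 0]$ because its frequencies $\lambda_n - \lambda_M$ are non-positive; one obtains $\sup_{[Re \le 0]}\|\phi\|_X = \sup_t \|\phi(it)\|_X = \sup_t \|P(it)\|_X$, and evaluating at $s = -\sigma + it$ rearranges to $(\ast)$.

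For the forward direction, I would assume Bohr's theorem for $\lambda$. By Theorem~\ref{maininftyvectorvalued}, $\mathcal{D}_\infty(\lambda, X)$ is complete and every $D$ in it has $\sigma_u(D) \le 0$, hence $S_N(D) \to D$ uniformly on $[Re > \sigma]$ for every $\sigma > 0$. Banach-Steinhaus applied to the pointwise bounded family of operators $S_N: \mathcal{D}_\infty(\lambda, X) \to L^\infty([Re > \sigma], X)$ then yields a uniform operator bound $K(\sigma) := \sup_N \|S_N\| < \infty$. For a polynomial $P$ of length $M$ and $N \le M$, combining this bound with $(\ast)$ applied to $P_N$ gives
\begin{equation*}
\sup_t \|P_N(it)\|_X \le e^{\lambda_N \sigma}\sup_t \|P_N(\sigma + it)\|_X \le K(\sigma) e^{\lambda_N \sigma}\|P\|_\infty \le K(\sigma) e^{\lambda_M \sigma}\|P\|_\infty,
\end{equation*}
establishing \eqref{BTineqvectorvalued} with $C(\sigma, \lambda) = K(\sigma)$ and $M_0(\sigma) = 1$.

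For the reverse direction, I would assume \eqref{BTineqvectorvalued} and, by Proposition~\ref{mainresultingredient1vectorvalued}, reduce to the scalar case $X = \C$. The target is condition (2) of Theorem~\ref{maininftyvectorvalued}: completeness of $\mathcal{D}_\infty(\lambda)$. Given a Cauchy sequence $(D^j)$ in $\mathcal{D}_\infty(\lambda)$, its coefficients converge to some $a_n$ and $D^j \to f$ uniformly on $[Re > 0]$ with $f \in \mathcal{H}_\infty^\lambda([Re > 0])$; by Theorem~\ref{Besivectorvalued} the associated series $D = \sum a_n e^{-\lambda_n s}$ lies in $\mathcal{H}_\infty^+(\lambda)$. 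It remains to prove $D \in \mathcal{D}_\infty(\lambda)$, which by the Bohr-Cahen formula (Proposition~\ref{BohrCahenIIvectorvalued}) amounts to showing $\limsup_N \log \|P_N\|_\infty / \lambda_N \le 0$. Passing to the limit $j \to \infty$ in \eqref{BTineqvectorvalued} applied to the polynomials $(D^j)_M$ (noting that $(D^j)_N \to P_N$ in sup norm for each fixed $N$) yields $\sup_{N \le M}\|P_N\|_\infty \le C(\sigma) e^{\lambda_M \sigma}\|P_M\|_\infty$ for the partial sums of $D$, and this must be combined with the uniform boundedness of Riesz means $\|R_x^{\lambda, 1}(D)\|_\infty \le C_1 \|D\|_\infty^+$ from Proposition~\ref{tool1vectorvalued} by choosing the cutoff $x$ (equivalently the length $M$) so that the polynomial approximation error against the full partial sum $P_N$ is balanced against the exponential factor $e^{\lambda_M \sigma}$.

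The main obstacle is this last step in the reverse direction: the inequality \eqref{BTineqvectorvalued} controls partial sums by the full polynomial but offers no absolute a priori bound on $\|P_M\|_\infty$ for the partial sums of an $\mathcal{H}_\infty^+$ series. One must therefore interleave Riesz-mean approximations (whose norms are controlled without any gap condition on $\lambda$) with \eqref{BTineqvectorvalued} applied at judiciously chosen lengths, exploiting the freedom to take $\sigma > 0$ arbitrarily small while only requiring $M \ge M_0(\sigma)$, so that the resulting growth estimate on $\log \|P_N\|_\infty/\lambda_N$ can be pushed below every positive $\sigma$.
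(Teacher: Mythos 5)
Your forward direction is correct and is essentially a reorganized version of the paper's argument: the paper also gets the uniform bound $\sup_N\sup_t\|\sum_{n=1}^N a_n e^{-\lambda_n(\sigma+it)}\|\le C(\sigma)\|D\|_\infty$ from completeness of $\mathcal{D}_\infty(\lambda,X)$ plus uniform boundedness, and then converts it to \eqref{BTineqvectorvalued}; where the paper uses an Abel-summation identity for this conversion, your Phragm\'en--Lindel\"of inequality $(\ast)$ does the same job and is a clean alternative.

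The reverse direction, however, has a genuine gap which you yourself flag but do not close, and the balancing heuristic you propose would not work as stated. The inequality you obtain for the partial sums of the limit series, $\sup_{N\le M}\|P_N\|_\infty\le C\,e^{\lambda_M\sigma}\|P_M\|_\infty$, is circular ($\|P_M\|_\infty$ is exactly the uncontrolled quantity), and applying \eqref{BTineqvectorvalued} directly to the uniformly bounded Riesz polynomials $R_x^{\lambda,1}(D)$ produces a bound of order $e^{x\sigma}\|D\|_\infty^+$ on their partial sums, which blows up precisely when you let $x\to\infty$ to strip off the Riesz factors $(1-\lambda_n/x)$; no choice of $x$ versus $M$ repairs this. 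The missing idea is to first upgrade \eqref{BTineqvectorvalued}, again by Abel summation (writing $S_y(2\sigma+it)=e^{-2y\sigma}S_y(it)+2\sigma\int_0^y e^{-2\sigma a}S_a(it)\,da$ and estimating each $S_a(it)$ by \eqref{BTineqvectorvalued}), to the \emph{shifted} inequality
\begin{equation*}
\sup_{N\le M}\sup_{t\in\R}\Big\|\sum_{n=1}^N a_n e^{-\lambda_n(2\sigma+it)}\Big\|_X\le C_1(\sigma)\sup_{t\in\R}\Big\|\sum_{n=1}^M a_n e^{-\lambda_n it}\Big\|_X,
\end{equation*}
in which the factor $e^{\lambda_M\sigma}$ has been absorbed by the evaluation at $Re=2\sigma$. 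Applying \emph{this} inequality to $R_x^{\lambda,1}(D_\varepsilon)$ (whose sup norms are $\le\|D\|_\infty$ up to a constant, by Theorem~\ref{uniformRieszlimitvectorvalued} or Proposition~\ref{tool1vectorvalued}) gives a bound on the partial sums of the Riesz means at abscissa $2\sigma+\varepsilon$ that is uniform in $x$, so one may let $x\to\infty$ and conclude $\sigma_u(D)\le 2\sigma+\varepsilon$ by the Bohr--Cahen formula; since $\sigma,\varepsilon>0$ are arbitrary, Bohr's theorem follows. Note also that the paper runs this argument for an arbitrary $D\in\mathcal{D}_\infty^{ext}(\lambda)$ directly, which is simpler than your detour through completeness and Cauchy sequences.
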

\begin{proof}
Assume that Bohr's theorem holds for $\lambda$. Then by Theorem \ref{maininftyvectorvalued} we know that $\mathcal{D}_{\infty}(\lambda,X)$ is complete. Hence an application of the uniform boundedness principle shows that for every $\sigma>0$ there is a constant $C=C(\sigma)>0$ such that for every $D\in \mathcal{D}_{\infty}(\lambda,X)$ we have
$$\sup_{N} \sup_{t\in \R} \big\|\sum_{n=1}^{N} a_{n}(D)e^{-\lambda_{n}(\sigma+it)}\big\|_{X} \le C(\sigma)\|D\|_{\infty}.$$
In particular, for every complex sequence $(a_{n})\subset X$
\begin{equation} \label{BTineq2vectorvalued}
\sup_{N\le M} \sup_{t\in \R} \big\| \sum_{n=1}^{N}a_{n}e^{-\lambda_{n}(\sigma+it)}\big\|_{X}\le C(\sigma) \sup_{t\in \R} \big\| \sum_{n=1}^{M} a_{n}e^{-\lambda_{n}it}\big\|_{X}.
\end{equation}
Let us now verify $(\ref{BTineqvectorvalued})$ with $M_{0}=1$. For some fixed $(a_{n})\subset X$ we define $S_{x}(s):=\sum_{\lambda_{n}<x}a_{n}e^{-s\lambda_{n}}.$
Then using Abel summation we have for every $0<y\le x$ and $t\in \R$
$$S_{y}(it)=e^{y\sigma}S_{y}(\sigma+it)-\sigma \int_{0}^{y} e^{\sigma a} S_{a}(\sigma+it) da.$$
Taking norms and applying (\ref{BTineq2vectorvalued}) we obtain
\begin{equation*}
\|S_{y}(it)\|_{X}\le e^{y\sigma} C(\sigma) \|S_{x}\|_{\infty}+ C(\sigma)\|S_{x}\|_{\infty} e^{2\sigma x}\le 2C(\sigma)e^{2\sigma x} \|S_{x}\|_{\infty},
\end{equation*}
which implies (\ref{BTineqvectorvalued}).
Assume now that (\ref{BTineqvectorvalued}) holds with a constant $C(\sigma)$, and let $D=\sum a_{n}e^{-\lambda_{n}s} \in \mathcal{D}_{\infty}^{ext}(\lambda)$.
We claim that $\sigma_{u}(D)\le 0$. First we show that (\ref{BTineqvectorvalued}) implies that for every $M\ge M_{0}(\sigma)$
\begin{equation} \label{BTineq3vectorvalued}
\sup_{N\le M} \sup_{t\in \R} \big\| \sum_{n=1}^{N}a_{n}e^{-\lambda_{n}(\sigma+it)}\big\|_{X}\le C_{1}(\sigma) \sup_{t\in \R} \big\| \sum_{n=1}^{M} a_{n}e^{-\lambda_{n}it}\big\|_{X}.
\end{equation}
Indeed, keeping the definition of $S_{x}$  again by Abel summation
$$S_{y}(2\sigma+it)=e^{-2y\sigma}S_{y}(it)+2\sigma \int_{0}^{y} e^{-2\sigma a} S_{a}(it) da$$
and so by (\ref{BTineqvectorvalued}) for every $M_{0}(\sigma)\le y \le x$
\begin{align*}
|S_{y}(2\sigma+it)| &\le \|S_{x}\|_{\infty} \big(e^{-y\sigma}C(\sigma)+2\sigma C(\sigma) \int_{0}^{y} e^{-\sigma a}da \big). \\ &\le \|S_{x}\|_{\infty}C(\sigma)\big(1+2\sigma \int_{0}^{\infty} e^{-\sigma a} da \big).
\end{align*}
From this (\ref{BTineq3vectorvalued}) follows. Now let $\sigma, \varepsilon>0$. Applying (\ref{BTineq3vectorvalued}) to the Dirichlet polynomial $R_{x}^{\lambda,1}(D_{\varepsilon})$, we obtain for every $N$ with $M_{0}(\sigma)\le \lambda_{N}\le x$
\begin{equation*}
\big\| \sum_{n=1}^{N-1} a_{n}\big(1-\frac{\lambda_{n}}{x}\big)e^{-(\sigma+\varepsilon+s)\lambda_{n}} \big\|_{\infty}=\|S_{\lambda_{N}}(R_{x}^{\lambda,1}(D_{\varepsilon+\sigma}))\|_{\infty}\le C(\sigma) \|R_{x}^{\lambda,1}(D_{\varepsilon})\|_{\infty}.
\end{equation*}
 Tending $x\to \infty$ we obtain (with Theorem \ref{uniformRieszlimitvectorvalued}) for every $N$ such that $\lambda_{N} \ge M_{0}(\sigma)$
 $$\big\|\sum_{n=1}^{N-1} a_{n}e^{-(\sigma+\varepsilon+s)\lambda_{n}}\big\|_{\infty}\le C(\sigma)\|D_{\varepsilon}\|_{\infty} \le C(\sigma)\|D\|_{\infty}.$$
We conclude by Proposition \ref{BohrCahenIIvectorvalued} that $\sigma_{u}(D)\le \sigma+\varepsilon$ for every $\sigma,\varepsilon>0$, and so we have $\sigma_{u}(D)\le 0$.
\end{proof}

\subsection{Bohr's strips}
Given a frequency $\lambda$ and a  Banach space $X$,  we define
\[
L(\lambda,X) := \sup_{D \in \mathcal{D}(\lambda,X)} \sigma_{a}(D)-\sigma_{c}(D)\,,
\]
and abbreviate $L(\lambda) = L(\lambda,\C)$. Then straightforward arguments show that
\begin{equation} \label{Lstrip1vectorvalued}
L(\lambda)= L(\lambda,X) =\sigma_{c}\left(\sum e^{-\lambda_{n}s} \right)=
\sigma_{a}\left(\sum e^{-\lambda_{n}s} \right)  \,,
\end{equation}
and (with a less obvious argument)  Bohr proved in \cite[\S 3, Hilfssatz 2 and 3]{Bohr2} that
\begin{equation} \label{Lstrip2vectorvalued}
L(\lambda) =\limsup_{N \to \infty} \frac{\log(N)}{\lambda_{N}}\,.
\end{equation}
Define also
\begin{equation*}
S(\lambda,X):=\sup_{D\in \mathcal{D}(\lambda,X)} \sigma_{a}(D)-\sigma_{u}(D)
\end{equation*}
(again we write $S(\lambda) = S(\lambda,\C)$), and note that under Bohr's theorem for $\lambda$
(see also Proposition~\ref{mainresultingredient1vectorvalued}) we have
\begin{equation} \label{Lstrip4vectorvalued}
S(\lambda,X)=\sup_{D\in \mathcal{D}_{\infty}(\lambda,X)} \sigma_{a}(D).
\end{equation}
Then in the  ordinary case
\begin{equation} \label{Lstrip3vectorvalued}
S((\log n), X)=1-\frac{1}{\text{\text{cot(X)}}}\,,
\end{equation}
where $\text{cot(X)}$ denotes the  optimal cotype  of $X$. More precisely,  for finite dimensional $X$ we have
that $$S((\log n), X)=\frac{1}{2}\,,$$
which  is a celebrated theorem of Bohnenblust and Hille
from \cite{BoHi}, and for infinite dimensional $X$ the result was proved in
\cite{DefantGarciaMaestrePerez} (see also \cite[Theorem 26.4]{Defant}).

For any frequency $\lambda$ and any finite (!) dimensional Banach space $X$, we by the Cauchy-Schwarz inequality deduce that
\begin{equation*}
S(\lambda, X) \leq \frac{L(\lambda)}{2}\,.
\end{equation*}
But this estimate is far from being an equality:
For the scalar case $X=\mathbb{C}$ a results of Neder from \cite{Neder} shows that for every $x>0$ and $0\le y\le\frac{x}{2}$ there is a frequency $\lambda$ such that
$S(\lambda,\mathbb{C})=y$ and $L(\lambda)=x$.
Regarding the case $x=\infty$, for $\mathbb{Q}$-linearly independent frequencies $\lambda$ we have $S(\lambda,\C)=0$ (see \cite[Theorem 4.7]{Schoolmann}) although $L(\lambda)=\infty$ for suitable choices of $\lambda$. Hence it seems that for  scalar-valued general Dirichlet series it probably only makes sense to ask for the exact value of $S(\lambda, \mathbb{C})$ for concrete (families of) frequencies.
The game seems to change drastically if we consider $X$-valued $\lambda$-Dirichlet series for an infinite dimensional Banach space $X$.

\begin{Prop}
	\label{lowvectorvalued}
	Let $\lambda$ be any frequency satisfying Bohr's theorem, and $X$ an infinite dimensional Banach space. Then
	\begin{equation} \label{Lstrip5vectorvalued}
	L(\lambda) \big(1-\frac{1}{\operatorname{cot}(X)}\big)\le S(\lambda,X).
	\end{equation}
	Moreover, if $\lambda$ is $\mathbb{Q}$-linearly independent, then $$L(\lambda) \big(1-\frac{1}{\operatorname{cot}(X)}\big)= S(\lambda,X).$$
\end{Prop}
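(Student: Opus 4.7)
My plan is to prove the two bounds separately. Set $q:=\operatorname{cot}(X)$; the case $q=\infty$ is immediate since $\sigma_a(D)-\sigma_u(D)\le L(\lambda)$ always holds, so from now on assume $q<\infty$.

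For the lower bound I will explicitly construct, for each $\varepsilon>0$, a Dirichlet series $D\in\mathcal{D}_\infty(\lambda,X)$ with $\sigma_a(D)\ge (1-1/q)(L(\lambda)-\varepsilon)$; combined with~\eqref{Lstrip4vectorvalued} and $\varepsilon\to 0$ this will give $S(\lambda,X)\ge L(\lambda)(1-1/q)$. By~\eqref{Lstrip2vectorvalued} I pick indices $M_1<M_2<\cdots$ with $\log M_k/\lambda_{M_k}\ge L(\lambda)-\varepsilon$ and $M_k\ge 2M_{k-1}$, and set $I_k:=\{M_{k-1}+1,\ldots,M_k\}$, so that $|I_k|\ge M_k/2$. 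Since $X$ is infinite dimensional with $\operatorname{cot}(X)=q$, the Maurey--Pisier theorem supplies unit vectors $(y_n^{(k)})_{n\in I_k}\subset X$ such that $\sup_{\omega\in\mathbb{T}^{|I_k|}}\|\sum_{n\in I_k}\omega_n y_n^{(k)}\|\le 2|I_k|^{1/q}$. My candidate is
\[
D(s):=\sum_{k\ge 1}\frac{1}{k^2|I_k|^{1/q}}\sum_{n\in I_k}y_n^{(k)}e^{-\lambda_n s}.
\]
The sup norm of the $k$-th block on $[Re>0]$ equals its sup on $[Re=0]$ by the maximum modulus principle (applied to each scalar functional), and the latter is bounded by $2|I_k|^{1/q}$ because $(e^{-it\lambda_n})_{n\in I_k}\in\mathbb{T}^{|I_k|}$ for every $t$; hence $\|D\|_\infty\le\sum_k 2/k^2<\infty$ and therefore $D\in\mathcal{H}^+_\infty(\lambda,X)=\mathcal{D}_\infty(\lambda,X)$ by Theorem~\ref{maininftyvectorvalued}. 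Using $\lambda_{M_k}\le\log M_k/(L(\lambda)-\varepsilon)$,
\[
\sum_{n\in I_k}\frac{\|y_n^{(k)}\|}{k^2|I_k|^{1/q}}e^{-\sigma\lambda_n}\;\ge\;\frac{|I_k|\,e^{-\sigma\lambda_{M_k}}}{k^2|I_k|^{1/q}}\;\ge\;\frac{M_k^{1-1/q-\sigma/(L(\lambda)-\varepsilon)}}{2k^2},
\]
which for $\sigma<(1-1/q)(L(\lambda)-\varepsilon)$ diverges in $k$ thanks to $M_k\ge 2^k$. Therefore $\sigma_a(D)\ge(1-1/q)(L(\lambda)-\varepsilon)$.

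For the upper bound I exploit that $\mathbb{Q}$-linear independence forces $\lambda_n>0$ and turns $\mathbb{T}^\infty$ with the Kronecker flow into a $\lambda$-Dirichlet group with $h_{\lambda_n}(\omega)=\omega_n$; in particular Bohr's theorem holds for $\lambda$, so Theorem~\ref{maininftyvectorvalued} gives $\mathcal{D}_\infty(\lambda,X)=\mathcal{H}_\infty^+(\lambda,X)$. Given $D\in\mathcal{D}_\infty(\lambda,X)$ and $\sigma>0$, pick $f_\sigma\in H_\infty^\lambda(\mathbb{T}^\infty,X)$ with $\widehat{f_\sigma}(h_{\lambda_n})=a_ne^{-\sigma\lambda_n}$ and $\|f_\sigma\|_\infty\le\|D\|_\infty$. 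Because the Fourier support of $f_\sigma$ consists of single-coordinate characters, the conditional expectation of $f_\sigma$ with respect to the first $N$ coordinates is exactly $\omega\mapsto\sum_{n=1}^Na_ne^{-\sigma\lambda_n}\omega_n$, whose $L_q$-norm is therefore at most $\|f_\sigma\|_\infty$. The Steinhaus cotype-$q$ inequality (derived from Rademacher cotype via the symmetrization $\omega_n\varepsilon_n\stackrel{d}{=}\omega_n$) then yields
\[
\Bigl(\sum_{n=1}^N\|a_n\|^qe^{-q\sigma\lambda_n}\Bigr)^{1/q}\le C_q(X)\|D\|_\infty,
\]
so $\sum_n\|a_n\|^qe^{-q\sigma\lambda_n}<\infty$ for every $\sigma>0$. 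For $\tau>\sigma+L(\lambda)(1-1/q)$, H\"older's inequality with exponents $q$ and $q'=q/(q-1)$ combined with~\eqref{Lstrip1vectorvalued} produces $\sum_n\|a_n\|e^{-\tau\lambda_n}<\infty$, since $q'(\tau-\sigma)>L(\lambda)$. Letting $\sigma\to 0$ gives $\sigma_a(D)\le L(\lambda)(1-1/q)$, hence $S(\lambda,X)\le L(\lambda)(1-1/q)$.

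The hard part will be the lower-bound construction, where the arithmetic information $L(\lambda)=\limsup\log N/\lambda_N$ and the Maurey--Pisier witnesses of optimal cotype must be synchronized inside a single Dirichlet series whose sup norm stays controlled. The key observation that makes the blocks behave is the elementary fact that $(e^{-it\lambda_n})_{n\in I_k}\in\mathbb{T}^{|I_k|}$ for every $t$, independently of any arithmetic of $\lambda$; this is precisely what lets the Maurey--Pisier sup bound transfer verbatim to the Dirichlet norm and explains why $L(\lambda)(1-1/q)\le S(\lambda,X)$ holds for every $\lambda$ with Bohr's theorem. In the reverse direction $\mathbb{Q}$-linear independence is genuinely needed, as it is what lets us identify the Dirichlet norm with a Steinhaus average on $\mathbb{T}^\infty$ through the Kronecker flow and thereby apply cotype in the sharp direction.
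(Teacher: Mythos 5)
Your argument is essentially sound, and for the lower bound it takes a genuinely different route from the paper. The paper first identifies $S(\lambda,X)$, via a closed-graph argument resting on the completeness of $\mathcal{D}_\infty(\lambda,X)$, with the best constant $A$ in the polynomial inequality \eqref{20novvectorvalued}, and then obtains the lower bound abstractly: feeding the Maurey--Pisier vectors of \eqref{infinitebanachargumentvectorvalued} into that inequality and dualizing shows $(e^{-\lambda_n\sigma})\in\ell_{\operatorname{cot}(X)'}$ for every $\sigma>A$. You instead exhibit an explicit block series $D\in\mathcal{H}^+_\infty(\lambda,X)=\mathcal{D}_\infty(\lambda,X)$ whose $\sigma_a$ is as large as allowed; the observation that $(e^{-it\lambda_n})_{n\in I_k}\in\T^{|I_k|}$ for all $t$, so that the Maurey--Pisier sup bound controls the uniform norm of each block, is correct, and the divergence estimate for $\sigma<(1-1/q)(L(\lambda)-\varepsilon)$ checks out. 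What the paper's formulation buys is the reusable identity $S(\lambda,X)=A$, exploited again in Propositions~\ref{conjecturevectorvalued} and~\ref{type2}; what yours buys is concreteness. Your upper bound is essentially the paper's: cotype on the Kronecker group $\T^\infty$ plus H\"older against $\sum e^{-\lambda_n s}$ via \eqref{Lstrip1vectorvalued}, with contractivity of the conditional expectation standing in for the paper's appeal to Theorem~\ref{Nabschnittvectorvalued}.

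Two points need repair. First, your dismissal of the case $\operatorname{cot}(X)=\infty$ is backwards: the fact $\sigma_a(D)-\sigma_u(D)\le L(\lambda)$ gives the \emph{upper} bound $S(\lambda,X)\le L(\lambda)$, whereas for $q=\infty$ the inequality \eqref{Lstrip5vectorvalued} that must be proved is $L(\lambda)\le S(\lambda,X)$, which is not immediate. Fortunately your construction covers this case verbatim upon reading $|I_k|^{1/q}=1$ (Maurey--Pisier then supplies almost isometric copies of $\ell_\infty^{|I_k|}$), so simply delete the reduction; you should also say a word about $L(\lambda)=\infty$, where $L(\lambda)-\varepsilon$ must be replaced by an arbitrary large $R$. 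Second, in the upper bound you apply the cotype-$q$ inequality with $q=\operatorname{cot}(X)$ exactly; the optimal cotype is an infimum and need not be attained, so that inequality may fail. Run the argument with $q=\operatorname{cot}(X)+\varepsilon$, as the paper does, to get $S(\lambda,X)\le L(\lambda)\bigl(1-\frac{1}{\operatorname{cot}(X)+\varepsilon}\bigr)$, and then let $\varepsilon\to 0$. With these corrections the proof is complete.
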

\begin{proof}
	First we show that $S(\lambda,X)$ equals the infimum of all $\sigma\in \R$ for which there is a constant $C>0$ such that for every $N$ and sequence $(a_{n})\subset X$ we have
	\begin{equation} \label{20novvectorvalued}
	\sum_{n=1}^N \|a_n\|_{X}  e^{-\lambda_n \sigma}
	\leq C  \sup_{t\in \R}\big\| \sum_{n=1}^N a_n e^{-\lambda_n it} \big\|_{X}.
	\end{equation}
	Indeed, if $\sigma >S(\lambda,X)$, then a closed graph argument (here use completeness of $\mathcal{D}_\infty(\lambda, X)$ which is guaranteed by assuming Bohr's theorem and Theorem \ref{maininftyvectorvalued}) gives (\ref{20novvectorvalued}). Conversely, let us denote by $A$ the infimum above and take $\sigma>A$. Fix $\varepsilon>0$. Then by the definition of $A$ we have for every $D=\sum a_{n}e^{-\lambda_{n}s}$ that
	$$\lim_{N\to \infty} \sum_{n=1}^N \|a_n\|_{X}  e^{-\lambda_n (\sigma+\varepsilon)}
	\leq C  \lim_{N\to \infty} \sup_{t\in \R}\big\| \sum_{n=1}^N a_n e^{-\lambda_n (\varepsilon+it)} \big\|_{X}=\|D_{\varepsilon}\|_{\infty},$$
	where the last equality follows from Bohr's theorem. Hence $\sigma+\varepsilon\ge S(\lambda,X)$ and so altogether we obtain $A=S(\lambda,X)$. Let us write $\frac{1}{\operatorname{cot}(X)'}=1-\frac{1}{\operatorname{cot}(X)}$. Then a direct calculation shows
	\[
	L(\lambda) \Big(1-\frac{1}{\operatorname{cot}(X)}\Big) = L\Big(\frac{\lambda}{\operatorname{cot}(X)'}\Big)
	=
	\inf
	\big\{ \sigma \in \R \colon  (e^{-\lambda_n\sigma}) \in \ell_{\operatorname{cot}(X)'}\big\}=:B\,.
	\]
	Let us show that $B\le A$, and assume without loss of generality that $A < \infty$. Take some  $A<\sigma$. i.e. there is $C>0$ such that for every $N$ and sequence $(a_{n})\subset X$ we have
	\begin{equation} \label{eisa}
\sum_{n=1}^N \|a_n\|_{X}  e^{-\lambda_n \sigma}
	\leq C  \sup_{t\in \R}\big\| \sum_{n=1}^N a_n e^{-\lambda_n it} \big\|_{X}.
\end{equation}
	 We show that $(e^{-\lambda_n\sigma}) \in \ell_{\text{cot(X)}'}$, that is $B\le \sigma$.
	Since $X$ is infinite dimensional, there are $x_1, \ldots, x_n \in X$ such that for all $u=(u_1, \ldots , u_N) \in \mathbb{C}^{N}$ we have
	\begin{equation} \label{infinitebanachargumentvectorvalued}
	\frac{1}{2}\|u\|_\infty \leq \|\sum_{n=1}^N x_n u_n\|_X \leq \|u\|_{\text{cot(X)}}\,;
	\end{equation}
	see \cite{MauPi} and also \cite[Theorem 14.5]{DiJaTo95}. Now let $w_1, \ldots , w_n \in \mathbb{C}$ arbitrary. Then, applying (\ref{eisa}) and (\ref{infinitebanachargumentvectorvalued}) with $a_n=e_{n}w_{n}$ and $u=(w_{1}, \ldots, w_{n})$, we obtain by the choice of $\sigma$  that
	\begin{align*}
	&
	\sum_{n=1}^N |e^{-\lambda_n \sigma} w_n|
	\leq 2 \sum_{n=1}^N \|x_nw_n\|_{X}e^{-\lambda_n \sigma}\le 2 C(\sigma) \sup_{t \in \R} \big\|  \sum_{n=1}^N  x_n w_n e^{-\lambda_n it} \big\|_{ X}
	\\&
	\leq 2 C(\sigma) \big(\sum_{n=1}^N |w_n e^{-i\lambda_n t}|^{\operatorname{cot}(X)}\big)^{\frac{1}{\operatorname{cot}(X)}} = 2 C(\sigma) \big(\sum_{n=1}^N |w_n |^{\operatorname{cot}(X)}\big)^{\frac{1}{\operatorname{cot}(X)}}\,.
	\end{align*}
	Consequently, by duality $(e^{-\lambda_{n}\sigma})\in \ell_{\operatorname{cot}(X)'}$, which finally implies $B\le A$. It remains to verify that $B\ge A$, whenever $\lambda$ is $\mathbb{Q}$-linearly independent. Note that $\T^{\infty}$ with the mapping
	$$\beta\colon \R \to \T^{\infty}, ~~ t \mapsto (e^{-it\lambda_{n}})$$
	forms a $\lambda$-Dirichlet group. We assume that $B<\infty$ and take $\sigma>B$.  Then there is some $0<\varepsilon<1$ such that $\sigma> L(\lambda)\big(1-\frac{1}{\operatorname{cot}(X)+\varepsilon}\big)$. Defining $q:=\operatorname{cot}(X)+\varepsilon$ we obtain for every sequence $(a_{n})\subset X$
	\begin{align*}
	\sum_{n=1}^{N} \|a_{n}\|_{X}e^{-\lambda_{n}\sigma}&\le C(\sigma) \left(\sum_{n=1}^{N}\|a_{n}\|^{q}_{X}\right)^{\frac{1}{q}}\le C(X,\sigma) \left( \int_{\T^{\infty}} \big\| \sum_{n=1}^{N} a_{n} z_{n} \big\|^{q}_{X} dz\right)^{\frac{1}{q}} \\ &\le C(X,\sigma) \sup_{z\in \T^{\infty}} \big\|\sum_{n=1}^{N} a_{n} z_{n} \big\|_{X}=\sup_{t\in \R} \big\|\sum_{n=1}^{N} a_{n} e^{-it\lambda_{n}} \big\|_{X},
	\end{align*}
	where the first inequality follows form H\"{o}lders inequality. This implies $B\ge A$ and finishes the proof.
\end{proof}

The following proposition gives some  evidence that  the estimate in \eqref{Lstrip5vectorvalued} (as in the particular case \eqref{Lstrip3vectorvalued}) might be an equality.

\begin{Prop} \label{conjecturevectorvalued}
	For every
	frequency $\lambda$ satisfying Bohr's theorem  we have
	\begin{equation}
	S(\lambda, \ell_r)
	=
	\begin{cases}\frac{L(\lambda)}{2}, & \text{ if }  1 \leq r \leq 2, \\[2ex]
	L(\lambda) \big( 1- \frac{1}{r} \big)      , & \text{ if } 2 \leq r \leq \infty.\\
	\end{cases}
	\end{equation}
\end{Prop}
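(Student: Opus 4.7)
The lower bound $S(\lambda,\ell_r)\ge L(\lambda)\bigl(1-1/\max(r,2)\bigr)$ is immediate from Proposition~\ref{lowvectorvalued} together with the fact that the optimal cotype of $\ell_r$ equals $\max(r,2)$ (with the convention $1-1/\infty=1$ in the case $r=\infty$). The substantive work is the matching upper bound. I work throughout with the characterization established inside the proof of Proposition~\ref{lowvectorvalued}, namely that $S(\lambda,\ell_r)$ equals the infimum of those $\sigma\in\R$ for which some constant $C>0$ satisfies
\[
\sum_{n=1}^{N}\|a_n\|_r\,e^{-\lambda_n\sigma} \;\le\; C\,\sup_{t\in\R}\Bigl\|\sum_{n=1}^{N} a_n e^{-\lambda_n it}\Bigr\|_r
\]
for all $N$ and all $(a_n)\subset\ell_r$. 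Set $F(t)=\sum_n a_n e^{-\lambda_n it}$ and denote by $F_k(t)=\sum_n a_n^{(k)}e^{-\lambda_n it}$ the $k$-th coordinate scalar $\lambda$-Dirichlet series.

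For the range $2\le r<\infty$ the argument is clean. Apply the scalar Hausdorff--Young inequality on the Bohr compactification $\overline{\R}$ coordinatewise (legitimate because $r'\le 2$) to get $\bigl(\sum_n|a_n^{(k)}|^r\bigr)^{1/r}\le\|F_k\|_{L_{r'}(\overline{\R})}$. Raising to the $r$-th power, summing over $k$, and applying Minkowski's integral inequality (the outer index $r$ exceeds the inner $r'$) produces
\[
\Bigl(\sum_n\|a_n\|_r^{r}\Bigr)^{1/r} \;\le\; \Bigl(\sum_k\|F_k\|_{L_{r'}}^{r}\Bigr)^{1/r} \;\le\; \|F\|_{L_{r'}(\overline{\R},\ell_r)} \;\le\; \|F\|_\infty.
\]
A final Hölder estimate with conjugates $r,r'$ introduces the Abel weight: for every $\sigma>L(\lambda)/r'=L(\lambda)(1-1/r)$ one has $\sum_n e^{-\lambda_n\sigma r'}<\infty$ by \eqref{Lstrip1vectorvalued}--\eqref{Lstrip2vectorvalued}, and hence $\sum_n\|a_n\|_r e^{-\lambda_n\sigma}\le C\|F\|_\infty$. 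The endpoint $r=\infty$ is covered by the trivial bound $S(\lambda,X)\le L(\lambda)$, which already matches the lower bound $L(\lambda)$.

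For $1\le r\le 2$ the target is $S(\lambda,\ell_r)\le L(\lambda)/2$. I would aim first at the unweighted cotype-$2$ type estimate $\bigl(\sum_n\|a_n\|_r^2\bigr)^{1/2}\le C_r\|F\|_\infty$; once this is in hand, one more Cauchy--Schwarz against $\sum_n e^{-2\lambda_n\sigma}<\infty$ for $\sigma>L(\lambda)/2$ closes the proof. Two standard moves reduce the estimate: coordinatewise Parseval on $\overline{\R}$ gives $\|F_k\|_{L_2}^2=\sum_n|a_n^{(k)}|^2$, and Minkowski in the mixed norm (valid for $r\le 2$) yields $\bigl(\sum_n\|a_n\|_r^2\bigr)^{1/2}\le\bigl(\sum_k\|F_k\|_{L_2}^{r}\bigr)^{1/r}$. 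To dominate the right-hand side by $\|F\|_\infty$ I would invoke the complex Kahane--Khintchine equivalence to rewrite $\bigl(\sum_k\|F_k\|_{L_2}^{r}\bigr)^{1/r}$ as a randomized $\ell_r$-norm, and then exploit the cotype-$2$ property of $\ell_r$; ARNP of $\ell_r$ (Theorem~\ref{mainresultARNPvectorvalued}) together with Lemma~\ref{blavectorvalued} serve to transfer the Rademacher/Steinhaus cotype-$2$ inequality to the Dirichlet character system on $\overline{\R}$ through translates $D_\sigma$ of the series.

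The main obstacle will be that last transfer step when $1\le r<2$: the cotype-$2$ inequality for $\ell_r$ is classically formulated with independent Rademacher or Steinhaus variables, whereas the characters $(e^{-\lambda_n it})$ on $\overline{\R}$ need not be jointly independent unless $\lambda$ is $\mathbb{Q}$-linearly independent. Handling general frequencies $\lambda$ satisfying only Bohr's theorem therefore forces one to combine the mixed-norm Minkowski bounds above with the almost-periodic/translation structure captured by Lemma~\ref{blavectorvalued} and by the ARNP-based identity $\mathcal{H}_\infty^+(\lambda,\ell_r)=\mathcal{H}_\infty(\lambda,\ell_r)$.
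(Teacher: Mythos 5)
Your lower bound and your treatment of the range $2\le r\le\infty$ are fine and essentially coincide with the paper's argument: coordinatewise Hausdorff--Young on the Dirichlet group, Minkowski's integral inequality in the mixed norm (legitimate since $r'\le r$), and a final H\"older step against $\sum_n e^{-(L(\lambda)+\varepsilon)\lambda_n}<\infty$, which yields $S(\lambda,\ell_r)\le S_{r'}(\lambda,\ell_r)\le L(\lambda)/r'$.

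The case $1\le r\le 2$, however, contains a genuine gap, and you have correctly located it yourself: the ``transfer step'' from the character system $(h_{\lambda_n})$ to independent Steinhaus/Rademacher variables is not carried out, and the mechanism you propose cannot work. Rewriting $\bigl(\sum_k\|F_k\|_{L_2}^r\bigr)^{1/r}$ as a randomized norm and then invoking cotype $2$ of $\ell_r$ presupposes an inequality of the form
\begin{equation*}
\mathbb{E}\Big\|\sum_n \varepsilon_n a_n\Big\|_X^2 \;\lesssim\; \int_G\Big\|\sum_n a_n h_{\lambda_n}(\omega)\Big\|_X^2\,d\omega,
\end{equation*}
which is exactly \eqref{RUCvectorvalued}. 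By Lemma~\ref{RUCvectorvaluedA} this holds when $X$ has type $2$, but — as the paper records right after that lemma, citing \cite{CarandoMaceraScottiTradacete} — for $\lambda=(\log n)$ the inequality \eqref{RUCvectorvalued} is \emph{equivalent} to type $2$, and $\ell_r$ fails type $2$ for every $1\le r<2$. Since $(\log n)$ satisfies Bohr's theorem, your route breaks down already in the most basic instance of the statement. Note also that your first Minkowski step only gives $\bigl(\sum_n\|a_n\|_r^2\bigr)^{1/2}\le\bigl(\sum_k\|F_k\|_{L_2}^r\bigr)^{1/r}$, and the reverse mixed-norm comparison needed to dominate the right-hand side by $\|F\|_{L_2(G,\ell_r)}$ goes the wrong way for $r\le 2$; so there is no soft substitute for the missing transfer. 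ARNP and Lemma~\ref{blavectorvalued} do not help here either: they concern the identification $\mathcal{H}_p^+=\mathcal{H}_p$, not coefficient estimates against independent systems.

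The paper closes this case by a completely different, and more elementary, device. For $r=1$ it uses the lattice structure of $\ell_1$: since $\|a_n\|_{\ell_1}=\sum_k|a_n(k)|$, one may interchange $\sum_k$ with $\sum_n$, apply the \emph{scalar} hypercontractive estimate $S_1(\lambda)=L(\lambda)/2$ from \eqref{Lstriptvectorvalued} to each coordinate series $F_k$ in the $L_1(G)$-norm, and then interchange $\sum_k$ with $\int_G$ to reassemble $\|F(\omega)\|_{\ell_1}$; this gives $S(\lambda,\ell_1)\le S_1(\lambda,\ell_1)\le L(\lambda)/2$ with no randomization whatsoever. For $1<r\le 2$ it then invokes the isometric embedding $\ell_r\hookrightarrow L_1(\mu)$ and the fact that $S(\lambda,\cdot)$ is a local invariant, so $S(\lambda,\ell_r)\le S(\lambda,L_1(\mu))=S(\lambda,\ell_1)$. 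If you want to salvage your plan, this reduction to $r=1$ plus the coordinatewise scalar $\mathcal{H}_1$-estimate is the missing idea.
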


Notice that $\ell_r$ can be replaced by any $\mathfrak{L}_r$-space. We prepare the proof with some more preliminaries.  Define for $1 \leq p \leq \infty$ in analogy to \eqref{Lstrip4vectorvalued}
\begin{equation} \label{Lstrip6vectorvalued}
S_p(\lambda,X)=\sup_{D\in \mathcal{H}_{p}(\lambda,X)} \sigma_{a}(D),
\end{equation}
and again $S_p(\lambda) = S_p(\lambda,\C)$.
Using the hypercontractivity for frequencies $\lambda$ satisfying $L(\lambda) < \infty$, we know from \cite[Appendix B]{Schoolmannthesis} that for all $1 \leq p < \infty$
\begin{equation} \label{Lstriptvectorvalued}
S_p(\lambda)= \frac{L(\lambda)}{2}\,,
\end{equation}
whenever $L(\lambda) < \infty$.

In passing, we mention that it would be very interesting to know whether this equality also holds for frequencies $\lambda$
with $L(\lambda) = \infty$; note that if $S_p(\lambda) = \infty$, then $L(\lambda)= \infty$, but the converse of this implication  seems not clear.
Anyway, in view of Neder's result mentioned above, the behavior of $S_p(\lambda)$ seems not at all as chaotic as $S(\lambda)$.

We need the following alternative descriptions of $S_p(\lambda, X)$ (the proof of which follows by a standard closed graph argument and the Bohr-Cahen formulas for $\sigma_a$
and $\sigma_u$; analyse e.g. the proof of \cite[Proposition 9.5]{Defant}):
Assume that $\lambda$ satisfies Bohr's theorem, $X$ is some Banach space, and $1 \leq p \leq \infty$. Then $S_p(\lambda, X)$ equals the infimum over all $\sigma >0$
for which  there is a constant $c_\sigma >0$ such that for all finite Dirichlet polynomials $D = \sum_{\lambda_n \leq x} a_n e^{-\lambda_n s}$
\begin{equation} \label{charac1vectorvalued}
\sum_{\lambda_n \leq x} \|a_n\|e^{-\lambda_n \sigma} \leq c_\sigma \|D\|_p,
\end{equation}
and alternatively we may replace this estimate by
\begin{equation} \label{charac2vectorvalued}
\sum_{\lambda_n \leq x} \|a_n\| \leq c_\sigma e^{x \sigma}\|D\|_p .
\end{equation}

\begin{proof}[Proof of Proposition~\ref{conjecturevectorvalued}]
	By Proposition~\ref{lowvectorvalued}
	 we only have to prove the upper bound.

	\noindent The case $r = \infty$: By the definitions and  \eqref{Lstrip1vectorvalued} (first equality) we obviously have $S(\lambda, \ell_\infty) \leq L(\lambda, \ell_\infty) = L(\lambda)$, which is what we want. For the rest of this proof fix some $\lambda$-Dirichlet group $(G, \beta)$.

	\noindent The case $1 \leq r \leq 2$:  We first handle the case $r=1$. Since  $S(\lambda, \ell_1) \leq S_1(\lambda, \ell_1)\,,$
	it suffices to show that
	$$S_1(\lambda, \ell_1) \leq \frac{L(\lambda)}{2}\,.$$
	Fix  some $\varepsilon >0$ and take  a Dirichlet polynomial $D = \sum_{\lambda_n \leq x} a_n e^{-\lambda_n s} \in \mathcal{D}(\lambda, \ell_1)$.
	Then by \eqref{Lstriptvectorvalued} for $p=1$ we have
	\begin{align*}
	\sum_{\lambda_n \leq x}\| a_n\|_{\ell_1}
	&
	=\sum_n \sum_k |a_n(k)| = \sum_k \sum_n |a_n(k)|
	\\&
	\leq \sum_k c_{\varepsilon} e^{x\big(\frac{L(\lambda)}{2}+\varepsilon\big)}
	\int_G|\sum_{\lambda_n \leq x}  a_n(k) h_{\lambda_n}(\omega )| d\omega
	\\&
	=  c_{\varepsilon} e^{x\big(\frac{L(\lambda)}{2}+\varepsilon\big)}
	\int_G \sum_k|\sum_{\lambda_n \leq x}  a_n(k) h_{\lambda_n}(\omega )| d\omega
	\\&
	=  c_{\varepsilon} e^{x\big(\frac{L(\lambda)}{2}+\varepsilon\big)}
	\int_G \|\sum_{\lambda_n \leq x}  a_n h_{\lambda_n}(\omega )\|_{\ell_1} d\omega,
	\end{align*}
	and the conclusion  follows from \eqref{charac2vectorvalued}.
	Assume now  that $1 < r \leq 2. $ Then it is well-known that there is a finite measure $\mu$ and an isometric embedding
	$\ell_r \hookrightarrow L_1(\mu)$ (see e.g. \cite[Section 24]{DefantFloret}). Since  $L_1(\mu)$ is an $\mathfrak{L}_1$-space
	(see e.g. \cite[Section 23]{DefantFloret}), we by \eqref{charac1vectorvalued} or  \eqref{charac2vectorvalued}  easily deduce that
	$S(\lambda, L_1(\mu)) = S(\lambda, \ell_1)$. But then we conclude  from the case $p=1$ that as desired
	\[
	S(\lambda, \ell_r)  \leq  S(\lambda, L_1(\mu)) \leq \frac{L(\lambda)}{2}\,.
	\]
	
	\noindent
	The case $2 \leq r \leq \infty$:
	For every $D = \sum_{\lambda_n \leq x} a_n e^{-\lambda_n s} \in \mathcal{D}(\lambda, \ell_r)$ we have
	\begin{align*}
	\sum_{\lambda_n \leq x}\| a_n\|_{\ell_r} e^{-\frac{L(\lambda)+\varepsilon}{r'}\lambda_n}
	&
	\leq \Big( \sum_{\lambda_n \leq x}  e^{-\big(L(\lambda)+ \varepsilon\big)\lambda_n}  \Big)^{\frac{1}{r'}} \,\Big( \sum_{\lambda_n \leq x}\|a_n\|_{\ell_r}^r \Big)^{\frac{1}{r}}
	\\&
	\leq C \Big( \sum_k \sum_{\lambda_n \leq x}|a_n(k)|^r \Big)^{\frac{1}{r}}\,.
	\intertext{Applying the Hausdorff-Young inequality for locally compact abelian Dirichlet groups and Minkowski's integral inequality we get}
	\sum_{\lambda_n \leq x}\| a_n\|_{\ell_r} e^{-\frac{L(\lambda)+\varepsilon}{r'}\lambda_n}
	&
	\leq C \Big( \sum_k\Big( \int_G |\sum_{\lambda_n \leq x}  a_n(k) h_{\lambda_n}(\omega )|^{r'} d\omega\Big)^{\frac{r}{r'}} \Big)^{\frac{1}{r}}
	\\&
	\leq C\Big( \int_G \|\sum_{\lambda_n \leq x}  a_n h_{\lambda_n}(\omega )\|_{\ell_r}^{r'} d\omega\Big)^{\frac{1}{r'}}\,.
	\end{align*}
	This proves by \eqref{charac1vectorvalued} that
	\[
	S(\lambda, \ell_r) \leq S_{r'}(\lambda, \ell_r) \leq \frac{L(\lambda)}{r'}\,. \qedhere
	\]
\end{proof}

The statement for $2\le r < \infty$ in Proposition \ref{conjecturevectorvalued} can be generalized to spaces with type 2.

\begin{Prop} \label{type2}
	Given a frequency $\lambda$ satisfying Bohr's theorem and an infinite dimensional Banach space $X$ of type 2, we have
\begin{equation}\label{eq-bohr-strip}
  S(\lambda,X)=L(\lambda)(1-1/\text{cot}(X)).
\end{equation}
\end{Prop}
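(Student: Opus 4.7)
By Proposition~\ref{lowvectorvalued} we already have $L(\lambda)\bigl(1-1/\operatorname{cot}(X)\bigr) \le S(\lambda,X)$, so the task is the reverse inequality. Write $q:=\operatorname{cot}(X)$ and $q'=q/(q-1)$, and fix a $\lambda$-Dirichlet group $(G,\beta)$. By the characterization of $S(\lambda,X)$ established within the proof of Proposition~\ref{lowvectorvalued}, it is enough to prove that for every $\sigma > L(\lambda)/q'$ there is $C_\sigma > 0$ such that for every $X$-valued Dirichlet polynomial $D=\sum_n a_n e^{-\lambda_n s}$,
\[
\sum_n \|a_n\|\, e^{-\lambda_n\sigma} \le C_\sigma \sup_{t\in\R}\bigl\|D(it)\bigr\|_X.
\]

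Mimicking the proof of Proposition~\ref{conjecturevectorvalued} for $r\ge 2$, I would apply H\"older with conjugate exponents $q$ and $q'$:
\[
\sum_n \|a_n\|\, e^{-\lambda_n \sigma} \le \Bigl(\sum_n \|a_n\|^q\Bigr)^{1/q}\Bigl(\sum_n e^{-\lambda_n \sigma q'}\Bigr)^{1/q'}.
\]
Since $\sigma q' > L(\lambda)$, \eqref{Lstrip1vectorvalued}--\eqref{Lstrip2vectorvalued} makes the second factor uniformly bounded, so the proof reduces to the Fourier cotype $q$ estimate
\[
\Bigl(\sum_n \|a_n\|^q\Bigr)^{1/q} \le C(X)\Bigl\|\sum_n a_n h_{\lambda_n}\Bigr\|_{L_{q'}(G,X)},
\]
from which the desired bound follows via $\|\cdot\|_{L_{q'}(G,X)}\le\|\cdot\|_{L_\infty(G,X)}=\sup_t\|D(it)\|_X$, using density of $\beta(\R)$ in $G$. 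This is exactly the step that, in the $\ell_r$-case ($r\ge 2$) of Proposition~\ref{conjecturevectorvalued}, was supplied by the scalar Hausdorff--Young inequality applied coordinate-wise combined with Minkowski's integral inequality.

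For a general type~2 space $X$ no coordinate structure is available, so I would instead combine the Rademacher cotype $q$ estimate $(\sum\|a_n\|^q)^{1/q}\le C_q(X)\|\sum a_n r_n\|_{L_q(X)}$ with a type~2 randomization that transfers the Rademacher sum to a sum against the characters $h_{\lambda_n}$. When $\lambda$ is $\Q$-linearly independent, $\T^\infty$ is a $\lambda$-Dirichlet group with $h_{\lambda_n}=z_n$ equal to the independent Steinhaus variables, and the passage Rademacher $\to$ Steinhaus $\to$ characters is immediate via Kahane--Khintchine \eqref{Kahaneinequalityvectorvalued}, yielding the desired Fourier cotype estimate at once. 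The main obstacle lies in the general case: on an arbitrary $\lambda$-Dirichlet group the characters $h_{\lambda_n}$ need not be stochastically independent (the $\lambda_n$ may have non-trivial $\Q$-linear relations), so one cannot compare random-sign and character sums by a direct contraction. This is precisely where the type~2 hypothesis on $X$ is decisive: via a Kahane--Marcus--Pisier-type comparison adapted to characters on a general compact abelian group, using that $|h_{\lambda_n}|\equiv 1$ together with the type~2 constant of $X$, one obtains the needed dimension-free equivalence between the two random systems in $L_{q'}(X)$, completing the proof of the upper bound $S(\lambda,X)\le L(\lambda)/q'$.
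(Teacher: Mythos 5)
Your overall skeleton is the right one and matches the paper's: lower bound from Proposition~\ref{lowvectorvalued}, H\"older with exponents $q$ and $q'$ to peel off $\big(\sum e^{-\lambda_n\sigma q'}\big)^{1/q'}<\infty$ for $\sigma q'>L(\lambda)$, and then a reduction to comparing $\big(\sum\|a_n\|^q\big)^{1/q}$ with an $L_p(G,X)$ norm of $\sum a_n h_{\lambda_n}$. You also correctly locate the obstruction (the $h_{\lambda_n}$ need not be independent on a general $\lambda$-Dirichlet group) and correctly sense that type $2$ is what bridges it. But the bridge itself is exactly the step you do not supply: invoking ``a Kahane--Marcus--Pisier-type comparison adapted to characters'' that yields a ``dimension-free equivalence between the two random systems'' is not a citable off-the-shelf theorem, and as stated it is an overstatement --- no equivalence holds or is needed, only the one-sided domination
\[
\mathbb{E}\Big\|\sum_{n=1}^m \varepsilon_n a_n\Big\|_X^2 \;\leq\; C\int_G\Big\|\sum_{n=1}^m a_n h_{\lambda_n}(\omega)\Big\|_X^2\,d\omega .
\]
(The reverse direction would amount to bounding a character sum by a random-sign sum, which fails in general.) This one-sided inequality is the paper's Lemma~\ref{RUCvectorvaluedA}, and its proof is short but genuinely uses a specific mechanism: type $2$ gives $\mathbb{E}\|\sum\varepsilon_n a_n\|\ll\mathbb{E}\|\sum\gamma_n a_n\|\ll\pi_2(T^\ast)$ for $T\colon\ell_2^m\to X$, $Te_n=a_n$, and then $\pi_2(T^\ast)\le\|\sum a_n h_{\lambda_n}\|_{L_2(G,X)}$ because the $h_{\lambda_n}$, while not independent, are still an \emph{orthonormal} system in $L_2(G)$, so $\|T^\ast x^\ast\|_{\ell_2^m}=\|x^\ast(\sum a_n h_{\lambda_n})\|_{L_2(G)}$. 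Without producing this (or an equivalent) argument, the central step of your proof is a gap.

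Two smaller points. First, you work directly with $q=\operatorname{cot}(X)$; since the optimal cotype need not be attained, you should take $q>\operatorname{cot}(X)$, run the argument to get $S(\lambda,X)\le (L(\lambda)+\varepsilon)/q'$, and then let $q\downarrow\operatorname{cot}(X)$ and $\varepsilon\downarrow 0$, as the paper does. Second, once you have the displayed $L_2(G,X)$ domination, you do not need a ``Fourier cotype'' estimate in $L_{q'}(G,X)$: chain cotype~$q$ with the $L_2$ bound and conclude via \eqref{charac1vectorvalued} (i.e.\ $S(\lambda,X)\le S_2(\lambda,X)$), which also covers your final passage to $\sup_t\|D(it)\|_X$ since the $L_2(G)$ norm is dominated by the sup norm.
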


We prepare the proof with the following lemma -- an argument  that is part  of the proof of  \cite[Theorem 4.1]{CarandoMaceraScottiTradacete}.
\begin{Lemm} \label{RUCvectorvaluedA}
	Let $X$ be a Banach space of type $2$ and $(G, \beta)$ a $\lambda$-Dirichlet group. Then there is a constant $C>0$ such that for every choice of finitely many $a_1, \ldots, a_m \in X$ we have
	\begin{align}\label{RUCvectorvalued}
	\mathbb{E} \Big\|\sum_{n=1}^m \varepsilon_n  a_n \Big\|_{X}^2
	\leq C \int_G \big\|\sum_{n=1}^m  a_n h_{\lambda_n}(\omega )\big\|_{X}^{2} d\omega,
	\end{align}
	where the $\varepsilon_n$ form  independent identical distributed Bernoulli variables.
\end{Lemm}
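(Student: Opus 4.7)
My plan is to combine the complex Kahane contraction principle with the type 2 property of $X$, exploiting the fact that the characters $h_{\lambda_n}$ are unimodular.

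First, since $|h_{\lambda_n}(\omega)|=1$ for every $n$ and every $\omega \in G$, the complex Kahane contraction principle yields, pointwise in $\omega$,
\[
\mathbb{E}_\varepsilon \Big\|\sum_{n=1}^m \varepsilon_n a_n\Big\|_X^2 \leq K \, \mathbb{E}_\varepsilon \Big\|\sum_{n=1}^m \varepsilon_n h_{\lambda_n}(\omega)\, a_n\Big\|_X^2
\]
with a universal constant $K$ (the bound in the opposite direction with unimodular scalars gives the reverse estimate, so in fact both quantities are equivalent for each fixed $\omega$). Integrating against the Haar measure $m$ and applying Fubini,
\[
\mathbb{E}_\varepsilon \Big\|\sum_n \varepsilon_n a_n\Big\|_X^2 \leq K \, \mathbb{E}_\varepsilon \int_G \Big\|\sum_n \varepsilon_n a_n h_{\lambda_n}(\omega)\Big\|_X^2 \, dm(\omega).
\]

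The task is then reduced to proving the randomly unconditional (RUC) estimate
\[
\mathbb{E}_\varepsilon \int_G \Big\|\sum_n \varepsilon_n a_n h_{\lambda_n}(\omega)\Big\|_X^2 \, dm(\omega) \leq C' \int_G \Big\|\sum_n a_n h_{\lambda_n}(\omega)\Big\|_X^2 \, dm(\omega),
\]
namely that the characters $(h_{\lambda_n})$ form a randomly unconditionally convergent system in $L_2(G,X)$. In the Hilbert case both sides coincide with $\sum_n \|a_n\|_X^2$ by Parseval applied to the orthonormal system $(h_{\lambda_n})$, so the inequality holds with $C'=1$ (and the Rademacher randomization is then vacuous because multiplying orthonormal coefficients by signs does not change the $L_2$-norm).

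The principal obstacle will be establishing the RUC estimate for \emph{general} type 2 spaces. The key idea is that scalar Parseval, applied to each $x^*\in X^*$ with $\|x^*\|\leq 1$, yields $\sum_n |x^*(a_n)|^2 \leq \int_G \|\sum a_n h_{\lambda_n}(\omega)\|_X^2 \, dm(\omega)$, which provides control of the weak $\ell_2$-norm of the coefficient family $(a_n)$; the type 2 hypothesis on $X$ then supplies the K-convexity needed to upgrade this weak control to the norm-in-$X$ statement expressing boundedness of the Rademacher-multiplier projection on $L_2(G,X)$. The hardest point is to carry this argument out so that the resulting constant $C'$ depends only on $T_2(X)$ and not on the frequency $\lambda$ or the chosen Dirichlet group $(G,\beta)$ --- this is precisely where type 2 (rather than some weaker form of Rademacher averaging) is essential.
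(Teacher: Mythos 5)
Your opening reduction via the contraction principle is fine, but it only restates the problem: since $|h_{\lambda_n}(\omega)|=1$, the left-hand side of your ``RUC estimate'' is, pointwise in $\omega$ and hence after integration, equivalent to $\mathbb{E}\|\sum_n\varepsilon_n a_n\|_X^2$, so the whole content of the lemma remains to be proved. The genuine gap is in your final paragraph. The scalar Parseval bound $\sum_n|x^*(a_n)|^2\le\|x^*\|^2\|D\|_{L_2(G,X)}^2$ (with $D=\sum_n a_nh_{\lambda_n}$) only controls the \emph{operator norm} of $T^*\colon X^*\to\ell_2^m$, where $Te_n=a_n$, and this is far too weak: already for $X=\ell_2$ and $a_n=e_n$ the weak $\ell_2$-norm of $(a_1,\dots,a_m)$ equals $1$, while $\mathbb{E}\|\sum_{n\le m}\varepsilon_ne_n\|^2=m=\|D\|_{L_2(G,X)}^2$. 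Hence no inequality of the form $\mathbb{E}\|\sum_n\varepsilon_na_n\|^2\le C\sup_{\|x^*\|\le1}\sum_n|x^*(a_n)|^2$ can hold with a constant independent of $m$, even in Hilbert space. K-convexity (which type $2$ does imply) cannot repair this, since it concerns the boundedness of the Rademacher projection and provides no upgrade from operator-norm control of $T^*$ to the quantity you need.

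What is actually required --- and what the paper's proof does --- is to run the same Parseval computation for \emph{finitely many functionals simultaneously}: for $x_1^*,\dots,x_K^*\in X^*$ one has
$\sum_k\|T^*x_k^*\|_{\ell_2^m}^2=\int_G\sum_k|x_k^*(D(\omega))|^2\,d\omega\le\|D\|_{L_2(G,X)}^2\,\sup_{x^{**}\in B_{X^{**}}}\sum_k|x^{**}(x_k^*)|^2$,
which yields the $2$-summing bound $\pi_2(T^*)\le\|D\|_{L_2(G,X)}$ rather than merely $\|T^*\|\le\|D\|_{L_2(G,X)}$. The type $2$ hypothesis then enters through the duality
$\big(\mathbb{E}\|\sum_n\varepsilon_na_n\|^2\big)^{1/2}\ll\big(\mathbb{E}\|\sum_n\gamma_na_n\|^2\big)^{1/2}\ll\pi_2(T^*)$
(see \cite[(4.2) and Theorem 12.2]{Tomczak-Jaegermann}), not through K-convexity. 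Your sketch identifies the correct ingredients (Parseval plus type $2$), but, as you acknowledge, it leaves the decisive step unproved, and the route you indicate for closing it would fail.
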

For the sake of completeness we sketch the proof.

\begin{proof}[Proof of Lemma~\ref{RUCvectorvaluedA}] Denote $D=\sum_{n=1}^m a_n h_{\lambda_n}$ and let $T: \ell_2^m \to X$ be the operator defined by $T(e_n)=a_n$.
	Since $X$ has type $2$, we have
	\[
	\Big(\mathbb{E} \Big\|\sum_{n=1}^m \varepsilon_n  a_n \Big\|_{X}^2\Big)^{\frac{1}{2}}
	\ll
	\Big(\mathbb{E} \Big\|\sum_{n=1}^m \gamma_n  a_n \Big\|_{X}^2 \Big)^{\frac{1}{2}} \ll \pi_2(T^\ast);
	\]
	see e.g \cite[(4.2) and Theorem 12.2]{Tomczak-Jaegermann}. For every $x^*\in X^*$ observe that
	\[\|x^*(D)\|_{L_2(G)}
	=\|(x^*(a_n))_{n=1}^m\|_{\ell_2^m}
	=\|T^*(x^*)\|_{\ell_2^m}.\]
	Therefore, given  a finite collection of vectors $x_k^*\in X^*$ we have
	\begin{align*}
	\sum_k \|T^*(x_k^*)\|_{\ell_2^m}^2
	&=\sum_k \|x_k^*(D)\|_{L_2(G)}^2=\int_G \sum_k |x_k^*(D(\omega))|^2 d\omega
	\\ & \leq \int_G \|D(\omega)\|_X^2 \sup_{x^{**}\in B_{X^{**}}}\sum_k |x^{**}(x_k^*)|^2  d\omega
	\\ &= \|D\|_{L_2(G,X)}^2 \sup_{x^{**}\in B_{X^{**}}}\sum_k |x^{**}(x_k^*)|^2.
	\end{align*}
	From the definition of the 2-summing norm we deduce that $\pi_2(T^\ast)\leq \|D\|_{L_2(G,X)}$ which concludes the argument.
\end{proof}

Having Lemma \ref{RUCvectorvaluedA} in mind, in order to prove Proposition~\ref{type2} we check that  formula \eqref{eq-bohr-strip} for Bohr's strip holds whenever \eqref{RUCvectorvalued} is satisfied. Therefore, it is natural to ask for which Banach spaces and frequencies inequality \eqref{RUCvectorvalued} holds. As it turns out, by \cite[Theorem 4.1]{CarandoMaceraScottiTradacete}
we have that \eqref{RUCvectorvalued} is equivalent to type 2 for the case ordinary Dirichlet series $\lambda=(\log n)$. Also, combining \cite[Proposition 3.1]{CarandoMaceraScottiTradacete} and \cite[Theorem 4.1]{CarandoMaceraScottiTradacete} or  looking carefully at the proof of \cite[Theorem 1.5]{ArendtBu} we have also that \eqref{RUCvectorvalued} is equivalent to type 2 for the case of classical Fourier series $\lambda=(n)$. On the other hand, there are frequencies for which  \eqref{RUCvectorvalued} is satisfied for every Banach space:  $\Q$-linearly independent frequencies (trivially) and lacunary frequencies (see \cite[Theorem 2.1]{Pi77}). As a consequence, in these cases we have $S(\lambda,X)=L(\lambda)(1-1/\text{cot}(X))$ regardless of the Banach space $X$.

\begin{proof}[Proof of Proposition~\ref{type2}]
	Fix some $\lambda$-Dirichlet group $(G, \beta)$. Since $X$ has type 2, we have that $\operatorname{cot}(X)<\infty$. Then for $\text{cot}(X)<q<\infty$ and sequence $(a_{n})\subset X$ we  for each $x$ obtain
	\begin{align*}
	\sum_{\lambda_n \leq x}\| a_n\|_{X} e^{-\frac{L(\lambda)+\varepsilon}{q'}\lambda_n}
	&
	\leq \Big( \sum_{\lambda_n \leq x}  e^{-\big(L(\lambda)+ \varepsilon\big)\lambda_n}  \Big)^{\frac{1}{q'}} \,\Big( \sum_{\lambda_n \leq x}\|a_n\|_{X}^q \Big)^{\frac{1}{q}}\,.
	\intertext{Applying the cotype $q$ inequality and \eqref{RUCvectorvalued}, we obtain}
	\sum_{\lambda_n \leq x}\| a_n\|_{X} e^{-\frac{L(\lambda)+\varepsilon}{q'}\lambda_n}
	& \leq C \Big(\mathbb{E} \Big\|\sum_{\lambda_n \leq x} \varepsilon_n  a_n \Big\|_{X}^2\Big)^{\frac{1}{2}}
	\\&
	\leq C\Big( \int_G \|\sum_{\lambda_n \leq x}  a_n h_{\lambda_n}(\omega )\|_{X}^{2} d\omega\Big)^{\frac{1}{2}}\,.
	\end{align*}
		By \eqref{charac1vectorvalued} we deduce  that
		\[
		S(\lambda, X) \leq S_{2}(\lambda, X) \leq \frac{L(\lambda)+\varepsilon}{q'}\,,
		\]
		for every $\varepsilon>0$ and every $q>\operatorname{cot}(X)$, which concludes the argument.
\end{proof}

\subsection{$\mathbb{Q}$-linear independence II}\label{subsec-QlinindIIvectorvalued}
 In the case of a $\mathbb{Q}$-linearly independent frequency $\lambda$ we know from Theorem \ref{radHpcoincidevectorvalued} that the equality $\mathcal{H}_{\infty}^{+}(\lambda,X)=\mathcal{H}_{\infty}(\lambda,X)$ holds if and only if $c_{0}$ is not isomorphically contained in $X$. Moreover, for this class of $\lambda$'s and for the scalar case $X=\mathbb{C}$, in  \cite[Theorem 4.7]{Schoolmann} it is shown that isometrically
\begin{equation} \label{Qlinscalarvectorvalued}
\mathcal{D}_{\infty}(\lambda) = \ell_{1}, ~~ \sum a_{n}e^{-\lambda_{n}s}\mapsto (a_{n})\,.
\end{equation}
 Hence by Theorem \ref{maininftyvectorvalued} we have $\mathcal{D}_{\infty}(\lambda,X)=\mathcal{H}_{\infty}^{+}(\lambda,X)$ for every Banach space $X$. All together we obtain that for $\mathbb{Q}$-linearly independent frequencies $\lambda$ the equality
$\mathcal{D}_{\infty}(\lambda,X)=\mathcal{H}_{\infty}(\lambda,X)$ holds if and only if $c_0$
is no isomorphic copy of $X$.

In this section we provide another approach to this result by extending the equality $\mathcal{D}_{\infty}(\lambda)=\ell_{1}$ to its vector-valued analog. Therefore
let us denote by $\ell_{1}^{w}(X)$ the Banach space of all weak summable $X$-valued sequences with norm
$$w((a_{n}))=\sup_{x^{\ast} \in B_{X^{\ast}} } \sum_{n=1}^{\infty} |x^{\ast}(a_{n})|$$
and $\ell^{w,0}_{1}(X)$ is the (closed) subspace of $\ell_{1}^{w}(X)$ consisting of  $(a_{n}) \in \ell_{1}^{w}(X)$ such that
$$\lim_{N \to \infty} w\left((a_{n})_{n\ge N}\right) =0.$$
Recall from \cite[Theorem V.8]{Diestel} that  $\ell^{w,0}_{1}(X)=\ell^{w}_{1}(X)$ if and only if $c_{0}$ is not isomorphically contained in $X$.
\begin{Theo} \label{independentvectorvalued} If $\lambda$ is $\mathbb{Q}$-linearly independent then for every Banach space $X$ isometrically we have
\begin{equation} \label{independentchainvectorvalued}
\ell^{w,0}_{1}(X) = \Hcal_{\infty}(\lambda,X) \subset \mathcal{D}_{\infty}(\lambda,X) =\ell^{w}_{1}(X).
\end{equation}
 In particular,  $\mathcal{D}_{\infty}(\lambda,X)=\Hcal_{\infty}(\lambda,X)$  holds isometrically if and only if $c_0$
is no isomorphic copy of $X$.
\end{Theo}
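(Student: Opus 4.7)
The plan is to work with the $\lambda$-Dirichlet group $(\T^\infty,\beta)$, where $\beta(t)=(e^{-i\lambda_n t})_n$ has dense image by Kronecker--Weyl since $\lambda$ is $\Q$-linearly independent, so that the characters $h_{\lambda_n}$ are exactly the coordinate maps $z\mapsto z_n$ and $\Hcal_\infty(\lambda,X)=H_\infty^\lambda(\T^\infty,X)$ isometrically. The whole proof then amounts to reading Hahn--Banach and Bochner-integration arguments against the scalar identity $\mathcal{D}_\infty(\lambda)=\ell_1$ recorded in \eqref{Qlinscalarvectorvalued}.

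The two outer identifications and the middle inclusion are the routine part. Combining Theorem~\ref{HahnBanachinftyvectorvalued} with \eqref{Qlinscalarvectorvalued} gives
\[
\|D\|_\infty=\sup_{\|x^\ast\|\leq 1}\|x^\ast\circ D\|_{\mathcal{D}_\infty(\lambda)}=\sup_{\|x^\ast\|\leq 1}\sum_n|x^\ast(a_n)|=w((a_n)),
\]
that is $\mathcal{D}_\infty(\lambda,X)=\ell_1^w(X)$ isometrically. Since the scalar space $\mathcal{D}_\infty(\lambda)=\ell_1$ is complete, $\lambda$ satisfies Bohr's theorem, and Theorem~\ref{maininftyvectorvalued} together with Corollary~\ref{inclusionplusvectorvalued} produce the isometric chain $\Hcal_\infty(\lambda,X)\subset\Hcal_\infty^+(\lambda,X)=\mathcal{D}_\infty(\lambda,X)$. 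The inclusion $\ell_1^{w,0}(X)\subset\Hcal_\infty(\lambda,X)$ is equally elementary: for $(a_n)\in\ell_1^{w,0}(X)$ the tail estimate $\sup_z\|\sum_{n\geq N}a_n z_n\|_X\leq w((a_n)_{n\geq N})\to 0$ shows $f(z)=\sum_n a_n z_n$ converges uniformly on $\T^\infty$, and interchanging $\sup_z$ with $\sup_{\|x^\ast\|\leq 1}$ and using that each absolutely summable scalar series $\sum x^\ast(a_n)z_n$ has sup norm $\sum_n|x^\ast(a_n)|$ yields $\|f\|_\infty=w((a_n))$, so the continuous function $f$ lies in $H_\infty^\lambda(\T^\infty,X)$ with the right norm.

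The heart of the proof is the reverse inclusion $\Hcal_\infty(\lambda,X)\subset\ell_1^{w,0}(X)$. Given $D\in\Hcal_\infty(\lambda,X)$ with representing function $f\in H_\infty^\lambda(\T^\infty,X)$, the chain above already yields $(a_n)\in\ell_1^w(X)$, i.e.\ $\sum a_n$ is weakly unconditionally Cauchy in $X$. The crucial additional input is the decay $\|a_n\|_X\to 0$: since $\T^\infty$ carries normalized Haar measure, $f$ belongs to $L_1(\T^\infty,X)$ in the Bochner sense, and approximating $f$ in $L_1$-norm by $X$-valued simple functions together with the scalar Riemann--Lebesgue lemma shows $\widehat{f}\in c_0(\widehat{\T^\infty},X)$; the characters $e_n$ form a discrete sequence escaping every finite subset of $\Z^{(\N)}$, so $a_n=\widehat{f}(e_n)\to 0$ in $X$. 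Bessaga--Pe\l czy\'nski now closes the argument: if $(a_n)$ failed to lie in $\ell_1^{w,0}(X)$, the weakly unconditionally Cauchy series $\sum a_n$ would admit a subsequence $(a_{n_k})$ equivalent to the unit vector basis of $c_0$, in particular bounded below in norm, contradicting $\|a_n\|\to 0$.

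The ``in particular'' statement then follows by combining the established chain with the Diestel characterization recalled in the text: $\Hcal_\infty(\lambda,X)=\mathcal{D}_\infty(\lambda,X)$ isometrically if and only if $\ell_1^{w,0}(X)=\ell_1^w(X)$, which happens exactly when $c_0$ is not isomorphic to a subspace of $X$. The main conceptual obstacle will be the simultaneous use of two independent inputs in the hard direction --- the $\ell_1^w$-membership coming through Hahn--Banach from the scalar theory, and the Riemann--Lebesgue decay coming from Bochner measurability --- which together feed Bessaga--Pe\l czy\'nski; without Bochner measurability one cannot rule out the $c_0$-subsequence obstruction, which is precisely why $\Hcal_\infty=\ell_1^{w,0}$ holds for every Banach space $X$ while $\Hcal_\infty=\ell_1^w$ only holds when $c_0\not\hookrightarrow X$.
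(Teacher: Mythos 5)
Most of your proposal is sound and matches the paper: the identification $\mathcal{D}_{\infty}(\lambda,X)=\ell_1^{w}(X)$ via Theorem~\ref{HahnBanachinftyvectorvalued} and the scalar identity \eqref{Qlinscalarvectorvalued}, the isometric chain through $\mathcal{H}^+_\infty$ using completeness of $\ell_1$ and Theorem~\ref{maininftyvectorvalued}, the inclusion $\ell_1^{w,0}(X)\subset\mathcal{H}_\infty(\lambda,X)$ by uniform convergence of the tails, and the final appeal to Diestel's characterization are all essentially the paper's argument. The problem is the hard inclusion $\mathcal{H}_\infty(\lambda,X)\subset\ell_1^{w,0}(X)$, where your use of Bessaga--Pe\l czy\'nski is a misquotation: for a weakly unconditionally Cauchy series that is not unconditionally convergent, the theorem produces a sequence of \emph{disjoint finite blocks} $y_j=\sum_{n\in F_j}a_n$ bounded below in norm and equivalent to the $c_0$-basis, not a subsequence of the individual terms $a_n$. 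Consequently there is no contradiction with $\|a_n\|_X\to 0$. Indeed, the implication you are implicitly claiming --- that weak unconditional Cauchyness together with norm-null coefficients forces membership in $\ell_1^{w,0}(X)$ --- is false: in $X=c_0$ list, for each $k$, the vector $\frac{1}{k}e_k$ repeated $k$ times; the resulting sequence $(a_n)$ satisfies $\|a_n\|\to 0$ and $w((a_n))=1$, yet $w\left((a_n)_{n\ge N}\right)=1$ for every $N$. So Riemann--Lebesgue decay of individual coefficients is far too weak an input, and your argument cannot close.

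What is really needed is uniform decay of the weak $\ell_1$-tails, $\sup_{\|x^\ast\|\le 1}\sum_{n>N}|x^\ast(a_n)|\to 0$, and this is where the paper invests its genuinely nontrivial tool. Given $f\in H_\infty^\lambda(G,X)$ one forms $F(\omega)=[\eta\mapsto f(\eta\omega)]$, checks that $F\in H_1^\lambda\bigl(G,H_\infty^\lambda(G,X)\bigr)$ with $\widehat{F}(h_x)=\widehat{f}(h_x)h_x$, and applies Theorem~\ref{CHindependencevectorvalued} (the L\'evy-inequality--based Carleson--Hunt substitute valid for $\mathbb{Q}$-linearly independent $\lambda$ in \emph{every} Banach space, here the Banach space $H_\infty^\lambda(G,X)$) to obtain, for almost every $\omega$, norm convergence in $H_\infty^\lambda(G,X)$ of the partial sums $\sum_{n\le N}\widehat{F}(h_{\lambda_n})h_{\lambda_n}(\omega)$ to $F(\omega)$. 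Translating back through the isometry $\mathcal{D}_\infty(\lambda,X)=\ell_1^w(X)$ applied to the tails yields exactly $w\bigl((\widehat{f}(h_{\lambda_n}))_{n>N}\bigr)\to 0$. Your proposal is missing this step entirely; without it (or some equally strong substitute) the inclusion $\mathcal{H}_\infty(\lambda,X)\subset\ell_1^{w,0}(X)$ for arbitrary $X$ remains unproved.
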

\begin{proof} We start with the second equality in (\ref{independentchainvectorvalued}). Let $D \in \mathcal{D}_{\infty}(\lambda,X)$ with Dirichlet coefficients $(a_{n})$.
Then by (\ref{Qlinscalarvectorvalued}) and Theorem~\ref{HahnBanachinftyvectorvalued}
$$w((a_n)) = \sup_{x^{*} \in B_{X^{*}}} \sum_{n=1}^{\infty}|x^{*}(a_{n})|
=\sup_{x^{*} \in B_{X^{*}}}  \|x^\ast \circ D\|_\infty
=\|D\|_{\infty}<\infty\,.$$
Conversely, the same arguments give that $\ell_{1}^{w}(X) \subset \mathcal{D}_\infty(X)$.

 Now we verify the first equality in (\ref{independentchainvectorvalued}). Fix $(a_{n}) \in \ell^{w,0}_{1}(X)$ and a $\lambda$-Dirichlet group $(G,\beta)$. Then for all $M\ge N$ we have (using the $\ell_{1}$--$\ell_{\infty}$ duality)
\begin{align*}
\Big\| \sum_{n=N}^{M} a_{n} h_{\lambda_{n}}\Big\|_{\infty}& =\sup_{\omega \in G} \Big\| \sum_{n=N}^{M} a_{n} h_{\lambda_{n}}(\omega) \Big\|_{X} \le  \sup_{b \in B_{\ell_{\infty}}} \Big\| \sum_{n=N}^{M} a_{n} b_{n} \Big\|_{X} \\ &\le  \sup_{x^{\ast} \in B_{X^{\ast}}} \sum_{n=N}^{\infty} |x^{\ast}(a_{n})|   =  w((a_{n})_{n\ge N} ),
\end{align*}
which vanishes as  $N\to \infty$. This shows that  $(\sum_{n=1}^{M} a_{n} h_{\lambda_{n}})_{M}$ is a Cauchy sequence in $H_{\infty}^{\lambda}(G,X)$, and therefore there is  a limit $f \in H_{\infty}^{\lambda}(G,X)$ with $\|f\|_{\infty}\le w((a_{n}))$ and $\widehat{f}(h_{\lambda_{n}})=a_{n}$ for all $n$.
Conversely, let $f\in H_{\infty}^{\lambda}(G,X)$ and define for $\omega, \eta \in G$
$$F(\omega)=[ \eta\mapsto f(\eta\omega)].$$
Then  we straightforwardly see that $F\in  L_{1}(G,H_{\infty}^{\lambda}(G,X))$. Moreover, the Fourier coefficients are given by $\widehat{F}(h_{x})=\widehat{f}(h_{x}) h_{x}$, since
$$\widehat{F}(h_{x})(\eta)=\left(\int_{G} F(\omega) \overline{h_{x}}(\omega) d\omega \right)(\eta)=\int_{G} f(\eta\omega) \overline{h_{x}}(\omega) d\omega=\widehat{f}(h_{x}) h_{x}(\eta).$$
So $F\in H_{1}^{\lambda}(G,H_{\infty}^{\lambda}(G,X))$,
and then Theorem  \ref{CHindependencevectorvalued} (to be proved in the final section) implies
that for almost all $\omega\in G$ we have
$$ \sum_{n=1}^{\infty}\widehat{f}(h_{\lambda_{n}}) h_{\lambda_{n}} h_{\lambda_{n}}(\omega)= \sum_{n=1}^{\infty} \widehat{F}(h_{\lambda_{n}})h_{\lambda_{n}}(\omega)=F(\omega)$$
with convergence in $H_{\infty}^{\lambda}(G,X)$. Hence, using the inclusion in (\ref{independentchainvectorvalued}), that is a consequence of Corollary \ref{inclusionplusvectorvalued} and Theorem \ref{maininftyvectorvalued},  there is some $\omega\in G$ such that
\begin{align*}
&\lim_{N \to \infty} w((\widehat{f}(h_{\lambda_{n}}))_{n > N})=\lim_{N\to \infty}\big\|\sum_{n=1}^{N} \widehat{f}(h_{\lambda_{n}})
h_{\lambda_{n}}-f\big\|_{H_{\infty}^{\lambda}(G,X)}\\ &= \lim_{N\to \infty}\big\|\sum_{n=1}^{N} \widehat{f}(h_{\lambda_{n}})
 h_{\lambda_{n}}(\omega)h_{\lambda_{n}}-f(\omega~\cdot )\big\|_{H_{\infty}^{\lambda}(G,X)}
\\&
=\lim_{N\to \infty} \big\|\sum_{n=1}^{N} \widehat{F}(h_{\lambda_{n}}) h_{\lambda_{n}}(\omega)-F(\omega)\big\|_{H_{\infty}^{\lambda}(G,X)}=0,
\end{align*}
which is what we aimed for.
\end{proof}

\section{Maximal inequalities} \label{maximalinequalitiessecvectorvalued}
A fundamental question in Fourier analysis is to ask under which assumptions the Fourier series of $f\in H_{1}^{\lambda}(G,X)$, that is
\begin{equation} \label{questionvectorvalued}
f \sim \sum \widehat{f}(h_{\lambda_{n}})h_{\lambda_{n}}\,,
\end{equation}
represents $f$ by pointwise converges, or with respect to some norm.
More questions appear, if one here replaces ordinary summation by other summation methods.
For the scalar case, various results in this direction  can be found in \cite{DefantSchoolmann4} and \cite{DefantSchoolmann3}, and our aim in this final section is to study their vector-valued counterparts (and related topics).

\subsection{Carleson-Hunt theorem}
 A famous example in the direction of (\ref{questionvectorvalued}) for $X=\mathbb{C}$ is given by the Carleson-Hunt theorem, which states that
\begin{equation} \label{CHpowerseriesvectorvalued}
f(z)=\sum_{k=0}^{\infty} \widehat{f}(k)z^{k}
\end{equation}
almost everywhere on $\T$, provided that $f\in L_{p}(\T)$ and $1<p\le \infty$. As proven in \cite[Theorem 2.2]{DefantSchoolmann4} this results extends to $H_{p}^{\lambda}(G)$, $1<p\le \infty$, for arbitrary frequencies $\lambda$ and $\lambda$-Dirichlet groups $(G,\beta)$.
The techniques of the proof in \cite{DefantSchoolmann4} extend to $H_{p}^{\lambda}(G,X)$ once we assume that the $X$-valued counterpart of  (\ref{CHpowerseriesvectorvalued}) is valid.
\begin{Theo} \label{CHtheoremvectorvalued}
Assume that $X$ is a Banach space for which the Carleson-Hunt maximal inequality holds, i.e. for every $1 < p < \infty$ there
is some $C >0$ such that for every $f \in L_p(\mathbb{T},X)$
\begin{equation} \label{CHcrucialXvectorvalued}
  \Big\| \sup_N \big\| \sum_{n=-N}^N \widehat{f}(n)  z^n  \big\|_{X} \Big\|_{L_p(\mathbb{T})}
  \leq C \|f\|_{L_p(\mathbb{T},X)}\,.
 \end{equation}
 Then for every frequency  $\lambda$,
 every $\lambda$-Dirichlet group $(G, \beta)$,  and every
  $1 < p < \infty$ we have that for every $f \in H_p^\lambda(G,X)$
  \begin{equation} \label{CHconvergencevectorvalued}
  f = \sum_{n=1}^\infty  \widehat{f}(h_{\lambda_n})  h_{\lambda_n}
  \end{equation}
  almost everywhere on $G$, and
   \begin{equation} \label{CHinequvectorvalued}
  \Big\| \sup_N \big\| \sum_{n=1}^N \widehat{f}(h_{\lambda_n})  h_{\lambda_n}  \big\|_{X} \Big\|_{p}
 \leq C \|f\|_{p}\,,
 \end{equation}
where $C = C(p)$ is a constant which only depends on $p$.
\end{Theo}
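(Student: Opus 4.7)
The plan is to follow the scalar strategy of \cite[Theorem 2.2]{DefantSchoolmann4}, substituting the hypothesis \eqref{CHcrucialXvectorvalued} for the scalar Carleson-Hunt inequality at the single point where the latter enters. First I would reduce to establishing the maximal inequality \eqref{CHinequvectorvalued} for Dirichlet polynomials: a standard Banach-principle argument shows that once \eqref{CHinequvectorvalued} is known on all of $H_p^\lambda(G,X)$, the a.e. convergence \eqref{CHconvergencevectorvalued} follows by density, since polynomials are dense in $H_p^\lambda(G,X)$ and for polynomials the partial sums trivially converge to $f$ pointwise. Hence it suffices to prove
\[
\Big\| \sup_{1 \le M \le N} \big\| \sum_{n=1}^M a_n h_{\lambda_n}\big\|_X \Big\|_{L_p(G)} \le C(p)\, \|P\|_{L_p(G,X)}
\]
for arbitrary polynomials $P = \sum_{n=1}^N a_n h_{\lambda_n}$, with $C(p)$ independent of $N$ and of $\lambda$.

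The core step is a transference from $G$ to $\T$. The function $t\mapsto P(\beta(t)) = \sum_{n=1}^N a_n e^{-i\lambda_n t}$ is an $X$-valued almost periodic function on $\R$, and by the density of $\beta(\R)$ in $G$ both the $L_p(G,X)$-norm of $P$ and the $L_p(G)$-norm of its partial-sum maximal function can be expressed as Bohr-type mean values of $P\circ\beta$ on $\R$. Choosing a large $R>0$ and a common denominator $d$ large enough so that the integers $k_n := \lfloor d\lambda_n/R\rfloor$ are strictly increasing and nonnegative, I form the auxiliary polynomial $Q(z) := \sum_{n=1}^N a_n z^{k_n}$ on $\T$. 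A standard Weyl/Kronecker equidistribution computation, carried out pointwise on the $X$-valued sums, yields
\[
\|Q\|_{L_p(\T,X)} \longrightarrow \|P\|_{L_p(G,X)} \,\,\,\text{and}\,\,\, \Big\| \sup_M \big\|\sum_{n=1}^M a_n z^{k_n}\big\|_X \Big\|_{L_p(\T)} \longrightarrow \Big\| \sup_M \big\|\sum_{n=1}^M a_n h_{\lambda_n}\big\|_X \Big\|_{L_p(G)}
\]
as the discretization is refined.

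Finally, since $Q$ has Fourier coefficients supported on the strictly increasing nonnegative integers $k_1<\cdots<k_N$, the hypothesis \eqref{CHcrucialXvectorvalued} applied to $Q$ gives the partial-sum maximal inequality on $\T$ with constant $C(p)$: the supremum over symmetric partial sums $\sum_{|k|\le K}\widehat{Q}(k) z^k$ dominates the supremum over our natural partial sums $\sum_{n=1}^M a_n z^{k_n}$ (since $k_n\ge 0$), so the bound transfers. Passing to the limit in the displayed convergence then delivers \eqref{CHinequvectorvalued} for $P$ with the same constant $C(p)$, and density extends the inequality to all of $H_p^\lambda(G,X)$. The hard part will be the transference step: one has to justify the simultaneous convergence of both the $L_p$-norms and the partial-sum maximal $L_p$-norms through the integer-frequency approximation, which is the technical heart of the scalar argument in \cite{DefantSchoolmann4}. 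No new difficulty appears in the vector-valued setting, because the entire equidistribution argument operates pointwise on $X$-valued sums and the only place where the geometry of $X$ intervenes is the assumed bound \eqref{CHcrucialXvectorvalued} on $L_p(\T,X)$.
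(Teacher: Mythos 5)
Your overall architecture is sound and matches the intended one: reduce to $\lambda$-polynomials by density, transfer the circle hypothesis to the Dirichlet group, and use that the spectrum lies in the nonnegative characters so that the symmetric partial sums of \eqref{CHcrucialXvectorvalued} coincide with the natural one-sided partial sums $\sum_{n=1}^M a_n z^{k_n}$. The density reduction and the ``symmetric dominates one-sided'' observation are both fine, and you are right that the geometry of $X$ enters only through \eqref{CHcrucialXvectorvalued}.

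The gap is in the discretization $k_n=\lfloor d\lambda_n/R\rfloor$: the claimed convergence $\|Q\|_{L_p(\T,X)}\to\|P\|_{L_p(G,X)}$ (and its maximal analogue) is false in general, because this operation does not preserve the additive structure of the frequencies. The Haar integral of $\|\sum_n a_n h_{\lambda_n}\|_X^p$ over $G$ is determined by the relation group $\{m\in\Z^N:\sum_n m_n\lambda_n=0\}$ (the orbit closure of $t\mapsto(e^{-i\lambda_n t})_n$ in $\T^N$ is its annihilator), whereas the circle integral of $\|\sum_n a_n z^{k_n}\|_X^p$ is determined by $\{m:\sum_n m_n k_n=0\}$, and the floor map does not carry one into the other. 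For instance, for $\lambda=(1,(1+\sqrt2)/2,\sqrt2)$ one has $\lambda_1-2\lambda_2+\lambda_3=0$, but $k_1-2k_3'+k_2'$ for the corresponding integer parts equals $\lfloor L(1+\sqrt 2)\rfloor-2\lfloor L(1+\sqrt2)/2\rfloor=1$ for a positive density of integers $L$; already for scalar coefficients and $p=4$ the cross term $2\operatorname{Re}\big(a_1 a_3 \overline{a_2}^{\,2}\big)$ then appears in $\|P\|_4^4$ but not in $\|Q\|_4^4$, so your displayed limit does not even exist. The step can be repaired, but it needs a relation-preserving discretization: write $\lambda=(R,B)$ with respect to a $\Q$-basis $B=(b_j)$ (the Bohr matrix of Section 2), approximate the finitely many relevant $b_j$ simultaneously by rationals with a common denominator, and define the $k_n$ through the resulting integer combinations; then every relation $\sum m_n\lambda_n=0$ forces $\sum m_n k_n=0$, the spurious relations among the $k_n$ escape to infinity, and the pushforward measures converge weak-star to Haar on the correct orbit closure. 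Alternatively one sidesteps the discretization entirely, as the scalar argument of the cited reference does, by transferring through the real line: the flow $\beta$ together with Fubini reduces the maximal inequality on $G$ to the Carleson--Hunt maximal inequality for partial Fourier integrals on $L_p(\R,X)$, which follows from \eqref{CHcrucialXvectorvalued} by standard dilation and transference and accommodates arbitrary real frequencies without any rational approximation.
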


A Banach space for which (\ref{CHcrucialXvectorvalued}) holds must have UMD (see for example  \cite{LaHy}, where also some sufficient conditions are presented). To our knowledge, the class of Banach space for which (\ref{CHcrucialXvectorvalued}) holds is not known. In the case of Banach lattices, (\ref{CHcrucialXvectorvalued}) holds true if and only if $X$ has UMD \cite{Francia}. The next result shows that for $\mathbb{Q}$-linearly independent frequencies (\ref{CHinequvectorvalued}) is valid for every Banach space $X$ and $1\le p<\infty$.
\begin{Theo} \label{CHindependencevectorvalued}
Let $\lambda$ be a $\Q$-linearly independent frequency, $(G,\beta)$ a $\lambda$-Dirichlet group, and $X$ a Banach space. Then for every $1\le p< \infty$ we have for all $f\in H_{p}^{\lambda}(G,X)$
\begin{equation}
  \Big\| \sup_N \big\| \sum_{n=1}^N \widehat{f}(h_{\lambda_n})  h_{\lambda_n}  \big\|_{X} \Big\|_{p}
 \leq 2 \|f\|_{p}.
 \end{equation}
 In particular,  we almost everywhere on $G$ have
 $$f=\sum_{n=1}^{\infty} \widehat{f}(h_{\lambda_{n}}) h_{\lambda_{n}}.$$
\end{Theo}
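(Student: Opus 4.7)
The plan is to reduce the maximal inequality to the classical Lévy inequality for sums of independent, symmetric, Banach-space-valued random variables, exploiting that $\Q$-linear independence forces the characters $h_{\lambda_n}$ on $G$ to be a jointly independent family of uniform Steinhaus variables.

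First I would prove that for every $N$ the continuous homomorphism
\[
\Phi_N:=(h_{\lambda_1},\dots,h_{\lambda_N})\colon G\to\T^N
\]
is surjective and measure-preserving. Indeed, $\Phi_N\circ\beta(t)=(e^{-i\lambda_1 t},\dots,e^{-i\lambda_N t})$, and by Kronecker's theorem the $\Q$-linear independence of $\lambda_1,\dots,\lambda_N$ is exactly the density of this curve in $\T^N$. Hence $\Phi_N$ has dense image, and as a continuous homomorphism between compact groups it is therefore surjective; the pushforward of the Haar measure on $G$ then coincides with the Haar measure on $\T^N$. Consequently $h_{\lambda_1},\dots,h_{\lambda_N}$ are independent uniformly distributed $\T$-valued random variables on $G$, and the partial sums $S_N:=\sum_{n=1}^N \widehat{f}(h_{\lambda_n})\,h_{\lambda_n}$ are precisely the conditional expectations $\mathbb{E}(f\mid\mathcal{F}_N)$, where $\mathcal{F}_N:=\sigma(h_{\lambda_1},\dots,h_{\lambda_N})$; this is because a character $h_{\lambda_n}$ is trivial on $\ker\Phi_N$ iff $\lambda_n\in\Z\lambda_1+\dots+\Z\lambda_N$, which by $\Q$-linear independence forces $n\le N$. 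In particular $\|S_N\|_p\le\|f\|_p$.

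Writing $Y_n:=\widehat{f}(h_{\lambda_n})\,h_{\lambda_n}$, each $Y_n$ is symmetric (since $-h_{\lambda_n}$ and $h_{\lambda_n}$ share the same uniform distribution on $\T$) and the family $(Y_n)$ is independent by the previous step. The Banach-valued Lévy maximal inequality --- whose standard reflection-principle proof uses only the triangle inequality together with symmetry of the summands --- then yields for every $t>0$
\[
m\bigl(\max_{n\le N}\|S_n\|_X>t\bigr)\le 2\,m\bigl(\|S_N\|_X>t\bigr).
\]
Integrating against $p\,t^{p-1}\,dt$ gives $\bigl\|\max_{n\le N}\|S_n\|_X\bigr\|_p\le 2^{1/p}\|S_N\|_p\le 2\|f\|_p$, and monotone convergence in $N$ delivers the claimed maximal inequality with universal constant $2$, valid uniformly in $p\in[1,\infty)$.

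For the almost everywhere convergence, density of $\lambda$-polynomials in $H_p^\lambda(G,X)$ (sketched in Section~\ref{HplambdaX}) combined with the just-proved maximal inequality yields $S_N\to f$ almost everywhere on $G$ by the standard approximation argument: polynomials converge trivially, and for general $f$ the maximal operator applied to $f-P$ controls the error uniformly in $N$ in $L_p$-norm. The main obstacle is the geometry-free Lévy inequality with the universal constant $2$ that is independent of $p$; this is exactly why no UMD, cotype or RNP hypothesis appears here, in sharp contrast with Theorem~\ref{CHtheoremvectorvalued} where the Carleson--Hunt property \eqref{CHcrucialXvectorvalued} is essential.
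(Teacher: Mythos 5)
Your proof is correct and rests on exactly the same key idea as the paper's: the $\Q$-linear independence makes the characters $h_{\lambda_n}$ a family of independent symmetric (Steinhaus) random variables, so the L\'evy maximal inequality for Banach-space-valued sums gives the constant $2$ with no geometric hypothesis on $X$, and a.e.\ convergence follows by density of polynomials. The only (harmless) divergence is in the bookkeeping: you work directly on $G$ via Kronecker's theorem and obtain the intermediate bound $\|S_N\|_p\le\|f\|_p$ by identifying $S_N$ with a conditional expectation, whereas the paper passes to the concrete Dirichlet group $\T^\infty$ and derives that bound from Theorem~\ref{Nabschnittvectorvalued} together with Corollary~\ref{inclusionplusvectorvalued}.
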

\begin{proof}
Consider the $\lambda$-Dirichlet group induced by $\beta: \R \rightarrow \T^\infty$ given by $\beta(t)=(e^{-\lambda_n i t})$.
We have to prove that for every $f\in H_p^\lambda(\T^\infty,X)$ we have
\[\Big(\int_{\T^\infty} \sup_{N}\Big\| \sum_{k=1}^N \widehat{f}(e_k)z_k\Big\|_X^p \, dz\big)^{1/p}
\le 2 \|f\|_p, \]
This estimate follows from Levy's inequality for Banach spaces (see for example \cite[Proposition~1.1.1]{KW}). This result is usually stated for Bernoulli random variables but in fact it holds for Steinhaus variables $\T^\infty \to \T, z \mapsto z_k$ with the same proof. For $1\le N\le M$, we apply Levy's inequality to get
\[\Big(\int_{\T^\infty} \sup_{1\le N\le M}\Big\| \sum_{k=1}^N \widehat{f}(e_k)z_k\Big\|_X^p \, dz\Big)^{1/p}\leq 2\Big(\int_{\T^\infty} \Big\| \sum_{k=1}^M \widehat{f}(e_k)z_k\Big\|_X^p\, dz\Big)^{1/p}
\le 2 \|f\|_p, \]
where in the last inequality we used Theorem \ref{Nabschnittvectorvalued} and Corollary~\ref{inclusionplusvectorvalued}. The result then follows from  the monotone convergence theorem taking $M\to \infty$.
\end{proof}

Clearly, Theorem~\ref{CHtheoremvectorvalued} does not apply to the case $p=1$.
But under Landau's condition $(LC)$,  this loss  can be offset if we replace  the function
$f \in H_1^\lambda(G,X)$ by its convolution $f \ast p_\sigma$ with the Poisson measure --
this even works for every $X$.

\begin{Theo} \label{Hel2}
Suppose, that $\lambda$ satisfies $(LC)$, $(G,\beta)$ is a $\lambda$-Dirichlet group, and $X$ a Banach space. Then for every $\varepsilon>0$ there is a constant $C=C(\varepsilon, \lambda)$ such that for all $f\in H_{1}^{\lambda}(G,X)$ we have
\begin{equation*}
\Big\|\sup_{\sigma \ge \varepsilon}\sup_{N} \big| \sum_{n=1}^{N} \widehat{f}(h_{\lambda_{n}})e^{-\sigma\lambda_{n}} h_{\lambda_{n}}(\cdot)\big| \Big\|_{1,\infty} \le C \|f\|_{1},
\end{equation*}
where $\|\cdot\|_{1,\infty}$ denotes the norm of the weak $L_{1}$-space $L_{1,\infty}(G,X)$. Moreover, for every $f\in H_{1}^{\lambda}(G,X)$ there is a null set $N\subset G$ such that for every $\sigma>0$ and every $\omega\in G-N$
\begin{equation*} \label{introdis1vectorvalued}
f*p_{\sigma}(\omega)=\sum_{n=1}^{\infty} \widehat{f}(h_{\lambda_{n}}) e^{-\sigma\lambda_{n}} h_{\lambda_{n}}(\omega).
\end{equation*}
\end{Theo}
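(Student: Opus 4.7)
\textit{Proof plan.} The theorem has two parts: a weak-$L_1$ maximal inequality and the pointwise almost-everywhere identity $f*p_\sigma(\omega)=\sum_n\widehat{f}(h_{\lambda_n})e^{-\sigma\lambda_n}h_{\lambda_n}(\omega)$ with a single null set working for every $\sigma>0$. The second assertion will follow from the first by a standard approximation. For a Dirichlet polynomial $P$, Corollary~\ref{heutevectorvalued} gives uniform (Riesz) convergence of the series to $P*p_\sigma$, hence pointwise convergence on all of $G$. Approximating $f\in H_1^\lambda(G,X)$ by polynomials $P_k\to f$ in $H_1$, the maximal inequality applied to $f-P_k$ controls the exceptional sets, yielding a.e.\ convergence for every $\sigma\ge\varepsilon$. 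A countable exhaustion over $\varepsilon=1/m\to 0$ glues these into a single null set $N\subset G$ that serves all $\sigma>0$ simultaneously.

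The main work is the maximal inequality. My plan is to combine Landau's condition with Theorem~\ref{quantitativevectorvalued}: under (LC) the choice $k_N=e^{-\delta\lambda_N}$ yields, for any $\delta>0$, the uniform bound $\|S_ND\|_\infty\le C_\delta e^{\delta\lambda_N}\|D\|_\infty^+$ for all $D\in\mathcal{H}_\infty^+(\lambda,X)$. By density of Dirichlet polynomials in $H_1^\lambda(G,X)$ it suffices to prove the maximal inequality for polynomials $f$. Writing the partial sum as a convolution
\[
\sum_{n=1}^{N}\widehat{f}(h_{\lambda_n})e^{-\sigma\lambda_n}h_{\lambda_n}(\omega)=(f*K_{N,\sigma})(\omega),\qquad K_{N,\sigma}(\omega)=\sum_{n=1}^{N}e^{-\sigma\lambda_n}h_{\lambda_n}(\omega),
\]
and using the scalar domination $\|(f*K_{N,\sigma})(\omega)\|_X\le (\|f(\cdot)\|_X*|K_{N,\sigma}|)(\omega)$, one reduces the task to controlling the scalar maximal kernel $K^*(\omega)=\sup_{N,\,\sigma\ge\varepsilon}|K_{N,\sigma}(\omega)|$. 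The factorisation $K_{N,\sigma}=L_{N,\sigma-\varepsilon/2}*p_{\varepsilon/2}$ with $L_{N,\tau}=\sum_{n=1}^{N}e^{-\tau\lambda_n}h_{\lambda_n}$ then allows the quantitative Bohr estimate to be applied with $\delta<\varepsilon/2$, so that the growth factor $e^{\delta\lambda_N}$ is absorbed by the residual decay $e^{-(\varepsilon/2-\delta)\lambda_n}$; the remaining convolution by the Poisson measure $p_{\varepsilon/2}$ then places the analysis inside the scope of the Poisson maximal operator.

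Once the kernel estimate is in place, the weak-$L_1$ maximal inequality will follow from the classical weak type $(1,1)$ bound for the Poisson maximal function $g\mapsto\sup_{\sigma\ge\varepsilon/2}|(g*p_\sigma)(\omega)|$ on the compact abelian group $G$, which is a consequence of the positive approximate-identity structure of $(p_\sigma)_{\sigma>0}$ transferred from $\mathbb{R}$ via $\beta$. The main obstacle in this plan is the kernel domination itself: one must carefully balance the $e^{\delta\lambda_N}$ growth from the quantitative Bohr estimate against the exponential decay available in only the $\varepsilon/2$ fraction of the Poisson parameter set aside, and handle the fact that $p_\sigma$ is in general merely a measure on $G$ rather than a bounded density, so that the argument must be phrased in terms of convolutions with measures rather than with pointwise kernels.
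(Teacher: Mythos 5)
Your reduction of the second assertion to the first is fine, and the final ingredient you invoke (weak type $(1,1)$ for the Poisson maximal operator $g\mapsto\sup_{u}|g\ast p_u|$, obtained by transference of the Hardy--Littlewood maximal inequality along $\beta$) is indeed part of the real proof. The gap is in the central reduction. The domination
$\|(f\ast K_{N,\sigma})(\omega)\|_X\le(\|f(\cdot)\|_X\ast|K_{N,\sigma}|)(\omega)$ followed by passing to the maximal kernel $K^{*}=\sup_{N,\,\sigma\ge\varepsilon}|K_{N,\sigma}|$ discards exactly the two pieces of structure the theorem lives on: the cancellation inside the kernel and, more importantly, the analyticity of $f$ (the fact that $\widehat f$ is supported on $\{h_{\lambda_n}\}$). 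After this step you are asking for a weak $(1,1)$ bound for the \emph{positive} maximal operator $g\mapsto\sup_{N,\sigma}(g\ast|K_{N,\sigma}|)$ acting on arbitrary $g=\|f(\cdot)\|_X\in L_1(G)$, and there is no reason for this to hold. Take the paper's own prototype of a frequency with $(LC)$ but not $(BC)$, $\lambda=(\sqrt{\log n})$: then $L(\lambda)=\infty$, so $\sum_n e^{-\sigma\lambda_n}=\infty$ for every $\sigma>0$, hence $K_{N,\sigma}(\mathbf{1})=\sum_{n\le N}e^{-\sigma\lambda_n}\to\infty$ and $K^{*}$ has no integrable (or even finite) majorant near the identity; on a general $\lambda$-Dirichlet group (e.g.\ the Bohr compactification $\overline\R$) there is also no covering/doubling structure with which to run a Calder\'on--Zygmund argument for a positive maximal kernel. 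The subsequent appeal to Theorem~\ref{quantitativevectorvalued} applied to the kernel series $L_{N,\tau}=\sum_{n\le N}e^{-\tau\lambda_n}h_{\lambda_n}$ cannot repair this: that theorem requires the Dirichlet series to lie in $\mathcal{H}^{+}_{\infty}(\lambda,X)$, whereas $\sum e^{-\lambda_n s}$ has abscissa of convergence exactly $L(\lambda)$, which is $+\infty$ for $(\sqrt{\log n})$, so its hypothesis fails for the kernel series precisely in the cases of interest.

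The quantitative Bohr/Abel-summation machinery has to be applied not to the kernel but to the one-sided series built from $f$ itself, i.e.\ to the vertical limits $D^{\omega}=\sum\widehat f(h_{\lambda_n})h_{\lambda_n}(\omega)e^{-\lambda_n s}$: one dominates $\sup_{N}\|\sum_{n\le N}\widehat f(h_{\lambda_n})e^{-\sigma\lambda_n}h_{\lambda_n}(\omega)\|_X$ for $\sigma\ge\varepsilon$ pointwise in $\omega$ by a constant times $\sup_{u\ge c\varepsilon}\|(f\ast p_u)(\omega)\|_X$, using Abel summation in the vertical variable together with the Riesz-mean measures \eqref{measureRieszmeanvectorvalued} and the $(LC)$-bound $\|S_N\|\le Ce^{\delta\lambda_N}$ with $\delta<c\varepsilon$; only then does the transference weak $(1,1)$ estimate finish the proof. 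This is the content of the scalar proof of \cite[Theorem 3.2]{DefantSchoolmann4}, and the point the paper makes is that every step of that argument involves only scalar measures and kernels acting on $f$, so it survives the replacement of $|\cdot|$ by $\|\cdot\|_X$ verbatim -- which is why the paper's proof is a one-line citation rather than a new argument. Your plan, as written, does not reach that argument and the kernel-domination route it proposes instead cannot be made to work for frequencies with $L(\lambda)=\infty$.
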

The scalar valued variant of this result can be found in \cite[Theorem 3.2]{DefantSchoolmann4}, and its proof extends word by word to $H_{1}^{\lambda}(G,X)$.

\subsection{Almost everywhere convergence of Riesz means}
It is known that (\ref{CHconvergencevectorvalued}) may fail for $p=1$
as it does for the power series case $\lambda=(n)$. For the scalar case $X=\mathbb{C}$, a substitute for this failure is given by \cite[Theorem 2.1]{DefantSchoolmann3}, which states that, given any $k>0$, for every $f\in H_{1}^{\lambda}(G)$ we almost everywhere on $G$  have
\begin{equation} \label{Rieszmeansconvvectorvalued}
f(\omega)=\lim_{x\to \infty}\sum_{\lambda_{n}<x} \widehat{f}(h_{\lambda_{n}})(\omega) \big(1-\frac{\lambda_{n}}{x}\big)^{k} h_{\lambda_{n}}(\omega);
\end{equation}
 in the language of \cite{DefantSchoolmann3} this means that  $f$   is summable by its first $(\lambda,k)$-Riesz means almost everywhere on $G$.

Analyzing the proof of this result (more precisely, replacing there the  absolute values by the norms), we see that \eqref{Rieszmeansconvvectorvalued} extends to  $X$-valued functions without any restrictions on $X$
(in contrast to Theorem \ref{CHtheoremvectorvalued}).

\begin{Theo} \label{Hel1}
Let $\lambda$ be a frequency, $k>0$, $(G,\beta)$ any $\lambda$-Dirichlet group, and $X$ a Banach space. Then
$$R_{\max}^{\lambda,k}(f)(\omega)=\sup_{x>0} \Big\| \sum_{\lambda_{n}<x} \widehat{f}(h_{\lambda_{n}}) \big(1-\frac{\lambda_{n}}{x}\big)^{k} h_{\lambda_{n}}(\omega)\Big\|_{X}$$
defines a bounded operator from $H_{1}^{\lambda}(G,X)$ to $L_{1,\infty}(G,X)$. In particular, given $f\in H_{1}^{\lambda}(G,X)$, for every $k>0$ almost everywhere on $G$
\begin{equation*}
f=\lim_{x \to \infty}  \sum_{\lambda_{n}<x} \widehat{f}(h_{\lambda_{n}}) \big(1-\frac{\lambda_{n}}{x}\big)^{k} h_{\lambda_{n}}.
\end{equation*}
\end{Theo}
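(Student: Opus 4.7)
My plan is to adapt the proof of the scalar case $X=\C$ (i.e.\ \cite[Theorem 2.1]{DefantSchoolmann3}, referenced in the discussion preceding the statement) by systematically replacing absolute values with $X$-norms; every step of the scalar argument is of a pointwise or convolutional nature, so no geometric hypothesis on $X$ will be required.

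The starting observation is that the Riesz means are Bochner convolutions. Given any $\lambda$-Dirichlet group $(G,\beta)$ and the scalar measures $\mu_x \in M(G)$ from \eqref{measureRieszmeanvectorvalued} with the chosen parameter $k>0$, one has
\begin{equation*}
\sum_{\lambda_{n}<x} \widehat{f}(h_{\lambda_{n}}) \Big(1-\frac{\lambda_{n}}{x}\Big)^{k} h_{\lambda_{n}}(\omega) \,=\, (f \ast \mu_x)(\omega)
\end{equation*}
for every $f \in H_1^\lambda(G,X)$ and $\omega \in G$, simply because $f \ast \mu_x \in H_1^\lambda(G,X)$ has the displayed Fourier coefficients and only finitely many of them are nonzero. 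The standard Bochner estimate $\|(f \ast \mu_x)(\omega)\|_X \le (\|f(\cdot)\|_X \ast |\mu_x|)(\omega)$ then yields the pointwise domination
\begin{equation*}
R_{\max}^{\lambda,k}(f)(\omega) \,\le\, \sup_{x > 0} \big(g \ast |\mu_x|\big)(\omega), \qquad g(\omega) := \|f(\omega)\|_X \in L_1(G),
\end{equation*}
with $\|g\|_{L_1(G)} = \|f\|_{H_1^\lambda(G,X)}$.

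It therefore suffices to establish the scalar weak-$(1,1)$ inequality $\|\sup_{x>0}(g \ast |\mu_x|)\|_{L_{1,\infty}(G)} \le C(k)\|g\|_{L_1(G)}$ for all $g \in L_1(G)$, which is obtained by rerunning the proof of the scalar theorem in \cite{DefantSchoolmann3}; that proof controls the Riesz kernels by a positive majorant of Hardy--Littlewood type, and the same majorant dominates $|\mu_x|$. Combining the two steps delivers the announced weak-$(1,1)$ boundedness of $R_{\max}^{\lambda,k}$ from $H_1^\lambda(G,X)$ into $L_{1,\infty}(G,X)$, with a constant depending only on $k$.

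For the almost-everywhere convergence I would then run the usual density-plus-maximal-inequality argument. For any $X$-valued $\lambda$-Dirichlet polynomial $P$ the Riesz means $R_x^{\lambda,k}(P)$ converge uniformly on $G$ to $P$ as $x \to \infty$ (only finitely many coefficients are nonzero and each factor $(1-\lambda_n/x)^k$ tends to $1$), and such polynomials are norm-dense in $H_1^\lambda(G,X)$ by the sketch following \eqref{internalvectorvalued}. The weak-$(1,1)$ bound implies that the set of $f \in H_1^\lambda(G,X)$ for which $R_x^{\lambda,k}(\widehat{f})(\omega) \to f(\omega)$ for almost every $\omega \in G$ is closed, and since it contains a dense subspace it must equal the whole space. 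The main obstacle is to verify that the scalar argument in \cite{DefantSchoolmann3} truly depends only on $|\mu_x|$ rather than on cancellations specific to $\mu_x$; granted this, the vector-valued upgrade is essentially mechanical, exactly as predicted by the paper's remark that one need only replace absolute values by norms.
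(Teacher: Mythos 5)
Your proof is correct and is essentially the paper's own argument made explicit: the paper merely remarks that the scalar proof of \cite[Theorem 2.1]{DefantSchoolmann3} survives verbatim with absolute values replaced by norms, and your pointwise domination $\|(f\ast\mu_x)(\omega)\|_X\le(\|f(\cdot)\|_X\ast|\mu_x|)(\omega)$ is precisely the mechanism behind that remark. The one point you rightly flag --- that the scalar argument controls $|\mu_x|$ by a positive, radially decreasing majorant transported along $\beta$ from $L_1(\R)$, so the resulting weak-$(1,1)$ bound holds for \emph{every} nonnegative $g\in L_1(G)$ and not only for Hardy-space functions --- is exactly what that proof delivers, and is the reason no geometric hypothesis on $X$ is required.
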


\subsection{Helson type theorems}

Let us transfer the Theorems~\ref{CHtheoremvectorvalued},~\ref{Hel2}, and~\ref{Hel1}  on
 pointwise summability of the Fourier series of functions $f\in H_{p}^{\lambda}(G,X)$ into convergence theorems of so-called vertical limits
\[
D^\omega = \sum \widehat{f}(h_{\lambda_n}) h_{\lambda_n}(\omega) e^{-\lambda_n s},\, \omega \in G
\]
of the corresponding Dirichlet series $D=\Bcal(f)\in  \mathcal{H}_{p}(\lambda,X)$.
The key to translate these results into terms of Dirichlet series is given by the  vector-valued analogs of
\cite[Remark 1.3]{DefantSchoolmann2} and \cite[Lemma 1.4]{DefantSchoolmann3}.

\begin{Theo} Let $(G, \beta)$ be a $\lambda$-Dirichlet group and $D=\sum a_{n}e^{-\lambda_{n}s}\in \mathcal{H}_{p}(\lambda,X)$, where $1\le p \le \infty$ and $X$ is a Banach space.
\begin{enumerate}
\item[(1)] Provided $1 < p < \infty$ and $X$ satisfies \eqref{CHcrucialXvectorvalued},  almost all vertical limits  $D^{\omega}, \omega \in G$, converge almost everywhere on $[Re=0]$ and so consequently at every $s\in [Re>0]$.
\item[(2)] If $\lambda$ satisfies $(LC)$ and $p=1$, then $D^{\omega}$ converges on $[Re>0]$ for almost every $\omega\in G$.
    \item[(3)] If $p=1$, then almost all $D^{\omega}, \omega \in G$ are Riesz summable almost everywhere on $[Re=0]$, that is the limit
$$\lim_{x\to \infty} \sum_{\lambda_{n}<x} a_{n}h_{\lambda_{n}}(\omega) \big(1-\frac{\lambda_{n}}{x}\big)^{k} e^{-it\lambda_{n}}$$
exists for almost all $t\in \R$. In particular, we have $\sigma_{c}^{\lambda,k}(D^{\omega})\le 0$ for almost every $\omega$.
\end{enumerate}
\end{Theo}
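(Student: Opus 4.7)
The strategy is to lift the three pointwise convergence theorems on $G$---Theorems~\ref{CHtheoremvectorvalued},~\ref{Hel2}, and~\ref{Hel1}---to the corresponding statements about the vertical limits $D^\omega = \sum a_n h_{\lambda_n}(\omega) e^{-\lambda_n s}$. The bridge is the elementary identity
$$\sum_{\lambda_n < x} a_n h_{\lambda_n}(\omega) \Big(1 - \tfrac{\lambda_n}{x}\Big)^k e^{-(\sigma+it)\lambda_n} = \sum_{\lambda_n < x} \widehat{f}(h_{\lambda_n}) \Big(1 - \tfrac{\lambda_n}{x}\Big)^k e^{-\sigma \lambda_n} h_{\lambda_n}(\omega\beta(t)),$$
valid for every $\omega \in G$, $t \in \R$, $\sigma \geq 0$, $k \geq 0$, and a consequence of the multiplicativity $h_{\lambda_n}(\omega\beta(t)) = h_{\lambda_n}(\omega) e^{-i\lambda_n t}$. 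The second ingredient is the measure-theoretic translation principle---the vector-valued analog of \cite[Lemma~1.4]{DefantSchoolmann3}, whose proof is a direct Fubini plus translation-invariance argument independent of $X$---stating that whenever $A \subset G$ has full Haar measure, for almost every $\omega \in G$ the set $\{t \in \R : \omega\beta(t) \in A\}$ has full Lebesgue measure on every bounded interval.

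For (1), I would apply Theorem~\ref{CHtheoremvectorvalued} to $f$ to obtain a full-measure set $A \subset G$ on which the partial sums $\sum_{n \leq N} \widehat{f}(h_{\lambda_n}) h_{\lambda_n}(\eta)$ converge to $f(\eta)$. The translation principle then yields, for almost every $\omega \in G$, convergence of $D^\omega(it)$ for almost every $t \in \R$, that is, almost everywhere on $[Re = 0]$. Since a Dirichlet series converging at a point $s_0$ converges on $[Re > Re(s_0)]$, picking any such $it_0$ gives $\sigma_c(D^\omega) \leq 0$, hence convergence at every $s \in [Re > 0]$. For (2), Theorem~\ref{Hel2} supplies a single null set $N \subset G$ such that for every $\omega \notin N$ and every $\sigma > 0$ the series $\sum \widehat{f}(h_{\lambda_n}) e^{-\sigma \lambda_n} h_{\lambda_n}(\omega)$ converges; specializing the identity at $t = 0$ gives convergence of $D^\omega(\sigma)$ for every $\sigma > 0$, whence $\sigma_c(D^\omega) \leq 0$ and $D^\omega$ converges on $[Re > 0]$. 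For (3), Theorem~\ref{Hel1} yields a full-measure set of $G$ on which the first $(\lambda, k)$-Riesz means of $f$ converge; the translation principle then transfers this, for almost every $\omega$, to Riesz summability of $D^\omega$ at almost every $it \in [Re = 0]$. Fixing any such $t_0$, the sequence $(R_x^{\lambda, k}(D^\omega)(it_0))_{x > 0}$ is bounded, and the first Bohr--Cahen formula of Proposition~\ref{BohrCahenIIvectorvalued} applied to the imaginary shift $D^\omega(\cdot + it_0)$ (whose $(\lambda, k)$-abscissa coincides with that of $D^\omega$) gives $\sigma_c^{\lambda, k}(D^\omega) \leq 0$.

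The principal difficulty is largely bookkeeping: once the translation identity and translation principle are in place, each part reduces to applying the relevant theorem on $G$ and invoking a standard abscissa fact (ordinary convergence abscissa in~(1) and~(2), Riesz-summation abscissa via Bohr--Cahen in~(3)). The only genuinely subtle point is~(3), where one must recognize that pointwise Riesz summability at a single point $it_0$ on $[Re = 0]$ already forces $\sigma_c^{\lambda, k}(D^\omega) \leq 0$ via the Bohr--Cahen estimate, circumventing the need for a direct half-plane propagation result for general Riesz means.
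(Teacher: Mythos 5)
Your proposal is correct and is precisely the argument the paper intends: the paper itself only remarks that the key is the vector-valued analogue of the translation/Fubini lemma from \cite{DefantSchoolmann3}, and you supply exactly that, combining the character identity $h_{\lambda_n}(\omega\beta(t))=h_{\lambda_n}(\omega)e^{-i\lambda_n t}$ with Theorems~\ref{CHtheoremvectorvalued}, \ref{Hel2} and \ref{Hel1}. Your handling of the ``in particular'' clause in (3) via the Bohr--Cahen formula applied to the imaginary shift is also the right way to close that step.
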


\subsection{Riesz projection}

We finally study the boundedness of the vector-valued Riesz projection for Dirichlet groups, or equivalently the boundedness of the vector-valued Hilbert transform in these groups.

Let $G$ be a compact abelian group and $P\subset\widehat{G}$ such that $P+P\subset P$, $P\cup(-P)=\widehat{G}$ and $P\cap(-P)=\{0\}$. Notice that $P$ (which stands for positive) defines an order on $\widehat{G}$. In the case of Dirichlet groups we will always consider the order inherited by $\R$ given by $P=\{h_x\in\widehat{G}:\ x\ge 0\}$. A distinction between positive and negative characters allows us to define a Hilbert transform, also known as abstract conjugate function. Indeed, define the Hilbert transform $T_P$ and the Riesz projection $R_P$ over $X$-valued trigonometric polynomials on $G$ by
\[T_P\Big(\sum_{\gamma\in\widehat{G}} x_\gamma \gamma\Big)= -i\sum_{\gamma\in\widehat{G}} \text{sg}(\gamma) x_\gamma \gamma \quad \text{and}\quad R_P\Big(\sum_{\gamma\in\widehat{G}} x_\gamma \gamma\Big)= \sum_{\gamma\in P}  x_\gamma \gamma.\]
where $\text{sg}(\gamma)=\chi_P(\gamma)-\chi_{-P}(\gamma)$.
A Banach space has the ACF (abstract conjugate function) property if there is $1<p<\infty$ and a constant $C>0$ such that for every compact abelian group $G$ with ordered characters, $T_P$ extends to a bounded operator on $L_p(G,X)$ with norm bounded by $C$.

As it turns out, UMD is equivalent to ACF for some (every) $1<p<\infty$ (see \cite{BeGiMu,Bou83,burk} and also \cite{AsmarKellyMonty} for a complete picture and an alternative proof of ACF $\Rightarrow$ UMD).

\begin{Theo}\label{rieszprojvectorvalued} Let $(G,\beta)$ be a Dirichlet group ($G\neq 0$) and $X$  a Banach space. Then $X$ has UMD if and only if the Riesz projection
$$R\colon L_{p}(G, X)\to H_{p}(G,X)$$ is bounded for some (and then for all) $1<p<\infty$, where we denote by $H_{p}(G,X)$ the space of all $f\in L_{p}(G,X)$ such that $\widehat{f}(h_{x})\ne 0$ implies $x\ge 0$ for every $x$.
\end{Theo}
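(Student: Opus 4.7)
The plan is to exploit the fact that the order on $\widehat{G}$ inherited from $\R$ via $\widehat{\beta}$ makes $(G,\beta)$ a compact abelian group with ordered dual, and then to reduce the reverse implication to the classical circle case by pulling back along a single non-trivial character. For the forward direction I would assume that $X$ has UMD. By the UMD $\Leftrightarrow$ ACF equivalence recalled just above the statement, the Hilbert transform $T_P$ associated to $P=\{h_x\in\widehat{G}: x\ge 0\}$ extends boundedly to $L_p(G,X)$ for every $1<p<\infty$. A direct calculation on trigonometric polynomials yields the identity
\[
R=\tfrac{1}{2}(I+iT_P)+\tfrac{1}{2}E_0,
\]
where $E_0 f=\int_G f\,dm$ is the (obviously bounded) projection onto the constants, so $R$ extends boundedly to $L_p(G,X)$ as well.

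For the reverse direction, suppose $R$ is bounded on $L_p(G,X)$ for some $1<p<\infty$; I plan to deduce boundedness of the classical Riesz projection $R_\T$ on $L_p(\T,X)$, which by Bourgain's theorem is equivalent to $X$ being UMD. Since $G\neq\{0\}$, the subgroup $\widehat{\beta}(\widehat{G})\subset\R$ is non-trivial, so there exists $x_0>0$ with $h_{x_0}\in\widehat{G}$. Because $h_{x_0}\circ\beta=e^{-ix_0\pmb{\cdot}}$ has dense range in $\T$ and $h_{x_0}(G)$ is compact (hence closed), the character $\pi:=h_{x_0}\colon G\to \T$ is a continuous surjective group homomorphism; by uniqueness of Haar measure it therefore pushes $m_G$ forward to $m_\T$. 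Consequently the composition operator $Uf:=f\circ\pi$ defines an isometric embedding $U\colon L_p(\T,X)\hookrightarrow L_p(G,X)$ that sends $\sum_n c_n z^n$ to $\sum_n c_n h_{nx_0}$. Since $nx_0\ge 0$ precisely when $n\ge 0$, comparing Fourier coefficients on polynomials gives the intertwining identity $R\circ U = U\circ R_\T$, which extends by density to all of $L_p(\T,X)$. The chain $\|R_\T f\|_p=\|UR_\T f\|_p=\|RUf\|_p\le\|R\|\,\|f\|_p$ then closes the loop.

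The "some iff all" clause is automatic: once UMD has been extracted from a single $p$, the forward direction returns boundedness of $R$ on $L_p(G,X)$ for every $1<p<\infty$. The main technical point I expect is verifying surjectivity of $\pi$ and the measure-preservation property, both of which are routine once one observes that $h_{x_0}(G)$ is a closed subgroup of $\T$ containing the dense set $\{e^{-ix_0 t}:t\in\R\}$. Beyond that, the argument is essentially a transference principle that turns the general Dirichlet-group Riesz projection into the classical one on $\T$, where the Burkholder–Bourgain characterization of UMD applies directly.
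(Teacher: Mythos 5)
Your proof is correct and follows essentially the same route as the paper: the forward direction via the UMD $\Leftrightarrow$ ACF equivalence and the identity $R=\tfrac{1}{2}(I+iT_P)+\tfrac{1}{2}E_0$, and the reverse direction by transferring the Riesz projection to $\T$ through the character $\pi=h_{x_0}$, which is exactly the dual map $\widehat{\alpha}$ of $k\mapsto h_{kx_0}$ that the paper uses. Your self-contained check that $\pi$ is surjective and pushes Haar measure to Haar measure merely replaces the paper's citation of an external proposition; the transference argument itself is identical.
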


\begin{proof}
If $X$ has UMD, then joining \cite{burk} and \cite[Theorem~2.1]{AsmarKellyMonty} we deduce that $X$ enjoys the ACF property for every $1<p<\infty$. In our setting we have $P=\{h_x\in\widehat{G}:\ x\ge 0\}$ and so $T_P$ is bounded. Therefore, the Riesz projection $R$ is bounded, since
	\[Rf=1/2(f+iT_Pf+\widehat{f}(0)),\]
	for every $f\in L_p(G,X)$.

Conversely, since $(G,\beta)$ is a Dirichlet group we have that $\widehat{G}$ is a subgroup of $\R$. Take any non-zero character $h_{x}\in \widehat{G}$ with $x>0$. Then the mapping
$$\alpha\colon \mathbb{Z} \hookrightarrow \widehat{G}, ~~ k \mapsto h_{kx},$$
where $kx$ is multiplication in $\R$, is an injective homomorphism. So the dual map of $\alpha$, that is $\widehat{\alpha} \colon G \to \T$, is continuous and has dense range. Then
$$L_{p}(\T,X) = H_{p}^{\alpha(\Z)}(G,X), ~~ f\mapsto f\circ \widehat{\alpha},$$
is an onto isometry and $\widehat{f\circ \widehat{\alpha}} \circ \alpha=\widehat{f},$ that is $\widehat{f}(k)=\widehat{f\circ \widehat{\alpha}}(h_{kx})$ for all $k\in \Z$.
(see \cite[Proposition 3.17 with $E=\Z$]{DefantSchoolmann2}, where the proof extends to the vector valued setting).  On the other hand, with $E=\mathbb{N}_{0}$ we isometrically have
\begin{equation} \label{3janvectorvalued}
H_{p}(\T,X)=H_{p}^{\alpha(\mathbb{N}_{0})}(G,X).
\end{equation} Now we apply the Riesz projection of $G$. Let $f \in L_{p}(\T,X)$ and define $g:=R(f\circ \widehat{\alpha})$. Then $\widehat{g}(h_{xk})=0$, if $k<0$, and $\widehat{g}(h_{xk})=\widehat{f}(k)$, if $k\ge 0$, since $x>0$. Hence $g \in H_{p}^{\alpha(\N_{0})}(G,X)$ and so there is a corresponding $\widetilde{f}\in H_{p}(\T,X)$ in the sense of (\ref{3janvectorvalued}). In this way we obtain a bounded map $$L_{p}(\T,X)\to  H_{p}(\T,X),~~ f \mapsto \widetilde{f},$$
that coincides with the Riesz projection on $\T$. Therefore $X$ has UMD (see e.g. \cite[Corollary 5.2.11, p. 399]{HytWeisI}).
\end{proof}

 The following corollary complements Theorem~\ref{quantitativevectorvalued}
and its consequences in the reflexive case $1<p<\infty$.

\begin{Coro} \label{projvectorvalued} Let $\lambda$ be a frequency.
If $X$ has UMD, then for all $1<p<\infty$
$$\sup_{N\in \N}\Big\|\pi^{N}_{p} \colon \mathcal{H}_{p}(\lambda,X) \to \mathcal{H}_{p}(\lambda,X), ~~ \sum a_{n}e^{-\lambda_{n}s} \mapsto \sum_{n=1}^{N} a_{n}e^{-\lambda_{n}s}\Big\| <\infty.$$
In particular,  every $\lambda$-Dirichlet series from $\mathcal{H}_{p}(\lambda,X)$ converges in $\mathcal{H}_p(\lambda,X)$.
\end{Coro}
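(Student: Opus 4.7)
The plan is to identify, via the Bohr map \eqref{BohrmapHelsonvectorvalued}, the operator $\pi^N_p$ on $\mathcal{H}_p(\lambda,X)$ with the partial Fourier sum $S_N f := \sum_{n=1}^N \widehat{f}(h_{\lambda_n}) h_{\lambda_n}$ on $H_p^\lambda(G,X)$, and then control the latter using the vector-valued Riesz projection provided by Theorem \ref{rieszprojvectorvalued}. Since $\mathcal{H}_p(\lambda,X)$ is independent of the chosen $\lambda$-Dirichlet group, I would work with the Bohr compactification $(G,\beta) = (\overline{\R},\beta_{\overline{\R}})$, for which $\widehat{G}$ may be identified with $\R$; in particular every real number corresponds to a character $h_y\in\widehat{G}$, which will matter in a moment.

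The core of the argument is the algebraic identity
\[
S_N f = f - h_y \cdot R\bigl(h_{-y} \cdot f\bigr),
\]
for any $y$ with $\lambda_N < y < \lambda_{N+1}$, where $R \colon L_p(G,X) \to H_p(G,X)$ is the Riesz projection. To verify this I would compute Fourier coefficients: with the paper's convention one has $(h_{-y}\cdot f)^{\wedge}(h_z) = \widehat{f}(h_{z+y})$, so the Fourier support of $h_{-y}\cdot f$ sits on $\{h_{\lambda_n - y}\}_{n\in\N}$. Our choice of $y$ places the points with $n \le N$ strictly to the left of $0$ and those with $n \ge N+1$ strictly to the right of $0$. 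Hence $R$ kills the first $N$ terms, and multiplication by $h_y$ translates the surviving tail back to $\sum_{n\ge N+1}\widehat{f}(h_{\lambda_n})h_{\lambda_n} = f - S_N f$, as claimed.

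Since $h_y$ and $h_{-y}$ are unimodular, multiplication by either is an isometry of $L_p(G,X)$. Theorem \ref{rieszprojvectorvalued} applies because $X$ has UMD, so $\|R\| \leq C_p$ for some constant $C_p$ depending only on $p$. Combining,
\[
\|\pi^N_p D\|_p = \|S_N f\|_p \leq \|f\|_p + \|R\|\cdot\|h_{-y}\cdot f\|_p \leq (1+C_p)\|D\|_p,
\]
uniformly in $N$, which is the first assertion. For the ``in particular'' claim I would use the density of $Pol(\lambda,X)$ in $\mathcal{H}_p(\lambda,X)$ (recalled after \eqref{internalvectorvalued}): for a polynomial $P$ one has $\pi^N_p(P)=P$ for all sufficiently large $N$, and a standard $3\varepsilon$-argument based on the just-established uniform boundedness then yields $\pi^N_p(D)\to D$ in $\mathcal{H}_p(\lambda,X)$ for every $D\in\mathcal{H}_p(\lambda,X)$.

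The only non-routine ingredient is Theorem \ref{rieszprojvectorvalued} itself, which has already been proved; what remains is the transference of the classical partial-sum/conjugate-function relationship (in the form of the displayed identity) to the Dirichlet-group setting, and this is where the choice of the Bohr compactification becomes essential, as it guarantees the existence of characters $h_y$ for all intermediate real frequencies~$y$.
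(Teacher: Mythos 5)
Your proof is correct and follows essentially the same route as the paper: both express $\pi^N_p$ as $D - h_y\, R(h_{-y}\cdot f)$ for a suitable modulation, invoke Theorem \ref{rieszprojvectorvalued} for the uniform bound $1+\|R\|$, and finish the convergence claim by density of polynomials. The only (immaterial) difference is that the paper takes $y=\lambda_{N+1}$, which already lies in $\widehat{\beta}(\widehat{G})$ for any $\lambda$-Dirichlet group, so your detour through the Bohr compactification to realize an intermediate frequency $y\in(\lambda_N,\lambda_{N+1})$ as a character is not needed.
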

\begin{proof}
    Let $(G,\beta)$ be a $\lambda$-Dirichlet group and identify $\mathcal{H}_{p}(\lambda,X)=H^\lambda_{p}(G,X)$. Then fixing $D\in \mathcal{H}_{p}(\lambda,X)$ we for every $N\in \N$ have
    \[\pi^{N}_{p}(D)=D -e^{-\lambda_{N+1}s}R(e^{\lambda_{N+1}s}D),\]
    that leads to
    \[\|\pi^{N}_{p}(D)\|_p\leq (1+\|R\|)\|D\|_p.\]
    Now taking supremum in $N$ we obtain by Theorem \ref{rieszprojvectorvalued} that $$\sup_N\|\pi^{N}_{p}\|\leq 1+\|R\|<\infty.$$
In order to prove the second statement fix again $D\in\mathcal{H}_{p}(\lambda,X)$. For $\varepsilon>0$ let
    $P\in\mathcal{H}_{p}(\lambda,X)$ be a Dirichlet polynomial such that $\|D-P\|_p<\varepsilon$. Choose $N_0$ such that $\pi^{N_0}_{p}(P)=P$. Then for every $N\geq N_0$ we have
    \begin{align*}
        \|D-\pi^{N}_{p}(D)\|_p &\leq \|D-P\|_p + \|P-\pi^{N}_{p}(D)\|_p
        \\ &
        \leq \varepsilon + \|\pi^{N}_{p}(D-P)\|_p
        \leq (2+\|R\|)\varepsilon.
    \end{align*}
    Hence, the partial sums of $D$ converge in $\mathcal{H}_{p}(\lambda,X)$.
\end{proof}


\begin{thebibliography}{HD}



\bibitem{AlemanOlsenSaksman}
A. Aleman, J.F. Olsen, and E. Saksman: \emph{Fatou and brothers Riesz theorems in the infinite-dimensional polydisc}, J. Anal. Math. 137, 1 (2019) 429-447.
\bibitem{Saksman}
A. Aleman, J.F. Olsen, and E. Saksman: \emph{Fourier multipliers for Hardy spaces of Dirichlet series}, International Mathematics Research Notices 16 (2014) 4368-4378.
\bibitem{ArendtBu}
W. Arendt and S. Bu: {\em Fourier series in Banach spaces and maximal regularity}. Vector measures, integration and related topics, 21--39, Oper. Theory Adv. Appl., 201, Birkh\"auser Verlag, Basel, 2010.
\bibitem{AsmarKellyMonty}
N. H.Asmar, B. P. Kelly, and S. Montgomery-Smith: \emph{A note on UMD spaces and transference in vector-valued function spaces}, Proc. Edinburgh Math. Soc. 39 (1996) 485-490.
\bibitem{Bayart}
F. Bayart: \emph{Hardy spaces of Dirichlet series and their compostion operators}, Monatsh. Math. 136 (2002) 203-236.
\bibitem{BaDe} F. Bayart, A. Defant, L. Frerick, M. Maestre, and P. Sevilla-Peris: \emph{Multipliers of Dirichlet series and monomial series expansion of holomorphic functions in infinitely many variables}, Math. Ann. 368, no.1-2 (2017) 837-876.
\bibitem{BalasubramanianCaladoQueffelec}
R. Balasubramanian, B. Calado, and H. Queff\'elec: \emph{The Bohr inequality for ordinary
Dirichlet series},  Studia Math. 175 (2006) 285-304.

\bibitem{BeGiMu}
E. Berkson, T. A. Gillespie, P. S. Muhly: \emph{Generalized analyticity in UMD spaces}, Ark. Mat. 27 (1989), no. 1, 1--14.

\bibitem{Besicovitch}
A. S. Besicovitch: \emph{Almost periodic functions}, Dover publications (1954).

\bibitem{BoHi}
H. F. Bohnenblust and E. Hille: \emph{On the absolute convergence of Dirichlet series}, Ann. of Math.
32 (1931) 600-622.
\bibitem{Bohr}
H.~Bohr: \emph{\"Uber die gleichm\"a\ss ige Konvergenz Dirichletscher Reihen}, J. Reine Angew. Math. 143 (1913) 203-211.
\bibitem{BohrStrip}
H.~Bohr: \emph{\"Uber die Bedeutung der Potenzreihen unendlich vieler Variabeln in der Theorie der Dirichletschen Reihen $\sum{\frac{a_{n}}{n^{s}}}$}, Nachr. Ges. Wiss. G\"ott. Math. Phys. Kl. 4 (1913) 441-488.

\bibitem{Bohr2}
H.~Bohr: \emph{Einige Bemerkungen \"uber das Konvergenzproblem Dirichletscher Reihen}, Rendiconti Palermo 37 (1913) 1-16.

\bibitem{Bohr3}
H.~Bohr: \emph{Zur Theorie der fastperiodischen Funktionen II}, Acta Math. 46 (1925) 101-214.

\bibitem{Bohr4}
H.~Bohr: \emph{En bem\ae rkning om Dirichletske r\ae kkers ligelige konvergens}, Mat. Tidsskr. B (1951) 1-8.
\bibitem{Bohr5}
H.~Bohr: \emph{L\"{o}sung des absoluten Konvergenzproblems einer allgemeinen Klasse Dirichletscher Reihen}, Acta Math. 36 (1913) 197-240.

\bibitem{Bonet} J. Bonet: \emph{Abscissas of weak convergence of vector-valued Dirichlet series}, J. Funct. Anal. 269 (12) (2015) 3914-3927.

\bibitem{Bou83}
J.~Bourgain: \emph{Some remarks on Banach spaces in which martingale difference sequences are unconditional}, Ark. Mat. 21 (1983), no. 2, 163--168.

\bibitem{Bou84}
J.~Bourgain: \emph{{B}anach space properties of the disc algebra and {$H^{\infty}$}},  Acta Math. 152 (1984), no. 1-2, 1--48.

\bibitem{Bukvalov}
A. V. Bukhvalov and A. A. Danilevich: \emph{Boundary properties of analytic and harmonic functions with values in Banach spaces}, Mat. Zametki 31 (1982) 203-214.

\bibitem{burk} Burkholder, Donald L:  \emph{A geometrical characterization of Banach spaces in which martingale difference sequences are unconditional}, The Annals of Probability (1981) 997--1011.
\bibitem{CarandoDefantSevilla}
D. Carando, A. Defant, and P. Sevilla-Peris: \emph{Bohr's absolute convergence problem for $\mathcal{H}_p$-Dirichlet series in Banach spaces}, Analysis and PDE 7 (2014) 513-527.


\bibitem{CarandoDefantSevilla1}
D. Carando, A. Defant, and P. Sevilla-Peris: \emph{Almost sure-sign convergence of Hardy-type Dirichlet series},
J. Anal. Math. 135 (2018) 225-247.



\bibitem{CarandoMaceraScottiTradacete}
D. Carando, F. Marceca, M. Scotti, and P. Tradacette: \emph{Random unconditional convergence of vector-valued Dirichlet series}, J. Funct. Anal. 277 (9) (2019) 3156--3178.

\bibitem{CarandoMarcecaSevilla}
D. Carando, F. Marceca, and P. Sevilla-Peris: \emph{Hausdorff-Young type inequalities for vecto-valued Dirichlet series}, to appear in Analysis and PDE (2020).


\bibitem{CastilloMedinaGarciaMaestre}
J. Castillo-Medina, Domingo Garc\'ia, and M. Maestre:
\emph{Isometries between spaces of multiple Dirichlet series}, J. Math. Anal. and Appl 472 (2019) 526-545.


\bibitem{ChoiKimMaestre}
Y. S. Choi, U. Y. Kim, and M. Maestre:
\emph{Banach spaces of general Dirichlet series}, J. Math. Anal. and Appl. 465 (2018) 839-856.


 \bibitem{ColeGamelin}
B. J. Cole and T. W. Gamelin: \emph{Representing measures and Hardy spaces for the infinite
polydisk algebra}, Proc. London Math. Soc.  53 (1986) 112-142.


\bibitem{DefantFloret}
A.~Defant and K. ~Floret: \emph{Tensor norms and operator ideals}, North Holland Math.\ Studies 176, 1993.


\bibitem{DefantGarciaMaestrePerez}
A. Defant, D. Garc\'ia, M. Maestre, and D. P\'erez-Garc\'ia: \emph{Bohr's strip for vector-valued Dirichlet series},
Math. Ann. 342 (2008) 533-555.


\bibitem{DefantPopaSchwarting}
A. Defant, D. Popa, and U. Schwarting: \emph{Coordinatewise multiple summing operators in Banach spaces},
J. Funct. Anal. 259 (2010) 220-242.


\bibitem{DefantSchwartingSevilla}
A. Defant, U. Schwarting, and P. Sevilla-Peris: \emph{Estimates for vector-valued Dirichlet series},
Monatsh. Math. 175 (2014) 89-116.

\bibitem{DefantSevilla}
A. Defant and P. Sevilla-Peris: \emph{Convergence of Dirichlet polynomials in Banach spaces},
Trans. Amer. Math. Soc. 363 (2011) 681-697.

\bibitem{Defant} A. Defant, D. Garc\'ia, M. Maestre, and P. Sevilla Peris: \emph{Dirichlet series and holomorphic functions in high dimensions} in: New Mathematical Monographs Series, Cambridge University Press (2019).


\bibitem{DefantSchoolmann2} A. Defant and I. Schoolmann: \emph{$\mathcal{H}_{p}$-theory of general Dirichlet series}, J. Fourier Anal. Appl. 25 (6) (2019) 3220–3258.

    \bibitem{DefantSchoolmann4} A. Defant and I. Schoolmann: \emph{Variants of  a theorem of  Helson for general Dirichlet series},  submitted (2020).

\bibitem{DefantSchoolmann3} A. Defant and I. Schoolmann: \emph{Riesz means in Hardy spaces on Dirichlet groups}, submitted (2020).




    \bibitem{DefantSchoolmann6}A. Defant and I. Schoolmann: \emph{ Hardy spaces of general Dirichlet series -- a survey}, Banach Center Publications 119 (2019), 123-149.




    \bibitem{AntonioDefant}
A. Defant and  A. P\'{e}rez: \emph{Hardy spaces of vector-valued Dirichlet series},
Studia Math. 243 (2018) 53-78.

\bibitem{Diestel}
J. Diestel: \emph{Sequences and series in Banach spaces}, Grad. Texts Math. 92, Springer (1984).

\bibitem{DiJaTo95}
J.~Diestel, H.~Jarchow and A.~Tonge:
\newblock {\em Absolutely summing operators}, volume~43 of {\em Cambridge
	Studies in Advanced Mathematics}.
\newblock Cambridge University Press, Cambridge, 1995.

\bibitem{DiestelUhl}
J. Diestel and J.J. Uhl Jr: \emph{Vector measures}. Mathematical Surveys, No. 15. American Mathematical Society, Providence, R.I. (1977).


\bibitem{Doss}
R. Doss: \emph{One the Fourier-Stieltjes transforms of singular or absolutely continuous  measures}, Math. Zeitschr. 97 (1967) 77-84.


\bibitem{Dowling} P. N. Dowling, \emph{The analytic Radon-Nikodym property in Lebesgue Bochner function spaces}, Proceedings of the american math. society, Vol. 99, No. 1, 1987.

\bibitem{Duy} T.K. Duy: \emph{On convergence of Fourier series of Besicovitch almost
periodic functions},
    Lithuanian Math. J. 53,3 (2013) 264-279.


\bibitem{Fefferman}
    C. Fefferman:
    \emph{On the convergence of multiple Fourier series}, Bull. Amer. Math. Soc. 77 (1971) 744-745.



\bibitem{HardyRiesz}
G. H. Hardy and M. Riesz: \emph{The general theory of Dirichlet series}, Cambridge Tracts in Mathematics and Mathematical Physics 18 (1915).

\bibitem{HLS}
H. Hedenmalm, P. Lindqvist, and K. Seip: \emph{A Hilbert space of Dirichlet series and systems of dilated function in $L^{2}(0,1)$}, Duke Math. J. 86 (1997) 1-37.
\bibitem{HedenmalmSaksman}
H. Hedenmalm and  E. Saksman: \emph{Carleson's convergence theorem for Dirichlet series},
Pacific J. of Math. 208 (2003) 85-109.
\bibitem{Helson3}
H. Helson: \emph{Compact groups and Dirichlet series}, Ark. Mat. 8 (1969) 139-143.
\bibitem{Helson}
H. Helson: \emph{Dirichlet series}, Regent Press (2005).

\bibitem{HelsonLowdenslager}
H. Helson and  D. Lowdenslager: \emph{Prediction theory and Fourier series in several variables},
Acta Math. 99 (1958) 165-202.

\bibitem{HytWeisI} T. Hyt\"{o}nen, J. Neerven, M. Veraar, L. Weis: \emph{ Analysis in Banach spaces: Volume I: Martingales and
Littlewood-Paley Theory}, A Series of Modern Surveys in Mathematics 63,  Springer (2016).

\bibitem{KW} S.~Kwapien, and W.A.~Woyczynski:  \emph{Random series and stochastic integrals: single and multiple}, Birkh\"{a}user  (1992).



\bibitem{LaHy} T. Hyt\"onen, and M. Lacey: \emph{Pointwise convergence of vector-valued Fourier series.} Math. Ann. 357 (2013), no. 4, 1329-1361.

\bibitem{Landau}
E. Landau: \emph{\"{U}ber die gleichm\"{a}\ss ige Konvergenz Dirichletscher Reihen}, J. Reine Angew. Math. 143 (1921) 203-211.

\bibitem{MauPi}
B. Maurey and G. Pisier: {\em S\'{e}ries de variables al\'{e}atoires vectorielles ind\'{e}pendantes et propri\'{e}t\'{e}s g\'{e}om\'{e}triques des espaces de Banach.} (French) Studia Math. 58 (1976), no. 1, 45--90.

\bibitem{Neder} L. Neder, \emph{Zum Konvergenzproblem der Dirichletschen Reihen beschr\"{a}nkter Funktionen}, Math.
Zeitschrift 14 (1922) 149-158.

\bibitem{Pel}
A. Pe{\l}czy\'{n}ski:
{\em Banach spaces of analytic functions and absolutely summing
operators},
American Mathematical Society, Providence, R.I. (1977).



\bibitem{Perron}
O. Perron: \emph{Zur Theorie der Dirichletschen Reihen}, J. Reine Angew. Math. 134 (1908) 95-143.

\bibitem{Pi77}
G. Pisier:
{\em Les in\'egalit\'es de Khintchine-Kahane, d'apr\`es C. Borell, (French)}. S\'eminaire sur la G\'eom\'etrie des Espaces de Banach (1977–1978), Exp. No. 7, 14 pp., \'Ecole Polytech., Palaiseau (1978).

\bibitem{Pi16}
G. Pisier:
{\em Martingales in {B}anach spaces}, volume 155 of {\em Cambridge
	Studies in Advanced Mathematics}.
Cambridge University Press, Cambridge, 2016.

\bibitem{QQ}
H. Queff\'{e}lec and M. Queff\'{e}lec: \emph{Diophantine approximation and Dirichlet series}, Hindustan Book Agency, Lecture Note 2 (2013).


    \bibitem{Francia} J. L. Rubio de Francia:  \emph{Martingale and integral transforms of Banach space
valued functions}, In Probability and Banach spaces (Zaragoza, 1985), Lecture Notes in Math. 1221 (1986) 195-222.


\bibitem{Rudin62}
W. Rudin, \emph{Fourier analysis on groups}, Interscience Publishers (1962).
\bibitem{Schoolmannthesis} I. Schoolmann: Hardy spaces of general Dirichlet series and their maximal inequalities. PhD thesis (2020).

\bibitem{Schoolmann}
 I. Schoolmann: \emph{On Bohr's theorem for general Dirichlet series}, to appear in Mathematische Nachrichten (2020).
 


\bibitem{Tomczak-Jaegermann}
N.~Tomczak-Jaegermann: \emph{Banach-Mazur distances and finite-dimensional operator ideals}, volume 38 of Pitman Monographs and Surveys in Pure and Applied
Mathematics. Longman Scientific $\&$ Technical, Harlow; copublished in the United
States with John Wiley $\&$ Sons, Inc., New York, 1989.


 \bibitem{Valiron}
G. Valiron: \emph{Th\'{e}orie g\'{e}n\'{e}rale des s\'{e}ries de Dirichlet}, M\'{e}morial des Sience Math\'{e}matiques 17 (1926).

\bibitem{vakhania} N. N. Vakhania, V. I. Tarieladze and S. A. Chobanyan:  \emph{Probability distributions on {B}anach spaces}, Mathematics and its Applications (Soviet Series) 14, Translated from the Russian and with a preface by Wojbor A.
              Woyczynski, D. Reidel Publishing Co., Dordrecht (1987) xxvi+482.

\bibitem{Weissler}
F. B. Weissler: \emph{Logarithmic Sobolev inequalities and hypercontractive estimates on the circle},
J. Funct. Anal. 37 (1980) 218-234.

\end{thebibliography}
\end{document}